\providecommand{\cD}{\mathcal{D}}
\providecommand{\cH}{\mathcal{H}}
\providecommand{\cK}{\mathcal{K}}
\providecommand{\cO}{\mathcal{O}}
\providecommand{\cP}{\mathcal{P}}
\providecommand{\F}{\mathbb F}
\providecommand{\Q}{\mathbb Q}
\providecommand{\Z}{\mathbb Z}
\providecommand{\rInj}{\mathrm{Inj}}
\providecommand{\rsoc}{\mathrm{soc}}
\providecommand{\JH}{\mathrm{JH}}
\providecommand{\p}{\mathfrak {p}}
\providecommand{\Fr}{\mathrm{Fr}}
\def\lra{\longrightarrow}
\providecommand{\Hom}{\mathrm{Hom}}
\providecommand{\Ext}{\mathrm{Ext}}
\providecommand{\GL}{\mathrm {GL}}
\providecommand{\SL}{\mathrm {SL}}
\providecommand{\Gal}{\mathrm{Gal}}
\providecommand{\id}{\mathrm{Id}}
\providecommand{\rad}{\mathrm{rad}}
\providecommand{\Sym}{\mathrm{Sym}}
\providecommand{\Rep}{\underline{\mathrm {Rep}}}
\providecommand{\Ind}{\mathrm {Ind}}
\providecommand{\xto}[1][]{\xrightarrow{#1}}
\providecommand{\simto}{
\xto[\sim]} 
\providecommand{\summ}{\sum\limits}
\providecommand{\bFp}{\overline{\F}_p}
\providecommand{\bQp}{\overline{\Q}_p}
\providecommand{\matr}[4]{\begin{pmatrix}{#1}&{#2}\\
{#3}&{#4}\end{pmatrix}}
\providecommand{\smatr}[4]{\bigl(\begin{smallmatrix} {#1}& {#2}\\
{#3}&{#4}\end{smallmatrix}\bigl)}
\providecommand{\DIAG}{\mathcal{DIAG}}
\providecommand{\red}{\mathrm{r\acute{e}d}}
\providecommand{\irr}{\mathrm{irr}}
\providecommand{\ra}{\rightarrow} 
\providecommand{\vv}{\vspace{2mm}}
\providecommand{\cP}{\mathcal{P}}
\providecommand{\cD}{\mathcal{D}}
\providecommand{\cS}{\mathcal{S}}
\providecommand{\cPx}{\mathcal{P}(x_0,\cdots,x_{f-1})}
\providecommand{\cRDx}{\mathcal{RD}(x_0,\cdots,x_{f-1})}
\providecommand{\cIDx}{\mathcal{ID}(x_0,\cdots,x_{f-1})}
\providecommand{\cPy}{\mathcal{P}(y_0,\cdots,y_{f-1})}
\providecommand{\cIy}{\mathcal{I}(y_0,\cdots,y_{f-1})}
\providecommand{\ligne}{\textbf{---}}
\providecommand{\rcosoc}{\mathrm{cosoc}}
\providecommand{\lon}{s} 
\newtheorem{theorem}{Théorème}[section]
\newtheorem{lemma}[theorem]{Lemme}
\newtheorem{cor}[theorem]{Corollaire}
\newtheorem{prop}[theorem]{Proposition}
\newtheorem{defn}[theorem]{Définition}
\newtheorem{rem}[theorem]{Remarque}
\newtheorem{exem}[theorem]{Exemple}
\begin{document}

\title{Sur quelques représentations supersingulières de $\GL_2(\Q_{p^f})$}

\author{Yongquan Hu}
\date{}
\maketitle

\vspace{-10mm}

\hspace{5cm}\hrulefill\hspace{5.5cm} \vspace{5mm}

\textbf{Résumé}--Soit $p\geq 3$ un nombre premier, $f\geq 1$ un entier
et $\Q_{p^f}$ l'extension finie non ramifiée de $\Q_{p}$ de degré $f$.
D'après \cite{BP}, à une représentation continue semi-simple générique
$\Gal(\bQp/\Q_{p^f})\ra \GL_2({\bFp})$, on sait associer une
famille de représentations lisses admissibles de $\GL_2(\Q_{p^f})$
à coefficients dans $\bFp$ dont des paremètres
sont données. Dans cet article, on montre qu'il y a beaucoup de
paramètres que l'on sait.
\\

\tableofcontents

\section{Introduction}
 Fixons $p$ un nombre premier. Serre a conjectur\'e (\cite{Se2}), il y a d\'ej\`a 20 ans, que toute repr\'esentation continue, irr\'eductible, impaire $\Gal(\overline{\Q}/\Q)\ra \GL_2(\bFp)$ est \emph{modulaire} au sens qu'elle provient d'une forme modulaire parabolique primitive.
Dans \cite{BDJ}, Buzzard, Diamond et Jarvis généralisent cette conjecture en rempla\c{c}ant $\Q$ par un corps de nombres totalement réel qui est non ramifié en $p$.
Pour notre propos, la conjecture de Buzzard-Diamond-Jarvis (abr\'eg\'e par BDJ) peut se formuler grossièrement comme suit (cf. \cite[conjecture 4.7]{BDJ}). Soient $F$ un corps de nombres totalement r\'eel non ramifi\'e en $p$ et $\rho:\Gal(\overline{\Q}/F)\ra\GL_2(\bFp)$ une repr\'esentation continue irr\'eductible et totalement impaire. Pour chaque place $\nu$ de $F$ divisant $p$, d'apr\`es le travail de \cite{BDJ}, on peut associer \`a $\rho|_{\Gal(\bQp/F_{\nu})}$ (o\`u $F_{\nu}$ est le compl\'et\'e de $F$ en $\nu$) un ensemble de repr\'esentations irr\'eductibles de $\GL_2(\cO_{F_{\nu}})$ (o\`u $\cO_{F_{\nu}}$ est l'anneau des entiers de $F_{\nu}$). Alors (une version de) la conjecture de BDJ dit que ce sont exactement les repr\'esentations irr\'eductibles qui apparaissent en sous-objet dans une certaine $\bFp$-repr\'esentation lisse admissible $\pi_{\nu}(\rho)$ de $\GL_2(F_{\nu})$ associ\'ee \`a $\rho$.


Des résultats partiels sur la conjecture de BDJ ont été  obtenus par Gee (\cite{Ge}).
De plus, sous une hypoth\`ese suppl\'ementaire sur $\rho|_{\Gal(\bQp/F_{\nu})}$ appel\'ee <<g\'en\'ericit\'e>> (\cite[définition 11.7]{BP}), il est espéré  (cf. \cite[remarque 4.8]{BDJ}) que toutes les multiplicit\'es des repr\'esentations de $\GL_2(\cO_{F_{\nu}})$ apparaissant en sous-objet dans $\pi_{\nu}(\rho)$ sont \'egales \`a $1$. On peut donc se demander si l'on peut construire abstraitement des représentations lisses admissibles de $\GL_2(F_{\nu})$ dont le $\GL_2(\cO_{F_{\nu}})$-socle, i.e. la plus grande sous-$\GL_2(\cO_{F_{\nu}})$-repr\'esentation semi-simple, est exactement la somme directe des repr\'esentations irr\'eductibles dict\'ees par la conjecture.
C'est ce qui est fait dans \cite{BP}. \vv

Notre probl\`eme \'etant local, on d\'esigne d\'esormais par $F$ l'extension $\Q_{p^f}$ de $\Q_p$ (l'unique extension non ramifi\'ee de degr\'e $f\geq 1$) et $\cO_F$ son anneau des entiers.  On note
\[G=\GL_2(F),\ \ \ K=\GL_2(\cO_F),\]
$I$ le sous-groupe d'Iwahori de $K$,  $N$ le normalisateur de $I$
dans $G$, et $I_1\subset I$ (resp. $K_1\subset K$) le sous-groupe des matrices unipotentes supérieures (resp. égales à l'identité) modulo $p$.

Soit $\rho:\Gal(\bQp/F)\ra\GL_2(\bFp)$ une repr\'esentation continue semi-simple g\'en\'erique et telle que $p\in Z$ agisse trivialement sur $\det(\rho)$. Notons $\cD(\rho)$ l'ensemble des repr\'esentations irr\'eductibles de $K$ sur $\bFp$, ou de mani\`ere \'equivalente de $K/K_1$ sur $\bFp$, associ\'e \`a $\rho$ dans \cite{BDJ}. On appelle \emph{poids de Diamond} les \'el\'ements de $\cD(\rho)$. \`A partir de $\cD(\rho)$, on peut construire une famille de diagrammes (au sens de \cite[\S9]{BP}) $D(\rho,r)=(D_0(\rho),D_1(\rho),r)$ comme suit (en faisant agir $p\in Z$ trivialement):
\begin{enumerate}
\item[(i)]
$D_0(\rho)$ est la plus grande représentation de $K/K_1$ telle que $\rsoc_K(D_0(\rho))$, le $K$-socle de $D_0(\rho)$, est isomorphe \`a $\oplus_{\sigma\in\cD(\rho)}\sigma$ et telle que chaque  $\sigma\in\cD(\rho)$ n'apparaît qu'une fois dans $D_0$

\item[(ii)] $D_1(\rho)$ est l'unique représentation de $N$ sur $D_0(\rho)^{I_1}$ qui étend l'action de $I$

\item[(iii)] $r:D_1(\rho)\hookrightarrow D_0(\rho)$ est une injection $I$-équivariante arbitraire.
\end{enumerate}
Puis, avec une telle  $r$ fix\'ee, on obtient par un r\'esultat de \cite[\S9]{BP} une famille de repr\'esentations lisses admissibles $\pi(\rho,r)$ de $G$ telles que \vv

\begin{itemize}
\item[--] $\rsoc_K\pi(\rho,r)=\rsoc_{K}D_0(\rho)$\vspace{1mm}

\item[--]  $D(\rho,r)$ s'injecte dans $(\pi(\rho,r)^{K_1},\pi(\rho,r)^{I_1},\mathrm{can})$ en tant que diagrammes \vspace{1mm}

\item[--] $\pi(\rho,r)$ est engendrée par $D_1(\rho)$ en tant que $G$-représentation.\vspace{2mm}
\end{itemize}
On dira que $\pi(\rho,r)$ est une représentation de $G$ associée à $D(\rho,r)$. Cette proc\'edure est loin d'\^etre canonique et ce n'est pas facile \`a examiner si deux repr\'esentations obtenues ainsi sont isomorphes.  \vv

Fixons un diagramme $D(\rho,r)$ et une représentation
$\pi(\rho,r)$ de $G$ qui lui est associée comme ci-dessus.  On s'intéresse aux
questions suivantes:\vv

(Q1) La représentation $\pi(\rho,r)$ est-elle uniquement déterminée
 par le diagramme  $D(\rho,r)$? Autrement dit, si
$\pi'(\rho,r)$ est une autre représentation de $G$
associée à $D(\rho,r)$, est-ce que l'on a
\[\pi(\rho,r)\cong \pi'(\rho,r)?\]

(Q2) Est-ce que l'on a un isomorphisme de diagrammes:
\[(\pi(\rho,r)^{K_1},\pi(\rho,r)^{I_1},\mathrm{can})\cong (D_0(\rho),D_1(\rho),r)?\]

(Q3) Si $\rho$ est réductible et scindée, on sait que le diagramme
$D(\rho,r)$ se décompose en une somme directe de sous-diagrammes (voir \cite[\S15]{BP}, on pr\'ecise que $f$ est le degr\'e de $F$ sur $\Q_p$)
\[D(\rho,r)=\oplus_{\ell=0}^fD(\rho,r_{\ell})=\oplus_{\ell=0}^f(D_{0,\ell}(\rho),D_{1,\ell}(\rho),r_{\ell}).\]
La représentation $\pi(\rho,r)$ est-elle aussi une
somme directe de sous-représentations $\{\pi_{\ell}, \ 0\leq
\ell\leq f\}$ vérifiant (au moins) la condition
$\rsoc_K(\pi_{\ell})=\rsoc_K(D_{0,\ell}(\rho))$?

\vv

Si $f=1$, alors les réponses \`a (Q1)-(Q3) sont positives, ce qui
a été démontré dans \cite{BP}. Malheureusement, lorsque $f\geq 2$,
les réponses \`a (Q1)-(Q3) sont toutes négatives. En particulier, le diagramme $D(\rho,r)$ \emph{ne suffit pas} pour d\'eterminer une unique repr\'esentation lisse admissible de $G$. Plus précisément, on va démontrer le résultat suivant (cf. théorèmes \ref{theorem-exem-f>3-irr}, \ref{theorem-exem-f>3-red}, \ref{theorem-exem-f=2}):

\begin{theorem}
On conserve les notations précédentes.

(i) Supposons $f=2m+1$ avec $m\geq1$ et $\rho$ irréductible. Il existe deux représentations supersingulières non isomorphes de $G$
associées au même diagramme
$D(\rho,r)$.

(ii) Supposons $f=2$ et $\rho$ irréductible. Il existe une représentation supersingulière $\pi(\rho,r)$
(associée à $D(\rho,r)$) avec $ D_0(\rho)\subsetneq
\pi(\rho,r)^{K_1}$.

(iii) Supposons $f=2m$
avec $m\geq 2$ et $\rho$ réductible scindée. Il existe une représentation $\pi(\rho,r)$
qui n'est pas semi-simple.
\end{theorem}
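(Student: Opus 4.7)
Le plan est de construire explicitement, dans chacun des trois cas, des repr\'esentations $\pi(\rho,r)$ comme quotients $\cInd_{KZ}^G D_0(\rho)/\mathcal{R}$, o\`u $\mathcal{R}$ est engendr\'e par un syst\`eme de relations de Hecke attach\'ees aux vecteurs propres de $I$ dans $D_0(\rho)^{I_1}$. Ce syst\`eme fait intervenir des param\`etres scalaires dans $\bFp$ dont la libert\'e de choix, contrainte uniquement par l'admissibilit\'e et par la pr\'eservation du diagramme $D(\rho,r)$, fournit les exemples pathologiques cherch\'es.

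\textbf{Partie (i).} Pour $f=2m+1$ et $\rho$ irr\'eductible, j'utiliserais la description explicite de $D_0(\rho)^{I_1}$ donn\'ee dans \cite{BP} pour construire deux familles distinctes $(\lambda_\chi)$ et $(\lambda'_\chi)$ de scalaires index\'ees par les caract\`eres $\chi$ de $I/I_1$ apparaissant, donnant deux repr\'esentations $\pi(\rho,r)$ et $\pi'(\rho,r)$ admettant toutes deux $D(\rho,r)$ comme diagramme sous-jacent. Pour \'etablir leur non-isomorphisme, j'\'etudierais l'action it\'er\'ee de $\smatr{0}{1}{p}{0}$ sur le socle commun et extrairais une $K$-sous-repr\'esentation dont la structure (par exemple les constituants de Jordan-H\"older ou la longueur) d\'epend effectivement des param\`etres. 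La parit\'e de $f$ et la condition $m\geq1$ interviennent ici pour garantir l'existence de deux syst\`emes compatibles mais in\'equivalents.

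\textbf{Partie (ii).} Dans le cas $f=2$ avec $\rho$ irr\'eductible, il s'agit de produire un vecteur $v\in\pi(\rho,r)^{K_1}\setminus D_0(\rho)$. La d\'emarche serait d'identifier, parmi les translat\'es $g\cdot D_0(\rho)$ pour $g\in G$ bien choisi, une combinaison lin\'eaire $K_1$-invariante qui ne provient pas de la copie initiale de $D_0(\rho)$. La petite taille $f=2$ rend les relations de Hecke suffisamment contraintes pour en garantir l'existence et pour v\'erifier concr\`etement l'invariance par $K_1$; il resterait \`a engendrer ce vecteur comme cons\'equence d'une relation impos\'ee dans la d\'efinition de $\pi(\rho,r)$.

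\textbf{Partie (iii) et obstacle principal.} Pour $\rho$ r\'eductible scind\'ee et $f=2m\geq4$, $D(\rho,r)=\oplus_\ell D(\rho,r_\ell)$ se d\'ecompose, et je chercherais \`a recoller deux facteurs en une extension \emph{non scind\'ee}: concr\`etement, un quotient admissible de $\cInd_{KZ}^G D_0(\rho)$ o\`u un g\'en\'erateur $I_1$-invariant d'un facteur est identifi\'e, modulo l'action de Hecke, \`a un \'el\'ement provenant d'un autre facteur; le nombre $f+1\geq5$ de facteurs laisse la place pour un tel recollement. L'obstacle central se trouve cependant en (i), \`a savoir prouver le non-isomorphisme \emph{effectif}: tout morphisme $G$-\'equivariant $\pi(\rho,r)\to\pi'(\rho,r)$ respectant les socles entra\^inerait une relation alg\'ebrique entre les param\`etres. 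Il faut donc choisir $(\lambda_\chi)$ et $(\lambda'_\chi)$ de sorte qu'une telle relation soit interdite, puis produire un invariant calculable (vraisemblablement une sous-$K$-repr\'esentation non triviale de $\pi(\rho,r)^{I_1}$ ou une filtration qui en d\'ecoule) qui le d\'etecte.
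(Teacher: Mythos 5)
Votre point de d\'epart n'est pas celui du papier et il constitue la premi\`ere lacune s\'erieuse: vous proposez de r\'ealiser les $\pi(\rho,r)$ comme quotients $\cInd_{KZ}^G D_0(\rho)/\mathcal{R}$ par des relations de Hecke, alors que pour $f\geq 2$ on ne sait contr\^oler ni l'admissibilit\'e ni le $K$-socle de tels quotients; c'est pr\'ecis\'ement pour contourner cette difficult\'e que \cite[\S9]{BP} construit les \'el\'ements de $S(\rho,r)$ via une enveloppe injective. Le papier part d'une repr\'esentation $\pi\in S(\rho,r)$ d\'ej\`a fournie par \cite[th\'eor\`eme 9.8]{BP}, la plonge dans $\Omega$ avec $\Omega|_K\cong\rInj_K\rsoc_K(\pi)$, puis la \emph{d\'eforme} en rempla\c{c}ant l'action de $\Pi$ par $\Pi_{\phi,a}=(1+a\phi)^{-1}\Pi(1+a\phi)$ pour un endomorphisme $I$-\'equivariant nilpotent $\phi$ soumis aux conditions (S1) et (S2); le lemme \ref{lemma-exemple-f=3-in-S} garantit alors que $\pi_{\phi,a}$ reste dans $S(\rho,r)$. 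Rien dans votre sch\'ema ne garantit l'analogue de cette derni\`ere \'etape pour vos quotients.

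La seconde lacune est celle que vous signalez vous-m\^eme sans la combler: l'invariant qui d\'etecte le non-isomorphisme (resp. le d\'efaut de semi-simplicit\'e, resp. l'exc\`es de $K_1$-invariants) n'est jamais exhib\'e. Le papier le construit explicitement, et c'est l\`a que r\'eside l'essentiel du travail: en (i), on choisit $\sigma\in\cD(\rho)$ avec $\sigma^{[s]}\in\cD(\rho)$, un couple $(\sigma,\tau)$ de type $(+1,j)$ et la sous-$I$-repr\'esentation $M_{\tau}\cong E_{j-1}(\chi_{\sigma},\chi_{\tau},r'_{j-1}+1)$, puis on montre \`a l'aide du lemme \ref{lemma-Breuil} et du corollaire \ref{corollary-W-U(tau)} que le vecteur $F_0=\sum_{\lambda\in\F_q}\smatr{[\lambda]}110\Pi(v_{\tau})$ engendre ou non un poids de Diamond $\sigma'$ selon que l'on calcule dans $\pi$ ou dans $\pi_{\phi,a}$ pour $a$ v\'erifiant (\ref{equation-neq0}); la pr\'esence de $\sigma'$ dans $\langle K\cdot\Pi(M_{\tau})\rangle$ modulo $\langle K\cdot\Pi(\ker\beta)\rangle$ est l'invariant cherch\'e. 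En (ii), il ne suffit pas de trouver un vecteur $K_1$-invariant parmi des translat\'es: il faut d'abord \'etablir que deux sous-espaces $V_1$ et $V_2$ de $\pi$ ayant la m\^eme filtration par le socle co\"{\i}ncident d\`es que $\pi^{K_1}=D_0(\rho)$, co\"{\i}ncidence que la d\'eformation $\Pi_{\phi,a}$ d\'etruit en cr\'eant une extension $0\ra\sigma_3\ra *\ra\sigma_4\ra0$ fix\'ee par $K_1$ hors de $D_0(\rho)$. Votre id\'ee de recollement en (iii) est, elle, proche de l'esprit du papier (le $\phi$ y relie deux blocs $\Omega_{\ell}$ distincts), mais l\`a encore le m\'ecanisme effectif est la conjugaison de $\Pi$, pas une identification de g\'en\'erateurs dans une induite compacte. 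En l'\'etat, votre texte est un plan de travail plut\^ot qu'une d\'emonstration.
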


%
%
%

Introduisons maintenant les principales autres notations de cet article.\vv

Notons $\p:=p\cO$ l'id\'eal maximal de $\cO_F$ et $q:=p^f$ le cardinal du corps r\'esiduel $\cO_F/\p$. On identifie $\cO_F/\p$ avec $\F_q$ qui s'injecte naturellement dans $\bFp$. Pour $\lambda\in\F_q$ on note $[\lambda]$ le repr\'esentant multiplicatif dans $\cO_F$. Pour $a\in\cO_F$ on note $\overline{a}\in\F_q$ la reduction modulo $\p$ de $a$.

Si $n\geq 1$, on note
\[K_n:=\matr{1+\p^n}{\p^n}{\p^n}{1+\p^n},\ \  I_n:=\matr{1+\p^n}{\p^{n-1}}{\p^n}{1+\p^{n}},\]
et $U^+$ (resp. $U^-$) le sous-groupe de $I_1$ des matrices unipotentes sup\'erieures (resp. inf\'erieures). Explicitement, $U^+=\smatr{1}{\cO_F}01$ et $U^-=\smatr{1}0{\p}1$.
  On pose
$\cH\subset K$ le sous-groupe des matrices de la forme
$\smatr{[\lambda]}00{[\mu]}$ avec $\lambda$, $\mu\in\F_q$ et $Z_1:=I_1\cap Z$ le sous-groupe des matrices de la forme $\smatr a00a$ avec $a\in1+\p$.
On désigne par $\Pi$ la matrice $\smatr01{p}0$ (de sorte que $N$ est
engendré par $I$ et $\Pi$).


\vv

Toutes les représentations considérées dans cet article sont sur des
$\bFp$-espaces vectoriels. Pour celles de $G$, on suppose qu'elles admettent
un caractère central. On désigne par $\Rep_G$ (resp. $\Rep_K$,
$\Rep_I$, etc.) la catégorie des représentations lisses de $G$
(resp. $K$, $I$, etc.) sur $\bFp$. \vv

Si $\chi:I\ra \bFp^{\times}$ est un caractère, on note
$\chi^s:=\chi(\Pi\cdot \Pi)$. On pose $\alpha:I\ra\bFp^{\times}$ le
caractère envoyant $\smatr ab{pc}d\in I$ sur
$\overline{a}\overline{d}^{-1}$. Si $M$ est une repr\'esentation lisse de $I$, on note $\Ind_I^KM$ la $\bFp$-repr\'esentation lisse des fonctions $f:K\ra \bFp$ telles que $f(ik)=i\cdot f(k)$ ($i\in I$, $k\in K$) avec action \`a gauche de $K$ par $(kf)(k')=f(k'k)$. Pour $k\in K$ et $m\in M$, on d\'esigne par $[k,m]$ l'\'el\'ement de $\Ind_I^KM$ de support $Ik^{-1}$ et de valeur $m$ en $k^{-1}$. Remarquons que $\Ind_I^K$ est un foncteur exacte de la cat\'egorie $\Rep_I$
dans $\Rep_K$.

Si $\sigma$ est une représentation irréductible de $K$ sur $\bFp$, alors
$\sigma^{I_1}$ est de dimension 1 (\cite[lemme 2]{BL2}) et
on note $\chi_{\sigma}$ le caractère donnant l'action de $I$ sur
$\sigma^{I_1}$ et on note $\sigma^{[s]}$ l'unique représentation
irréductible de $K$ distincte de $\sigma$ et telle que $I$
agit sur $(\sigma^{[s]})^{I_1}$ via $\chi_{\sigma}^s$.\vv

Si $S$ est une représentation lisse d'un groupe profini $H$ (par
exemple, $H=K$ ou $I$), on définit le socle de $M$, noté
$\rsoc(S)$, ou plutôt $\rsoc_H(S)$, comme la plus grande
sous-représentation semi-simple de $S$ et, par récurrence (en posant
$\rsoc(S)=\rsoc^1(S)$), on note $\rsoc^{i+1}(S)$ la
sous-représentation de $S$ contenant $\rsoc^{i}(S)$ telle que
$S_i:=\rsoc^{i+1}(S)/\rsoc^{i}(S)$ soit le socle de $S/\rsoc^{i}(S)$.
Pareillement, on d\'efinit le radical de $S$, noté $\rad(S)$, comme la
plus petite sous-représentation de $S$ telle que le quotient
$S/\rad(S)$ soit semi-simple. On appelle $S/\rad(S)$ le cosocle de $S$, not\'e $\rcosoc(S)$. Le plus petit entier $r$ tel que $\rsoc^r(S)=S$ ou encore $S_r=0$ s'appelle la \emph{longueur de Loewy} de $S$ (cf. \cite[\S1, exercice 2]{Al}) et est not\'e $r(M)$.
Par ailleurs, on écrit la filtration par le socle de $S$ sous la forme:
\[S_0\ \ligne\ S_1\ \ligne\ \cdots\ \ligne\ S_{r(M)-1}.\]

On note
$I(\bQp/F)$ le sous-groupe d'inertie de $\Gal(\bQp/F)$ et on normalise l'injection
du corps de classe local $\iota:F^{\times}\hookrightarrow
\Gal(\bQp/F)^{\mathrm{ab}}$ de telle sorte que les uniformisantes
s'envoient sur les Frobenius géométriques. Grâce à $\iota$, on identifie les
caractères lisses de $F^{\times}$ (resp. de $\cO_F^{\times}$) dans
$\bFp^{\times}$ et les caractères lisses de $\Gal(\bQp/F)$ (resp.
de $I(\bQp/F)$) dans $\bFp^{\times}$. Pour $d\geq 1$,
on note $w_d:I(\bQp/\Q_{p^d})\ra \bFp^{\times}$
le caractère envoyant $g$ sur
$\frac{\overline{g(\sqrt[p^d-1]{p})}}{\sqrt[p^d-1]{p}}\in
\F_{p^d}^{\times}\hookrightarrow \bFp^{\times}$. \vv

Si $\sigma$ et $\tau$ sont deux poids, on dit que $(\sigma,\tau)$ est un couple de
poids de type $(-1,j)$ (resp. $(+1,j)$) si
$\Ext^1_{K}(\tau,\sigma)\neq 0$ et si l'on est dans le cas (a)
(resp. (b)) du corollaire 5.6 (i), \cite{BP}.
Remarquons que ceci implique implicitement $f\geq 2$.\vv

\noindent\textbf{Remerciements}
Ce travail s'est accompli sous la direction
de C. Breuil. Je le remercie chaleureusement pour avoir partagé
avec moi ses idées et ses connaissances et pour toutes ses
remarques. Je voudrais remercier R. Abddellatif, F. Herzig et V. Sécherre pour leurs commentaires et suggestions à la première version. Enfin, je remercie chaleureusement le referee pour ses nombreuses remarques et suggestions constructives.




\section{Combinatoire de
$K$-représentations}\label{subsection-on-K-extensions} Dans ce
paragraphe, on étudie la structure de certaines
$K$-représentations et on généralise \cite[lemme 18.4]{BP}.

\subsection{Rappels et compléments}\label{subsection-principal}
Toute représentation irréductible de $K$ sur $\bFp$ est triviale
sur $K_1$ (cf. \cite[proposition 4]{BL2}), donc est une représentation de $\GL_2(\F_{q})\cong
K/K_1$. Un \emph{poids} (ou \emph{poids de Serre}) est par
définition une telle représentation. Tout poids est,  à isomorphisme près, de la forme (cf.
\cite[proposition 1]{BL2})
\[\bigl(\Sym^{r_0}\bFp^2\otimes(\Sym^{r_1}\bFp^2)^{\Fr}\otimes\cdots
\otimes(\Sym^{r_{f-1}}\bFp^2)^{\Fr^{f-1}}\bigr)\otimes\eta \] où\
les $r_i$ sont des entiers entre $0$ et $p-1$, $\eta$ est un
caractère lisse de $\cO_F^{\times}$ dans $\bFp^{\times}$ vu comme un caract\`ere de $K$ via le d\'eterminant $\det:K\ra \cO_F^{\times}$, et $\Fr:
\GL_2(\F_{q})\ra\GL_2(\F_{q})$ est le Frobenius donné par $\smatr
abcd\mapsto \smatr{a^p}{b^p}{c^p}{d^p}$. On notera
cette représentation $(r_0,\cdots,r_{f-1})\otimes\eta$.

D'autre part, un caract\`ere lisse de $I$ dans $\bFp^{\times}$ est triviale sur $I_1$, donc est un caract\`ere de $\cH\cong I/I_1$. On voit facilement que tout caract\`ere peut s'\'ecrire sous la forme:
\[\matr{a}b{pc}d(\in I)\mapsto \overline{a}^{\sum_{i=0}^{f-1}p^ir_i}\eta(ad)\]
avec $r_i$ et $\eta$ comme pr\'ec\'edement. Par abus de notation, on note ce caract\`ere $(r_0,\cdots,r_{f-1})\otimes\eta$. Notons que $(0,\cdots,0)\otimes\eta\cong(p-1,\cdots,p-1)\otimes\eta$ de telle sorte que l'on demande souvent que les $r_i$ ne soient pas tous $p-1$. 

Si $\sigma=(r_0,\cdots,r_{f-1})\otimes\eta$ est un poids, alors le
caractère $\chi=\chi_{\sigma}$ qui donne l'action de $I$ sur $\sigma^{I_1}$ est exactement le caract\`ere $(r_0,\cdots,r_{f-1})\otimes\eta$ (\cite[\S2]{BP}).
Par réciprocité de Frobenius, le
morphisme $\chi\hookrightarrow \sigma|_I$ correspond à une
surjection $K$-équivariante
$\Ind_I^K\chi\twoheadrightarrow\sigma$. 
De m\^eme, on dispose d'une injection $K$-\'equivariante $\sigma\hookrightarrow\Ind_I^K\chi^s$.
La structure précise de $\Ind_I^{K}\chi$ ou de
$\Ind_I^{K}\chi^s$ a été étudiée dans \cite{Je}. On rappelle
le résultat en suivant la formulation donnée dans \cite{BP}. \vv

 Soient
$(x_0,\cdots,x_{f-1})$ $f$ variables. On définit $\cPx$ comme
l'ensemble des $f$-uplets
$\lambda:=(\lambda_0(x_0),\cdots,\lambda_{f-1}(x_{f-1}))$ où\
$\lambda_i(x_i)\in\Z\pm x_i$ est défini comme suit. Si $f=1$,
$\cP(x_0)=\{x_0,p-1-x_0\}$. Si $f>1$, alors: \vspace{2mm}

 (i)  $\lambda_i(x_i)\in\{x_i,x_i-1,p-2-x_i,p-1-x_i\}$ pour
$i\in\{0,\cdots,f-1\}$ \vspace{2mm}

 (ii)  si $\lambda_i(x_i)\in\{x_i,x_i-1\}$, alors
$\lambda_{i+1}(x_{i+1})\in\{x_{i+1},p-2-x_{i+1}\}$ \vspace{2mm}

 (iii) si $\lambda_i(x_i)\in\{p-2-x_i,p-1-x_i\}$,
alors $\lambda_{i+1}(x_{i+1})\in\{p-1-x_{i+1},x_{i+1}-1\}$
\vspace{2mm}\\
avec les conventions $x_f:=x_0$ et
$\lambda_f(x_f):=\lambda_0(x_0)$.
\\

 Pour $\lambda\in\cPx$, on
définit\vspace{1mm}
\begin{itemize}
\item[]
$e(\lambda):=\dfrac{1}{2}\Bigl(\sum\limits_{i=0}^{f-1}p^i(x_i-\lambda_i(x_i))\Bigr)$
\ \ si $\lambda_{f-1}(x_{f-1})\in\{x_{f-1},x_{f-1}-1\}$

\item[]
$e(\lambda):=\dfrac{1}{2}\Bigl(p^f-1+\sum\limits_{i=0}^{f-1}p^i(x_i-\lambda_i(x_i))\Bigr)$\
\ \ sinon.
\end{itemize}

\vspace{2mm}

Le résultat de \cite{Je} se reformule alors ainsi:

\begin{lemma}\label{lemma-Gamma-PS}
Les sous-quotients irréductibles de $\Ind_{I}^{K}\chi$ ou
$\Ind_{I}^{K}\chi^s$ sont exactement les poids:
\[(\lambda_0(r_0),\cdots,\lambda_{f-1}(r_{f-1}))\otimes{\det}^{e(\lambda)(r_0,\cdots,r_{f-1})}\eta\]
pour $\lambda\in\cPx$ en oubliant les $\lambda$ tels qu'il existe $0\leq i\leq f-1$ vérifiant
$\lambda_i(r_i)<0$.
\end{lemma}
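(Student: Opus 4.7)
Le plan est d'identifier l'énoncé à une reformulation purement combinatoire du résultat de Jeyakumar (\cite{Je}) dans le langage de $\cPx$ introduit dans \cite{BP}.

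Premièrement, je réduirais au cas des représentations de $\GL_2(\F_q)$. Les caractères $\chi$ et $\chi^s$ sont triviaux sur $I_1\supset K_1$, donc $\Ind_I^K\chi$ (resp. $\Ind_I^K\chi^s$) est triviale sur $K_1$ et se factorise par $\Ind_B^{\GL_2(\F_q)}\bar\chi$, où $B$ désigne l'image de $I$ dans $\GL_2(\F_q)$. Tous ses facteurs de composition sont donc des poids, et il s'agit d'appliquer le théorème de Jeyakumar qui calcule les facteurs de Jordan-Hölder de cette induite parabolique finie.

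Deuxièmement, je transcrirais la paramétrisation de Jeyakumar dans le langage de $\cPx$. Chaque facteur de composition correspond à un choix compatible, étage $i$ par étage $i$, de substitution $x_i\leftrightarrow p-1-x_i$ (ou $x_i-1\leftrightarrow p-2-x_i$); les règles (ii)-(iii) de la définition de $\cPx$ encodent exactement ces compatibilités imposées par la structure des torsions de Frobenius successives. L'exclusion des $\lambda$ pour lesquels un $\lambda_i(r_i)<0$ correspond aux $f$-uplets formels qui ne définissent pas un poids valide (cas dégénérés lorsque certains $r_i$ sont aux bords de l'intervalle $[0,p-1]$).

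Troisièmement, je vérifierais la formule pour $e(\lambda)$ par l'égalité des caractères centraux. Tout facteur de composition $(s_0,\ldots,s_{f-1})\otimes\eta'$ partage son caractère central avec $\Ind_I^K\chi$ ou $\Ind_I^K\chi^s$, ce qui détermine l'exposant $e(\lambda)$ du déterminant modulo $p^f-1$. L'alternative entre $\chi$ et $\chi^s$ induit précisément la distinction de cas dans la définition de $e(\lambda)$ selon que $\lambda_{f-1}(x_{f-1})$ appartient ou non à $\{x_{f-1},x_{f-1}-1\}$. L'obstacle principal est la vérification combinatoire rigoureuse de ces identités: il faut notamment s'assurer que $e(\lambda)$ est bien un entier (grâce au facteur $\frac{1}{2}$ et à la parité de la somme intervenant, imposée par les règles (ii)-(iii)) et que les exclusions décrites correspondent exactement à celles détectées par la condition $\lambda_i(r_i)\geq 0$.
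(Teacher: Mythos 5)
Votre d\'emarche co\"{\i}ncide essentiellement avec celle du texte~: la preuve donn\'ee ici se r\'eduit \`a renvoyer \`a \cite[lemme 2.2]{BP}, qui est pr\'ecis\'ement la traduction du calcul de Jeyakumar des facteurs de Jordan--H\"older de $\Ind_B^{\GL_2(\F_q)}\bar\chi$ dans le langage de $\cPx$, traduction dont vous esquissez les \'etapes. Seule petite r\'eserve de pr\'ecision~: le caract\`ere central ne d\'etermine que $2e(\lambda)$ modulo $q-1$, donc $e(\lambda)$ \`a $(q-1)/2$ pr\`es seulement, et c'est bien la distinction de cas selon l'appartenance de $\lambda_{f-1}(x_{f-1})$ \`a $\{x_{f-1},x_{f-1}-1\}$ qui l\`eve cette ambigu\"{\i}t\'e, comme vous l'indiquez vous-m\^eme.
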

\noindent\textbf{Notation:} ici et dans la suite, si $k\in\Z$ et si $\eta:\cO_F^{\times}\ra\bFp^{\times}$ est un caract\`ere, on \'ecrit $\det^{k}\eta$ au lieu de $\overline{\det}^k\otimes\eta\circ\det$, le caract\`ere de $K$ donn\'e par $g\mapsto (\overline{\det g})^{k}\eta(\det g)$.

\begin{proof}
Voir  \cite[lemme 2.2]{BP}.
\end{proof}

Pour $\lambda\in\cPx$, on définit
\begin{equation}\label{equation-J()}
J(\lambda):=\{i\in\{0,\cdots,f-1\}|\  \lambda_i(x_i)\in\{p-2-x_i,p-1-x_i\}\}.\end{equation}
D'apr\`es le lemme \ref{lemma-Gamma-PS}, si $\tau$ est un sous-quotient irréductible de $\Ind_I^{K}\chi^s$, il existe un unique $\lambda\in\cPx$ tel que $\lambda_i(r_i)\geq0$ pour tout $i\in\{0,\cdots,f-1\}$ et tel que \[\tau=(\lambda_0(r_0),\cdots,\lambda_{f-1}(r_{f-1}))\otimes{\det}^{e(\lambda)(r_0,\cdots,r_{f-1})}\eta.\]
On dit que $\lambda$ est le $f$-uplet correspondant \`a $\tau$ et on pose $J(\tau):=J(\lambda)$.

D'autre part, le lemme
\ref{lemma-Gamma-PS} montre que
$\Ind_I^{K}\chi^s$ est de multiplicité 1.
Par cons\'equent, si $\tau$ est un sous-quotient irr\'eductible de $\Ind_I^{K}\chi^s$, alors il existe une
unique sous-$K$-représentation de $\Ind_I^{K}\chi^s$, notée
$U(\tau)$, dont le cosocle est $\tau$. Pour le voir, soit $\rInj_{\GL_2(\F_q)}\tau$ une enveloppe injective de $\tau$ dans la cat\'egorie $\Rep_{\GL_2(\F_q)}$, vue comme une repr\'esentation lisse de $K$ en faisant agir $K_1$ trivialement. Comme la repr\'esentation $\Ind_{I}^K\chi^s$ est triviale sur $K_1$ et comme $\tau$ y appara\^it avec multiplicit\'e 1, on d\'eduit de \cite[\S5, exercice 2]{Al} que le $\bFp$-espace vectoriel $\Hom_{K}(\rInj_{\GL_2(\F_q)}\tau,\Ind_{I}^K\chi^s)$ est de dimension 1. On prend pour $U(\tau)$ l'image
d'un tel morphisme non nul et la propri\'et\'e que $\rcosoc_K(U(\tau))=\tau$ r\'esulte de \cite[\S6, th\'eor\`eme 6]{Al}.

On peut déterminer la structure  de $U(\tau)$ à
l'aide de $J(\tau)$:
\begin{lemma}\label{lemma-Prin-filtration}
(i) Si $\chi=\chi^s$, alors $\Ind_{I}^K\chi^s=\sigma\oplus\sigma^{[s]}$. En particulier, on a $U(\sigma)=\sigma$ et $U(\sigma^{[s]})=\sigma^{[s]}$.

(ii) Si $\chi\neq \chi^s$, alors les sous-quotients irréductibles de
$U(\tau)$ sont les poids $\tau'\in
\JH(\Ind_I^K\chi^s)$ tels que $J(\tau')\subset J(\tau)$. En particulier, $\rsoc_K(\Ind_I^K\chi^s)=\sigma$.
\end{lemma}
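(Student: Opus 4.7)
The plan is to deduce both parts from the combinatorial description of $\Ind_I^K \chi^s$ in lemme \ref{lemma-Gamma-PS}, together with Frobenius reciprocity and the $\Ext^1$-calculations between Serre weights from \cite{BP}.

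For part (i), I would first observe that the equality $\chi = \chi^s$ forces the parameters $(r_0, \ldots, r_{f-1})$ defining $\chi$ into a restricted form, so that a direct application of lemme \ref{lemma-Gamma-PS} shows the only $\lambda \in \cPx$ contributing subquotients are those giving $\sigma$ and $\sigma^{[s]}$. Under this hypothesis, both $\sigma$ and $\sigma^{[s]}$ carry $\chi = \chi^s$ on their $I_1$-invariants, so Frobenius reciprocity produces two distinct $K$-equivariant injections into $\Ind_I^K \chi^s$. Since the representation has length two, the images are complementary and yield the direct sum decomposition; the formulas $U(\sigma) = \sigma$ and $U(\sigma^{[s]}) = \sigma^{[s]}$ are then immediate from the definition of $U(\cdot)$.

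For part (ii), I would first establish the ``in particular'' statement: by Frobenius reciprocity, $\Hom_K(\tau', \Ind_I^K \chi^s) \neq 0$ iff $\chi_{\tau'} = \chi^s$, and a direct inspection of the character formulas for the weights listed by lemme \ref{lemma-Gamma-PS} shows that, under $\chi \neq \chi^s$, only $\tau' = \sigma$ (corresponding to $\lambda = (x_0,\ldots,x_{f-1})$, so $J(\sigma) = \emptyset$) satisfies this. Combined with the multiplicity-1 property, this gives $\rsoc_K(\Ind_I^K \chi^s) = \sigma$.

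For the description of the subquotients of $U(\tau)$, I would proceed by induction on $|J(\tau)|$. Since $\Ind_I^K \chi^s$ is multiplicity-free, every submodule is uniquely determined by its set of Jordan--H\"older factors, which necessarily forms an order ideal with respect to the submodule order on $\JH(\Ind_I^K \chi^s)$. The lemma then amounts to identifying this order with the partial order $\tau' \leq \tau \Longleftrightarrow J(\tau') \subset J(\tau)$. The inductive step uses the ``diamants d'Ext'' of \cite{BP}: whenever $J(\tau) = J(\tau') \cup \{i\}$ for some index $i$, the relevant $\Ext^1$-group between $\tau$ and $\tau'$ is nonzero and contributes a nontrivial extension inside $\Ind_I^K \chi^s$, which by gluing the already-constructed $U(\tau')$'s builds up a submodule with cosocle $\tau$ and exactly the predicted composition factors. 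The main obstacle is the converse direction, namely ruling out ``extra'' submodules whose factor sets are not $J$-downward closed; equivalently, showing that the socle filtration of $\Ind_I^K \chi^s$ has layers indexed precisely by the cardinality of $J(\cdot)$ and that no unexpected $K$-equivariant maps collapse this stratification. This combinatorial rigidity, which ultimately rests on the explicit structure of $\cPx$ and the $\Ext^1$-diagrams computed in \cite{BP}, is the technical heart of the argument.
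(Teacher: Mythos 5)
The paper gives no argument of its own for this lemma: it simply cites \cite{BP} (lemme 2.3 for (i), th\'eor\`eme 2.4 for (ii)), the latter ultimately resting on Jeyakumar's determination of the principal indecomposable modules of $\SL_2(\F_q)$ \cite{Je}. Your treatment of (i) is correct and is essentially how one would prove it from scratch: both factors carry $\chi=\chi^s$ on their $I_1$-invariants, Frobenius reciprocity gives two embeddings, and a length (or dimension) count finishes.

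For (ii) there are two problems. First, a small but genuine slip in the Frobenius reciprocity step: $\Hom_K(\tau',\Ind_I^K\chi^s)\cong\Hom_I(\tau'|_I,\chi^s)$ detects whether $\chi^s$ occurs as a \emph{quotient} of $\tau'|_I$, i.e.\ whether $\chi^s$ equals the character of $I$ on the $I_1$-coinvariants of $\tau'$, which is $\chi_{\tau'}^s$. The condition for $\tau'$ to embed is therefore $\chi_{\tau'}=\chi$, not $\chi_{\tau'}=\chi^s$; as written, your criterion would exclude $\sigma$ itself whenever $\chi\neq\chi^s$. Second, and more seriously, the core of the statement --- that the submodule lattice of $\Ind_I^K\chi^s$ is governed exactly by containment of the sets $J(\cdot)$ --- is not actually established. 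Knowing that $\Ext^1_K(\tau,\tau')\neq0$ when $J(\tau)=J(\tau')\cup\{i\}$ does not by itself show that this extension is realized nontrivially \emph{inside} $\Ind_I^K\chi^s$ on top of the already-constructed $U(\tau')$, and it certainly does not rule out submodules whose sets of Jordan--H\"older factors fail to be $J$-downward closed; you explicitly defer exactly this point (``the technical heart'') to an unproved ``combinatorial rigidity''. That rigidity is precisely the content of \cite[th\'eor\`eme 2.4]{BP} and of \cite{Je}, so the proposal in effect reduces the lemma to itself. A complete argument must either reprove Jeyakumar's description of the socle filtration of $\Ind_I^K\chi^s$ or cite it, as the paper does.
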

\begin{proof}
Voir \cite[lemme 2.3]{BP} pour (i) et \cite[théorème 2.4]{BP} pour (ii).
\end{proof}

\vv

Si $j$ est un entier entre
$0$ et $f-1$, on note $E_j(\chi)$ l'extension non triviale de
dimension 2:
\begin{equation}\label{equation-define-Ej(chi)}
0\ra \chi\ra E_j(\chi)\ra \chi\alpha^{-p^j}\ra0\end{equation}
où\ l'action de $I$ est donnée, dans une base convenable
$\{v,w\}$, par: si $\smatr ab{pc}d\in I$,
\[\left\{ {\begin{array}{ll}
\matr ab{pc}d v=\chi\Biggl(\matr
ab{pc}d\Biggr)v   \\
\matr ab{pc}dw=\chi\alpha^{-p^j}\Biggl(\matr
ab{pc}d\Biggr)\big(w+(\overline{b/d})^{p^{j}}v\big).
 \\\end{array}}\right.\]
On v\'erifie que cela d\'efinit bien une $I$-extension non triviale de $\chi\alpha^{-p^j}$ par $\chi$. Notons que par d\'efinition $E_j(\chi)$ est triviale sur $K_1$.


Si $M$ est une $I$-représentation, on note $\Pi(M)$ la $I$-représentation définie par
\begin{equation}\label{equation-define-Pi(M)}
h\cdot \Pi(v):=\Pi\bigl((\Pi^{-1}h\Pi)\cdot v\bigr),\ \ h\in I.
\end{equation}
C'est bien d\'efinie puisque $\Pi$ normalise $I$. Notons que $\Pi(\chi)\cong\chi^s$ et $\Pi(\chi\alpha^{-p^j})\cong\chi^s\alpha^{p^j}$, et que $\Pi(E_j(\chi))$ est triviale sur $\Pi K_1\Pi^{-1}=\smatr{1+\p}{\cO_F}{\p^2}{1+\p}$.
\begin{rem}
Si $M'$ est une représentation de $N$ (ou de $G$) et si $M\subset M'$ est
un sous-espace vectoriel stable par $I$, alors
l'espace $\Pi(M)$ est aussi stable par $I$ avec l'action définie
par (\ref{equation-define-Pi(M)}).
\end{rem}


\begin{lemma}\label{lemma-extension-I}
Soit $\chi'$ un caract\`ere lisse de $I$.

(i) $\Ext^1_{I/Z_1}(\chi',\chi)\neq0$ si et seulement si $\chi'=\chi\alpha^{-p^j}$ ou $\chi'=\chi\alpha^{p^j}$ pour un $j\in\{0,\cdots,f-1\}$. Si $\chi'=\chi\alpha^{-p^j}$ (resp. $\chi'=\chi\alpha^{p^j}$) et si $M$ est une $I$-extension non triviale de $\chi'$ par $\chi$, alors $M\cong E_j(\chi)$ (resp. $M\cong \Pi(E_j(\chi^s))$).

(ii) $\Ext^1_{I/K_1}(\chi',\chi)\neq 0$ si et seulement si $\chi'=\chi\alpha^{-p^j}$ pour un $j\in\{0,\cdots,f-1\}$. Si $\chi'=\chi\alpha^{-p^j}$ et si $M$ est une $I/K_1$-extension non triviale de $\chi'$ par $\chi$, alors $M\cong E_j(\chi)|_{U^+\cH}$.
\end{lemma}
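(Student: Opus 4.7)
My plan is to compute both Ext groups via Hochschild--Serre for the split extension $1\to I_1\to I\to \cH\to 1$. Since $|\cH|=(q-1)^2$ is coprime to $p$ and the coefficients lie in the characteristic-$p$ field $\bFp$, inflation--restriction collapses to give, for $H\in\{Z_1,K_1\}$,
\[
\Ext^1_{I/H}(\chi',\chi)\;\cong\;\Hom_\cH\bigl((I_1/H)^{\mathrm{ab}}/p,\ \chi(\chi')^{-1}\bigr),
\]
with $\cH$ acting on $I_1/H$ by conjugation. The real work is to identify the $\cH$-module $(I_1/H)^{\mathrm{ab}}/p$, i.e.\ the maximal exponent-$p$ abelian quotient of $I_1/H$.

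For (ii), the Iwahori factorisation $I_1=U^-\cdot T_1\cdot U^+$, where $T_1:=\smatr{1+\p}{0}{0}{1+\p}$, combined with $U^-,T_1\subset K_1$ and $U^+\cap K_1=\smatr{1}{\p}{0}{1}$, yields $I_1/K_1\cong U^+/(U^+\cap K_1)\cong \cO_F/\p\cong \F_q$, already abelian. The conjugation action of $\smatr{[\lambda]}{0}{0}{[\mu]}\in\cH$ multiplies $\bar b\in\F_q$ by $\lambda\mu^{-1}$, so as an $\cH$-module over $\bFp$ one has $\F_q\cong\bigoplus_{j=0}^{f-1}\bFp(\alpha^{p^j})$. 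Hence the Ext is non-zero iff $\chi(\chi')^{-1}=\alpha^{p^j}$ for some $j$, of dimension $1$ in that case. The extension $E_j(\chi)$ is trivial on $K_1$ (the cocycle $(\overline{b/d})^{p^j}$ vanishes there), and its associated homomorphism $I_1/K_1\to\bFp$ is $\bar b\mapsto \bar b^{p^j}$, which picks out the $j$-th summand: hence $E_j(\chi)|_{U^+\cH}$ realises the unique non-trivial class.

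For (i), $I_1/Z_1$ is non-abelian and the key input is a commutator computation. The Iwahori factorisation gives an $\cH$-equivariant surjection $U^+/p\oplus(T_1/Z_1)/p\oplus U^-/p\twoheadrightarrow (I_1/Z_1)^{\mathrm{ab}}/p$ with source $\cong\F_q^3$. The computation
\[
\left[\matr{1}{b}{0}{1},\matr{1}{0}{pc}{1}\right]\equiv \matr{1+pbc}{0}{0}{1-pbc}\pmod{I_2}
\]
shows that $[U^+,U^-]$ has image $-2\bar b\bar c$ in $(T_1/Z_1)/p\cong\F_q$. As multiplication $\F_q\otimes\F_q\to\F_q$ is surjective and $p\geq 3$ forces $2\neq 0$, the middle summand $T_1/Z_1$ is killed in the abelianisation mod $p$, leaving $(I_1/Z_1)^{\mathrm{ab}}/p\cong U^+/p\oplus U^-/p$. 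The conjugation action is by $\alpha$ on $U^+$ and $\alpha^{-1}$ on $U^-$, so the $\cH$-decomposition over $\bFp$ reads $\bigoplus_{j=0}^{f-1}\bFp(\alpha^{p^j})\oplus\bigoplus_{j=0}^{f-1}\bFp(\alpha^{-p^j})$. Hence the Ext is non-zero iff $\chi(\chi')^{-1}=\alpha^{\pm p^j}$, one-dimensional in each case. The explicit extensions $E_j(\chi)$ (cocycle on $U^+$) and $\Pi(E_j(\chi^s))$ (cocycle on $U^-$, via $\Pi^{-1}\smatr{a}{b}{pc}{d}\Pi=\smatr{d}{c}{pb}{a}$) then realise these classes.

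The main obstacle is the commutator computation, which requires $p\geq 3$ to ensure the scalar $2$ is invertible; everything else reduces to the standard decomposition of $\F_q$ as a $\bFp[\cH]$-module.
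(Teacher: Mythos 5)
Your proof is correct and is essentially the argument the paper relies on: its one-line proof simply invokes \cite[propositions 5.2, 5.4 (i), 5.5]{Pa2} (the case $f=1$) together with the remark that $\dim_{\bFp}\Hom(\cO_F,\bFp)=f$, and those proofs are precisely your inflation--restriction reduction to the $\cH$-eigenspace decomposition of $\Hom(I_1/H,\bFp)$, with the commutator computation $[U^+,U^-]\subset T_1\cdot I_2$ killing the torus contribution when $p\neq 2$. The only point left implicit in your write-up is that the $2f$ characters $\alpha^{\pm p^j}$ of $\cH$ are pairwise distinct --- this is what makes each $\Ext$ space one-dimensional and hence gives the uniqueness of $M$ up to isomorphism --- which does hold here because $p^i\not\equiv p^{j}$ and $p^i+p^j\not\equiv 0 \pmod{q-1}$ under the paper's standing hypotheses on $p$ and $f$.
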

\begin{proof}
Les preuves de \cite[propositions 5.2, 5.4 (i), 5.5]{Pa2} qui traitent le cas $f=1$ s'\'etendent au cas g\'en\'eral en remarquant que $\dim_{\bFp}\Hom(\cO_F,\bFp)=f$.
\end{proof}

\vv

Posons $W:=\Ind_I^K\Pi(E_j(\chi))$
et définissons-y les vecteurs suivants (pour $0\leq k\leq q-1$)
\begin{equation}\label{equation-define-F-et-f}
f_k=\summ_{\lambda\in\F_q}\lambda^k\matr{[\lambda]}110[1,\Pi(v)],\
\
F_k=\summ_{\lambda\in\F_q}\lambda^k\matr{[\lambda]}110[1,\Pi(w)],\end{equation}
où\ l'on convient que $0^0:=1$ et $0^{q-1}:=0$.
On a alors les formules suivantes:
\begin{lemma}\label{lemma-Witt-dans-W}
(i) Pour tout $0\leq k\leq q-1$, $F_k$ (resp. $f_k$) est un
vecteur propre de $\cH$ de caractère $\chi\alpha^{-k-p^j}$ (resp.
$\chi\alpha^{-k}$). \vspace{0.8mm}

(ii) Pour tout $0\leq k\leq q-1$,
\[\matr
1p01F_k=F_k+f_k.\]\vspace{0.8mm}

(iii) On a
\[\matr10p1 F_k=\left\{ {\begin{array}{ll} F_k-f_{k+2p^j} &
\mathrm{si\ }  k+2p^j\leq q-1\\

F_k-f_{k+2p^j-(q-1)}& \textrm{si\ } k+2p^j\geq q.
\\\end{array}}\right.\]

(iv) On a
\[\matr{1+p}001F_k=\left\{ {\begin{array}{ll} F_k+f_{k+p^j} &
\mathrm{si\ } k+p^j\leq q-1\\

F_k+f_{k+p^j-(q-1)}& \textrm{si\ } k+p^j\geq q.
\\\end{array}}\right.\]

\end{lemma}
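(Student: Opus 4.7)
Le plan est de proc\'eder par calcul direct dans $W=\Ind_I^K\Pi(E_j(\chi))$, en utilisant les r\`egles $h\cdot[k,m]=[hk,m]$ et $[k_0 i_0,m]=[k_0,i_0m]$ pour $i_0\in I$, qui ram\`enent chaque calcul \`a un repr\'esentant fix\'e de $I\backslash K$. Comme les matrices $\smatr{[\lambda]}110$ ($\lambda\in\F_q$), compl\'et\'ees par $\id$, forment un syst\`eme de repr\'esentants de $I\backslash K$, il suffit, pour chaque $h\in K$ \`a examiner, d'\'ecrire $h\smatr{[\lambda]}110=\smatr{[\mu_\lambda]}110\cdot i_\lambda$ avec $i_\lambda\in I$, puis d'appliquer $i_\lambda$ \`a $\Pi(v)$ ou $\Pi(w)$. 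L'action de $I$ sur ces derniers s'obtient \`a partir de (\ref{equation-define-Pi(M)}) et de la conjugaison $\Pi^{-1}\smatr ab{pc}d\Pi=\smatr dc{pb}a$, donnant
$$h\cdot\Pi(v)=\chi^s(h)\,\Pi(v),\qquad h\cdot\Pi(w)=\chi^s(h)\alpha^{p^j}(h)\bigl(\Pi(w)+(\overline{c/a})^{p^j}\Pi(v)\bigr)$$
pour $h=\smatr ab{pc}d\in I$, en notant que $\alpha^s=\alpha^{-1}$, donc $(\chi\alpha^{-p^j})^s=\chi^s\alpha^{p^j}$.

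Pour (i), un calcul matriciel imm\'ediat donne
$$\smatr{[\lambda']}00{[\mu']}\smatr{[\lambda]}110=\smatr{[\lambda'\lambda/\mu']}110\cdot\smatr{[\mu']}00{[\lambda']}.$$
On applique cette relation aux sommes d\'efinissant $F_k$ et $f_k$ et on effectue le changement de variable $\nu:=\lambda'\lambda/\mu'$. Le facteur $(\mu'/\lambda')^k$ issu du changement de variable se combine avec $\chi^s\alpha^{p^j}$ (resp. $\chi^s$) \'evalu\'e en $\smatr{[\mu']}00{[\lambda']}$ pour redonner exactement $\chi\alpha^{-k-p^j}(\smatr{[\lambda']}00{[\mu']})$ (resp. $\chi\alpha^{-k}(\smatr{[\lambda']}00{[\mu']})$).

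Pour (ii)--(iv), les \'el\'ements $g=\smatr 1p01,\ \smatr 10p1,\ \smatr{1+p}001$ appartiennent tous \`a $I_1$, donc fixent chaque droite de $\mathbb{P}^1(\F_q)$: la d\'ecomposition prend alors la forme $g\smatr{[\lambda]}110=\smatr{[\lambda]}110\cdot i_\lambda$ avec $i_\lambda:=\smatr{[\lambda]}110^{-1}g\smatr{[\lambda]}110\in I_1$, et un calcul direct livre
$$\smatr 1p01:\ i_\lambda=\smatr 10p1,\ \ \smatr 10p1:\ i_\lambda=\smatr{1+p[\lambda]}{p}{-p[\lambda]^2}{1-p[\lambda]},\ \ \smatr{1+p}001:\ i_\lambda=\smatr 10{p[\lambda]}{1+p}.$$
Comme $i_\lambda\in I_1$, les caract\`eres $\chi^s$ et $\alpha$ y sont triviaux, donc $i_\lambda\Pi(w)=\Pi(w)+(\overline{c/a})^{p^j}\Pi(v)$, o\`u la quantit\'e $(\overline{c/a})^{p^j}$ vaut respectivement $1$, $-\lambda^{2p^j}$ et $\lambda^{p^j}$ (le signe dans le deuxi\`eme cas provient de $(-1)^{p^j}=-1$, puisque $p\geq 3$).

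Il reste \`a convertir $\sum_{\lambda\in\F_q}\lambda^m[\smatr{[\lambda]}110,\Pi(v)]$ en un $f_{m'}$: cette somme vaut $f_m$ si $m\leq q-1$, et $f_{m-(q-1)}$ si $m\geq q$, en utilisant $\lambda^q=\lambda$ pour $\lambda\in\F_q$ ainsi que les conventions $0^0=1$ et $0^{q-1}=0$, qui garantissent que le terme $\lambda=0$ co\"incide correctement avec celui de $f_{m-(q-1)}$. Le principal obstacle sera d'ordre plus calculatoire que conceptuel: il faudra suivre avec soin les conjugaisons par $\Pi$ (d'o\`u les exposants $p^j$), les signes (notamment dans (iii)), et v\'erifier que les conventions sur $0^m$ produisent bien les formules annonc\'ees dans le cas $k+2p^j\geq q$ ou $k+p^j\geq q$.
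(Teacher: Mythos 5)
Votre démonstration est correcte et suit essentiellement la même voie que celle du texte : décomposition $g\smatr{[\lambda]}110=\smatr{[\mu_\lambda]}110\, i_\lambda$ avec $i_\lambda\in I$, puis action de $i_\lambda$ sur $\Pi(v)$ et $\Pi(w)$ via la conjugaison par $\Pi$, et réduction des exposants modulo $q-1$ avec les conventions sur $0^m$; les matrices $i_\lambda$ et les coefficients $1$, $-\lambda^{2p^j}$, $\lambda^{p^j}$ que vous obtenez coïncident avec ceux du calcul du texte. La seule différence mineure est que vous redémontrez le point (i) par le calcul diagonal explicite là où le texte renvoie à \cite[lemme 2.5]{BP}.
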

\begin{proof}
(i) Il découle de \cite[lemme 2.5]{BP}.

(ii) Il d\'ecoule de l'égalité
\[\matr1p01\matr{[\lambda]}110=\matr{[\lambda]}110\matr10p1.\]

(iii) Il d\'ecoule du calcul suivant et du fait que $\lambda^{q-1}=1$ pour tout $\lambda\in\F_q^{\times}$: \[\begin{array}{rll}\matr10p1F_k&=&\matr10p1\displaystyle\summ_{\lambda\in\F_q}\lambda^k\matr{[\lambda]}110[1,\Pi(w)]\\
&=&\displaystyle\summ_{\lambda\in\F_q}\lambda^k\matr{[\lambda]}110\matr{1+p[\lambda]}p{-p[\lambda^2]}{1-p[\lambda]}[1,\Pi(w)]\\
&=&\displaystyle\summ_{\lambda\in\F_q}\lambda^k\matr{[\lambda]}110([1,\Pi(w)]-\lambda^{2p^j}[1,\Pi(v)])\\
&=&F_k-\displaystyle\summ_{\lambda\in\F_q}\lambda^{k+2p^j}\matr{[\lambda]}110[1,\Pi(v)].\end{array}\]

(iv) Il d\'ecoule du calcul suivant: \[\begin{array}{rll}\matr{1+p}001F_k&=&\displaystyle\summ_{\lambda\in\F_q}\lambda^k\matr{1+p}001\matr{[\lambda]}110[1,\Pi(w)]\\
&=&\displaystyle\summ_{\lambda\in\F_q}\lambda^k\matr{[\lambda](1+p)}110\matr100{1+p}[1,\Pi(w)]\\
&=&\displaystyle\summ_{\lambda\in\F_q}\lambda^k\matr{[\lambda]}110\matr10{p[\lambda]}1[1,\Pi(w)]\\
&=&F_k+\displaystyle\summ_{\lambda\in\F_q}\lambda^{k+p^j}\matr{[\lambda]}110[1,\Pi(v)].\end{array}\]
\end{proof}

\begin{lemma}\label{lemma-calcul-H}
Soit $R\in W$ une combinaison lin\'eaire des vecteurs $F_k$ et $f_k$ ($0\leq k\leq q-1$), c'est-\`a-dire, $R$ s'\'ecrit sous la forme
\[R=\summ_{0\leq k\leq q-1}a_kF_k+\summ_{0\leq k\leq q-1}b_kf_k\]
avec $a_k,b_k\in\bFp$. Si $a_k\neq 0$ (resp. $b_k\neq0$), alors $F_k$ (resp. $f_k$) appartient \`a $\langle I\cdot R\rangle$, la sous-$I$-repr\'esentation de $W$ engendr\'ee par $R$.
\end{lemma}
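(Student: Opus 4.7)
The plan is a two-stage extraction: first use the torus $\cH$ to isolate an $\cH$-eigencharacter component of $R$, then use a specific element of $I_1$ to separate an $F$ from the corresponding $f$ within that component.

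For the first stage, by Lemma \ref{lemma-Witt-dans-W}(i) each $F_k$ is an $\cH$-eigenvector for $\chi\alpha^{-k-p^j}$ and each $f_k$ for $\chi\alpha^{-k}$. Since $|\cH|=(q-1)^2$ is prime to $p$, the group algebra $\bFp[\cH]\subset \bFp[I]$ contains idempotents $e_\psi$ projecting onto each character $\psi$, so every $\cH$-isotypic component of $R$ lies in $\langle I\cdot R\rangle$. This reduces the problem to the case where $R$ is $\cH$-isotypic of some character $\chi\alpha^{-m}$. In the generic situation (to be made precise below) this eigenspace is spanned by exactly two vectors, namely $F_{k_F}$ with $k_F\equiv m-p^j\pmod{q-1}$ and $f_{k_f}$ with $k_f\equiv m\pmod{q-1}$, chosen in $\{0,\ldots,q-1\}$.

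For the second stage, I write $R=aF_{k_F}+bf_{k_f}$ and apply $\smatr{1+p}001-1$. The key observation is that $\smatr{1+p}001$ acts trivially on every $f_k$: indeed, writing $\smatr{1+p}001\smatr{[\lambda]}110 = \smatr{[\lambda]}110\cdot\smatr10{p[\lambda]}{1+p}$ and noting that $\Pi^{-1}\smatr10{p[\lambda]}{1+p}\Pi = \smatr{1+p}{[\lambda]}01$ acts on $v\in E_j(\chi)$ via the character $\chi\big(\smatr{1+p}{[\lambda]}01\big)=1$ (using that $\eta$ is trivial on $1+p\cO_F$), one gets $\smatr{1+p}001\cdot\smatr{[\lambda]}110[1,\Pi(v)]=\smatr{[\lambda]}110[1,\Pi(v)]$ for every $\lambda$. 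Combined with Lemma \ref{lemma-Witt-dans-W}(iv), this yields
\[\Bigl(\smatr{1+p}001-1\Bigr)R=a\,f_{k_f}.\]
If $a\neq 0$, then $f_{k_f}\in\langle I\cdot R\rangle$ and, subtracting, $F_{k_F}=a^{-1}(R-bf_{k_f})\in\langle I\cdot R\rangle$ as well. If $a=0$ and $b\neq 0$, then $R=bf_{k_f}$ and the conclusion is trivial.

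The main obstacle is the degenerate case $m\equiv 0$ or $m\equiv p^j\pmod{q-1}$. Because of the convention $0^0=1$, $0^{q-1}=0$, the $\chi$-eigenspace then contains two vectors of type $f$ (namely $f_0$ and $f_{q-1}$, both fixed by $I_1$) and the $\chi\alpha^{-p^j}$-eigenspace contains two of type $F$, so the unipotent trick above only separates the sum. To treat these cases one must exploit the identities $f_0-f_{q-1}=\smatr0110[1,\Pi(v)]$ and $F_0-F_{q-1}=\smatr0110[1,\Pi(w)]$, which are supported solely at the ``boundary'' coset $\lambda=0$, and combine the action of $\smatr{1+p}001$ on $F_{q-1-p^j}$ (which by Lemma \ref{lemma-Witt-dans-W}(iv) produces $f_{q-1}$ specifically, thanks to the wrap-around) with auxiliary applications of Lemma \ref{lemma-Witt-dans-W}(iii) to separate the two pieces. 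This explicit case-by-case analysis of the boundary cosets is the delicate part of the proof.
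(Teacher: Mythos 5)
Your first stage and your treatment of the generic isotypic components are correct and coincide with the paper's argument: decompose $R$ into $\cH$-eigencomponents (possible since $|\cH|$ is prime to $p$), then apply $\smatr{1+p}001-1$ together with le lemme \ref{lemma-Witt-dans-W} (iv) — and with the observation, left implicit in the paper, that $\smatr{1+p}001$ fixes every $f_k$ — to peel off the $f$-vector and recover the $F$-vector by subtraction. The problem is that the two degenerate components, namely the $\chi$-eigenspace spanned by $F_{q-1-p^j},f_{q-1},f_0$ and the $\chi\alpha^{-p^j}$-eigenspace spanned by $F_{q-1},F_0,f_{p^j}$, are exactly where the content of the lemma lies, and you do not prove them: you only name them as ``the delicate part of the proof'' and list candidate tools. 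That is a genuine gap, not a routine verification.

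Moreover the toolkit you sketch for those cases would not work as stated. The identity $f_0-f_{q-1}=\smatr0110[1,\Pi(v)]$ is unusable here because $\smatr0110\notin I$ while the statement concerns the sub-$I$-repr\'esentation $\langle I\cdot R\rangle$; your parenthetical claim that $f_{q-1}$ is fixed by $I_1$ is false (it is fixed by $U^-$ and $\cH$-propre, but not by $U^+$); and le lemme \ref{lemma-Witt-dans-W} (iii) applied to $F_{q-1-p^j}$ produces $f_{p^j}$, which lies in the \emph{other} eigenspace and so does not separate $f_{q-1}$ from $f_0$. The missing ingredient is the action of $\smatr1101$: writing $[\lambda]+1=[\lambda+1]+pX_\lambda$ and using that $[1,\Pi(v)]$ is fixed by $\smatr10{pX_\lambda}1$, one gets
\[\matr1101f_{q-1}=f_{q-1}+\summ_{k'=0}^{q-2}\binom{q-1}{k'}(-1)^{q-1-k'}f_{k'},\]
and a second round of $\cH$-eigenspace projection applied to the right-hand side extracts $f_0$ (its coefficient is $(-1)^{q-1}=1\neq0$), after which $F_{q-1-p^j}$ follows by subtraction; the $\chi\alpha^{-p^j}$-component is treated by the analogous computation. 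This is precisely the calcul (\ref{equation-calcul-H}) by which the paper closes the argument, and without it (or a substitute) your proof is incomplete.
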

\begin{proof}
On peut r\'e\'ecrire $R$ sous la forme $R=\sum_{0\leq k\leq q-2}R_k$ avec
\[R_0=a_{q-1}F_{q-1}+a_{0}F_{0}+b_{p^j}f_{p^j},\ \ \ R_{q-1-p^j}=a_{q-1-p^j}F_{q-1-p^j}+b_{q-1}f_{q-1}+b_{0}f_{0},\]
et $R_{k}=a_{k}F_{k}+b_{k+p^j}f_{k+p^j}$ si $k\notin\{0,q-1-p^j\}$. Alors, par le lemme \ref{lemma-Witt-dans-W} (i), les $R_k$ sont des vecteurs propres de $\cH$ de caract\`ere distinct l'un de l'autre. Le cardinal de $\cH$ \'etant premier \`a $p$, on en d\'eduit que $R_k$ appartient \`a $\langle I\cdot R\rangle$ pour tout $0\leq k\leq q-2$.

Supposons $k\notin\{0,q-1-p^j\}$. Si $a_k\neq 0$, le lemme \ref{lemma-Witt-dans-W} (iv) entra\^ine que $f_{k+p^j}$ appartient \`a $\langle I\cdot R\rangle$ et donc de m\^eme pour $F_k$. Si $a_k=0$ et $b_{k+p^j}\neq 0$, alors $R_k=b_{k+p^j}f_{k+p^j}$ et l'\'enonc\'e est trivial.

Supposons $k=q-1-p^j$. Si $a_{q-1-p^j}\neq 0$, le lemme \ref{lemma-Witt-dans-W} (iv) entra\^ine que $f_{q-1}$ appartient \`a $\langle I\cdot R\rangle$. Puis, on a le calcul suivant:
\begin{equation}\begin{array}{rll}\displaystyle \label{equation-calcul-H}
\matr{1}{1}01f_{q-1}&=&\summ_{\lambda\in\F_q}\lambda^{q-1}\matr{[\lambda]+1}110[1,\Pi(v)]\\
&=&\summ_{\lambda\in\F_q}\lambda^{q-1}\matr{[\lambda+1]}110\matr{1}0{pX_{\lambda}}1[1,\Pi(v)]\\
&=&\summ_{\lambda\in\F_q}(\lambda-1)^{q-1}\matr{[\lambda]}110[1,\Pi(v)]\\
&=&f_{q-1}+\summ_{k'=0}^{q-2}\binom{q-1}{k'}(-1)^{q-1-k'}f_{k'}
\end{array}\end{equation}
o\`u l'on a \'ecrit $[\lambda]+1=[\lambda+1]+pX_{\lambda}$ avec $X_{\lambda}\in \cO_F$ d\'ependant de $\lambda$ et o\`u l'on a utilis\'e le fait que $[1,\Pi(v)]$ est fix\'e par $\smatr{1}0{pX_{\lambda}}1$. Par cons\'equent, le vecteur $\smatr{1}101f_{q-1}-f_{q-1}$ s'\'ecrit d'une combinaison lin\'eaire des vecteurs $f_{k'}$ avec $0\leq k'\leq q-2$ qui sont des vecteurs propres de $\cH$ de caract\`ere distinct l'un de l'autre, d'o\`u $f_0\in \langle I\cdot f_{q-1}\rangle$ et ensuite $F_{q-1-p^j}\in \langle I\cdot R\rangle$. Le calcul (\ref{equation-calcul-H}) montre aussi que $f_{q-1}$ appartient \`a $\langle I\cdot R\rangle$  si $a_{q-1-p^j}=0$ et $b_{q-1}\neq 0$. L'\'enonc\'e est trivial dans le cas o\`u $a_{q-1-p^j}=b_{q-1}=0$ et $b_{0}\neq0$.

Enfin, un argument analogue permet de traiter le cas o\`u $k=0$.
\end{proof}

\begin{prop}\label{prop-W-vecteurs}
Soit $\omega$ un sous-quotient irréductible de
$\Ind_I^K\Pi(\chi\alpha^{-p^j})$. Alors il existe une unique
sous-$K$-représentation de $W$, notée $W_{\omega}$, telle que:

(i) le cosocle de $W_{\omega}$ est isomorphe à $\omega$;

(ii) modulo $\Ind_{I}^K\Pi(\chi)$, l'image de $W_{\omega}$ est
isomorphe à $U(\omega)$,  définie comme étant l'unique
sous-$K$-représentation de $\Ind_I^K\Pi(\chi\alpha^{-p^j})$ de cosocle
$\omega$.
\end{prop}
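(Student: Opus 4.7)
I begin by noting that $W$ fits into the short exact sequence of $K$-representations
\[0 \to \Ind_I^K \Pi(\chi) \to W \xrightarrow{\pi} \Ind_I^K \Pi(\chi\alpha^{-p^j}) \to 0,\]
obtained by applying the exact functor $\Ind_I^K$ to the $\Pi$-twist of (\ref{equation-define-Ej(chi)}). Condition (ii) is then equivalent to requiring $W_\omega \subset V := \pi^{-1}(U(\omega))$ with $\pi(W_\omega) = U(\omega)$.

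For existence I construct $W_\omega$ as a cyclic submodule. Since $U(\omega)$ has simple cosocle $\omega$, it is generated as a $K$-module by any generator of the one-dimensional eigenspace $U(\omega)^{I_1,\chi_\omega}$. I pick a lift $v \in V$ of such a generator $\bar v$, chosen in the $\chi_\omega$-eigenspace of $V$ under $\cH$ (which exists because the $\cH$-action is semisimple, $|\cH|$ being coprime to $p$), and set $W_\omega := \langle K\cdot v\rangle$. Property (ii) follows immediately from $\pi(W_\omega) = K\cdot\bar v = U(\omega)$. For (i), any irreducible quotient of the cyclic $K$-module $W_\omega$ must contain $\chi_\omega$ in its $I_1$-invariants, being generated by the image of $v$; since this image is nonzero in $U(\omega)/\rad(U(\omega)) = \omega$, one concludes $\rcosoc(W_\omega) = \omega$.

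For uniqueness, let $W_\omega'$ be another submodule satisfying (i) and (ii). By the same analysis, it is generated by some $v' \in V^{I_1,\chi_\omega}$. After rescaling so that $\pi(v) = \pi(v')$, the difference $v-v'$ lies in $\ker(\pi) = \Ind_I^K\Pi(\chi)$ and is itself a $\chi_\omega$-weight vector. It then suffices to show $\langle K\cdot(v-v')\rangle \subset \rad(W_\omega)$, so that adjoining $v'$ to $W_\omega$ produces no new cosocle factor. This step exploits the explicit structure of $W$ given by Lemmas \ref{lemma-Witt-dans-W} and \ref{lemma-calcul-H}: any $\chi_\omega$-weight vector in $\Ind_I^K \Pi(\chi)$ is a linear combination of the $f_k$'s, and the interrelations (iii)--(iv) of Lemma \ref{lemma-Witt-dans-W} between the $F_k$'s and the $f_k$'s, together with the multiplicity-one property of $\Ind_I^K \Pi(\chi)$ (Lemma \ref{lemma-Prin-filtration}), constrain how such vectors sit inside the $K$-span of $v$.

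The principal obstacle is this final uniqueness step: translating the explicit $I$-module relations in $W$ into the structural statement that all the ``extra freedom'' in choosing $v$ lies in $\rad(W_\omega)$. Equivalently, one must verify that every $\chi_\omega$-eigenvector of $V \cap \ker(\pi)$ generates a $K$-submodule contained in $\rad(\langle K\cdot v\rangle)$, which should ultimately follow from careful bookkeeping of which $F_k$'s and $f_k$'s appear in the $K$-span of $v$, guided by the combinatorics of the $J$-sets classifying subobjects of $U(\omega)$ (Lemma \ref{lemma-Prin-filtration}).
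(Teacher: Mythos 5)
Your overall strategy (take a cyclic submodule generated by a lift of a generator of $U(\omega)$, then compare two candidate generators modulo $\ker(\pi)=\Ind_I^K\Pi(\chi)$) is the same as the paper's, but two of your steps do not hold as written. First, the space $U(\omega)^{I_1,\chi_\omega}$ from which you propose to take your generator is in general \emph{zero}: $U(\omega)^{I_1}$ sits inside the two-dimensional space $\bigl(\Ind_I^K\Pi(\chi\alpha^{-p^j})\bigr)^{I_1}$, whose $\cH$-characters are $\chi\alpha^{-p^j}$ and its $s$-conjugate, and these coincide with $\chi_\omega$ only when $\omega$ is the socle or the cosocle of the full induced representation. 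The correct generator of $U(\omega)$ is the $\cH$-eigenvector $f_{\sum_{i\in J(\theta)}p^i(p-1-\theta_i(r_i'))}$ of \cite[lemme 2.7]{BP} (plus the correction term $\epsilon(\omega)\eta'(-1)[1,\Pi(w)]$ in the degenerate case $\chi\alpha^{-p^j}=(\chi\alpha^{-p^j})^s$ and $\dim_{\bFp}\omega=1$, which your argument omits entirely); it has $\cH$-character $\chi_\omega$ but is \emph{not} $I_1$-invariant. For the same reason your argument for (i) fails: an irreducible quotient $\tau$ of $\langle K\cdot v\rangle$ is generated by the image of $v$, which is only an $\cH$-eigenvector, so you cannot conclude that $\chi_\omega$ occurs in $\tau^{I_1}$; and even $\chi_\tau=\chi_\omega$ would leave the ambiguity between $\tau$ and $\tau^{[s]}$, so it does not follow that $\rcosoc(W_\omega)=\omega$.

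Second, the uniqueness step --- which you correctly identify as the crux --- is left unproved, and your proposed reduction (show that $\langle K\cdot(v-v')\rangle\subset\rad(W_\omega)$) is not how the paper closes it. The paper's argument is more direct: a candidate $W'$ contains a vector $F'=F+af$ with $f$ a $\chi_\omega$-eigenvector of $\Ind_I^K\Pi(\chi)$, hence a linear combination of the $f_k$; lemme \ref{lemma-calcul-H} then shows that every $F_k$ and $f_k$ occurring with nonzero coefficient in $F'$ already lies in $\langle I\cdot F'\rangle$, so that $F$ itself lies in $W'$ and $W_F\subset W'$. Equality follows from a cosocle argument: if $W_F\subsetneq W'$ then $W_F\subset\rad_K(W')$, so $\omega$ would occur with multiplicity $\geq 2$ in $W'/(W'\cap\Ind_I^K\Pi(\chi))\hookrightarrow\Ind_I^K\Pi(\chi\alpha^{-p^j})$, contradicting lemme \ref{lemma-Gamma-PS}. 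Your bookkeeping would have to reproduce exactly this use of lemme \ref{lemma-calcul-H}; as it stands the proof is incomplete at precisely the point where the multiplicity issues (recall that $W$ itself need not be multiplicity free) have to be controlled.
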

\begin{rem}
Lorsque $W$ est de multiplicité 1, la proposition \ref{corollary-W-vecteurs} est
\'evidente (par le m\^eme raisonnement que l'existence et l'unicit\'e de $U(\omega)$ plus haut). Cependant, ce n'est pas toujours le cas: par exemple, si $f=2$, $j=0$ et $\chi=\chi_{\sigma}$ avec $\sigma=(1,0)$, alors le poids $(p-2,p-1)\otimes\det$ apparaît dans $W$ avec multiplicité 2.
\end{rem}
\begin{proof}
Soit $\theta\in\cPx$ le $f$-uplet correspondant à $\omega$ par le lemme \ref{lemma-Gamma-PS}, c'est-à-dire,
si l'on écrit $\chi\alpha^{-p^j}=(r_0',\cdots,r_{f-1}')\otimes\eta'$
(en convenant que $r_i'=0$ pour tout $i$ si $\chi\alpha^{-p^j}=(\chi\alpha^{-p^j})^s$), alors $\theta$ est l'unique $f$-uplet tel que
\[\omega=(\theta_0(r_0'),\cdots,\theta_{f-1}(r_{f-1}'))\otimes{\det}^{e(\theta)(r_0',\cdots,r_{f-1}')}\eta'.\]

Considérons le vecteur $F\in W$ défini par
\begin{equation}\label{equation-Ej-define-F}
F=F_{\sum_{i\in
J(\theta)}p^i(p-1-\theta_i(r_i'))}+\epsilon(\omega)\eta'(-1)[1,\Pi(w)]\end{equation}
avec $\epsilon(\omega):=1$ si $\chi\alpha^{-p^j}=(\chi\alpha^{-p^j})^{s}$
et si $\omega$ est de dimension 1, et $\epsilon(\omega):=0$ sinon. Alors $F$ est un vecteur propre de $\cH$ de caract\`ere $\chi_{\omega}$.
On va démontrer que la sous-$K$-repr\'esentation $W_F=\langle K\cdot F\rangle$ de $W$ engendr\'ee par $F$
satisfait aux conditions demandées. D'après
\cite[lemmes 2.6 et 2.7]{BP}, $W_{F}$ satisfait à la condition (ii).

Remarquons que l'ensemble des sous-$K$-représentations de $W$
vérifiant (i) et (ii) est non vide. Soit $W'$ une telle
représentation. Alors
$W'$ contient un vecteur $F'$ qui s'écrit sous la forme
\[F'=F+af\]
o\`u
$a\in\bFp$ et $f\in\Ind_I^K\Pi(\chi)$ est un vecteur propre
de $\cH$ du même caractère que $F$. Le lemme \ref{lemma-calcul-H} impliqu'alors que $F\in W'$ et donc $W_F\subset W'$ (si $\epsilon(\omega)=1$, on utilise le fait que $[1,\Pi(w)]$ est fix\'e par $\smatr{1+p}001$
pour d\'eduire l'appartenance de $f$ \`a $W'$). Cela permet de conclure que $W'=W_F$, car sinon on aurait $W_F\subset \rad_K(W')$ et donc $\omega$, \'etant le cosocle de $W_F$ et de $W'$, appara\^itrait dans $W'/(W'\cap \Ind_I^K\Pi(\chi))\hookrightarrow \Ind_{I}^K\Pi(\chi\alpha^{-p^j})$ avec multiplicit\'e $\geq 2$, ce qui contredirait le lemme \ref{lemma-Gamma-PS}. Le corollaire s'en d\'eduit.
\end{proof}
\vv

Si $0\leq k\leq q-1$, on \'ecrit (de mani\`ere unique) $k=\sum_{0\leq i\leq f-1}p^ik_i$ avec $0\leq k_i\leq p-1$ pour tout $0\leq i\leq f-1$. Rappelons que $U^+=\smatr{1}{\cO_F}01\subset I$.

\begin{prop}\label{prop-U+-action}
Fixons $k$ un entier entre $0$ et $q-1$.

(i) Dans $\Ind_I^K\Pi(\chi)$, la sous-$U^+$-repr\'esentation $\langle U^+\cdot f_k\rangle$ engendr\'ee par $f_k$ est stable par $I$. Elle a une $\bFp$-base form\'ee des vecteurs suivants
\[\Bigl\{f_{\sum_{0\leq i\leq f-1}{p^ik_i'}},\ 0\leq k_i'\leq k_i\Bigr\}.\]

(ii) Dans $W$, la sous-$U^+$-repr\'esentation $\langle U^+\cdot F_k\rangle$ engendr\'ee par $F_k$ contient tous les vecteurs
\[\Bigl\{f_{\sum_{0\leq i\leq f-1}p^ik_i'}, \ 0\leq k_{j-1}'\leq p-1\ \mathrm{et}\ 0\leq k_i'\leq k_i\ \mathrm{si}\ i\neq j-1\Bigr\}.\]
\end{prop}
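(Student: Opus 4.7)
My plan for part (i) is to compute $\smatr{1}{[\mu]}{0}{1}f_k$ explicitly via the matrix identity
\[\matr{1}{[\mu]}{0}{1}\matr{[\lambda]}{1}{1}{0}=\matr{[\lambda+\mu]}{1}{1}{0}\matr{1}{0}{pX_{\lambda,\mu}}{1},\qquad X_{\lambda,\mu}:=\frac{[\lambda]+[\mu]-[\lambda+\mu]}{p}\in\cO_F,\]
combined with the conjugation $\Pi^{-1}\smatr{1}{0}{pX_{\lambda,\mu}}{1}\Pi=\smatr{1}{X_{\lambda,\mu}}{0}{1}$. Since $\chi$ is a character of $I$, it is trivial on $U^+$, so $\smatr{1}{X_{\lambda,\mu}}{0}{1}v=v$ and hence $\smatr{1}{0}{pX_{\lambda,\mu}}{1}\Pi(v)=\Pi(v)$. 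Substituting $\lambda\mapsto\lambda-\mu$ and expanding $(\lambda-\mu)^k=\prod_i(\lambda^{p^i}-\mu^{p^i})^{k_i}$ in characteristic $p$, one obtains
\[\matr{1}{[\mu]}{0}{1}f_k=\sum_{0\le k'_i\le k_i}\Bigl(\prod_i\binom{k_i}{k'_i}(-\mu^{p^i})^{k_i-k'_i}\Bigr)f_{\sum_ip^ik'_i},\]
so $\langle U^+\cdot f_k\rangle\subseteq V^\dagger:=\mathrm{span}\{f_{k'}:0\le k'_i\le k_i\}$. Since $\cH$ normalizes $U^+$ and $f_k$ is an $\cH$-eigenvector (lemma \ref{lemma-Witt-dans-W}(i)), the subspace $\langle U^+\cdot f_k\rangle$ is $\cH$-stable. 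The $f_{k'}$ lie in pairwise distinct $\cH$-characters $\chi\alpha^{-k'}$ except for the single coincidence $\chi\alpha^0=\chi\alpha^{-(q-1)}$ at $k=q-1$; projecting onto each isotypic component, and using that each coefficient $\prod_i\binom{k_i}{k'_i}$ is nonzero modulo $p$ (as $k_i<p$), extracts each $f_{k'}$ individually, with lemma \ref{lemma-calcul-H} handling the coincidence. For $I$-stability, it remains to check $U^-$-stability, which follows from an analogous conjugation-by-$\Pi$ argument showing that $\smatr{1}{0}{p[\mu]}{1}$ fixes each $\smatr{[\lambda]}{1}{1}{0}[1,\Pi(v)]$ (the conjugate matrix again has trivial character under $\chi$).

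For part (ii), the defining relation (\ref{equation-define-Ej(chi)}) of $E_j(\chi)$ gives $\smatr{1}{X_{\lambda,\mu}}{0}{1}w=w+\overline{X_{\lambda,\mu}}^{p^j}v$, whence
\[\matr{1}{[\mu]}{0}{1}F_k=\sum_\lambda(\lambda-\mu)^k\matr{[\lambda]}{1}{1}{0}[1,\Pi(w)]+\sum_\lambda(\lambda-\mu)^k\overline{X_{\lambda-\mu,\mu}}^{p^j}\matr{[\lambda]}{1}{1}{0}[1,\Pi(v)].\]
The first sum produces, modulo $\Ind_I^K\Pi(\chi)$, all $F_{k'}$ with $0\le k'_i\le k_i$ by the argument of (i). For the second sum the plan is to compute $\overline{X_{\lambda,\mu}}^{p^j}$ explicitly from the Witt-type congruence $([\lambda]+[\mu])^p\equiv[\lambda+\mu]^p\pmod{p^2}$: the binomial expansion of the left side yields $\overline{X_{\lambda^p,\mu^p}}=-\sum_{i=1}^{p-1}c_i\lambda^i\mu^{p-i}$ with $c_i:=\overline{\binom{p}{i}/p}\in\F_p^\times$ (nonzero by Wilson), so in $\F_q$ (where $x^{p^f}=x$) one has
\[\overline{X_{\lambda,\mu}}^{p^j}=-\sum_{i=1}^{p-1}c_i\,\lambda^{ip^{j-1}}\mu^{(p-i)p^{j-1}}\qquad(\text{indices mod }f).\]
Multiplying by $(\lambda-\mu)^k$ and expanding in the $\lambda^{k''}$-basis then yields monomials concentrated precisely on the $k''$ satisfying $0\le k''_{j-1}\le p-1$ and $0\le k''_i\le k_i$ for $i\ne j-1$. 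Varying $\mu$, iterating the $U^+$-action on the various $F_{k'}$'s already exhibited, and extracting via $\cH$-eigenspace projection (as in (i)) then produces every claimed $f_{k''}$ in $\langle U^+\cdot F_k\rangle$.

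The main obstacle is the bookkeeping in (ii): one must verify that the iteration over various $\mu$ and various seeds $F_{k'}$ actually covers the entire claimed range of $k''$—and in particular every value in $\{0,1,\ldots,p-1\}$ for the coordinate $k''_{j-1}$—by tracking how carries in the $p$-adic expansion of $k+ip^{j-1}$ interact with the Frobenius twist, and by confirming via $\cH$-isotypic projection that the relevant coefficients do not conspire to vanish. The shift from $j$ to $j-1$ in the statement reflects the inverse Frobenius (exponent $1/p$) inherent in the formula for $\overline{X_{\lambda,\mu}}$, which, after $p^j$-exponentiation in $\F_q$, lands on the index $j-1\bmod f$.
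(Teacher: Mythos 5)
Your part (i) is correct and follows the paper's own route: the same matrix identity, the binomial/Lucas expansion of $(\lambda-\mu)^k$, and extraction of each $f_{k'}$ by $\cH$-isotypic projection together with the lemme \ref{lemma-calcul-H}; your check of $U^-$-stability is a harmless variant of the paper's appeal to the decomposition $I=U^+K_1\cH$. Your setup for (ii) is also the right one: the Witt-vector congruence for $\overline{X_{\lambda,\mu}}$ and the resulting inverse-Frobenius shift onto the digit $j-1$ are exactly the mechanism the paper uses.

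But in (ii) the decisive step is missing, and you say so yourself: you never actually produce a vector $f_{k''}$ with $k''_{j-1}=p-1$ (or even with $k''_{j-1}>k_{j-1}$), which is the entire content of the statement beyond what part (i) already gives. \og Varying $\mu$, iterating over the seeds $F_{k'}$, and tracking carries\fg{} is a plan, not an argument; as written one cannot tell whether the relevant coefficients survive the iteration. The paper closes this gap by a specific two-step reduction that avoids the carries altogether. First, by part (i) applied in $W$ modulo $\Ind_I^K\Pi(\chi)$, the space $\langle U^+\cdot F_k\rangle$ contains a vector of the form $F_{\sum_{i\neq j-1}p^ik_i}+bf_{\sum_{i\neq j-1}p^ik_i+p^{j}}$: one first kills the digit $j-1$ in the index of $F$. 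Then a single application of $\smatr1101$ to this vector, combined with the congruence for $X_{\lambda-1}$, shows (formule (\ref{equation-U^+-action})) that $f_{\sum_{i\neq j-1}p^ik_i+p^{j-1}(p-1)}$ occurs with coefficient $1$ --- only the term $s=p-1$ of the Witt sum can contribute this monomial, and no carry occurs since the digit $j-1$ of the seed is $0$ --- and that its $\cH$-character differs from that of every $F_{k'}$ present, so it can be isolated; the exceptional case where $k_i=p-1$ for all $i\neq j-1$ is handled by the computation (\ref{equation-calcul-H}). Once this single vector is obtained, part (i) applied to it yields every $f_{k''}$ claimed. Without this (or an equivalent) explicit extraction, your proof of (ii) is incomplete.
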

\begin{proof}
(i) D'abord, comme $f_k$ est fix\'e par $K_1$ et est un vecteur propre de $\cH$, la stabilit\'e de $\langle U^+\cdot f_k\rangle$ d\'ecoule de la d\'ecomposition $I=U^+K_1\cH$. De plus, on est ramen\'e pour le deuxi\`eme \'enonc\'e \`a examiner l'action sur $f_k$ des matrices de la forme $\smatr1{[\mu]}01$ avec $\mu\in\F_q$.
En rempla\c{c}ant $q-1$ par $k$ et $\smatr{1}101$ par $\smatr{1}{[\mu]}01$ dans le calcul (\ref{equation-calcul-H}), on obtient:
\[\matr1{[\mu]}01f_{k}=\summ_{k'=0}^{k}\binom{k}{k'}(-\mu)^{k-k'}f_{k'},\]
et l'\'enonc\'e s'en d\'eduit par le lemme \ref{lemma-calcul-H} puisque $\binom{k}{k'}\neq 0$ si et seulement si $k'=\sum_{0\leq i\leq f-1}p^ik_i'$ avec $0\leq k_i'\leq k_i$.

(ii) Cette preuve est extraite de \cite[lemme 18.4, cas -1]{BP}. D'abord,
d'apr\`es (i), $\langle U^+\cdot F_{k}\rangle $ contient un vecteur qui s'\'ecrit sous la forme
\[F_{\sum_{i\neq j-1}p^ik_i}+bf_{\sum_{i\neq j-1}p^ik_i+p^{j}}\]
avec $b\in\bFp$. Puis, en utilisant le fait que \[X_{\lambda-1}\equiv \summ_{s=1}^{p-1}\frac{\binom{p}{s}}{p}(\lambda-1)^{p^{-1}s}
\equiv-\summ_{s=1}^{p-1}\frac{\binom{p}{s}}{p}\lambda^{p^{-1}s}(-1)^{p^{-1}(p-s)} \mod p^2\]
dont la premi\`ere \'egalit\'e vient de la loi d'addition dans $\cO_F$ (cf. \cite[\S II.6]{Se0}) et la deuxi\`eme est un exercice en combinatoire, on obtient par un calcul analogue \`a (\ref{equation-calcul-H}):
\[\matr1101f_{\sum_{i\neq j-1}p^ik_i+p^{j}}=\summ_{\lambda\in\F_q}(\lambda-1)^{\sum_{i\neq j-1}p^ik_i+p^{j}}\matr{[\lambda]}110[1,\Pi(v)]\]
et
\begin{equation}
\begin{array}{rll} 
\matr{1}{1}01F_{\sum_{i\neq j-1}p^ik_i}
&=&\summ_{\lambda\in\F_q}(\lambda-1)^{\sum_{i\neq j-1}p^ik_i}\matr{[\lambda]}110\matr{1}0{pX_{\lambda}}1[1,\Pi(w)]
\\
&=&\summ_{\lambda\in\F_q}(\lambda-1)^{\sum_{i\neq j-1}p^ik_i}\matr{[\lambda]}110[1,\Pi(w)]\label{equation-U^+-action}\\
&& \
+\summ_{\lambda\in\F_q}(\lambda-1)^{\sum_{i\neq j-1}p^ik_i}\Biggl(-\summ_{s=1}^{p-1}\frac{\binom{p}{s}}{p}\lambda^{p^{j-1}s}(-1)^{p^{j-1}(p-s)}\Biggr)\matr{[\lambda]}110[1,\Pi(v)].
\end{array}\end{equation}
On constate que le vecteur $f_{\sum_{i\neq j-1}p^ik_i+p^{j-1}(p-1)}$ n'appara\^it que dans la deuxi\`eme \'equation (\ref{equation-U^+-action}) et le coefficient est $1$. D'ailleurs, on v\'erifie que tout vecteur $F_{k'}$ qui appara\^it dans (\ref{equation-U^+-action}) a un caract\`ere propre de $\cH$ diff\'erent de $\chi\alpha^{-\sum_{i\neq j-1}p^ik_i-p^{j-1}(p-1)}$. Cela permet de conclure que le vecteur $f_{\sum_{i\neq j-1}p^ik_i+p^{j-1}(p-1)}$ appartient \`a $\langle U^+\cdot F_k\rangle$, sauf dans le cas o\`u $k_i=p-1$ pour tout $i\neq j-1$ auquel cas le calcul (\ref{equation-calcul-H}) s'applique. Le r\'esultat s'en d\'eduit par (i).
\end{proof}

La proposition \ref{prop-U+-action} permet d'am\'eliorer le r\'esultat de \cite[corollaire 5.6]{BP} qui traite des $K$-extensions entre deux poids. Rappelons que $\sigma=(r_0,\cdots,r_{f-1})\otimes\eta$ et $\chi=\chi_{\sigma}$.

\begin{prop}\label{prop-extension-K}
Supposons que $f\geq 2$.

(i) Si $r_i\leq p-2$ pour tout $0\leq i\leq f-1$, alors $\Ext^1_{K/Z_1}(\sigma,\sigma)=0$.

(ii) Supposons que $r_j\leq p-3$ et posons $\tau=(r_0,\cdots,r_{j}+2,\cdots,r_{f-1})\otimes{\det}^{-p^j}\eta$. Si $r_{j-1}\leq p-2$, alors $\Ext^1_K(\tau,\sigma)=0$.

(iii) Supposons que $r_j\geq 2$ et posons $\tau=(r_0,\cdots,r_j-2,\cdots,r_{f-1})\otimes{\det}^{p^j}\eta$. Si $r_{j-1}\leq p-2$, alors $\Ext^1_K(\tau,\sigma)=0$.
\end{prop}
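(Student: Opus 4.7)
The proof of each part proceeds by contradiction, assuming the existence of a non-split extension and deriving a contradiction from the structural results of Propositions~\ref{prop-W-vecteurs} and~\ref{prop-U+-action}. The guiding principle is that any such extension $E$ can be realized inside an explicit $K$-induction, where the $U^+$-action has additional ``free digits'' at position $j-1$ that the hypothesis on $r_{j-1}$ cannot accommodate.

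Suppose first, in case (iii), that $\Ext^1_K(\tau,\sigma)\neq 0$ and pick a non-split extension $0\to\sigma\to E\to\tau\to 0$. An elementary character computation, using $\alpha^s=\alpha^{-1}$, gives $\chi_\tau=\chi\alpha^{-p^j}$ (and $\chi_\tau=\chi\alpha^{p^j}$ in case (ii)), so that $\tau$ embeds in $\Ind_I^K\chi^s\alpha^{p^j}=\Ind_I^K\Pi(\chi\alpha^{-p^j})$, which is precisely the quotient of $W:=\Ind_I^K\Pi(E_j(\chi))$ by $\Ind_I^K\Pi(\chi)$. The $I$-extension of $\chi_\tau$ by $\chi$ induced on $I_1$-invariants must be isomorphic to $E_j(\chi)$ by Lemma~\ref{lemma-extension-I}(i); Frobenius reciprocity then produces a non-zero $K$-equivariant map $E\to W$ whose image has cosocle $\tau$. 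By Proposition~\ref{prop-W-vecteurs}, this image coincides with the sub-representation $W_\tau$ generated by the vector $F$ of \eqref{equation-Ej-define-F}. Case (ii) is treated symmetrically, using $W$ built from $\Pi(E_j(\chi^s))$.

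The heart of the argument is then to exploit Proposition~\ref{prop-U+-action}(ii). Since $F=F_k+\epsilon(\tau)\eta'(-1)[1,\Pi(w)]$ for $k=\sum_{i\in J(\tau)}p^i(p-1-\theta_i(r_i'))$, the sub-$U^+$-representation $\langle U^+\cdot F\rangle\subseteq W_\tau$ contains every vector $f_{k'}$ whose $(j-1)$-th digit $k'_{j-1}$ ranges freely over $\{0,\ldots,p-1\}$ and whose other digits $k'_i$ satisfy $0\leq k'_i\leq k_i$. Translating through Lemma~\ref{lemma-Gamma-PS}, the associated $\cH$-eigencharacters correspond to weights that would have to appear as Jordan--Hölder constituents of the pre-image of $\sigma$ inside $W_\tau\cap\Ind_I^K\Pi(\chi)$, hence must all equal $\sigma$. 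A direct combinatorial check using Lemmas~\ref{lemma-Gamma-PS} and~\ref{lemma-Prin-filtration} shows that, precisely under the hypothesis $r_{j-1}\leq p-2$, one may choose $k'$ with $k'_{j-1}=p-1$ whose associated weight differs from $\sigma$, yielding the desired contradiction. Part~(iii) is handled symmetrically via the twin $W$.

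Part~(i) is treated by a parallel but simpler analysis: any non-trivial class in $\Ext^1_{K/Z_1}(\sigma,\sigma)$ restricts, on $I_1$-invariants, to an $I/Z_1$-extension of $\chi$ by $\chi$, forbidden by Lemma~\ref{lemma-extension-I}(i) since $\alpha$ has order coprime to $p$ in $\bFp^\times$ (so $\chi\alpha^{\pm p^j}\neq\chi$ for every~$j$). A spectral-sequence argument for $K_1\trianglelefteq K$, combined with the observation that the hypothesis $r_i\leq p-2$ for every~$i$ prevents $\sigma$ from appearing as a $K/K_1$-constituent of $\sigma\otimes(K_1/Z_1K_2)^\vee$, completes the vanishing. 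The principal obstacle is the first step of the main argument: rigorously constructing the non-zero $K$-map $E\to W$ with cosocle $\tau$ starting from an abstract extension. This amounts to correctly identifying the $I$-structure of $E^{I_1}$ via Lemma~\ref{lemma-extension-I} and then lifting via Frobenius reciprocity; once this is done, Propositions~\ref{prop-W-vecteurs} and~\ref{prop-U+-action} make the role of the condition $r_{j-1}\leq p-2$ entirely explicit through the ``free digit'' phenomenon in the $U^+$-orbit.
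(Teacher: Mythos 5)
Your mechanism for (ii)--(iii) --- realize the hypothetical extension inside an explicit induction and contradict the ``free digit'' at $j-1$ furnished by Proposition~\ref{prop-U+-action}(ii) under $r_{j-1}\leq p-2$ --- is indeed the paper's mechanism, but the step you yourself call the principal obstacle is where your argument breaks, and your proposed resolution of it is wrong. You assert that $0\to\sigma\to E\to\tau\to0$ ``induces on $I_1$-invariants'' an $I$-extension of $\chi_\tau$ by $\chi$, identified via Lemma~\ref{lemma-extension-I}(i), from which Frobenius reciprocity gives the map to $W$. Taking $I_1$-invariants is only left exact: a lift $w$ of a generator of $\tau^{I_1}$ is in general \emph{not} $I_1$-fixed (the paper shows precisely that $\smatr{1}0p1w-w$ is a non-zero vector of $\sigma$), so $E^{I_1}$ is typically just $\sigma^{I_1}$ and there is no such extension on invariants. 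The actual content of the paper's proof is the subgroup-by-subgroup analysis (via $U^+$, $U^-$, $\smatr{1+\p}001$ and the Iwahori decomposition) determining exactly which matrices fix $w$, so as to exhibit the two-dimensional $I$-stable subspace $\bFp v\oplus\bFp w\subset E$; only then does Frobenius reciprocity apply. Moreover your map goes the wrong way and out of the wrong module: a two-dimensional $I$-\emph{sub}representation of $E$ yields a surjection \emph{onto} $E$ from an induced representation, namely $\Ind_I^K E_j(\chi)\twoheadrightarrow E$ in your case (iii) (where $\chi_\tau=\chi\alpha^{-p^j}$), not a map $E\to\Ind_I^K\Pi(E_j(\chi))$; the module $\Ind_I^K\Pi(E_j(\chi^s))$ to which Propositions~\ref{prop-W-vecteurs} and~\ref{prop-U+-action} apply arises in case (ii) (where the subrepresentation is $\Pi(E_j(\chi^s))$ by Lemma~\ref{lemma-extension-I}(i)), and the paper then deduces (iii) from (ii) by duality rather than attacking it directly.

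Part (i) has two independent gaps. First, the same misuse of invariants: a non-split self-extension of $\sigma$ does not restrict to an $I/Z_1$-extension of $\chi$ by $\chi$ on $I_1$-invariants, so Lemma~\ref{lemma-extension-I}(i) forbids nothing directly. Second, the inflation--restriction argument does not close: the inflation term $\Ext^1_{K/K_1}(\sigma,\sigma)$ is never addressed, and the claimed vanishing of $\Hom_{K/K_1}\bigl(\sigma,\sigma\otimes(K_1/Z_1K_2)^{\vee}\bigr)$ is false --- $(K_1/Z_1K_2)^{\vee}$ is the coadjoint representation $\bigoplus_i(\Sym^2\bFp^2\otimes{\det}^{-1})^{\Fr^i}$, and Clebsch--Gordan gives $\Sym^{r_i}\otimes\det$ as a summand of $\Sym^{r_i}\otimes\Sym^2$ whenever $1\leq r_i\leq p-3$, so $\sigma$ does occur there for most $\sigma$ allowed by the hypothesis. (The proposition is nonetheless true because such classes need not survive transgression, but that requires an argument you have not given.) The paper instead proves (i) by the same hands-on method as (ii): it shows $V$ would be a quotient of $\Ind_I^KM_w$ for a three-dimensional $I$-representation $M_w$ extending $\chi$ by $E_j(\chi)$, and uses Proposition~\ref{prop-U+-action} together with $r_i\neq p-1$ and Lemma~\ref{lemma-Ind(Ej)}(ii) to show the required kernel cannot exist.
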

\begin{proof}
(i) Par l'absurde, soit
\[0\ra V_1\ra V\ra V_2\ra 0\]
une $K$-extension non scind\'ee, triviale sur $Z_1$ et telle que $V_1\cong V_2\cong\sigma$.
Soit $w\in V$ un vecteur propre de $\cH$ tel que son image $\overline{w}$ dans $V_2$ soit fix\'ee par $I_1$. Alors $w$ est de caract\`ere propre $\chi$ engendrant $V$ sous l'action de $K$, et on a les \'enonc\'es suivants sur $w$:
\begin{enumerate}
\item[--] $w$ n'est pas fix\'e par $I_1$: sinon on aurait une surjection $K$-\'equivariante $\Ind_I^K\chi\twoheadrightarrow V$ ce qui est impossible par le lemme \ref{lemma-Gamma-PS};\vv

\item[--] $w$ est fix\'e par $U^+$: ceci d\'ecoule du lemme \ref{lemma-extension-I} (ii) parce qu'aucun des caract\`eres $\{\chi\alpha^{p^i},\ 0\leq i\leq f-1\}$ n'appara\^it dans $\sigma|_I$;\vv

\item[--] $w$ n'est pas fix\'e par $U^-$ et le vecteur $v=\smatr{1}0p1w-w$ qui appartient \`a $V_1$ est non nul: par ce qui pr\'ec\`ede, le premier \'enonc\'e d\'ecoule du fait que le groupe $I_1$ est engendr\'e par $U^+$, $U^-$ et $Z_1$, et le deuxi\`eme en est une cons\'equence: si $\smatr{1}0p1w=w$, alors pour tout $\mu\in\F_q^{\times}$ \[\matr{1}0{p[\mu]}1w=\matr{[\mu^{-1}]}001\matr{1}0p1\matr{[\mu]}001w=w,\]
    et puis $w$ serait fix\'e par $U^-$ parce que les matrices $\smatr10{p[\mu]}1$ l'engendrent topologiquement;\vv

\item[--] $w$ n'est pas fix\'e par $\smatr{1+\p}001$:
la d\'ecomposition d'Iwahori (combin\'ee avec le fait que $V$ est fix\'e par $K_2$)
\begin{equation}\label{equation-Iwahori}
\matr 1{b}01 \matr{1}0p1=
\matr{1}{0}{\frac{p}{1+pb}}{1}\matr{1+pb}{0}{0}{\frac{1}{1+pb}}
\matr{1}{\frac{b}{1+pb}}{0}{1}, \ \ \forall b\in\cO_F
\end{equation}
montre que, si $v$ est fix\'e par $\smatr{1+\p}001$, il est aussi fix\'e par $U^+$ et donc appartient \`a $V_1^{I_1}$ (\cite[lemme 2]{BL2}) de telle sorte que $\bFp w\oplus\bFp v$ fournisse une $I/Z_1$-extension de $\chi$ par $\chi$, ce qui contredit le lemme \ref{lemma-extension-I}.
\end{enumerate}

Soit maintenant $u\in V_1^{I_1}$ un vecteur non nul (unique \`a scalaire pr\`es). Puisque $w$ est fix\'e par $U^+$, un calcul simple montre qu'il est de m\^eme pour tout vecteur $u_{a}=\smatr{1+pa}001w-w$ (o\`u $a\in\cO_F$), d'o\`u $u_a\in\bFp u$ car $u_a\in V_1$ par le choix de $w$. Autrement dit, $\bFp u\oplus\bFp w$ induit une extension non triviale de $\smatr{1+\p}001$-repr\'esentations
\[0\ra \bFp u\ra *\ra \bFp w\ra0.\]
Or, une telle extension correspond \`a un caract\`ere lisse de $1+\p$ dans $\bFp^{\times}$, c'est-\`a-dire, il existe $j\in\{0,\cdots,f-1\}$ tel que (quitte à multiplier $u$ par un scalaire)
\[\matr{1+pa}001w=w+\frac{1}{2}\overline{a}^{p^j}u,\ \ \forall a\in\cO_F.\]
Par cons\'equent, la d\'ecomposition (\ref{equation-Iwahori}) donne
\[\matr{1}b{0}1v=v+\overline{b}^{p^j}u,\]
et ensuite le lemme \ref{lemma-extension-I} implique que $v$ est un vecteur propre de $\cH$ de caract\`ere $\chi\alpha^{-p^j}$ et que $\bFp u\oplus\bFp v$ fournit une $I$-repr\'esentation isomorphe \`a $E_j(\chi)$.

Somme toute, on a montr\'e que la $I$-repr\'esentation $M_w=\langle I\cdot w\rangle$ est de dimension $3$ dont une base est form\'ee par $\{u,v,w\}$  et qu'elle est isomorphe \`a
\[0\ra E_j(\chi)\ra M_{w}\ra \chi\ra0.\]
Consid\'erons la surjection $K$-\'equivariante $\phi:\Ind_I^KM_{w}\twoheadrightarrow V$ induite par r\'eciprocit\'e de Frobenius. Alors l'image de $\Ind_I^K(\bFp u\oplus\bFp v)$ dans $V$ ne peut pas \^etre r\'eduit \`a 0 ni \'egaler \`a $V$, donc est forc\'ement isomorphe \`a $V_1$ de telle sorte que l'on obtienne une surjection
\[\overline{\phi}:\Ind_I^K\bFp \overline{w}\cong\Ind_I^K\chi\twoheadrightarrow V_2\]
o\`u $\overline{w}$ d\'esigne l'image de $w$ dans $M_{w}/(\bFp u\oplus \bFp v)$.
En utilisant \cite[lemme 2.7]{BP} et l'hypoth\`ese que $r_i\neq p-1$ pour tout $0\leq i\leq f-1$, on trouve que le noyau $\ker(\overline{\phi})$ contient le vecteur
\[\summ_{\lambda\in\F_q}\lambda^{\sum_{i\neq j-1}p^i(p-1)}\matr{[\lambda]}110[1,\overline{w}],\]
ou de mani\`ere \'equivalente, $\ker(\phi)$ contient un vecteur propre de $\cH$ s'\'ecrivant sous la forme
\[R=\summ_{\lambda\in\F_q}\lambda^{\sum_{i\neq j-1}p^i(p-1)}\bigl([1,w]+a[1,u]\bigr)+f_{v}\]
pour certains $a\in\bFp$ et $f_v\in\Ind_I^K\bFp v$. Un calcul comme dans la preuve de la proposition \ref{prop-U+-action} (ii) montre que $R$ engendre le vecteur $\sum_{\lambda\in\F_q}\lambda^{q-1}\smatr{[\lambda]}110[1,v]$, qui engendre de plus $\Ind_I^K(\bFp u\oplus\bFp v)$ par le lemme \ref{lemma-Ind(Ej)} (ii) ci-apr\`es. Cela donne une contradiction et termine la d\'emonstration.\vv

(ii) On proc\`ede par la m\^eme ligne que (i). Plus pr\'ecis\'ement, soient $V$ une $K$-extension non scind\'ee de $\tau$ par $\sigma$ et $w\in V$ un vecteur propre de $\cH$ tel que son image dans $\tau$ soit fix\'ee par $I_1$. Alors on montre que $w$ est fix\'e par $U^+$ et par $\smatr{1+\p}001$, et que le vecteur $v=\smatr{1}0p1w-w$ qui appartient \`a $\sigma$ est non nul fix\'e par $I_1$.
Autrement dit, le sous-espace vectoriel $\bFp v\oplus\bFp w$ de $V$ est stable par $I$ isomorphe \`a  $\Pi(E_j(\chi^s))$, et on en d\'eduit une surjection $K$-\'equivariante $W=\Ind_I^K\Pi(E_j(\chi^s))\twoheadrightarrow V$. Apr\`es, comme dans (i), on montre que $W$ n'admet pas de quotient isomorphe \`a $V$ en utilisant la proposition \ref{prop-U+-action} et l'hypoth\`ese que $r_{j-1}\neq p-1$.

(iii) Il d\'ecoule de (ii) par dualit\'e. Soit $V$ une $K$-extension de $\tau$ par $\sigma$. Si l'on note $V^{*}$ la repr\'esentation duale de $V$ d\'efinie par $V^*:=\Hom_{\bFp}(V,\bFp)$ avec l'action usuelle de $K$ (voir \cite[\S6]{Al}), alors $V^{*}$ est une extension de $\sigma^*$ par $\tau^*$. De plus, on v\'erifie que $\sigma*$ (resp. $\tau^*$) est isomorphe \`a $\sigma$ (resp. $\tau$) \emph{\`a torsion pr\`es}, donc $V^*$ se scinde d'apr\`es (ii) et par cons\'equent $V$ se scinde aussi.\end{proof}
\vv

On termine cette section par le lemme suivant qui traite la repr\'esentation induite $\Ind_I^KE_j(\chi)$. On \'ecrit $\chi\alpha^{-p^j}=(r_0',\cdots,r_{f-1}')\otimes\eta'$ comme dans le lemme \ref{lemma-pour-rj'}.
\begin{lemma}\label{lemma-Ind(Ej)}
Supposons que $r_j'\leq p-2$.

(i) Dans $\Ind_I^KE_j(\chi)$, le vecteur
$\sum_{\lambda\in\F_q}\smatr{[\lambda]}110[1,w]$
est fix\'e par $I_1$ et engendre une sous-$K$-repr\'esentation irr\'eductible isomorphe au poids $(p-1-r_0',\cdots,p-1-r_{f-1}')\otimes{\det}^{\sum_{0\leq i\leq f-1}p^ir_i'}\eta'$.

(ii) Le vecteur $\sum_{\lambda\in\F_q}\lambda^{q-1}\smatr{[\lambda]}110[1,w]$ engendre enti\`erement $\Ind_I^KE_j(\chi)$ sous l'action de $K$.
\end{lemma}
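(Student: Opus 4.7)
Le plan pour (i) se d\'ecompose en trois \'etapes. Posons $g_\lambda:=\smatr{[\lambda]}{1}{1}{0}$ (pour $\lambda\in\F_q$), $S:=\sum_\lambda g_\lambda[1,w]$ le vecteur de l'\'enonc\'e, et $\sigma_0:=(p-1-r_0',\cdots,p-1-r_{f-1}')\otimes{\det}^{\sum p^ir_i'}\eta'$ le poids cibl\'e. Je v\'erifierai d'abord l'invariance de $S$ sous $I_1$ par calcul direct: pour chaque g\'en\'erateur $g$ de $I_1=U^+\cdot D_1\cdot(I_1\cap U^-)$ ($D_1$ d\'esignant la partie diagonale), on \'ecrit $g\cdot g_\lambda=g_\mu\cdot i$ avec $i\in I$, et l'on constate que l'entr\'ee sup\'erieure droite de $i$ est dans $p\cO_F$, de sorte que (\ref{equation-define-Ej(chi)}) donne $i\cdot w=w$; pour $g\in U^+$ on a $\mu=\lambda+\bar b$ et r\'eindexation, tandis que pour $g\in D_1\cup(I_1\cap U^-)$ on a $\mu=\lambda$. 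Un calcul analogue pour $h=\smatr{[a]}{0}{0}{[d]}\in\cH$ donne $h\cdot g_\lambda=g_{a\lambda/d}\cdot\smatr{[d]}{0}{0}{[a]}$, d'o\`u $h\cdot S=(\chi\alpha^{-p^j})^s(h)\cdot S$, et une comparaison directe des formules identifie $(\chi\alpha^{-p^j})^s$ \`a $\chi_{\sigma_0}$.

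Ensuite, par r\'eciprocit\'e de Frobenius, $S$ d\'etermine un morphisme $K$-\'equivariant $\phi:\Ind_I^K\chi_{\sigma_0}\to\Ind_I^KE_j(\chi)$, $[1,1]\mapsto S$. Compos\'e avec la surjection canonique $\Ind_I^KE_j(\chi)\twoheadrightarrow\Ind_I^K\chi\alpha^{-p^j}$, il envoie $S$ sur $\bar S:=\sum_\lambda[g_\lambda,\bar w]$, vecteur $I_1$-fix\'e standard de caract\`ere $\chi_{\sigma_0}$; le lemme \ref{lemma-Prin-filtration} (ii) appliqu\'e \`a $\chi\alpha^{-p^j}$ (avec $\chi\alpha^{-p^j}\neq(\chi\alpha^{-p^j})^s$, cons\'equence de $r_j'\leq p-2$ sous la g\'en\'ericit\'e) montre que $\langle K\cdot\bar S\rangle=\sigma_0=\rsoc_K(\Ind_I^K\chi\alpha^{-p^j})$. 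La surjection $\langle K\cdot S\rangle\twoheadrightarrow\sigma_0$ \'etant obtenue, il reste \`a v\'erifier $\langle K\cdot S\rangle\cap\Ind_I^K\chi=0$: les constituants de Jordan-H\"older de $\langle K\cdot S\rangle$ figurent parmi ceux de $\Ind_I^K\chi_{\sigma_0}$, et la comparaison explicite des param\`etres de $\cPx$ fournis par le lemme \ref{lemma-Gamma-PS} pour $\Ind_I^K\chi_{\sigma_0}$ et $\Ind_I^K\chi$ donne, sous la g\'en\'ericit\'e implicite, la disjonction souhait\'ee.

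Pour (ii), soit $T:=\sum_\lambda\lambda^{q-1}g_\lambda[1,w]=S-[g_0,w]$ avec $g_0:=\smatr{0}{1}{1}{0}$. Un calcul direct donne $\smatr{1}{[x]}{0}{1}\cdot T-T=[g_0,w]-[g_x,w]$ pour $x\in\F_q^\times$ (l'action de $\smatr{1}{[x]}{0}{1}$ sur $[g_\lambda,w]$ envoyant ce vecteur sur $[g_{\lambda+x},w]$, le facteur $I$ r\'esiduel ayant son entr\'ee sup\'erieure droite dans $p\cO_F$ et agissant donc trivialement sur $w$); en sommant sur $x\in\F_q^\times$ et en utilisant $q-1\equiv-1\pmod p$, on extrait $[g_0,w]\in\langle K\cdot T\rangle$, puis $[1,w]=g_0\cdot[g_0,w]$ via $g_0^2=1$. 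Une nouvelle application de $\smatr{1}{[x]}{0}{1}$ donne $\smatr{1}{[x]}{0}{1}\cdot[1,w]=[1,w]+x^{p^j}[1,v]$ par (\ref{equation-define-Ej(chi)}), d'o\`u $[1,v]\in\langle K\cdot T\rangle$; et $[1,v]$ \'etant cyclique dans $\Ind_I^K\chi$, on conclut gr\^ace \`a la suite exacte $0\to\Ind_I^K\chi\to\Ind_I^KE_j(\chi)\to\Ind_I^K\chi\alpha^{-p^j}\to0$.

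L'obstacle principal sera la troisi\`eme \'etape de (i): \'etablir $\langle K\cdot S\rangle\cap\Ind_I^K\chi=0$ requiert une comparaison d\'etaill\'ee des constituants de Jordan-H\"older via les param\'etrages du lemme \ref{lemma-Gamma-PS}, et la v\'erification que ces constituants sont effectivement disjoints sous l'hypoth\`ese de g\'en\'ericit\'e (dont $r_j'\leq p-2$ est un pr\'erequis). Les calculs de (ii), bien que plus concrets, reposent eux aussi sur un suivi pr\'ecis des caract\`eres de $I$ et des formules matricielles, dans l'esprit des propositions \ref{prop-U+-action} et \ref{prop-W-vecteurs}.
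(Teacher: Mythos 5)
Votre preuve de (ii) est correcte et suit essentiellement la m\^eme route que celle de l'article: on extrait le vecteur $\sum_{\lambda}\smatr{[\lambda]}110[1,w]$ de $\sum_{\lambda}\lambda^{q-1}\smatr{[\lambda]}110[1,w]$ en faisant agir $U^+$ (l'article utilise le d\'eveloppement binomial (\ref{equation-calcul-H}), vous moyennez sur $x\in\F_q^{\times}$, ce qui revient au m\^eme), puis on r\'ecup\`ere $[1,w]$, qui engendre toute l'induite. De m\^eme, les deux premi\`eres \'etapes de votre (i) (invariance sous $I_1$, caract\`ere propre de $\cH$, surjection de $\langle K\cdot S\rangle$ sur $\sigma_0$) sont correctes et recouvrent le d\'ebut de la preuve de l'article.

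En revanche, la troisi\`eme \'etape de (i), que vous identifiez vous-m\^eme comme l'obstacle principal, constitue une vraie lacune, et la strat\'egie propos\'ee ne peut pas aboutir telle quelle. Vous voulez d\'eduire $\langle K\cdot S\rangle\cap\Ind_I^K\chi=0$ d'une pr\'etendue disjonction $\JH(\Ind_I^K\chi_{\sigma_0})\cap\JH(\Ind_I^K\chi)=\emptyset$ <<sous la g\'en\'ericit\'e implicite>>. Or le lemme ne suppose rien d'autre que $r_j'\leq p-2$, et cette disjonction est fausse dans des cas couverts par cette hypoth\`ese: si $\sigma=(0,\cdots,0,1,0,\cdots,0)\otimes\eta$ (le $1$ \`a la place $j$), le lemme \ref{lemma-pour-rj'} (ii) donne $r_j'=p-2$ et $\chi\alpha^{-p^j}=\chi^s$, de sorte que $\chi_{\sigma_0}=(\chi\alpha^{-p^j})^s=\chi$ et que les deux induites ont exactement les m\^emes facteurs de Jordan--H\"older (on a m\^eme $\sigma_0=\sigma$, qui appara\^it bien dans $\Ind_I^K\chi$). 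Par ailleurs, $r_j'\leq p-2$ n'exclut pas non plus $\chi\alpha^{-p^j}=(\chi\alpha^{-p^j})^s$, contrairement \`a ce que vous affirmez au passage. L'article \'evite compl\`etement ce probl\`eme en appliquant la r\'eciprocit\'e de Frobenius dans l'autre sens: l'hypoth\`ese $r_j'\leq p-2$ garantit que le poids $\sigma_0$ admet un \emph{quotient} de $I$-repr\'esentations isomorphe \`a $E_j(\chi)$ (raisonnement analogue au lemme \ref{lemma-I-extension-r} (ii)), d'o\`u une injection $K$-\'equivariante $\sigma_0\hookrightarrow\Ind_I^KE_j(\chi)$ qui envoie $\sigma_0^{I_1}$ sur la droite engendr\'ee par $S$; l'irr\'eductibilit\'e de $\langle K\cdot S\rangle$ en d\'ecoule imm\'ediatement, sans aucune analyse de l'intersection avec $\Ind_I^K\chi$. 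C'est cet argument qu'il faut substituer \`a votre troisi\`eme \'etape.
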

\begin{proof}
\'Ecrivons $R_0=\sum_{\lambda\in\F_q}\smatr{[\lambda]}110[1,w]$, $R_{q-1}=\sum_{\lambda\in\F_q}\lambda^{q-1}\smatr{[\lambda]}110[1,w]$, et $\sigma'=(p-1-r_0',\cdots,p-1-r_{f-1}')\otimes{\det}^{\sum_{0\leq i\leq f-1}p^ir_i'}\eta'$.

(i) Puisque la $I$-repr\'esentation $E_j(\chi)$ est triviale sur $K_1$, l'induite $\Ind_I^KE_j(\chi)$ l'est aussi. Donc, pour le premier \'enonc\'e, il suffit de v\'erifier que le vecteur $R_0$ est fix\'e par $U^+$, ou encore par les matrices $\smatr{1}{[\mu]}01$ avec $\mu\in\F_q$, ce qui d\'ecoule du fait que $w$ est fix\'e par $U^-=\smatr{1}0{\p}1$:
\[\matr1{[\mu]}01R_0=\summ_{\lambda\in\F_q}\matr{[\mu]+[\lambda]}110[1,w]=
\summ_{\lambda\in\F_q}\matr{[\mu+\lambda]}110[1,w]=R_0.\]

La condition $r_j'\leq p-2$ assure que le poids $\sigma'$ admet un $I$-quotient isomorphe \`a $E_j(\chi)$ (par un raisonnement analogue au lemme \ref{lemma-I-extension-r}) (ii). Donc on obtient par r\'eciprocit\'e de Frobenius une injection $K$-\'equivariante $\sigma'\hookrightarrow \Ind_I^KE_j(\chi)$ qui envoie $\sigma'^{I_1}$ vers $\bFp R_0$, d'o\`u le r\'esultat.

(ii) En utilisant le fait que $w$ est fix\'e par $U^-$, le calcul (\ref{equation-calcul-H}) montre que la $K$-repr\'esentation $\langle K\cdot R_{q-1}\rangle$ contient $R_0$ et puis le vecteur $[1,w]$ puisque
\[\matr0110[1,w]=R_0-R_{q-1}.\]
Le r\'esultat s'en d\'eduit car $[1,w]$ engendre enti\`erement $\Ind_I^KE_j(\chi)$.
\end{proof}

\subsection{La structure de $W_{\omega}$}

On conserve les notations du \S\ref{subsection-principal}. En particulier, $\chi=\chi_{\sigma}$ est le caract\`ere donnant l'action de $I$ sur l'espace des $I_1$-invariants de $\sigma=(r_0,\cdots,r_{f-1})\otimes\eta$, $E_j(\chi)$ est la repr\'esentation de $I$ d\'efinie par (\ref{equation-define-Ej(chi)}), et $W=\Ind_I^K\Pi(E_j(\chi))$.

Si $\omega$ est un sous-quotient irr\'eductible de $\Ind_I^K\Pi(\chi\alpha^{-p^j})$, on a d\'efini une unique sous-$K$-repr\'esentation $W_{\omega}$ de $W$ v\'erifiant certaines conditions (cf. proposition \ref{prop-W-vecteurs}). Dans \cite{BP}, la structure de $W_{\omega}$ pour tout
sous-quotient irréductible <<spécial>> $\omega$ de
$\Ind_I^K\Pi(\chi\alpha^{-p^j})$ a été déterminée (cf. \cite[définition 17.2 et lemme 18.4]{BP}). Nous
allons généraliser ce résultat.
Commençons par un lemme
facile.

\begin{lemma}\label{lemma-pour-rj'}
Écrivons $\chi\alpha^{-p^j}$ sous la forme
$({r_0'},\cdots,r_{f-1}')\otimes\eta'$ pour des $r_i'$ et $\eta'$
convenables (uniquement déterminés en demandant que, si
$\chi\alpha^{-p^j}=(\chi\alpha^{-p^j})^s$, alors $r_i'=0$ pour
tout $0\leq i\leq f-1$).

(i) Si $r_j\geq 2$, alors $r_j'=r_j-2$ et $r_i'=r_i$ pour tout
$i\neq j$.

(ii) Si $r_j=1$, alors $r_j'\in \{p-2,p-1\}$. De plus, $r_j'=p-2$ si et
seulement si $r_i=0$ pour tout $i\neq j$, auquel cas on a
$\chi\alpha^{-p^j}=\chi^s$; si $r_j'=p-1$, soit $j'$ le premier
indice suivant $j$ tel que $r_{j'}\geq 1$, alors
$(r_j',\cdots,r_{j'}')=(p-1,p-1,\cdots,p-1,r_{j'}-1)$.

(iii) Si $r_j=0$, alors $r_j'\in\{p-2,p-3\}$. De plus, $r_j'=p-3$ si et
seulement si $r_i=0$ pour tout $i\neq j$, i.e.
$\chi=(0,\cdots,0)\otimes\eta$; si $r_j'=p-2$, soit $j'$ le
premier indice suivant $j$ tel que $r_{j'}\geq 1$, alors
$(r_j',\cdots,r_{j'}')=(p-2,p-1,\cdots,p-1,r_{j'}-1)$.
\end{lemma}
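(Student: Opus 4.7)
Le plan est de ramener la v\'erification de l'\'enonc\'e \`a un calcul direct sur les d\'eveloppements $p$-adiques modulo $q-1$. \`A partir de l'identit\'e
\[\alpha^{-p^j}(g) = \bar{a}^{-p^j}\bar{d}^{p^j} = \bar{a}^{-2p^j}\,\overline{ad}^{p^j},\quad g=\matr{a}{b}{pc}{d}\in I,\]
on d\'eduit imm\'ediatement, pour $\chi = (r_0,\dots,r_{f-1})\otimes\eta$,
\[\chi\alpha^{-p^j}(g) = \bar{a}^{N}\eta'(ad),\quad N := \sum_{i=0}^{f-1} p^i r_i - 2p^j,\]
o\`u $\eta' = \eta\cdot(\bar{\cdot})^{p^j}$ est un caract\`ere de $\cO_F^{\times}$. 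Le probl\`eme revient alors \`a \'ecrire $N\pmod{q-1}$ sous la forme $\sum_i p^i r_i'$ avec $r_i'\in\{0,\dots,p-1\}$, en respectant la convention que $r_i' = 0$ pour tout $i$ lorsque $\chi\alpha^{-p^j}$ est $s$-invariant (autrement dit lorsque $N\equiv 0\pmod{q-1}$).

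Je ferais ensuite la disjonction des cas selon la valeur de $r_j$. Dans le cas (i) o\`u $r_j\geq 2$, la soustraction $\sum p^i r_i - 2p^j$ se fait sans report: le chiffre en position $j$ devient $r_j-2\in\{0,\dots,p-3\}$ et les autres restent inchang\'es, ce qui respecte automatiquement la convention. Dans les cas (ii) et (iii), l'entier $N$ devient n\'egatif et il faut ajouter $q-1$ pour obtenir le d\'eveloppement $p$-adique; le calcul repose alors sur l'identit\'e
\[p^n - 2 = (p-1)p^{n-1} + (p-1)p^{n-2} + \cdots + (p-1)p + (p-2),\quad n\geq 1,\]
qui d\'ecrit la propagation du report depuis la position $j$ jusqu'\`a l'indice $j'$ (le premier indice suivant $j$ cycliquement avec $r_{j'}\geq 1$). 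Les sous-cas exceptionnels, o\`u tous les autres $r_i$ s'annulent et o\`u $j'$ n'existe pas, se traitent s\'epar\'ement: pour (ii), on v\'erifie directement l'\'egalit\'e $\chi\alpha^{-p^j}=\chi^s$ en comparant les deux expressions sur $g$, ce qui explique la valeur $r_j'=p-2$; pour (iii), on utilise l'hypoth\`ese $p\geq 3$ pour garantir que $p-3\geq 0$, via le d\'eveloppement $-2\equiv q-1-2\pmod{q-1}$.

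Le principal point technique est la gestion soigneuse de l'indexation cyclique, car l'indice $j'$ peut d\'epasser $f-1$ et revenir \`a $0$, avec un report qui se propage alors \`a travers plusieurs positions. Il faut aussi v\'erifier, dans chaque cas, que les $r_i'$ obtenus ne sont pas tous \'egaux \`a $p-1$ sauf dans les cas explicitement pr\'evus par la convention; ces v\'erifications sont \'el\'ementaires et d\'ecoulent de l'unicit\'e du d\'eveloppement $p$-adique d'un entier dans $[0,q-1)$.
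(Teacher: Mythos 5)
Votre argument est correct et correspond exactement \`a ce que le papier sous-entend par sa preuve en un mot (\og \'El\'ementaire \fg) : on r\'e\'ecrit $\chi\alpha^{-p^j}$ via $\alpha^{-p^j}(g)=\bar{a}^{-2p^j}(\overline{ad})^{p^j}$, puis on calcule le d\'eveloppement $p$-adique de $\sum_i p^ir_i-2p^j$ modulo $q-1$, la propagation du report (cyclique) donnant pr\'ecis\'ement les cha\^ines $(p-1,\dots,p-1,r_{j'}-1)$ des cas (ii) et (iii). Les v\'erifications annexes que vous signalez (convention quand $N\equiv 0$, hypoth\`ese $p\geq 3$ pour $p-3\geq 0$, impossibilit\'e d'obtenir tous les chiffres \'egaux \`a $p-1$ hors du cas $s$-invariant) sont bien les seuls points \`a contr\^oler et se traitent comme vous l'indiquez.
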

\begin{proof}
\'El\'ementaire.
\end{proof}

%
%


Rappelons que, si $\lambda\in\cPx$, alors $J(\lambda)$ est d\'efini par (\ref{equation-J()}):
\[J(\lambda):=\{i\in\{0,\cdots,f-1\}|\  \lambda_i(x_i)\in\{p-2-x_i,p-1-x_i\}\}.\]

\begin{lemma}\label{lemma-Breuil}
Supposons $f\geq 2$ et $\chi\neq \chi^s$.  Soient $\omega$ (resp. $\tau$) un sous-quotient irréductible
de $\Ind_I^K\Pi(\chi\alpha^{-p^j})$ (resp. $\Ind_I^K\Pi(\chi)$) et $\theta\in\cPx$ (resp. $\lambda$) le
$f$-uplet correspondant.  \'Ecrivons $\chi\alpha^{-p^j}$ sous la forme
$({r_0'},\cdots,r_{f-1}')\otimes\eta'$ comme dans le lemme
\ref{lemma-pour-rj'}.

(i) Si $r_j\geq 2$, alors $U(\tau)\subset W_{\omega}$ si
et seulement si $J(\lambda)\subset J(\theta)\cup\{j-1\}$.

(ii) Si $r_j\leq 1$, alors $U(\tau)\subset
W_{\omega}$ si et seulement si $J(\lambda)\subset J(\theta)\cup
J'\cup\{j,j-1\}$, o\`u
\[J':=\{i\in\{0,\cdots,f-1\}|\
r_i'=p-1, r_i=0\}.\]
\end{lemma}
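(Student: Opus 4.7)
Le plan est de ramener la question \`a une analyse combinatoire des vecteurs $f_k$ dans $W_{\omega}$. Par la proposition \ref{prop-W-vecteurs}, $W_{\omega}$ est engendr\'ee comme $K$-repr\'esentation par le vecteur explicite
\[F = F_{k_{\theta}} + \epsilon(\omega)\eta'(-1)[1,\Pi(w)], \qquad k_{\theta} := \summ_{i\in J(\theta)} p^i(p-1-\theta_i(r_i')).\]
En appliquant la proposition \ref{prop-W-vecteurs} \`a $\Ind_I^K \Pi(\chi)$ (ou en utilisant directement \cite[lemmes 2.6, 2.7]{BP}), on voit de m\^eme que $U(\tau)$ est engendr\'ee comme $K$-repr\'esentation par un unique vecteur propre $v_{\tau}$ de $\cH$, donn\'e modulo un terme auxiliaire en $[1,\Pi(v)]$ par $f_{k_{\lambda}}$ avec $k_{\lambda} := \summ_{i\in J(\lambda)} p^i(p-1-\lambda_i(r_i))$. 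La question $U(\tau) \subset W_{\omega}$ se ram\`ene donc essentiellement \`a l'appartenance $f_{k_{\lambda}} \in W_{\omega}$.

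Pour la suffisance, on applique la proposition \ref{prop-U+-action}(ii) au vecteur $F_{k_{\theta}}$ : la sous-$U^+$-repr\'esentation engendr\'ee contient tous les vecteurs $f_{k'}$ dont l'\'ecriture en base $p$, $k' = \summ_{i} p^ik_i'$, satisfait $k_{j-1}' \in \{0,\ldots,p-1\}$ (libre) et $0 \leq k_i' \leq (k_{\theta})_i$ pour $i \neq j-1$, o\`u $(k_{\theta})_i = p-1-\theta_i(r_i')$ si $i\in J(\theta)$ et $(k_{\theta})_i = 0$ sinon. Il faut alors v\'erifier que, sous l'hypoth\`ese $J(\lambda) \subset J(\theta) \cup \{j-1\}$ (cas (i)) ou $J(\lambda) \subset J(\theta) \cup J' \cup \{j,j-1\}$ (cas (ii)), le $f$-uplet $k_{\lambda}$ v\'erifie effectivement ces in\'egalit\'es chiffre par chiffre. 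Ceci r\'esulte d'une comparaison directe, en utilisant explicitement les valeurs des $r_i'$ fournies par le lemme \ref{lemma-pour-rj'}.

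Pour la n\'ecessit\'e, on suppose $U(\tau) \subset W_{\omega}$ et on examine les vecteurs propres de $\cH$ dans $W_{\omega} \cap \Ind_I^K \Pi(\chi)$. Comme $\Ind_I^K \Pi(\chi)$ est de multiplicit\'e 1 (lemme \ref{lemma-Gamma-PS}), chaque caract\`ere de $\cH$ y appara\^it au plus une fois dans les $I_1$-invariants. Le lemme \ref{lemma-calcul-H} et le calcul explicite de la proposition \ref{prop-U+-action} montrent que les seuls $f_{k'}$ pouvant \^etre atteints dans la $K$-orbite de $F$ sont ceux satisfaisant les in\'egalit\'es ci-dessus, ce qui impose les restrictions annonc\'ees sur $J(\lambda)$.

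Le principal obstacle r\'eside dans le cas (ii) : lorsque $r_j \in \{0,1\}$, le lemme \ref{lemma-pour-rj'} donne $r_j' \in \{p-2, p-1\}$ et les ``retenues'' dans l'expansion en base $p$ de $k_{\theta}$ propagent des chiffres $p-1$ sur plusieurs indices cons\'ecutifs suivant $j$. L'ensemble $J'$ param\`etre pr\'ecis\'ement les indices concern\'es par cette propagation, tandis que l'indice $j$ suppl\'ementaire appara\^it parce que le d\'ecalage par $\alpha^{-p^j}$ agit directement sur le chiffre $j$-i\`eme. Un d\'ecoupage en sous-cas selon les alternatives du lemme \ref{lemma-pour-rj'}(ii)-(iii) est indispensable pour mener \`a bien la comparaison et conclure dans ce cas.
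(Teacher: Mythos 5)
La direction $\Longleftarrow$ de votre proposition suit essentiellement la preuve du papier~: on part du g\'en\'erateur explicite $F_{k_\theta}$ de $W_{\omega}$ (resp. $f_{k_\lambda}$ de $U(\tau)$) fourni par (\ref{equation-Ej-define-F}) et \cite[lemme 2.7]{BP}, et on applique la proposition \ref{prop-U+-action} (ii) pour v\'erifier chiffre par chiffre que $f_{k_\lambda}\in\langle U^+\cdot F_{k_\theta}\rangle$ sous l'hypoth\`ese sur $J(\lambda)$. C'est bien l'argument du texte, y compris la n\'ecessit\'e de distinguer les sous-cas du lemme \ref{lemma-pour-rj'} dans le cas (ii).

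En revanche, votre direction $\Longrightarrow$ comporte une lacune r\'eelle. Vous affirmez que <<le lemme \ref{lemma-calcul-H} et le calcul explicite de la proposition \ref{prop-U+-action} montrent que les seuls $f_{k'}$ pouvant \^etre atteints dans la $K$-orbite de $F$ sont ceux satisfaisant les in\'egalit\'es ci-dessus>>. Or la proposition \ref{prop-U+-action} (ii) ne donne qu'une inclusion dans un sens~: elle dit que $\langle U^+\cdot F_k\rangle$ \emph{contient} certains vecteurs, pas qu'elle n'en contient pas d'autres~; et surtout $W_{\omega}=\langle K\cdot F\rangle$ est engendr\'ee sous $K$ tout entier (l'action de $\smatr0110$ en particulier), pas seulement sous $U^+$ ou $I$, de sorte qu'aucun des \'enonc\'es que vous citez ne borne sup\'erieurement l'ensemble des $f_{k'}$ contenus dans $W_{\omega}$. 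Le papier contourne cette difficult\'e par une voie compl\`etement diff\'erente~: il observe que $U(\tau)\subset W_{\omega}$ force l'existence d'une cha\^ine $U(\tau)\subset U(\tau')\subset W_{\omega'}\subset W_{\omega}$ avec $\Ext^1_K(\omega',\tau')\neq0$, et r\'eduit donc la n\'ecessit\'e \`a la v\'erification de $\Ext^1_K(\omega,\tau)=0$ d\`es que $J(\lambda)\nsubseteq J(\theta)\cup\{j-1\}$, ce qui se fait en comparant $\omega$ et $\tau$ aux conditions de \cite[corollaire 5.5 (ii)]{BP}. Sans un argument de ce type (ou un contr\^ole effectif de toute la $K$-orbite, que vous ne fournissez pas), votre preuve de la n\'ecessit\'e ne tient pas.
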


\begin{proof}
(i) 
Démontrons d'abord la direction $\Longleftarrow$. Puisque $\chi\neq\chi^s$ par hypoth\`ese, on d\'eduit de
\cite[lemme 2.7]{BP} et du lemme \ref{lemma-calcul-H} que, d'une part la représentation $U(\tau)$
est engendrée par le vecteur
\[f_{\sum_{i\in J(\lambda)}p^i(p-1-\lambda_i(r_i))},\]
et d'autre part $W_{\omega}$ contient le vecteur suivant selon le cas (cf. (\ref{equation-Ej-define-F}) de la preuve de la proposition \ref{prop-W-vecteurs} dans le premier cas):
\[\left\{ {\begin{array}{ll} F_0+\eta'(-1)[1,\Pi(w)] &
\mathrm{si\ } \chi\alpha^{-p^j}=(\chi\alpha^{-p^j})^s\ \mathrm{et}\ \dim_{\bFp}\omega=1  \\

F_{\sum_{i\in J(\theta)}p^i(p-1)}& \textrm{sinon}.
\\\end{array}}\right.\]
Dans le deuxi\`eme cas, la proposition \ref{prop-U+-action} implique que $W_{\omega}$ contient le vecteur $f_{\sum_{i\in J(\theta)\cup\{j-1\}}p^i(p-1)}$ et donc $U(\tau)\subset W_{\omega}$ d\`es que $J(\lambda)\subset
J(\theta)\cup\{j-1\}$. Dans le premier cas, en utilisant le fait que $[1,\Pi(w)]$ est fix\'e par $\smatr1101$), on obtient le calcul analogue \`a (\ref{equation-U^+-action}):
\[\begin{array}{rll}\Biggl(\matr1101 -1\Biggr)\bigl(F_0+\eta'(-1)[1,\Pi(w)]\bigr)&=&\Biggl(\matr1101-1\Biggr)F_0\\
&=&\summ_{s=1}^{p-1}\Bigl(-\frac{\binom{p}{s}}{p}(-1)^{p^{j-1}(p-s)}\Bigr)f_{p^{j-1}s};\end{array}\]
par cons\'equent, le lemme \ref{lemma-calcul-H} et la proposition \ref{prop-U+-action} (i) impliquent que $W_{\omega}$ contient tous les vecteurs $f_{p^{j-1}s}$ avec $0\leq s\leq p-1$, donc contient $U(\tau)$ si $J(\lambda)\subset \{j-1\}$ (notons que $J(\theta)=\emptyset$ dans ce cas).

D\'emontrons la direction $\Longrightarrow$: c'est-\`a-dire, v\'erifions
que $\tau$
n'apparaît pas dans $W_{\omega}$ lorsque
$J(\lambda)\nsubseteq J(\theta)\cup\{j-1\}$. Clairement, une condition n\'ecessaire pour que $U(\tau)$ soit contenu dans $W_{\omega}$ est qu'il existe $\tau'\in \JH\bigl(\Ind_{I}^K\Pi(\chi)\bigr)$ et $\omega'\in \JH\bigl(\Ind_I^K\Pi(\chi\alpha^{-p^j})\bigr)$ tels que \[U(\tau)\subset U(\tau')\subset W_{\omega'}\subset W_{\omega}\] et tels que $\Ext^1_K(\omega',\tau')\neq0$. On est donc ramen\'e \`a v\'erifier que $\Ext^1_K(\omega,\tau)=0$ dès que
$J(\lambda)\nsubseteq J(\theta)\cup\{j-1\}$. Avec cette hypothèse,
soit $k\neq j-1$ un indice tel que $k\notin J(\theta)$ et $k\in
J(\lambda)$. On a alors (puisque $r_j\geq 2$):
\[\omega=(\theta_0(r_0),\cdots,\theta_j(r_j-2),\cdots,\theta_{f-1}(r_{f-1}))\otimes
{\det}^{e(\theta)(r_0,\cdots,r_j-2,\cdots,r_{f-1})+p^j}\eta\] et
\[\tau=(\lambda_{0}(r_0),\cdots,\lambda_{j}(r_{j}),\cdots,\lambda_{f-1}(r_{f-1}))\otimes
{\det}^{e(\lambda)(r_0,\cdots,r_{f-1})}\eta.\]  Comme
$\theta_j(x_j-2)\notin\{\lambda_j(x_j), p-2-\lambda_j(x_j)\}$ et comme
$k\neq j-1$ et $\theta_k(x_k)\neq\lambda_{k}(x_k)$ (par hypothèse),
on voit que $\omega$ et $\tau(\lambda)$ ne satisfont \`a aucune condition list\'ee dans \cite[corollaire 5.5 (ii)]{BP}, d'où le résultat.


\vv

(ii)
Supposons maintenant $r_j\leq 1$. Puisque $\chi\neq \chi^s$, le lemme \ref{lemma-pour-rj'} implique que $r_j'\in\{p-1,p-2\}$. D'autre part, on a par d\'efinition $r_i'=p-1$ pour $i\in J'$. On d\'eduit donc de \cite[lemme 2.7]{BP} et du lemme \ref{lemma-calcul-H} que $W_{\omega}$ contient le vecteur
\[F_{\sum_{i\in J(\theta)\backslash(J'\cup\{j\})}p^i(p-1)+\sum_{i\in J'}p^i(p-2)+p^{j}(p-3)},\]
qui engendre d'apr\`es la proposition \ref{prop-U+-action} (ii) le vecteur (notons que $j-1\notin J'$ puisque $\chi\neq \chi^s$)
\[f_{\sum_{i\in J(\theta)\backslash(J'\cup\{j,j-1\})}p^i(p-1)+\sum_{i\in J'}p^i(p-2)+p^{j}(p-3)+p^{j-1}(p-1)}.\]
Puis, par notre hypoth\`ese que $p\geq 5$ et $\chi\neq \chi^s$, on voit que ce dernier vecteur engendre $U(\tau)$ (sous l'action de $K$) pour tout $\tau$ v\'erifiant
$J(\lambda)\subset J(\theta)\cup J'\cup\{j-1,j\}$.

Pour l'autre direction, on peut procéder comme dans (i).
\end{proof}

\begin{cor}\label{corollaire-W-chi=chi^s}
Supposons $f\geq 2$ et $\sigma=(0,\cdots,0)\otimes\eta$ de telle sorte que $\chi=\chi^s$. Soient $\omega$ un sous-quotient irr\'eductible de $\Ind_I^K\Pi(\chi\alpha^{-p^j})$ et $\theta\in\cPx$ le $f$-uplet correspondant.
Alors on a toujours $\sigma^{[s]}\subset W_{\omega}$, et on a $\sigma\subset W_{\omega}$ si et seulement si $J(\omega)\cup \{j-1\}=\{0,\cdots,f-1\}$.
\end{cor}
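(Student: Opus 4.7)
On adapte la m\'ethode de la preuve du lemme \ref{lemma-Breuil}. Comme $f\geq 2$ et $p\geq 3$ entra\^inent $q-1>2p^j$, on a $\alpha^{2p^j}\neq 1$, et donc $\chi\alpha^{-p^j}\neq(\chi\alpha^{-p^j})^s$, d'o\`u $\epsilon(\omega)=0$; par la proposition \ref{prop-W-vecteurs}, $W_\omega$ est engendr\'ee sous l'action de $K$ par $F_{k_0}$ avec $k_0:=\sum_{i\in J(\theta)}p^i(p-1-\theta_i(r_i'))$. Par ailleurs, $\chi=\chi^s$ et le lemme \ref{lemma-Prin-filtration}(i) donnent $\Ind_I^K\Pi(\chi)=\sigma\oplus\sigma^{[s]}$.

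\emph{Premier \'enonc\'e.} La proposition \ref{prop-U+-action}(ii) (appliqu\'ee avec $k_i'=0$ pour tout $i$) entra\^ine $f_0\in\langle U^+\cdot F_{k_0}\rangle\subset W_\omega$. Un raisonnement analogue \`a celui du lemme \ref{lemma-Ind(Ej)}(i), transpos\'e \`a $\Ind_I^K\Pi(\chi)\subset W$, montre que $f_0$ engendre sous l'action de $K$ la sous-repr\'esentation $(p-1,\ldots,p-1)\otimes\eta=\sigma^{[s]}$. D'o\`u $\sigma^{[s]}\subset W_\omega$.

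\emph{Second \'enonc\'e.} Un calcul direct, utilisant la d\'ecomposition $\smatr{1}{0}{[\lambda]}{1}=\smatr{[\lambda^{-1}]}{1}{1}{0}\smatr{[\lambda]}{1}{0}{-[\lambda^{-1}]}$ pour $\lambda\neq 0$ et la multiplicativit\'e du rel\`evement de Teichm\"uller, conduit \`a l'identit\'e cl\'e
\[\matr{0}{1}{1}{0}f_0 = [1,\Pi(v)] + \eta(-1)\,f_{q-1},\]
obtenue en regroupant les termes suivant que $\lambda=0$ ou $\lambda\in\F_q^\times$ et en utilisant $0^{q-1}=0$. Comme $f_0\in W_\omega$, on en d\'eduit $\smatr{0}{1}{1}{0}f_0\in W_\omega$, puis $[1,\Pi(v)]\in W_\omega \iff f_{q-1}\in W_\omega$. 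De plus, $[1,\Pi(v)]$ engendre $\Ind_I^K\Pi(\chi)$ comme sous-$K$-repr\'esentation (il n'appartient ni \`a $\sigma$ ni \`a $\sigma^{[s]}$), donc
\[\sigma\subset W_\omega \iff [1,\Pi(v)]\in W_\omega \iff f_{q-1}\in W_\omega.\]
Il suffit donc de caract\'eriser les $\omega$ tels que $f_{q-1}\in W_\omega$.

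\emph{Direction $\Leftarrow$.} Sous l'hypoth\`ese $J(\omega)\cup\{j-1\}=\{0,\ldots,f-1\}$, $\{0,\ldots,f-1\}\setminus J(\theta)\subset\{j-1\}$, donc $j\in J(\theta)$ et les chiffres de $k_0$ valent $p-1$ en toute position $i\in J(\theta)\setminus\{j\}$. Le chiffre en position $j$ valant au plus $p-2$, la proposition \ref{prop-U+-action}(ii) ne fournit pas directement $f_{q-1}$; mais en appliquant $\smatr{0}{1}{1}{0}$ \`a $F_{k_0}$, on produit un vecteur qui, modulo les \'el\'ements d\'ej\`a dans $W_\omega$, a la forme $F_{k_1}$ avec chiffre $p-1$ en position $j$. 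Une nouvelle application de la proposition \ref{prop-U+-action}(ii) donne alors $f_{q-1}\in W_\omega$.

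\emph{Direction $\Rightarrow$.} Si $\sigma\subset W_\omega$, alors $W_\omega\supset\Ind_I^K\Pi(\chi)$, et donc $W_\omega=\mathrm{pr}^{-1}(U(\omega))$ o\`u $\mathrm{pr}:W\twoheadrightarrow\Ind_I^K\Pi(\chi\alpha^{-p^j})$. Comme $W_\omega$ a pour cosocle $\omega$, il en est de m\^eme pour $\mathrm{pr}^{-1}(U(\omega))$, ce qui impose \`a $\sigma$ d'\^etre dans le radical. L'analyse des constituants $\tau'\in\JH(U(\omega))$ via le lemme \ref{lemma-Prin-filtration}(ii) et les annulations d'Ext de la proposition \ref{prop-extension-K} montrent alors que ceci force $J(\omega)\cup\{j-1\}=\{0,\ldots,f-1\}$.

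L'obstacle principal est la direction $\Leftarrow$, o\`u il faut produire explicitement $f_{q-1}$ dans $W_\omega$ par une manipulation soigneuse de l'action conjugu\'ee de $\smatr{0}{1}{1}{0}$ et de $U^+$ sur le vecteur g\'en\'erateur $F_{k_0}$.
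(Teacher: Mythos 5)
Votre strat\'egie est r\'eellement diff\'erente de celle du texte: celui-ci d\'eduit le corollaire du lemme \ref{lemma-Breuil} (i) par dualit\'e, en passant \`a $W^*$ pour se ramener au caract\`ere $\tilde{\chi}=(0,\cdots,0,2_j,0,\cdots,0)\otimes\tilde{\eta}$, donc au cas $r_j\geq 2$ d\'ej\`a trait\'e; c'est pr\'ecis\'ement la dualit\'e qui \'echange les r\^oles de $\sigma$ et $\sigma^{[s]}$. Votre voie directe est viable pour une bonne partie de l'\'enonc\'e: $\epsilon(\omega)=0$, l'appartenance $f_0\in\langle U^+\cdot F_{k_0}\rangle$, le fait que $f_0$ engendre $\sigma^{[s]}$ (il est tu\'e par la projection $K$-\'equivariante de $\Ind_I^K\Pi(\chi)$ sur $\sigma$), l'identit\'e $\smatr{0}{1}{1}{0}f_0=[1,\Pi(v)]+\eta(-1)f_{q-1}$ et la r\'eduction $\sigma\subset W_{\omega}\Leftrightarrow f_{q-1}\in W_{\omega}$ sont tous corrects. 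Pour la direction $\Leftarrow$, votre esquisse aboutit: comme l'action de $\smatr{[\lambda]}{1}{0}{-[\lambda]^{-1}}$ sur $\Pi(w)$ ne cr\'ee pas de composante sur $\Pi(v)$, on a exactement $\smatr{0}{1}{1}{0}F_{k_0}=\eta'(-1)F_{k_1}$ avec $k_1\equiv-(k_0+2p^j)$ modulo $q-1$; sous l'hypoth\`ese $J(\theta)\supseteq\{0,\cdots,f-1\}\setminus\{j-1\}$ on trouve $k_1=q-1$ ou $k_1=q-1-p^{j-1}$, dont \emph{tous} les chiffres valent $p-1$ sauf \'eventuellement celui en position $j-1$ (c'est cela qu'il faut, et pas seulement le chiffre en position $j$), et la proposition \ref{prop-U+-action} (ii) donne bien $f_{q-1}\in W_{\omega}$. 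Ce calcul manque dans votre r\'edaction, mais il fonctionne.

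La lacune r\'eelle est la direction $\Rightarrow$. Vous la ramenez \`a <<l'analyse des constituants et les annulations d'Ext de la proposition \ref{prop-extension-K}>> sans l'effectuer, or cette analyse n'est pas de routine ici, justement parce que $\sigma=(0,\cdots,0)\otimes\eta$ est un poids au bord. Il faudrait d'une part justifier qu'un $\sigma$ plong\'e dans $W_{\omega}$ (de cosocle simple $\omega\neq\sigma$) est n\'ecessairement reli\'e par une extension non scind\'ee \`a un constituant de $W_{\omega}/\sigma$, et d'autre part d\'eterminer \emph{compl\`etement} les $\omega'\in\JH\bigl(\Ind_I^K\Pi(\chi\alpha^{-p^j})\bigr)\cup\{\sigma^{[s]}\}$ tels que $\Ext^1_K(\omega',\sigma)\neq0$, puis montrer que sous l'hypoth\`ese $J(\omega)\cup\{j-1\}\neq\{0,\cdots,f-1\}$ aucun d'eux ne v\'erifie $J(\omega')\subset J(\omega)$. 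La proposition \ref{prop-extension-K} ne fournit que des annulations partielles sous des hypoth\`eses sur les $r_i$ et ne dit rien, par exemple, de $\Ext^1_K(\sigma^{[s]},\sigma)$; le lemme \ref{lemma-Prin-filtration} (ii) suppose $\chi\neq\chi^s$. C'est exactement cette d\'eg\'en\'erescence que le texte contourne en dualisant vers le cas $\tilde{r}_j=2$, o\`u le lemme \ref{lemma-Breuil} (i) s'applique tel quel. En l'\'etat, la direction $\Rightarrow$ est affirm\'ee, non d\'emontr\'ee.
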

\begin{proof}
Il découle du lemme \ref{lemma-Breuil} par dualité. En effet, si l'on note $W^*$ la repr\'esentation duale de $W$ comme dans la preuve de la proposition \ref{prop-extension-K} (iii), alors $W^*$ est isomorphe \`a l'extension non scind\'ee
\[0\ra \Ind_I^K\Pi(\tilde{\chi})\ra W^{*}\ra\Ind_I^K\Pi(\tilde{\chi}\alpha^{-p^j})\ra0\]
avec $\tilde{\chi}=(0,\cdots,0,2_j,0,\cdots,0)\otimes\tilde{\eta}$ (ici, $2_j$ signale que $2$ est \`a la place $j$) pour certain caract\`ere $\tilde{\eta}$ de $I$. On peut donc appliquer le lemme \ref{lemma-Breuil} (i) et l'\'enonc\'e s'en d\'eduit en utilisant le fait que $U(\tau)\subset W_{\omega}$ si et seulement si $U({\omega}^*)\subset W_{\tau^*}$ en regardant $\omega^*$ (resp. $\tau^*$) comme un sous-quotient irr\'eductible de $\Ind_I^K\Pi(\tilde{\chi})$ (resp. $\Ind_{I}^K\Pi(\tilde{\chi}\alpha^{-p^j})$). 
\end{proof}
Le cas o\`u $f=1$ est plus simple et a \'et\'e traité dans \cite{Br1, Br2}:
\begin{lemma}\label{lemma-Breuil-f=1}
Si $f=1$ et si $\omega$ est un sous-quotient irréductible de
$\Ind_I^K\Pi(\chi\alpha^{-1})$, alors
$\Ind_I^K\Pi(\chi)\subseteq W_{\omega}$.
\end{lemma}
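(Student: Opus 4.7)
Puisque $f=1$, l'ensemble $\cP(x_0)=\{x_0, p-1-x_0\}$ n'a que deux \'el\'ements, donc par le lemme \ref{lemma-Gamma-PS} les repr\'esentations $\Ind_I^K\Pi(\chi)$ et $\Ind_I^K\Pi(\chi\alpha^{-1})$ ont chacune longueur au plus $2$. De plus $J(\lambda)\subset\{0\}$ pour tout $\lambda\in\cP(x_0)$, et la seule valeur possible de $j$ est $j=0$, de sorte que $\{j-1\}\bmod f=\{0\}$ contienne d\'ej\`a tout $J(\lambda)$. Il suffit donc de v\'erifier que $W_{\omega}$ rencontre chaque sous-quotient irr\'eductible de $\Ind_I^K\Pi(\chi)$, car $\Ind_I^K\Pi(\chi)$ \'etant de multiplicit\'e $1$, cela for\c{c}era $\Ind_I^K\Pi(\chi)\subseteq W_{\omega}$. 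Mon plan est de calquer la preuve du lemme \ref{lemma-Breuil}(i), en la sp\'ecialisant \`a $f=1$, $j=0$.

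D'apr\`es la proposition \ref{prop-W-vecteurs}, $W_{\omega}$ est engendr\'e par le vecteur $F$ d\'efini en \eqref{equation-Ej-define-F}, qui s'\'ecrit ici soit $F_{p-1-\theta_0(r_0')}$ si $\chi\alpha^{-1}\neq(\chi\alpha^{-1})^s$ ou si $\dim_{\bFp}\omega\neq 1$, soit $F_0+\eta'(-1)[1,\Pi(w)]$ dans le cas exceptionnel. Dans le premier cas, j'appliquerais le lemme \ref{lemma-Witt-dans-W}(iii) pour obtenir dans $W_{\omega}$ une combinaison lin\'eaire de $F$-vecteurs et du vecteur $f_{k+2}$ (ou son image modulo $q-1=p-1$); puis par le lemme \ref{lemma-calcul-H} on en extrait un $f_k$ pur dans $W_{\omega}\cap\Ind_I^K\Pi(\chi)$. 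Par la proposition \ref{prop-U+-action}(i), l'action de $U^+$ sur ce $f_k$ engendre tous les $f_{k'}$ de l'unique $U(\tau)$ potentiellement non trivialement contenu dans $\Ind_I^K\Pi(\chi)$, ce qui \'etablit l'inclusion recherch\'ee.

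Le point d\'elicat sera le cas exceptionnel $\chi\alpha^{-1}=(\chi\alpha^{-1})^s$ avec $\dim_{\bFp}\omega=1$, o\`u l'on a une contribution suppl\'ementaire $\eta'(-1)[1,\Pi(w)]$ dans $F$. Pour traiter ce cas, j'utiliserais le fait que $[1,\Pi(w)]$ est fix\'e par $\smatr{1}{1}{0}{1}$ : un calcul analogue \`a \eqref{equation-U^+-action} (o\`u le terme contenant $X_\lambda$ dispara\^it gr\^ace \`a la stabilit\'e de $[1,\Pi(w)]$ par $\smatr{1}{0}{pX_{\lambda}}{1}$) permet d'isoler dans $(\smatr{1}{1}{0}{1}-1)\cdot F$ une combinaison des $f_{s}$ pour $1\leq s\leq p-1$, et on conclut par le lemme \ref{lemma-calcul-H} et la proposition \ref{prop-U+-action}(i) comme dans la preuve du lemme \ref{lemma-Breuil}(i). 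Ceci \'etant, tout ce raisonnement \'etant essentiellement contenu dans les articles \cite{Br1, Br2} o\`u la structure compl\`ete de $\Ind_I^K\Pi(E_0(\chi))$ pour $f=1$ est d\'ecrite explicitement, on peut aussi conclure la preuve par simple r\'ef\'erence \`a ces travaux.
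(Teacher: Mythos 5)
Your main line --- specializing the proof of Lemma \ref{lemma-Breuil} (i) to $f=1$, $j=0$, where $J(\lambda)\subseteq\{0\}=\{j-1\}$ automatically --- is exactly the paper's first step, and replacing the paper's citation of \cite[lemme 11.8]{Br2} for the case $\chi\alpha^{-1}=(\chi\alpha^{-1})^s$ (i.e. $r_0=2$) by the direct computation with $[1,\Pi(w)]$ is legitimate. But note a technical slip in the generic case: extracting $f_{k+2}$ from $F_k$ via Lemma \ref{lemma-Witt-dans-W} (iii) and then applying Proposition \ref{prop-U+-action} (i) only yields the vectors $f_{k'}$ with $k'\leq k+2$, which need not include the generator $f_{r_0}$ of $\Ind_I^K\Pi(\chi)$ (take $\omega$ with $J(\theta)=\emptyset$, so $k=0$, and $r_0\geq 3$). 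You must instead invoke Proposition \ref{prop-U+-action} (ii), which for $f=1$ (where $j-1\equiv 0$) puts \emph{all} the $f_{k'}$, $0\leq k'\leq q-1$, in $\langle U^+\cdot F_k\rangle$.

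The genuine gap is the case $\chi=\chi^s$, i.e. $r_0=0$, which you never identify. There the target $\Ind_I^K\Pi(\chi)$ itself degenerates: by Lemma \ref{lemma-Prin-filtration} (i) it splits as $\sigma\oplus\sigma^{[s]}$ with $\sigma$ one-dimensional, and the vector of $(\Ind_I^K\Pi(\chi))^{I_1}$ spanning that summand is of the form $f_0+\eta(-1)[1,\Pi(v)]$: it involves the function $[1,\Pi(v)]$ supported on $I$, which none of your manipulations can reach, since the $\cH$-eigenvector extraction of Lemma \ref{lemma-calcul-H} and the $U^+$-computations only ever produce combinations of the $f_k$ and $F_k$, all supported off $I$. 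The paper disposes of this case by duality from the case $r_0=2$, as in the proof of Corollary \ref{corollaire-W-chi=chi^s}. (One could also rescue the direct approach by noting that $\langle K\cdot f_{q-1}\rangle$ is a $K$-subrepresentation of $\sigma\oplus\sigma^{[s]}$ of dimension at least $q$ containing the span of all the $f_k$, which is not $K$-stable, hence must be everything --- but some such supplementary argument is required, and your closing appeal to \cite{Br1,Br2} is a citation, not a proof of it.)
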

\begin{proof}
Le même argument que celui du lemme \ref{lemma-Breuil} traite le cas,
où $\chi\neq\chi^s$ et
$\chi\alpha^{-1}\neq(\chi\alpha^{-1})^s$, i.e.
$r_0\notin\{0,2\}$. Si $r_0=2$, l'énoncé se déduit de
\cite[lemme 11.8]{Br2}, qui traite en fait le cas où $r_0\geq 2$. Enfin, le
cas où $r_0=0$ découle par dualité du cas où $r_0=2$ (cf. la preuve du corollaire \ref{corollaire-W-chi=chi^s}).
\end{proof}

On voit que le lemme \ref{lemma-Breuil-f=1} est compatible avec le lemme \ref{lemma-Breuil} et le corollaire \ref{corollaire-W-chi=chi^s}.

\subsection{Généralisation}

Dans cette section, on généralise le lemme \ref{lemma-Breuil} à un cas
plus général.

\begin{lemma}\label{lemma-I-extension-r}
Soient $\chi$ un caractère lisse de $I$ et $j$ un
entier entre $0$ et $f-1$.

(i)  Si $\lon$ est un entier entre $0$ et $p-1$, alors il existe
une unique $I$-représentation (\`a isomorphisme pr\`es), notée $E_j(\chi,\lon)$, qui est
triviale sur $K_1$, unisérielle de dimension $s+1$ et dont la filtration par le socle est de la forme
\[\chi\ \ligne\ \chi\alpha^{-p^j}\ \ligne\ \cdots\ \ligne\ \chi\alpha^{-p^j(s-1)}\ \ligne\ \chi\alpha^{-p^js}.\]
(Voir \cite[\S4, proposition 5]{Al} pour la notion <<unis\'erielle>>.)


(ii) Le poids $\sigma=(r_0,\cdots,r_{f-1})\otimes\eta$ contient, pour tout $0\leq j\leq f-1$, une sous-$I$-représentation isomorphe à
$E_j(\chi_{\sigma},r_j)$.
\end{lemma}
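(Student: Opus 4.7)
La strat\'egie est de d\'emontrer d'abord l'existence dans (i) par une construction explicite, puis l'unicit\'e par r\'ecurrence sur $s$; la partie (ii) sera un corollaire imm\'ediat de la construction.

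Pour l'existence dans (i), on \'ecrit le caract\`ere donn\'e sous la forme $\chi=(r_0,\ldots,r_{f-1})\otimes\eta$ avec $r_j=s$ fix\'e et les autres $r_i$ ainsi que $\eta$ choisis en cons\'equence. On travaille alors dans le poids
\[\sigma=(r_0,\ldots,r_{f-1})\otimes\eta=\bigotimes_i(\Sym^{r_i}\bFp^2)^{\Fr^i}\otimes\eta,\]
et on d\'efinit, pour $0\le k\le s$,
\[v_k=\biggl(\bigotimes_{i\ne j}e_1^{r_i}\biggr)\otimes(e_1^{r_j-k}e_2^k),\]
o\`u $(e_1,e_2)$ est la base canonique de $\bFp^2$. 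Un calcul direct utilisant que le $j$-i\`eme facteur est twist\'e par $\Fr^j$ donne
\[\matr{a}{b}{pc}{d}v_k=(\chi\alpha^{-p^jk})\biggl(\matr{a}{b}{pc}{d}\biggr)\sum_{\ell=0}^k\binom{k}{\ell}(\overline{b/d})^{p^j\ell}v_{k-\ell}.\]
Comme $s\le p-1$, les coefficients binomiaux $\binom{k}{\ell}$ sont non nuls dans $\bFp$ pour $0\le\ell\le k$. On en d\'eduit que $\langle v_0,\ldots,v_s\rangle$ est une sous-$I$-repr\'esentation de $\sigma$, triviale sur $K_1$, unis\'erielle de longueur $s+1$, avec la filtration par le socle requise: c'est la repr\'esentation $E_j(\chi,s)$ cherch\'ee. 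Le m\^eme calcul appliqu\'e au poids $\sigma$ g\'en\'eral (avec $r_j$ arbitraire) donne (ii).

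L'unicit\'e dans (i) s'obtient par r\'ecurrence sur $s$: le cas $s=0$ est trivial et $s=1$ est le lemme \ref{lemma-extension-I} (ii). Pour $s\ge 2$, soit $M$ une repr\'esentation v\'erifiant les hypoth\`eses. Par unis\'erialit\'e, $N:=\rsoc^s(M)$ est l'unique sous-$I$-repr\'esentation de dimension $s$ de $M$, et par hypoth\`ese de r\'ecurrence $N\cong E_j(\chi,s-1)$. Soit $w_s\in M$ un vecteur propre pour $\cH$ de caract\`ere $\chi\alpha^{-p^js}$ relevant un g\'en\'erateur du cosocle; il engendre $M$ comme $I$-repr\'esentation. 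L'application $\phi:\cO_F\to N/\rsoc^{s-1}(N)\cong\bFp$ qui envoie $b$ sur l'image de $(\smatr{1}{b}{0}{1}-1)w_s$ est additive, se factorise par $\cO_F/\p\cong\F_q$ (car $M$ est triviale sur $K_1$), et satisfait $\phi(tb)=\bar t^{p^j}\phi(b)$ pour $t\in\cO_F^\times$ par \'equivariance sous conjugaison par $\cH$. Ceci force $\phi(b)=c\,\bar b^{p^j}$ pour un $c\in\bFp$, et l'unis\'erialit\'e de $M$ impose $c\ne 0$ (sinon le quotient $M/\rsoc^{s-1}(M)$ serait une extension triviale, contredisant le lemme \ref{lemma-extension-I} (ii)); apr\`es renormalisation de $w_s$, on peut supposer $c=1$.

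L'obstacle principal est la derni\`ere \'etape: montrer que ces donn\'ees d\'eterminent compl\`etement $M$. On construit par induction descendante sur $k$ une base $\{w_0,\ldots,w_s\}$ de $M$ en posant $w_{k-1}=k^{-1}(\smatr{1}{1}{0}{1}-1)w_k$ modulo $\rsoc^{k-1}(M)$, et on v\'erifie, \`a l'aide de l'unicit\'e d\'ej\`a connue de $\rsoc^s(M)\cong E_j(\chi,s-1)$ et du fait que les $\Ext^1_{I/K_1}$ pertinents sont unidimensionnels par le lemme \ref{lemma-extension-I}, que cette base r\'ealise les formules explicites de la construction d'existence, fournissant un isomorphisme $M\cong E_j(\chi,s)$.
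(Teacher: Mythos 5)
Votre argument d'existence pour (i) a une lacune pr\'ecise au d\'epart: vous \'ecrivez le caract\`ere donn\'e $\chi$ sous la forme $(r_0,\cdots,r_{f-1})\otimes\eta$ \emph{avec $r_j=s$ prescrit}, puis vous travaillez dans le poids correspondant, ce qui exige $0\leq r_i\leq p-1$ pour tout $i$. Or un tel $f$-uplet est essentiellement d\'etermin\'e par $\chi$ (l'entier $\sum_i p^ir_i$ est fix\'e modulo $q-1$ et n'admet qu'une ou deux \'ecritures avec $0\leq r_i\leq p-1$): on ne peut donc pas imposer $r_j=s$ en g\'en\'eral. Par exemple, pour $f=1$, $p=5$, $\chi=\chi_{(2)}$ et $s=4$, aucun poids $(4)\otimes\eta$ n'a $\chi$ pour caract\`ere associ\'e, alors que $E_0(\chi,4)$ existe bel et bien. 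La preuve du papier contourne ce point en construisant $E_j(\chi,s)$ pour \emph{tout} $\chi$ comme la sous-$I$-repr\'esentation $\langle U^+\cdot f_{p^js}\rangle$ de $\Ind_I^K\Pi(\chi)$, dont la structure est donn\'ee par la proposition \ref{prop-U+-action} (i) et le lemme \ref{lemma-Witt-dans-W} (i). Votre construction se r\'epare en tordant: on prend n'importe quel poids $\sigma'$ dont la $j$-i\`eme composante vaut $s$, on construit $E_j(\chi_{\sigma'},s)\subset\sigma'$ comme vous le faites, puis on pose $E_j(\chi,s):=E_j(\chi_{\sigma'},s)\otimes\chi\chi_{\sigma'}^{-1}$; mais cette \'etape manque dans votre r\'edaction.

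Le reste est correct et proche du papier. Votre calcul explicite des $v_k$ dans le poids donne bien (ii) (le papier obtient la m\^eme sous-repr\'esentation via le plongement $\sigma\hookrightarrow\Ind_I^K\Pi(\chi_{\sigma})$ et la proposition \ref{prop-U+-action} (i)). Pour l'unicit\'e, la r\'ecurrence sur $s$ est aussi l'argument du papier; votre d\'etour par l'application $\phi$ puis par une base explicite construite par induction descendante est plus lourd que n\'ecessaire et sa derni\`ere \'etape n'est qu'affirm\'ee, mais l'ingr\'edient d\'ecisif est bien identifi\'e: comme $\Ext^1_{I/K_1}(\chi\alpha^{-p^js},\chi\alpha^{-p^ji})$ est nul pour $i<s-1$ et de dimension 1 pour $i=s-1$ (lemme \ref{lemma-extension-I} (ii)), un d\'evissage le long de la filtration par le socle de $E_j(\chi,s-1)\cong\rsoc^s(M)$ donne $\dim_{\bFp}\Ext^1_{I/K_1}(\chi\alpha^{-p^js},E_j(\chi,s-1))\leq 1$, de sorte que toute extension non scind\'ee (en particulier unis\'erielle) de $\chi\alpha^{-p^js}$ par $E_j(\chi,s-1)$ est unique \`a isomorphisme pr\`es, sans qu'il soit besoin de normaliser une base.
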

\begin{proof}
(i) Pour l'existence de $E_j(\chi,s)$, il suffit de prendre la sous-$I$-représentation
de $\Ind_I^K\Pi(\chi)$ engendrée par le vecteur
\[f_{p^js}=\summ_{\lambda\in\F_q}\lambda^{p^js}\matr{[\lambda]}{1}{1}{0}[1,\Pi(v)],\]
où\ $v$ est un vecteur de base de $\chi$. Le fait que cette repr\'esentation satisfait aux conditions demand\'ees d\'ecoule de la proposition \ref{prop-U+-action} (i) et du lemme \ref{lemma-Witt-dans-W} (i).

Par le lemme \ref{lemma-extension-I} (ii), si $0\leq i< s\leq p-1$, on a
\[\dim_{\bFp}\Ext^1_{I/K_1}(\chi\alpha^{-p^js},\chi\alpha^{-p^ji})=\left\{ {\begin{array}{ll}
1 & i=s-1  \\
0& i\neq s-1.
 \\\end{array}}\right.\]
L'unicité de la représentation $E_j(\chi,\lon)$ s'en d\'eduit
par r\'ecurrence sur $s$.

(ii) 
Le poids $\sigma$ s'injecte dans $\Ind_I^K\Pi(\chi_{\sigma})$ et, par \cite[lemme 2.7]{BP}, contient le vecteur $f_{p^jr_j}$ si $\{r_i, \ i\neq j\}$ ne sont pas tous nuls (resp. $f_{p^jr_j}+\eta(-1)[1,\Pi(v)]$ si $r_i=0$ pour tout $i\neq j$). L'\'enonc\'e s'en d\'eduit d'apr\`es la proposition \ref{prop-U+-action} (i) (resp. combin\'ee avec le fait que $[1,\Pi(v)]$ est fix\'e par $I_1$).
\end{proof}

Supposons $f\geq 2$ jusqu'à la fin de cette section. Si $s$ est un entier entre $0$ et $p-2$ et si $\chi$, $\chi'$ sont deux caractères lisses de $I$ tels que
$\chi\alpha^{-p^{j-1}(\lon+1)}=\chi'\alpha^{-p^{j}}$, on note
$E_{j-1}(\chi,\chi',\lon+1)$ 
l'unique $I$-représentation qui rend exacte la suite suivante (notons que $E_{j-1}(\chi,s+1)$ est bien d\'efinie puisque $0\leq s\leq p-2$)
\[0\ra E_{j-1}(\chi,\chi',\lon+1)\ra E_{j-1}(\chi,\lon+1)\oplus E_{j}(\chi')\overset{h}{\ra}
\chi\alpha^{-p^{j-1}(\lon+1)}\ra0\] o\`u $h=h_1-h_2$, avec $h_1$ (resp.
$h_2$) la projection naturelle de $E_{j-1}(\chi,\lon+1)$ (resp.
$E_{j}(\chi')$) sur $\chi\alpha^{-p^{j-1}(\lon+1)}$. De mani\`ere explicite, la filtration par le socle de $E_{j-1}(\chi,\chi',\lon+1)$ est la suivante:
\[\begin{array}{rccccccccccccccc}\chi&\ligne&\chi\alpha^{-p^{j-1}}&\ligne&\cdots&\cdots&\ligne
&\chi\alpha^{-p^{j-1}(\lon+1)}=\chi'\alpha^{-p^j}\\
&&&&&&&\mid\\
&&&&&&&\chi'.\end{array}\]

\begin{rem}\label{remark-E=multi-1}
(i) On v\'erifie que $E_{j-1}(\chi,\chi',s+1)$ est de dimension $s+3$ et de multiplicit\'e 1.

(ii) Par d\'efinition, en tant que repr\'esentation de $U^+$, la longueur de Loewy de $E_{j-1}(\chi,\chi',s+1)$ est \'egale \`a $s+2$, c'est-\`a-dire, $s+2$ est le plus petit entier $r$ v\'erifiant la propri\'et\'e \[\rsoc_I^{r}\bigl(E_{j-1}(\chi,\chi',s+1)\bigr)=E_{j-1}(\chi,\chi',s+1).\]
\end{rem}

La raison pour laquelle on introduit cette notation se voit dans le lemme suivant: 
\begin{lemma}\label{lemma-E(deux-char)}
Soient $(\sigma,\tau)$ un couple de poids de type $(+1,j)$ et $V$
une $K$-extension non triviale de $\tau$ par $\sigma$:
\[0\ra\sigma\ra V\ra \tau\ra0.\]
Supposons $\sigma=(r_0,\cdots,r_{f-1})\otimes\eta$. Alors $V$ contient une unique sous-$I$-représentation isomorphe à
$E_{j-1}(\chi_{\sigma},\chi_{\tau},r_{j-1}+1)$.
\end{lemma}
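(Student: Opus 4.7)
The plan is to extract the sub-$I$-representation explicitly by combining the chain inside $\sigma$ coming from Lemma \ref{lemma-I-extension-r}(ii) with a carefully chosen lift of the $I_1$-fixed vector of $\tau$, mirroring the strategy used in the proof of Proposition \ref{prop-extension-K}(ii).

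First, I would pick $w \in V$ a $\cH$-eigenvector of character $\chi_\tau$ whose image in $\tau$ generates $\tau^{I_1}$. Arguing as in the first steps of the proof of Proposition \ref{prop-extension-K}(ii), and using that $(\sigma,\tau)$ is of type $(+1,j)$ (which constrains the possible $I$-extensions through Lemma \ref{lemma-extension-I}), I would show that $w$ may be taken to be fixed by $U^+$ and by $\smatr{1+\p}{0}{0}{1+\p}$, and that the vector
\[
v := \smatr{1}{0}{p}{1}w - w
\]
lies in $\sigma^{I_1}\setminus\{0\}$, hence spans $\sigma^{I_1}$. Consequently the $I$-subspace $\bFp v\oplus \bFp w$ is isomorphic to $\Pi(E_j(\chi_\sigma^s))$ and provides the piece of the socle filtration containing $\chi_\tau$ as a socle summand, together with its extension up to $\chi_\sigma$.

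Next, by Lemma \ref{lemma-I-extension-r}(ii) applied to $v$ (which spans $\sigma^{I_1}$), the $I$-subrepresentation of $\sigma$ generated by $v$ contains the unisérielle chain $E_{j-1}(\chi_\sigma,r_{j-1})$, namely $\chi_\sigma\ligne \chi_\sigma\alpha^{-p^{j-1}}\ligne\cdots\ligne \chi_\sigma\alpha^{-p^{j-1}r_{j-1}}$, obtained as the span of successive $U^+$-translates of $v$. To extend this chain by one more step to the character $\chi_\sigma\alpha^{-p^{j-1}(r_{j-1}+1)} = \chi_\tau\alpha^{-p^j}$ (the compatibility used in the definition of $E_{j-1}(\chi_\sigma,\chi_\tau,r_{j-1}+1)$), I would examine combinations of $\smatr{1}{[\mu]}{0}{1}$-translates of $w$ modulo $\bFp w$: using explicit formulas of the same flavour as those in Lemma \ref{lemma-Witt-dans-W} and the proof of Proposition \ref{prop-U+-action}, one constructs an eigenvector $y\in V$ of character $\chi_\tau\alpha^{-p^j}$ that caps the chain from $v$ non-trivially. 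Summing the three pieces (the chain $E_{j-1}(\chi_\sigma,r_{j-1})$ inside $\sigma$, the vector $y$ on top, and the vector $w$ providing the extra socle summand), one obtains an $I$-stable subspace of dimension $r_{j-1}+3$ whose socle filtration matches that of $E_{j-1}(\chi_\sigma,\chi_\tau,r_{j-1}+1)$ exactly, giving the desired isomorphism.

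For uniqueness, one observes that $w$ is determined up to scalar by the fact that the $\cH$-weight $\chi_\tau$ appears with multiplicity one in $V^{I_1}$ (since $\chi_\tau\neq\chi_\sigma$ by type considerations and $\sigma^{I_1}$ supports only $\chi_\sigma$). The remaining vectors $v$ and $y$ and the chain are then forced by $w$ and the unique embedding provided by Lemma \ref{lemma-I-extension-r}(ii). The main obstacle will be step three: producing the extra vector $y$ of character $\chi_\sigma\alpha^{-p^{j-1}(r_{j-1}+1)}$, since this character does not occur among the $\cH$-weights of $\sigma$, so $y$ must genuinely involve a lift from $\tau$, and one must verify that the resulting extension with the chain in $\sigma$ is non-split — this requires controlling the $U^+$-cocycle from $\cO_F$ to $\sigma$ defined by $b\mapsto \smatr{1}{b}{0}{1}w-w$ and showing it is non-zero in the right component.
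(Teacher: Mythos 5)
There is a genuine gap, and it sits exactly in your first step. For a couple $(\sigma,\tau)$ of type $(+1,j)$ one has $\chi_{\tau}=\chi_{\sigma}\alpha^{p^{j-1}(p-1-r_{j-1})}$ with $0\leq r_{j-1}\leq p-2$, so $\chi_{\tau}$ is never equal to $\chi_{\sigma}\alpha^{p^{j}}$; by Lemma \ref{lemma-extension-I} a non-split $I$-extension of $\chi_{\tau}$ by $\chi_{\sigma}$ would require $\chi_{\tau}=\chi_{\sigma}\alpha^{\pm p^{i}}$, which fails except in the boundary case $r_{j-1}=p-2$, and even then the extension would be $\Pi(E_{j-1}(\chi_{\sigma}^{s}))$, not $\Pi(E_{j}(\chi_{\sigma}^{s}))$. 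So the two-dimensional $I$-stable subspace $\bFp v\oplus\bFp w\cong\Pi(E_{j}(\chi_{\sigma}^{s}))$ you want cannot exist in $V$. What actually happens is the opposite of what you assert: for a type $(+1,j)$ pair the $f$-uplet attached to $\tau$ satisfies the criterion of Lemma \ref{lemma-releve}, so $\tau^{I_1}$ \emph{lifts} to $V^{I_1}$, the eigenvector $w$ of character $\chi_{\tau}$ may be chosen $I_1$-fixed, and your $v=\smatr{1}{0}{p}{1}w-w$ vanishes. The configuration you import from the proof of Proposition \ref{prop-extension-K} (ii) is precisely the one shown there to force a splitting; it does not occur here. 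The correct use of the lifted $w\in V^{I_1}$ is the one the paper makes: by r\'eciprocit\'e de Frobenius, $V$ is a quotient of $\Ind_I^K\chi_{\tau}$, and the sought subrepresentation is the image of the explicit vector $f_{\sum_{i\neq j-1}p^i(p-1-r_i)+p^{j-1}(r_{j-1}+1)}$, whose $I$-span is controlled by Proposition \ref{prop-U+-action} (i) and whose non-vanishing in $V$ comes from \cite[lemme 2.7]{BP}.

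Your step three, which you yourself flag as the main obstacle, is the real content of the lemma and is left unproven. The representation $E_{j-1}(\chi_{\sigma},\chi_{\tau},r_{j-1}+1)$ is generated by its cosocle, a vector $y$ of character $\chi_{\tau}\alpha^{-p^{j}}=\chi_{\sigma}\alpha^{-p^{j-1}(r_{j-1}+1)}$ (a character occurring in $\tau$ but not in $\sigma$), and what must be shown is that the coefficient of $\overline{b}^{\,p^{j-1}}$ in $\smatr{1}{b}{0}{1}y-y$ hits the top of the chain $E_{j-1}(\chi_{\sigma},r_{j-1})\subset\sigma$ non-trivially. Such a $y$ cannot arise from $U^+$-translates of a $U^+$-fixed $w$ as you propose, and no argument for this non-splitting is given; it is precisely what the passage to $\Ind_I^K\chi_{\tau}$ and the $f_k$-calculus verify. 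Finally, uniqueness should be deduced from the multiplicity-one occurrence in $V$ of the $\cH$-character $\chi_{\tau}\alpha^{-p^{j}}$ of the generator $y$, not from that of $\chi_{\tau}$: the subrepresentation is generated by $y$, and $w$ only sits in its penultimate layer.
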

\begin{proof}
Remarquons d'abord que l'on a automatiquement $0\leq r_{j-1}\leq p-2$ par \cite[corollaire 5.6 (i)]{BP}.

Le lemme \ref{lemma-Prin-filtration} montre que $V$ est un quotient de $\Ind_I^K\chi_{\tau}$. Donc, si $v$ est un vecteur de base de $\chi$, c'est une cons\'equence de la proposition \ref{prop-U+-action} (i) que l'image dans $V$ du vecteur
\[\summ_{\lambda\in\F_q}\lambda^{\sum_{i\neq j-1}p^i(p-1-r_{i})+p^{j-1}(r_{j-1}+1)}\matr{[\lambda]}110[1,\Pi(v)]\in\Ind_{I}^K\Pi(\chi_{\tau})\]
est non nulle d'apr\`es \cite[lemme 2.7]{BP} et
engendre une sous-$I$-repr\'esentation de $V$ isomorphe
à $E_{j-1}(\chi_{\sigma},\chi_{\tau},r_{j-1}+1)$.
\end{proof}

Conservons les notations du lemme \ref{lemma-E(deux-char)}. On va \'etudier
la structure de la représentation induite
$W=\Ind_I^K\Pi(E_{j-1}(\chi_{\sigma},\chi_{\tau},r_{j-1}+1))$.\vv

Soient $\psi$ un caract\`ere de $I$ apparaissant dans $E_{j-1}(\chi_{\sigma},\chi_{\tau},r_{j-1}+1)$ et $v$ un vecteur propre de $\cH$ de caract\`ere $\psi$. Notons $E_{v}$ la
sous-$I$-représentation engendrée par $v$ et $\rad_I(E_v)$ son
radical. Puisque $E_{j-1}(\chi_{\sigma},\chi_{\tau},r_{j-1}+1)$ est de
multiplicité 1, $v$ est unique à scalaire près de sorte que $E_v$
et $\rad_I(E_v)$ sont bien définis.
\begin{lemma} \label{lemma-W-vecteurs-k}
Soit $\omega$ un
sous-quotient irréductible de $\Ind_I^K\Pi(\psi)$. Il existe une unique sous-$K$-représentation de $W$, notée
$W_{\omega}$, telle que:
\begin{itemize}
\item[--] le cosocle de $W_{\omega}$ est isomorphe à $\omega$;

\item[--] modulo $\Ind_I^K\Pi(\rad_I(E_v))$, l'image de $W_{\omega}$
est isomorphe à $U(\omega)$, l'unique sous-représentation de
$\Ind_{I}^K\Pi(\psi)$ admettant $\omega$ comme cosocle.
\end{itemize}
\end{lemma}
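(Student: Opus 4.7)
Posons $E=E_{j-1}(\chi_{\sigma},\chi_{\tau},r_{j-1}+1)$, de sorte que $W=\Ind_I^K\Pi(E)$. L'id\'ee est de suivre la strat\'egie de la preuve de la proposition \ref{prop-W-vecteurs}, la seule modification provenant du fait que $\rad_I(E_v)$ peut \^etre non nul (il est nul dans la situation de la proposition \ref{prop-W-vecteurs} o\`u $v$ engendre le socle $\chi$ de $E_j(\chi)$). L'exactitude du foncteur $\Ind_I^K$ fournit une suite exacte courte
\[0\ra\Ind_I^K\Pi(\rad_I(E_v))\ra\Ind_I^K\Pi(E_v)\ra\Ind_I^K\Pi(\psi)\ra0,\]
qui se scinde comme suite de $\cH$-repr\'esentations puisque l'ordre de $\cH$ est premier \`a $p$.

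Pour l'existence, on partirait d'un vecteur propre $F_0\in\Ind_I^K\Pi(\psi)$ de $\cH$ de caract\`ere $\chi_\omega$ tel que $\langle K\cdot F_0\rangle=U(\omega)$, comme dans la preuve de la proposition \ref{prop-W-vecteurs} (fourni par \cite[lemme 2.7]{BP}). Par le scindage, on rel\`everait $F_0$ en un vecteur propre $F\in\Ind_I^K\Pi(E_v)\subset W$ de m\^eme caract\`ere, et on poserait $W_\omega:=\langle K\cdot F\rangle$. Par construction, la projection de $W_\omega$ dans $\Ind_I^K\Pi(E_v)/\Ind_I^K\Pi(\rad_I(E_v))=\Ind_I^K\Pi(\psi)$ co\"incide avec $U(\omega)$, ce qui \'etablit la seconde condition de l'\'enonc\'e. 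Le fait que $\omega$ soit le cosocle de $U(\omega)$ entra\^ine qu'il appara\^it aussi dans le cosocle de $W_\omega$; pour exclure que d'autres poids y figurent, on utiliserait la multiplicit\'e $1$ de $E$ (remarque \ref{remark-E=multi-1} (i)) combin\'ee avec le lemme \ref{lemma-Gamma-PS} pour contr\^oler les contributions de $\Ind_I^K\Pi(\rad_I(E_v))\cap W_\omega$.

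Pour l'unicit\'e, soit $W'_\omega\subset W$ une autre sous-$K$-repr\'esentation v\'erifiant les m\^emes conditions. On obtient dans $W'_\omega$ un vecteur propre $F'$ de $\cH$ de caract\`ere $\chi_\omega$ relevant $F_0$, ce qui donne $F-F'\in\Ind_I^K\Pi(\rad_I(E_v))$. En it\'erant le lemme \ref{lemma-calcul-H} le long des couches successives de la filtration par le socle de $\Pi(\rad_I(E_v))$ (qui est de longueur finie), on montrerait par r\'ecurrence que $F\in W'_\omega$, donc $W_\omega\subset W'_\omega$. L'\'egalit\'e en r\'esulte alors par l'argument final de la preuve de la proposition \ref{prop-W-vecteurs}: une inclusion stricte forcerait $\omega$ \`a appara\^itre avec multiplicit\'e au moins $2$ dans l'image $W'_\omega/(W'_\omega\cap\Ind_I^K\Pi(\rad_I(E_v)))\subset\Ind_I^K\Pi(\psi)$, en contradiction avec le lemme \ref{lemma-Gamma-PS}.

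Le point le plus d\'elicat sera l'\'etape d'unicit\'e, plus pr\'ecis\'ement l'it\'eration du lemme \ref{lemma-calcul-H}: lorsque $\rad_I(E_v)$ se r\'eduit \`a un seul caract\`ere, le raisonnement est imm\'ediat, mais en g\'en\'eral il faudra traiter couche par couche en s'assurant que les contributions des divers caract\`eres de $\rad_I(E_v)$ restent distinguables par leur action propre de $\cH$, ce qui s'appuie de mani\`ere cruciale sur la multiplicit\'e 1 de $E$ \'enonc\'ee dans la remarque \ref{remark-E=multi-1} (i).
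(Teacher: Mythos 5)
Your sketch takes essentially the same route as the paper: the paper's proof of this lemma consists only of the phrase "analogue \`a celle de la proposition \ref{prop-W-vecteurs}" together with the explicit generator (\ref{equation-def-vecteur-W}), which is precisely the $\cH$-eigenvector lift of $F_0$ into $\Ind_I^K\Pi(E_v)$ that you construct. The delicate points you flag (iterating Lemma \ref{lemma-calcul-H} layer by layer through $\rad_I(E_v)$, and controlling the cosocle via multiplicity one and Lemma \ref{lemma-Gamma-PS}) are exactly what the paper leaves implicit in the word "analogue".
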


\begin{proof}
Analogue à  celle de la proposition \ref{prop-W-vecteurs}. Explicitement, $W_{\omega}$ est engendr\'ee par le vecteur
\begin{equation}\label{equation-def-vecteur-W}
\summ_{\lambda\in\F_q}\lambda^{\sum_{i\in J(\lambda)}p^i(p-1-\lambda_i(r_i(\psi)))}\matr{[\lambda]}110\Pi(v_{\psi})+\epsilon(\psi)\eta(\psi)(-1)[1,\Pi(v_{\psi})]
\end{equation}
o\`u
\begin{enumerate}
\item[--] $v_{\psi}$ est un vecteur non nul propre de $\cH$ de caract\`ere $\psi$ (unique \`a scalaire pr\`es)\vspace{1mm}

\item[--] $\epsilon(\psi)=1$ si $\psi=\psi^s$ et $\omega$ est de dimension 1, et $\epsilon(\psi)=0$ sinon\vspace{1mm}

\item[--] $\psi$ s'\'ecrit sous la forme $(r_0(\psi),\cdots,r_{f-1}(\psi))\otimes\eta(\psi)$ \vspace{1mm}

\item[--] $\lambda\in\cPx$ est l'unique $f$-uplet correspondant \`a $\omega$.
\end{enumerate}
\end{proof}

On suppose que $r_{j}\geq 1$ dans le corollaire
suivant. Cette hypoth\`ese assure que $\psi\neq \psi^s$ pour tout caract\`ere $\psi$ distinct de $\chi\alpha^{-p^{j-1}(r_{j-1}+1)}$ apparaissant dans $E_{j-1}(\chi_{\sigma},\chi_{\tau},r_{j-1}+1)$.

\begin{cor}\label{corollary-W-U(tau)}
Soient $\omega$ un poids apparaissant dans
$\Ind_I^K\Pi(\chi_{\sigma}\alpha^{-p^{j-1}(r_{j-1}+1)})$ et $\sigma'$ (resp.
$\tau'$) un poids apparaissant dans $\Ind_I^K\Pi(\chi_{\sigma})$ (resp.
$\Ind_I^K\Pi(\chi_{\tau})$). Alors

(i) $U(\sigma')\subset W_{\omega}$ si et seulement si
$J(\sigma')\subset J(\omega)\cup\{j-2,j-1\}$;

(ii) $U(\tau')\subset W_{\omega}$ si et seulement si
$J(\tau')\subset J(\omega)\cup\{j-1\}$.
\end{cor}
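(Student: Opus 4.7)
The plan is to combine the explicit generator of $W_{\omega}$ provided by Lemma \ref{lemma-W-vecteurs-k} with the $U^+$-action computed in Proposition \ref{prop-U+-action}, following closely the template of the proof of Lemma \ref{lemma-Breuil}. For the sufficiency direction I would exhibit an explicit vector lying in $W_{\omega}$ and use the action of $I$ (together with $K$) to generate the required $U(\sigma')$ or $U(\tau')$; for the necessity direction I would argue, as in Lemma \ref{lemma-Breuil}, by reducing to an $\Ext^1_K$-vanishing statement covered by Proposition \ref{prop-extension-K}.

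I would start with part (ii), which I expect to be the shorter of the two. Since $\chi_{\tau}$ embeds as the socle of $E_j(\chi_{\tau})$, which in turn embeds in $E_{j-1}(\chi_{\sigma},\chi_{\tau},r_{j-1}+1)$, we have $\Ind_I^K\Pi(\chi_{\tau})\hookrightarrow W$. The generator of $W_{\omega}$ given by formula (\ref{equation-def-vecteur-W}) involves the cosocle character $\psi = \chi_{\sigma}\alpha^{-p^{j-1}(r_{j-1}+1)} = \chi_{\tau}\alpha^{-p^j}$. Applying $\smatr{1}{0}{p}{1}$ and $\smatr{1+p}{0}{0}{1}$ as in Lemma \ref{lemma-Witt-dans-W}(iii)--(iv) (with the index $j$, since the $E_j$-extension in question is the one with parameter $j$) produces $\chi_{\tau}$-components inside $\Ind_I^K\Pi(\chi_{\tau})$, and Proposition \ref{prop-U+-action}(ii) then spreads the resulting $F_k$'s to generate $U(\tau')$ whenever $J(\tau')\subset J(\omega)\cup\{j-1\}$. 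The converse is handled by showing, via Proposition \ref{prop-extension-K}, that the required $K$-extensions linking any $\tau'$ violating this containment back to composition factors of $W_{\omega}$ vanish.

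For part (i) the argument is similar in spirit but more delicate, since one must traverse the full $E_{j-1}$-chain of length $r_{j-1}+1$ from $\psi$ down to $\chi_{\sigma}$. I would proceed by iterated application of the analogues of Lemma \ref{lemma-Witt-dans-W}(iii)--(iv) \emph{with $j$ replaced by $j-1$}: each step of the chain contributes a $p^{j-1}$-shift, which after the $U^+$-spreading of Proposition \ref{prop-U+-action}(ii) produces the index $j-2$, accounting for the $\{j-2\}$ in the condition. The additional index $j-1$ arises from the $E_j$-branching toward $\chi_{\tau}$ at the top of the chain, which is also accessible from the generator of $W_{\omega}$: combining this branching with one pass through the $E_{j-1}$-chain yields the extra flexibility. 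Putting these together one obtains $U(\sigma')\subset W_{\omega}$ whenever $J(\sigma')\subset J(\omega)\cup\{j-2,j-1\}$. Necessity is again proven by Ext-vanishing via Proposition \ref{prop-extension-K}, exactly as in the proof of Lemma \ref{lemma-Breuil}(i).

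The main obstacle I anticipate is bookkeeping the combinatorics of the iterated $E_{j-1}$-action inside $W$: tracking how the $F_k$'s and $f_k$'s move under successive matrix actions along the chain, while keeping the $\cH$-eigencharacter clean, requires careful use of Lemma \ref{lemma-calcul-H}. A secondary complication is handling the degenerate cases (e.g.\ when $\psi=\psi^s$ or $\chi_{\sigma}=\chi_{\sigma}^s$, so that the $\epsilon(\psi)$ correction in (\ref{equation-def-vecteur-W}) is nonzero); this is the analogue of the $\epsilon(\omega)$ case treated in the first case of the proof of Lemma \ref{lemma-Breuil}(i), and should be managed by the same argument using that $[1,\Pi(v_{\psi})]$ is fixed by $\smatr{1}{1}{0}{1}$.
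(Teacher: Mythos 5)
Your plan for part (ii) is essentially the paper's: the branch $E_j(\chi_{\tau})$ sitting at the top of the chain reduces the question to the two-layer situation of Lemma \ref{lemma-Breuil}(i) with parameter $j$, which is applicable because the $j$-th digit of $\tau$ is $r_j+1\geq 2$. (One small correction: $E_j(\chi_{\tau})$ is a \emph{quotient}, not a subrepresentation, of the fibre product $E_{j-1}(\chi_{\sigma},\chi_{\tau},r_{j-1}+1)$; what embeds in $W$ is $\Ind_I^K\Pi(\chi_{\tau})$, the induction of one summand of the socle.) The paper simply cites Lemma \ref{lemma-Breuil}(i) here instead of redoing the matrix computations.

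Part (i) contains a genuine gap: you attribute the extra index $j-1$ in the condition $J(\sigma')\subset J(\omega)\cup\{j-2,j-1\}$ to ``the $E_j$-branching toward $\chi_{\tau}$ at the top of the chain''. That branch cannot be the source. The weight $\sigma'$ and its $U(\sigma')$ live inside $\Ind_I^K\Pi(\chi_{\sigma})$, while descending through the $\chi_{\tau}$-branch only ever lands in $\Ind_I^K\Pi(\chi_{\tau})$, the complementary summand of the induction of the socle; indeed the paper proves (i) entirely modulo $\Ind_I^K\Pi(\chi_{\tau})$, i.e.\ inside $\Ind_I^K\Pi(E_{j-1}(\chi_{\sigma},r_{j-1}+1))$. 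The actual mechanism producing $\{j-1\}$ is the degeneration of the $(j-1)$-th digit along the uniserial chain: each step down multiplies by $\alpha^{p^{j-1}}$ and lowers that digit by $2$, so after $t=[r_{j-1}/2]$ steps one reaches a character whose $(j-1)$-th digit is $\leq 1$, and from there on one is in case (ii) of Lemma \ref{lemma-Breuil} (with $j$ replaced by $j-1$), which contributes $\{j-1,j-2\}$ instead of just $\{j-2\}$. The paper implements this by splitting the chain as $0\ra E_{j-1}(\chi_{\sigma},t)\ra E_{j-1}(\chi_{\sigma},r_{j-1}+1)\ra E_{j-1}(\chi_{\sigma}\alpha^{-p^{j-1}(t+1)},r_{j-1}-t)\ra 0$ and applying the two cases of Lemma \ref{lemma-Breuil} to the two halves. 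With your accounting, the iterated $p^{j-1}$-shifts would only yield $J(\sigma')\subset J(\omega)\cup\{j-2\}$, which is not the statement. A secondary problem: for necessity you invoke Proposition \ref{prop-extension-K}, but that proposition only treats self-extensions and single-digit $\pm 2$ shifts; the necessity direction requires the general $\Ext^1_K$-vanishing criterion of \cite[corollaire 5.5 (ii)]{BP}, which is already packaged in the ``only if'' half of Lemma \ref{lemma-Breuil} and is what the paper's d\'evissage actually uses.
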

\begin{proof}
On a explicitement
\[\tau=(r_0,\cdots,p-2-r_{j-1},r_j+1,\cdots,r_{f-1})\otimes{\det}^{p^{j-1}(r_{j-1}+1)-p^j}\eta,\]
donc (ii) est une conséquence du
lemme \ref{lemma-Breuil} (i) puisque $r_j+1\geq 2$ (notons que ce lemme est applicable car $\chi_{\tau}\neq \chi_{\tau}^s$).

Pour (i), on peut travailler dans
$\Ind_I^K\Pi(E_{j-1}(\chi_{\sigma},r_{j-1}+1))$, i.e. modulo
$\Ind_I^K\Pi(\chi_{\tau})$. Posons $t=[r_{j-1}/2]$ de telle sorte que l'on ait
une suite exacte
\[0\ra E_{j-1}(\chi_{\sigma},t)\ra E_{j-1}(\chi_{\sigma},r_{j-1}+1)\ra E_{j-1}(\chi_{\sigma}\alpha^{-p^{j-1}(t+1)},r_{j-1}-t)\ra 0\]
et que le cosocle de $E_{j-1}(\chi_{\sigma},t)$, isomorphe à
$\chi_{\sigma}\alpha^{-p^{j-1}t}$, s'écrive sous la forme
\[(r_0',\cdots,r_{j-1}',\cdots,r'_{f-1} )\otimes\eta'\]
avec $r_{j-1}'\leq 1$, $r_{j}'=r_j-1$ et $r_i'=r_i$ pour tout $i\notin\{j-1,j\}$. Le lemme \ref{lemma-Breuil} impliqu'alors que,
si $\omega'$ est un sous-quotient irréductible de
$\Ind_I^K\Pi(\chi_{\sigma}\alpha^{-p^{j-1}(t+1)})$, $W_{\omega'}$
contient $U(\sigma')$ si et seulement si $J(\sigma')\subseteq
J(\omega')\cup\{j-1,j-2\}$. D'autre part, si
$\overline{W_{\omega}}$ désigne l'image de $W_{\omega}$ dans
$\Ind_I^K\Pi(E_{j-1}(\chi\alpha^{-p^{j-1}(t+1)},r_{j-1}-t))$, le
même lemme implique que $\overline{W_{\omega}}$ contient
$U(\omega')$ si et seulement si $J(\omega')\subseteq
J(\omega)\cup\{j-2\}$. En somme, le corollaire est prouv\'e.
\end{proof}


\begin{exem}\label{exemple-1}
Conservons les notations du corollaire \ref{corollary-W-U(tau)} et prenons $\omega$ comme le sous-quotient irr\'eductible $\Ind_I^K\Pi(\chi_{\sigma}\alpha^{-p^{j-1}(r_{j-1}+1)})$ tel que $J(\omega)=\emptyset$. Supposons de plus $r_{j-2}\neq p-1$. Alors la filtration par le socle de $W_{\omega}$ est de la forme:
\[\begin{array}{cccccccccccccccccccc}
\sigma^{(0)}_{\{j-1\}}&\ligne&\sigma^{(0)}_{J}
&\cdots&\ligne&\sigma^{(t)}_{\{j-1\}}&\ligne&\sigma^{(t)}_{J}&\ligne
&\sigma^{(t+1)}_{\emptyset}&\ligne&\sigma^{(t+1)}_{\{j-2\}}
&\cdots&\ligne&\omega\\
\mid&&\mid&&&\mid&&\mid&&&&&&&\mid\\
\sigma^{(0)}_{\emptyset}&\ligne&\sigma_{\{j-2\}}^{(0)}&\cdots&\ligne
&\sigma_{\emptyset}^{(t)}&\ligne&\sigma^{(t)}_{\{j-2\}}
&&&&&&&\tau_{\{j-1\}}\\
&&&&&&&&&&&&&&\mid\\
&&&&&&&&&&&&&&\tau_{\emptyset},\end{array}\]

o\`u
\begin{enumerate}
\item[--] $t=[r_{j-1}/2]$ et $J=\{j-1,j-2\}$; \vspace{1mm}

\item[--] si $0\leq i\leq r_{j-1}$ et si $*\in\bigl\{\emptyset,\{j-1\},\{j-2\},J\bigr\}$, alors $\sigma^{(i)}_{*}$ d\'esigne l'unique  sous-quotient irr\'eductible de $\Ind_I^K\Pi(\chi_{\sigma}\alpha^{-p^{j-1}i})$ tel que $J(\sigma^{(i)}_{*})=*$;\vspace{1mm}

\item[--] idem pour $\tau_{\emptyset}$ et $\tau_{\{j-1\}}$ mais en tant que sous-quotients irr\'eductibles de $\Ind_I^K\Pi(\chi_{\tau})$; en fait, on a $\tau_{\emptyset}=\tau$ et
$\tau_{\{j-1\}}=\sigma$;\vspace{1mm}

\item[--] on doit oublier $\sigma_{\{j-2\}}^{(t)}$ dans ce diagramme si $r_{j-1}$ est pair.
\end{enumerate}

De mani\`ere explicite, lorsque $f\geq 3$, on a:
\begin{enumerate}
\item[--] si $0\leq i\leq t$, $\sigma_{\emptyset}^{(i)}=(r_0,\cdots,r_{j-2},r_{j-1}-2i,r_{j},\cdots,r_{f-1})\otimes{\det}^{p^{j-1}i}\eta$\vspace{1mm}

\item[--] si $0\leq i\leq t$, $\sigma_{\{j-2\}}^{(i)}=(r_0\cdots,p-2-r_{j-2},r_{j-1}-1-2i,r_{j},\cdots,r_{f-1})\otimes
    {\det}^{p^{j-1}i+p^{j-2}(r_{j-2}+1)}\eta$ \vspace{1mm}


\item[--] si $0\leq i\leq t$, $\sigma_{\{j-1\}}^{(i)}=(r_0,\cdots,r_{j-2},p-2-r_{j-1}+2i,r_{j}-1,\cdots,r_{f-1})\otimes
    {\det}^{p^{j-1}(r_{j-1}+1-i)}\eta$\vspace{1mm}

\item[--] si $0\leq i\leq t$, $\sigma_{J}^{(i)}=(r_0,\cdots,p-2-r_{j-2},p-1-r_{j-1}+2i,r_j-1,\cdots,r_{f-1})\otimes
    {\det}^{p^{j-1}(r_{j-1}-i)+p^{j-2}(r_{j-2}+1)}\eta$\vspace{1mm}

\item[--] si $t+1\leq i\leq r_{j-1}$, $\sigma_{\emptyset}^{(i)}=(r_0,\cdots,r_{j-2},p+r_{j-1}-2i,r_{j}-1,\cdots,r_{f-1})\otimes{\det}^{p^{j-1}i}\eta$ \vspace{1mm}

\item[--] si $t+1\leq i\leq r_{j-1}$, $\sigma_{\{j-2\}}^{(i)}=(r_0,\cdots,p-2-r_{j-2},p+r_{j-1}-1-2i,r_{j}-1,\cdots,r_{f-1})
    \otimes{\det}^{p^{j-1}i+p^{j-2}(r_{j-2}+1)}\eta$.
\end{enumerate}
On en d\'eduit que $\sigma_{\{j-1\}}^{(0)}\cong\omega$ et pour tout $1\leq i\leq t$: \[\sigma_{\{j-1\}}^{(i)}\cong\sigma_{\emptyset}^{(r_{j-1}-i)},\ \ \sigma_{J}^{(i-1)}\cong\sigma_{\{j-2\}}^{(r_{j-1}+1-i)}.\]


On peut de m\^eme d\'ecrire explicitement les poids $\sigma^{(i)}_{*}$ lorsque $f=2$. 
\end{exem}

\section{Combinatoire des diagrammes de Diamond génériques}

Dans ce paragraphe, on rappelle la construction des diagrammes de
Diamond (\cite{BP}) et on en fournit des propriétés combinatoires.

\subsection{Diagrammes de Diamond génériques}\label{subsection-PoidsDiamond}
Les $\bFp$-représentations continues de dimension 2 de
$\Gal(\overline{\Q}_p/F)$ sont classifiées à l'aide des
caractères fondamentaux de Serre $\omega_d$ ($d\geq 1$).
De manière explicite:
\begin{prop}\label{prop-galoisside}
Soit $\rho:\Gal(\bQp/F)\ra\GL_2(\bFp)$ une  représentation continue. Alors $\rho$ est de l'une des formes suivantes:

(i) $\rho$ est réductible et
\[\rho|_{I(\bQp/F)}\cong\matr{\omega_f^{\sum_{i=0}^{f-1}p^i(r_i+1)}}*0{1}\otimes\eta\]
où\ $\eta$ est un caractère lisse de $I(\bQp/F)$ qui se prolonge à
$\Gal(\bQp/F)$ et les $r_i$ sont des entiers entre $-1$ et $p-2$
tels que $(r_0,\cdots,r_{f-1})\neq(p-2,\cdots,p-2)$.

(ii) $\rho$ est irréductible et
\[\rho|_{I(\bQp/F)}\cong\matr{\omega_{2f}^{\sum_{i=0}^{f-1}p^i(r_i+1)}}00
{\omega_{2f}^{q\sum_{i=0}^{f-1}p^i(r_i+1)}}\otimes\eta\] où\
$\eta$ est un caractère lisse de $I(\bQp/F)$ qui se prolonge à
$\Gal(\bQp/F)$ et les $r_i$ sont des entiers tels que $0\leq
r_0\leq p-1$, $-1\leq r_i\leq p-2$ pour $i>0$ et
$(r_0,\cdots,r_{f-1})\neq (p-2,\cdots,p-2)$.
\end{prop}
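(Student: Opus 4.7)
The plan is standard: reduce to the tame inertia, classify its $\bFp$-characters, and normalize.

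First I would analyze the wild inertia subgroup $P \subset I(\bQp/F)$, a pro-$p$ group normal in $\Gal(\bQp/F)$. In case (ii), the subspace of $P$-fixed vectors $\rho^P$ is nonzero (pro-$p$-group action on a nonzero finite-dimensional $\bFp$-vector space) and Galois-stable (by normality of $P$), so irreducibility forces $\rho^P = \rho$; hence $P$ acts trivially and $\rho|_I$ factors through the tame quotient $I^t$. In case (i), an invariant line gives an upper-triangular form with diagonal characters $\chi_1, \chi_2 : I(\bQp/F) \to \bFp^\times$, which also factor through $I^t$ since $\bFp^\times$ has order prime to $p$.

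Next I would invoke the standard description of tame characters: every continuous character $I^t \to \bFp^\times$ is a power of some fundamental character $\omega_n$. A character of $I(\bQp/F)$ extends to $\Gal(\bQp/F)$ if and only if it is stable under the $F$-Frobenius $\varphi$; since $\varphi$ raises $\omega_n$ to the $q$-th power (with $q = p^f$) and $\omega_f = \omega_{2f}^{q+1}$, the characters extending to Galois are precisely the powers of $\omega_f$.

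For case (i), both diagonal characters $\chi_1, \chi_2$ extend to Galois as the diagonal entries of $\rho$. Taking $\eta$ to be the extension of $\chi_2$, the ratio $\chi_1 \chi_2^{-1}$ is a power of $\omega_f$; write it as $\omega_f^m$, so that $\rho|_I$ has the desired upper-triangular form with diagonal $(\omega_f^m, 1) \otimes \eta$. The base-$p$ expansion $m \equiv \sum_{i=0}^{f-1} p^i(r_i+1) \pmod{p^f - 1}$ with $r_i \in \{-1, \ldots, p-2\}$ is unique once one rules out the redundant tuple where all $r_i = p-2$ (which gives $p^f - 1 \equiv 0$, the same residue as all $r_i = -1$).

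For case (ii), since $P$ acts trivially, $\rho|_{I^t}$ is semisimple abelian, decomposing as $\psi_1 \oplus \psi_2$. Frobenius conjugation permutes $\{\psi_1, \psi_2\}$; it cannot fix either, otherwise the corresponding eigenspace would be $\Gal$-stable and contradict irreducibility, so $\psi_2 = \psi_1^\varphi$. Writing $\psi_1 = \omega_{2f}^m \cdot \eta$ with $\eta$ a power of $\omega_f$ (absorbing the Frobenius-stable part into $\eta$) and using $\omega_{2f}^\varphi = \omega_{2f}^q$ yields $\psi_2 = \omega_{2f}^{qm} \cdot \eta$. The main technical point is then to show that $m$ can always be chosen in the range prescribed by tuples $(r_0, \ldots, r_{f-1})$ with $r_0 \in \{0, \ldots, p-1\}$, $r_i \in \{-1, \ldots, p-2\}$ for $i > 0$, and not all $r_i = p-2$: this uses the freedom to twist $\eta$ by $\omega_f^n = \omega_{2f}^{n(q+1)}$ (shifting $m$ by multiples of $q+1$) together with the symmetry $m \leftrightarrow qm$ from swapping $\psi_1, \psi_2$ (which modulo $q+1$ becomes $m \leftrightarrow -m$). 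A direct enumeration then shows that every residue modulo $q+1$ other than $0$ (the case $\psi_1 = \psi_2$, excluded by irreducibility) is attained by some tuple in the allowed range, with $m = q-1$ being redundant since the swapped representation is already captured by $m = 2$.
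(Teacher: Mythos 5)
The paper gives no argument of its own here: the proof is just a citation to [Br2, corollaire 2.9]. Your proof is the standard argument underlying that reference --- trivialize the wild inertia (automatic on the diagonal characters in the reducible case since $\bFp^{\times}$ has no $p$-torsion, and forced by irreducibility plus normality of the pro-$p$ wild inertia subgroup in the irreducible case), classify tame characters via Serre's fundamental characters and the Frobenius action $\omega_n\mapsto\omega_n^q$, then normalize the exponent --- and it is correct. The one step you only sketch is the final enumeration in case (ii), namely that every nonzero class $\{\pm m\}\bmod(q+1)$ is attained by an admissible tuple; this does check out: the admissible values of $m=\sum_i p^i(r_i+1)$ run exactly over $[1,q]\setminus\{q-1\}$, which still covers every nonzero residue modulo $q+1$ since the excluded value $q-1\equiv -2$ is recovered from $m=2$ after twisting $\eta$ by $\omega_f^{-1}$.
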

\begin{proof}
Voir \cite[corollaire 2.9]{Br2}.
\end{proof}

\begin{defn}(\cite[\S11]{BP})\label{definition-generique}
Conservons les notations de la proposition
\ref{prop-galoisside}. La représentation $\rho$ est dite
\emph{générique} si $0\leq r_i\leq p-3$, et
$(r_0,\cdots,r_{f-1})\notin\{(0,\cdots,0),(p-3,\cdots,p-3)\}$ dans
le cas réductible, ou si $1\leq r_0\leq p-2$ et $0\leq r_i\leq p-3$
pour $i>0$ dans le cas irréductible.
\end{defn}

À la représentation $\rho$, on peut associer un ensemble $\cD(\rho)$ de poids,
appelés \emph{poids de Diamond} (\cite{BDJ}). Si
$\rho$ est de plus semi-simple et générique, on
peut décrire $\cD(\rho)$ comme suit (\cite[\S11]{BP}). \vv

Soit $(x_0,\cdots,x_{f-1})$ $f$ variables. On commence par définir
 deux ensembles $\cRDx$ et $\cIDx$ de $f$-uplets
$\lambda=(\lambda_0(x_0),\cdots,\lambda_{f-1}(x_{f-1}))$ où
$\lambda_i(x_i)\in\Z\pm x_i$. On convient que $x_f=x_0$ et
$\lambda_f(x_f)=\lambda_0(x_0)$ dans ce qui suit.\vv
\begin{itemize}
\item[--] Si $f=1$, $\cRDx:=\{x_0,p-3-x_0\}$ et
$\cIDx:=\{x_0,p-1-x_0\}$.\vv

\item[--] Si $f>1$, $\cRDx$ est l'ensemble des $\lambda$ tels que:\vv
\begin{itemize}
\item[(i)] $\lambda_i(x_i)\in\{x_i,x_i+1,p-2-x_i,p-3-x_i\}$ pour
tout $i\in\{0,\cdots,f-1\}$\vspace{1mm}

\item[(ii)] si $\lambda_i(x_i)\in\{x_i,x_i+1\}$, alors
$\lambda_{i+1}(x_{i+1})\in\{x_{i+1},p-2-x_{i+1}\}$\vspace{1mm}

\item[(iii)] si $\lambda_i(x_i)\in\{p-2-x_i,p-3-x_i\}$, alors
$\lambda_{i+1}(x_{i+1})\in\{p-3-x_{i+1},x_{i+1}+1\}$\vv\vv
\end{itemize}

et $\cIDx$ est l'ensemble des $\lambda$ tels que:\vv
\begin{itemize}
\item[(i)] $\lambda_0(x_0)\in\{x_0,x_0-1,p-2-x_0,p-1-x_0\}$ et
$\lambda_i(x_i)\in\{x_i,x_i+1,p-2-x_i,p-3-x_i\}$ si
$i>0$\vspace{1mm}

\item[(ii)] si $i>0$ et $\lambda_i(x_i)\in\{x_i,x_i+1\}$ (resp.
$\lambda_0(x_0)\in\{x_0,x_0-1\}$), alors
$\lambda_{i+1}(x_{i+1})\in\{x_{i+1},p-2-x_{i+1}\}$\vspace{1mm}

\item[(iii)] si $0<i<f-1$ et
$\lambda_i(x_i)\in\{p-2-x_i,p-3-x_i\}$, alors
$\lambda_{i+1}(x_{i+1})\in\{p-3-x_{i+1},x_{i+1}+1\}$\vspace{1mm}

\item[(iv)] si $\lambda_0(x_0)\in\{p-1-x_0,p-2-x_0\}$, alors
$\lambda_1(x_1)\in \{p-3-x_1,x_1+1\}$\vspace{1mm}

\item[(v)] si
$\lambda_{f-1}(x_{f-1})\in\{p-2-x_{f-1},p-3-x_{f-1}\}$, alors
$\lambda_0(x_0)\in\{p-1-x_0,x_0-1\}$.\vspace{4mm}
\end{itemize}
\end{itemize}

 Pour $\lambda\in \cRDx$ ou $\lambda\in\cIDx$, on
pose\vspace{1mm}
\begin{itemize}
\item[]
$e(\lambda):=\frac{1}{2}\big(\summ_{i=0}^{f-1}p^i(x_i-\lambda_i(x_i))\big)$
\ \ si $\lambda_{f-1}(x_{f-1})\in\{x_{f-1},x_{f-1}+1\}$

\item[]
$e(\lambda):=\frac{1}{2}\big(p^f-1+\summ_{i=0}^{f-1}p^i(x_i-\lambda_i(x_i))\big)$
\ \ sinon.
\end{itemize}\vv

\begin{lemma}\label{lemma-galois-serre-irred}
Supposons $\rho$ semi-simple et générique. Si $\rho$
vérifie (i) (resp. (ii)) de la proposition
\ref{prop-galoisside}, alors $\cD(\rho)$ est exactement
l'ensemble des poids
\[(\lambda_0(r_0),\cdots,\lambda_{f-1}(x_{f-1}))\otimes{\det}^{e(\lambda)(r_0,\cdots,r_{f-1})}\eta\]
pour $\lambda\in\cRDx$ (resp. $\lambda\in\cIDx$).
\end{lemma}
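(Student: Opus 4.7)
Ce lemme est essentiellement une reformulation, dans le cas semi-simple g\'en\'erique, de la d\'efinition originale de $\cD(\rho)$ donn\'ee dans \cite{BDJ}. Le plan est donc de ramener l'\'enonc\'e \`a la description combinatoire explicite faite dans \cite[\S11]{BP}, en v\'erifiant que les ensembles $\cRDx$ et $\cIDx$ d\'efinis ci-dessus correspondent bien aux param\'etrages des poids de Diamond apparaissant dans la litt\'erature.

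Plus pr\'ecis\'ement, je proc\'ederais en deux temps selon que $\rho$ est r\'eductible scind\'ee ou irr\'eductible. Dans le cas (i), $\rho|_{I(\bQp/F)}$ est somme directe de deux caract\`eres dont les exposants sur $\omega_f$ sont $\sum_{i=0}^{f-1}p^i(r_i+1)$ et $0$ (\`a torsion par $\eta$ pr\`es). D'apr\`es \cite{BDJ}, un poids $(s_0,\ldots,s_{f-1})\otimes\eta'$ appartient \`a $\cD(\rho)$ si et seulement si les caract\`eres d'inertie associ\'es (via l'action sur $\omega_f$) co\"incident avec ceux de $\rho|_{I(\bQp/F)}$ et $\rho|_{I(\bQp/F)}\otimes\omega_f^{-1}$ apr\`es passage \`a la somme altern\'ee de Deligne--Lusztig, ce qui se traduit en une \'equation modulo $p^f-1$ sur les exposants. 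Il s'agit donc de v\'erifier que les $f$-uplets $\lambda\in\cRDx$ param\`etrent exactement les solutions de cette \'equation sous la contrainte $0\leq\lambda_i(r_i)\leq p-1$ et que la torsion par $\det^{e(\lambda)(r_0,\ldots,r_{f-1})}$ donne le bon caract\`ere central. Le cas (ii), traitant $\rho$ irr\'eductible, se traite de la m\^eme fa\c{c}on en rempla\c{c}ant $\omega_f$ par $\omega_{2f}$ : la condition suppl\'ementaire que les indices $i>0$ satisfont $-1\leq r_i\leq p-2$ tandis que $0\leq r_0\leq p-1$ explique l'asym\'etrie entre $\lambda_0$ et les $\lambda_i$ ($i>0$) dans la d\'efinition de $\cIDx$, en particulier les clauses (iv) et (v).

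Le c\oe ur du travail consiste \`a examiner, indice par indice, la combinatoire modulo $p^f-1$ (ou $p^{2f}-1$). L'hypoth\`ese de g\'en\'ericit\'e garantit que l'on peut ignorer les \og collisions \fg\ (i.e., que l'in\'egalit\'e $0\leq\lambda_i(r_i)\leq p-1$ est automatique lorsque $\lambda\in\cRDx$ ou $\cIDx$), de sorte que le nombre d'\'el\'ements de $\cD(\rho)$ est bien $2^f$, comme attendu. La v\'erification que la torsion $\det^{e(\lambda)(r_0,\ldots,r_{f-1})}\eta$ donne le caract\`ere central correct s'effectue en regardant la partie \og paire \fg\ de l'exposant obtenu apr\`es remont\'ee : c'est la raison d'\^etre du facteur $\frac{1}{2}$ et du terme suppl\'ementaire $p^f-1$ dans la d\'efinition de $e(\lambda)$ suivant la parit\'e de $\lambda_{f-1}(x_{f-1})$.

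L'obstacle principal est purement organisationnel : il s'agit de traduire, sans erreur de signe ni d\'ecalage d'indice, la condition cohomologique de \cite{BDJ} en la r\'ecurrence combinatoire d\'efinissant $\cRDx$ et $\cIDx$. Comme cette traduction a d\'ej\`a \'et\'e faite dans \cite[\S11]{BP}, je me contenterais en pratique d'en d\'eduire l'\'enonc\'e par simple citation, en se ramenant au lemme 11.2 et \`a la proposition 11.4 (ou leurs analogues) de \cite{BP}.
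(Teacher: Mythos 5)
Votre preuve aboutit exactement \`a la m\^eme conclusion que celle du papier, qui se r\'eduit \`a la citation \og Voir \cite[lemmes 11.2, 11.4]{BP} \fg : votre esquisse d\'ecrit correctement le contenu de ces r\'esultats (traduction de la condition de \cite{BDJ} en la combinatoire de $\cRDx$ et $\cIDx$) avant de vous ramener, comme le papier, \`a cette m\^eme r\'ef\'erence. C'est donc essentiellement la m\^eme approche, simplement plus d\'etaill\'ee dans sa motivation.
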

\begin{proof}
Voir  \cite[lemmes 11.2, 11.4]{BP}.
\end{proof}

On peut
identifier l'ensemble $\cRDx$ (resp. $\cIDx$) avec l'ensemble des
sous-ensembles $\cS$ de $\{0,\cdots,f-1\}$ comme suit:\vspace{1mm}
\begin{itemize}
\item[--] pour $\lambda\in\cRDx$, on pose $i\in\cS$ si et
seulement si $\lambda_i(x_i)\in\{p-3-x_i,x_i+1\}$\vspace{1mm}

\item[--] pour $\lambda\in\cIDx$, on pose $0\in\cS$ si et
seulement si $\lambda_0(x_0)\in\{p-1-x_0,x_0-1\}$ et, si $i>0$ on
pose $i\in \cS$ si et seulement si
$\lambda_i(x_i)\in\{p-3-x_i,x_i+1\}$.
\end{itemize}\vspace{1mm}
Si $\lambda\in\cRDx$ (resp. $\cIDx$), et si $\sigma$ est le poids de Diamond correspondant (par le lemme \ref{lemma-galois-serre-irred}), on note $\cS_{\lambda}$ le
sous-ensemble de $\{0,\cdots,f-1\}$ qui lui est associé ci-dessus et on pose:
\begin{equation}\label{equation-ell}\ell(\sigma):=|S_{\lambda}|.
\end{equation}
\begin{lemma}\label{combination-corollary}
 Si $\lambda,\lambda'\in\cRDx$ (resp. $\cIDx$) sont tels qu'il existe
$j\in\{0,\cdots,f-1\}$ vérifiant:
\[\cS_{\lambda}\cap(\{0,\cdots,f-1\}\backslash\{j-1,j\})=\cS_{\lambda'}
\cap(\{0,\cdots,f-1\}\backslash\{j-1,j\}),\] alors
$\lambda_i(x_i)=\lambda_i'(x_i)$ pour tout $i\notin\{j-2,j-1,j\}$ (où l'on identifie $k\in\{j-2,j-1\}$ avec $k+f$ si $k<0$).
\end{lemma}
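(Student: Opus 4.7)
My plan is to reduce the lemma to a single combinatorial observation: in both $\cRDx$ and $\cIDx$, the coordinate $\lambda_i(x_i)$ is determined by the pair of bits $(\epsilon_i,\epsilon_{i+1})$, where $\epsilon_k \in \{0,1\}$ is the indicator of $k \in \cS_{\lambda}$ and indices are read modulo $f$. Granting this local dependence, the lemma is one line: the hypothesis gives $\epsilon_i = \epsilon_i'$ for all $i \notin \{j-1,j\}$, so the equality $(\epsilon_i,\epsilon_{i+1}) = (\epsilon_i',\epsilon_{i+1}')$ holds precisely when both $i$ and $i+1$ avoid $\{j-1,j\}$, i.e.\ exactly when $i \notin \{j-2,j-1,j\}$ mod $f$; for such $i$ one concludes $\lambda_i(x_i) = \lambda_i'(x_i)$.

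To establish the local dependence, I would first partition the possible values of $\lambda_i(x_i)$ into a \emph{positive block} (namely $\{x_i,x_i+1\}$ in $\cRDx$, or $\{x_0,x_0-1\}$ at the boundary $i=0$ of $\cIDx$) and a \emph{negative block} (namely $\{p-2-x_i,p-3-x_i\}$, respectively $\{p-1-x_0,p-2-x_0\}$). The transition rules --- (ii)--(iii) of $\cRDx$, and (ii)--(v) of $\cIDx$ --- then translate into the clean statement: $\lambda_i$ lies in its negative block if and only if $\epsilon_{i+1} = 1$. Within each block, direct inspection of the identification $\lambda \leftrightarrow \cS_{\lambda}$ given in the text shows that $\epsilon_i = 1$ selects the ``shifted'' value ($x_i+1$ or $p-3-x_i$ in $\cRDx$; and analogously at $i=0$ in $\cIDx$).

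Combining these two observations, $\lambda_i(x_i)$ is read off from the pair $(\epsilon_i,\epsilon_{i+1})$ by a fixed lookup table (four cases, or the analogous table at $i=0$ of $\cIDx$), which completes the reduction and hence the proof.

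The only obstacle worth mentioning is the bookkeeping at the boundary indices $i=0$ and $i=f-1$ of $\cIDx$, where the positive/negative blocks have a slightly different shape and where rules (iv)--(v) replace the would-be rotational symmetry of $\cRDx$. These rules are designed precisely so that the principle ``$\lambda_i$ is negative iff $\epsilon_{i+1} = 1$'' continues to hold uniformly; verifying this is a short case check against the definitions, which I do not expect to be difficult but which does need to be carried out explicitly to legitimate the uniform treatment above.
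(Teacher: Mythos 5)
Your argument is correct and is precisely the verification the paper leaves implicit (its proof is just ``C'est clair \`a partir de la d\'efinition''): the key point, that $\lambda_i(x_i)$ is determined by the pair of indicators $(\epsilon_i,\epsilon_{i+1})$ via the block/shift dichotomy, checks out in both the $\cRDx$ case and the $\cIDx$ case, including the boundary rules (iv)--(v) at $i=0$ and $i=f-1$. No gap; your write-up simply makes explicit what the paper declares obvious.
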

\begin{proof}
C'est clair à partir de la définition.
\end{proof}
\vspace{2mm}



Rappelons (\cite[\S9]{BP}) qu'un \emph{diagramme} est
par définition un triplet $(D_0,D_1,r)$ où\ $D_0$ est une
représentation lisse de $KZ$, $D_1$ est une représentation lisse
de $N$ et $r: D_1\ra D_0$ est un morphisme $IZ$-équivariant. On
définit des morphismes entre deux diagrammes de manière évidente et
on note $\DIAG$ la catégorie qui en résulte.

Supposons que la représentation $\rho$ est telle que $p\in Z$ agisse trivialement sur $\det(\rho)$. On lui associe une famille de
diagrammes $D=(D_0(\rho),D_1(\rho),r)$ comme suit
(\cite[\S13]{BP}):\vv

(i) $D_0(\rho)$ est la plus grande représentation de $\GL_2(\F_q)$ sur
$\bFp$ (pour l'inclusion) telle que $\rsoc_{\GL_2(\F_q)}
D_0(\rho)=\oplus_{\sigma\in\cD(\rho)}\sigma$ et telle que chaque
$\sigma\in\cD(\rho)$ n'apparaisse qu'une seule fois dans
$D_0(\rho)$; on la voit comme repr\'esentation de $KZ$ en faisant agir $K_1$ et $p\in Z$ trivialement; \vspace{1mm}

(ii) $D_1(\rho)$ est l'unique représentation de $N$ sur
$D_0(\rho)^{I_1}$ qui étend l'action de $IZ$;\vspace{1mm}

(iii) $r:D_1(\rho) \hookrightarrow D_0(\rho)$ est une injection
$IZ$-équivariante arbitraire.\vspace{1mm}

Remarquons qu'en
général, il y a un nombre infini d'injections $r$ à isomorphisme
près.\vv

Nous aurons besoin de la description explicite de $D_0(\rho)$.\vv

Soit $(y_0,\cdots,y_{f-1})$ $f$ variables. On définit
un ensemble $\cIy$ de $f$-uplets
$\mu:=(\mu_0(y_0),\cdots,\mu_{f-1}(y_{f-1}))$ avec
$\mu_i(y_i)\in\Z\pm y_i$ comme suit. Si $f=1$,
$\mu_0\in\{y_0,p-1-y_0,p-3-y_0\}$. Si $f>1$,
alors:\vspace{2mm}

(i) $\mu_i(y_i)\in\{y_i,y_i-1,y_i+1,p-2-y_i,p-3-y_i,p-1-y_i\}$
pour $i\in\{0,\cdots,f-1\}$\vspace{2mm}

(ii) si $\mu_i(y_i)\in\{y_i,y_i-1,y_i+1\}$, alors
$\mu_{i+1}(y_{i+1})\in\{y_{i+1},p-2-y_{i+1}\}$\vspace{2mm}

(iii) si $\mu_i(y_i)\in\{p-2-y_i,p-3-y_i,p-1-y_i\}$, alors
$\mu_{i+1}(y_{i+1})\in\{y_{i+1}-1,y_{i+1}+1,p-3-y_{i+1},p-1-y_{i+1}\}$
\vspace{2mm}\\
avec les conventions $y_f:=y_0$ et
$\mu_{f}(y_f):=\mu_0(y_0)$.\vv

\begin{defn}(\cite[\S4]{BP})\label{definition-compatible}
Soient $\mu, \mu'\in\cIy$. On dit que $\mu$ et
$\mu'$ sont \emph{compatibles} si, pour tout $0\leq i\leq f-1$, $\mu_i(y_i)$ et $\mu'_i(y_i)$ appartiennent \`a la fois soit \`a $\{y_i,p-2-y_i,y_i+1,p-3-y_i\}$, soit \`a $\{y_i,p-2-y_i,y_i-1,p-1-y_i\}$.
\end{defn}\vv

Pour $\lambda\in \cRDx$ (resp. $\cIDx$), on définit
l'élément $\mu_{\lambda}\in\cIy$ comme suit:\vspace{1mm}

(i) $\mu_{\lambda,i}(y_i):=p-1-y_i$ si
$\lambda_i(x_i)\in\{p-3-x_i,x_i\}$ (resp. si $i>0$ ou, si $i=0$ et
$\lambda_0(x_0)\in\{p-2-x_0,x_0-1\}$)\vspace{1mm}

(ii) $\mu_{\lambda,i}(y_i):=p-3-y_i$ si
$\lambda_i(x_i)\in\{p-2-x_i,x_i+1\}$ (resp. si $i>0$ ou, si $i=0$
et $\lambda_0(x_0)\in\{p-1-x_0,x_0\}$). \vspace{2mm}
\begin{theorem}\label{theorem-BP-D0(rho)}
On conserve les notations précédentes.

(i) $D_0(\rho)$ se décompose en une somme directe:
\[D_0(\rho)=\bigoplus_{\sigma\in\cD(\rho)}D_{0,\sigma}(\rho)\]
avec $\rsoc_{K}D_{0,\sigma}(\rho)\cong \sigma$.

(ii) Soient $\sigma\in\cD(\rho)$ et $\lambda$ le $f$-uplet correpondant. Alors les sous-quotients irréductibles de $D_{0,\sigma}(\rho)$ sont
exactement les poids:
\[\bigl(\mu_0(\lambda_0(r_0)),\cdots,\mu_{f-1}(\lambda_{f-1}(r_{f-1}))\bigr)\otimes
{\det}^{e(\mu\circ\lambda)(r_0,\cdots,r_{f-1})}\eta\] pour
$\mu\in\cIy$ tels que $\mu$ et $\mu_{\lambda}$ soient compatibles
(cf. définition \ref{definition-compatible}) en oubliant les
poids tels qu'il existe $i$ vérifiant $\mu_i(\lambda_i(r_i))<0$ ou
$\mu_i(\lambda_i(r_i))>p-1$. En particulier,
$D_0(\rho)$ est de multiplicité 1.

(iii) Supposons que $\rho$ est réductible. Alors, en tant que diagramme, $D(\rho,r)$ se
décompose en une somme directe de la forme (on pr\'ecise que $f$ est le degr\'e de $F$ sur $\Q_p$): \[D(\rho,r)=\bigoplus_{\ell=0}^{f}D_{\ell}(\rho,r)=\bigoplus_{\ell=0}^{f}\bigl(\oplus_{\ell(\sigma)
=\ell}D_{0,\sigma}(\rho),\oplus_{\ell(\sigma)=\ell}D_{1,\sigma}(\rho),r\bigr).\]
\end{theorem}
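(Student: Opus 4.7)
Le plan est de traiter les trois \'enonc\'es en s'appuyant de pr\`es sur \cite[\S\,13--15]{BP}. Pour (i) et (ii), je construirais d'abord, pour chaque $\sigma\in\cD(\rho)$ de $f$-uplet correspondant $\lambda\in\cRDx$ (ou $\cIDx$), la sous-repr\'esentation $D_{0,\sigma}(\rho)$ comme la plus grande sous-$K/K_1$-repr\'esentation de socle $\sigma$, de multiplicit\'e $1$, et dont tout sous-quotient irr\'eductible s'\'ecrit sous la forme annonc\'ee
\[\bigl(\mu_0(\lambda_0(r_0)),\cdots,\mu_{f-1}(\lambda_{f-1}(r_{f-1}))\bigr)\otimes{\det}^{e(\mu\circ\lambda)(r_0,\cdots,r_{f-1})}\eta\]
pour un $\mu\in\cIy$ compatible avec $\mu_{\lambda}$. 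L'existence proc\'ederait par r\'ecurrence sur la longueur de Loewy: partant de $\sigma$, on rajoute \`a l'\'etape $k+1$ les poids $\tau$ tels que $\Ext^1_K(\tau,\sigma')\neq 0$ pour un $\sigma'$ au niveau $k$, en utilisant la proposition \ref{prop-extension-K} (g\'en\'eralisant \cite[corollaire 5.6]{BP}) pour \'enum\'erer les candidats et en ne retenant que ceux compatibles avec $\mu_{\lambda}$, afin de garantir la multiplicit\'e $1$.

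Je poserais ensuite $D'(\rho):=\bigoplus_{\sigma\in\cD(\rho)}D_{0,\sigma}(\rho)$, qui est par construction de multiplicit\'e $1$ et de socle $\bigoplus_{\sigma\in\cD(\rho)}\sigma$. La propri\'et\'e universelle de $D_0(\rho)$ fournit une injection $D'(\rho)\hookrightarrow D_0(\rho)$, et l'\'egalit\'e r\'esulte du fait qu'on ne peut ajouter aucun poids suppl\'ementaire sans violer la multiplicit\'e $1$: tout tel ajout forcerait, d'apr\`es \cite[corollaire 5.6]{BP}, une r\'ep\'etition dans un $D_{0,\sigma}(\rho)$ d\'ej\`a construit. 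Les \'enonc\'es (i) et (ii) en d\'ecoulent simultan\'ement, et l'identification combinatoire des sous-quotients en (ii) se fait par lecture directe des conditions d\'efinissant $\cIy$.

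Pour (iii), le point cl\'e est que la fonction $\ell(\sigma):=|\cS_\lambda|$ d\'efinie en (\ref{equation-ell}) induit une partition de $\cD(\rho)$ en $f+1$ niveaux dans le cas r\'eductible. La d\'ecomposition au niveau de $D_0(\rho)$ s'obtient alors par simple regroupement des composantes de (i) selon $\ell(\sigma)$. Pour la d\'ecomposition en tant que diagramme, il faut en outre v\'erifier que $N$ pr\'eserve ce regroupement lors de son action sur $D_1(\rho)=D_0(\rho)^{I_1}$: autrement dit, que $\Pi$ envoie les caract\`eres eigenvecteurs de $D_{0,\sigma}(\rho)^{I_1}$ sur ceux d'un $D_{0,\sigma'}(\rho)^{I_1}$ avec $\ell(\sigma')=\ell(\sigma)$. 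Ce calcul repose sur l'\'etude de l'action du twist $\chi\mapsto\chi^s$ au niveau des $f$-uplets $\lambda\in\cRDx$, combin\'ee avec le lemme \ref{combination-corollary} pour contr\^oler les variations locales de $\cS_\lambda$.

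Le principal obstacle sera la v\'erification combinatoire dans (iii), en particulier la stabilit\'e du niveau $\ell$ sous l'action de $\Pi$, qui exige de traiter cas par cas selon la position des indices dans $\cS_\lambda$ et les valeurs des $r_i$. Les \'etapes (i) et (ii), bien que techniquement volumineuses, sont essentiellement une synth\`ese de \cite{BP} et la compatibilit\'e avec $r$ dans (iii) sera automatique une fois la stabilit\'e par $N$ \'etablie.
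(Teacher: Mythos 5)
The paper does not actually prove this statement: it is a recalled result, and the proof given is a bare citation of \cite{BP} (proposition 13.4 pour (i), th\'eor\`eme 14.8 pour (ii), th\'eor\`eme 15.4 (ii) pour (iii)). Your sketch is therefore being measured against the arguments of \cite[\S\S13--15]{BP} rather than against anything in this paper, and as an outline of that strategy it is broadly faithful: build each $D_{0,\sigma}(\rho)$ layer by layer, control the extensions with \cite[corollaire 5.6]{BP}, and reduce (iii) to the invariance of $\ell$ under $\tau\mapsto\delta(\tau)$.

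As a proof, however, the sketch has real gaps. For (i)--(ii), the inductive procedure ``add the $\tau$ with $\Ext^1_K(\tau,\sigma')\neq 0$ and keep only those compatible with $\mu_\lambda$'' does not by itself produce a representation: knowing which extensions are nonzero at each Loewy layer does not show that a single representation exists realizing exactly the compatible $\mu\in\cIy$ and no others (this is the substance of \cite[th\'eor\`eme 14.8]{BP}, proved there via injective envelopes of $\GL_2(\F_q)$-representations and a delicate combinatorial filtration, not by stacking $\Ext^1$'s). Likewise your maximality argument for $D'(\rho)=D_0(\rho)$ --- ``tout ajout forcerait une r\'ep\'etition'' --- asserts precisely what must be proved: the definition of $D_0(\rho)$ only constrains the multiplicity of the \emph{Diamond} weights, whereas the multiplicity-one statement of (ii) concerns all constituents and is a theorem, so you cannot invoke it to rule out enlargements. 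For (iii) you correctly isolate the key verification (that $\Pi$, i.e.\ $\delta$, preserves the level $\ell$), but you do not carry it out; note also that the needed input is the description of $\delta(\tau)$ via $\delta_{\red}\bigl((\cS_\lambda\backslash\cS^-_{\lambda,\tau})\cup\cS^+_{\lambda,\tau}\bigr)$ (lemme \ref{lemma-BP-delta}), which itself rests on the full strength of (ii). Since this theorem is imported wholesale from \cite{BP}, the economical course is to cite it as the paper does; if you do want to reprove it, the existence step in (ii) is where the actual work lies.
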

\begin{proof}
Voir \cite[proposition 13.4]{BP} pour (i), [\emph{ibid.}, théorème 14.8] pour (ii), et [\emph{ibid.}, théorème 15.4 (ii)] pour
(iii).
\end{proof}

On voit du th\'eor\`eme \ref{theorem-BP-D0(rho)} que $D_{0,\sigma}(\rho)$ est de multiplicit\'e 1 pour $\sigma\in\cD(\rho)$. Par cons\'equent, si $\tau$ est un sous-quotient irr\'eductible de $D_{0,\sigma}(\rho)$, il y a une unique sous-repr\'esentation admettant $\tau$ comme cosocle. On note cette repr\'esentation $I(\sigma,\tau)$ (au lieu de $U(\tau)$)pour accentuer que son socle est $\sigma$.

\begin{defn}
Si $S$ est une $K$-représentation de multiplicité 1 et si $\tau$
est un sous-quotient irréductible de $S$, on dit que $\tau^{I_1}$
\emph{a un relèvement} dans $S^{I_1}$, ou que $\tau^{I_1}$ \emph{se relève} dans
$S^{I_1}$, si la surjection $U(\tau)\twoheadrightarrow \tau$
induit une surjection $U(\tau)^{I_1}\twoheadrightarrow\tau^{I_1}$
où\ $U(\tau)\subseteq S$ est l'unique sous-représentation admettant
$\tau$ comme cosocle.
\end{defn}
\begin{lemma}\label{lemma-releve}
Conservons les notations du théorème \ref{theorem-BP-D0(rho)} (ii). Un sous-quotient irréductible $\tau$ de $D_{0,\sigma}(\rho)$ est tel que $\tau^{I_1}$ se relève dans $D_{0,\sigma}(\rho)^{I_1}$ si et seulement si
\[\mu_i(y_i)\in\{p-2-y_i,p-1-y_i,y_i,y_i+1\}\]
pour tout $0\leq i\leq f-1$.
\end{lemma}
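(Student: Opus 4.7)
The plan is to reformulate the lifting condition via r\'eciprocit\'e de Frobenius and then verify it by a combinatorial comparison of constituants de Jordan--H\"older.

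First I would observe that $\tau^{I_1}$ admits a rel\`evement in $D_{0,\sigma}(\rho)^{I_1}$ if and only if there exists a vector $v\in I(\sigma,\tau)^{I_1}$ whose image in the cosocle $\tau$ is non-nulle. Such a $v$ is necessarily a vecteur propre de $\cH$ of caract\`ere $\chi_\tau$; and since $I(\sigma,\tau)$ is the unique sous-$K$-repr\'esentation of $D_{0,\sigma}(\rho)$ admitting $\tau$ as cosocle (by multiplicit\'e $1$ in th\'eor\`eme \ref{theorem-BP-D0(rho)}), the sous-$K$-repr\'esentation engendered by $v$ is forc\'ement $I(\sigma,\tau)$ itself. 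Par r\'eciprocit\'e de Frobenius, this amounts to the existence of a surjection $K$-\'equivariante $\Ind_I^K\chi_\tau\twoheadrightarrow I(\sigma,\tau)$.

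Next I would compare the Jordan--H\"older constituents on both sides. Those of $\Ind_I^K\chi_\tau$ are given by lemme \ref{lemma-Gamma-PS}: writing $\tau=(s_0,\dots,s_{f-1})\otimes\eta_\tau$ with $s_i=\mu_i(\lambda_i(r_i))$, they are the weights with $i$-\`eme exposant in $\{s_i,\, s_i-1,\, p-2-s_i,\, p-1-s_i\}$. Those of $I(\sigma,\tau)$ are indexed, by th\'eor\`eme \ref{theorem-BP-D0(rho)} (ii), by the $\mu'\in\cIy$ compatibles avec $\mu_\lambda$ which sit ``below'' $\mu$ in the partial order inherited from $D_{0,\sigma}(\rho)$. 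An examen cas par cas over the six possible values of $\mu_i(y_i)$ then shows that the stated condition $\mu_i(y_i)\in\{p-2-y_i,p-1-y_i,y_i,y_i+1\}$ for every $i$ is exactly the one making every JH-constituent of $I(\sigma,\tau)$ also a JH-constituent of $\Ind_I^K\chi_\tau$. Indeed the ``d\'ecroissantes'' exceptions $\mu_i(y_i)\in\{y_i-1,p-3-y_i\}$ each allow one to exhibit a $\mu'\leq\mu$ (taking $\mu'_i=y_i$ or $\mu'_i=p-2-y_i$) whose $i$-\`eme exposant equals $\lambda_i(r_i)$ or $p-2-\lambda_i(r_i)$ and differs from $s_i$ by $+1$, producing a weight which cannot appear in $\Ind_I^K\chi_\tau$ and thus obstructing the surjection.

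For the reverse direction, I would show that once the combinatorial condition is met, the JH-inclusion together with the multiplicit\'e $1$ property (of both sides) and the $\Ext^1_K$-annulations of proposition \ref{prop-extension-K} force the existence of the desired surjection, via the unique sous-repr\'esentation de $\Ind_I^K\chi_\tau$ de cosocle $\tau$ (as in lemme \ref{lemma-Prin-filtration}) mapping onto $I(\sigma,\tau)$. An alternative hands-on approach would construct $v$ explicitly by adapting the vector-level formulas of proposition \ref{prop-W-vecteurs} or (\ref{equation-Ej-define-F}) to the setting of $D_{0,\sigma}(\rho)$.

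The main obstacle will be the combinatorial bookkeeping in the second paragraph: one must keep careful track of how the two-tiered description $\mu\circ\lambda$ of weights inside $D_{0,\sigma}(\rho)$ interacts with the one-tiered description via $\cPx$ of weights inside $\Ind_I^K\chi_\tau$, and show that the two ``$-1$'' options $y_i-1$ and $p-3-y_i$ are precisely those producing incompatible exponents, while the four remaining options are absorbed by a suitable $\lambda'\in\cPx$ applied to $(s_0,\dots,s_{f-1})$.
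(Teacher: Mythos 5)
The paper gives no argument here at all: its ``proof'' is the single citation \cite[corollaire 14.10]{BP}, so your attempt is necessarily an independent reconstruction rather than a variant of the paper's route. Your opening reduction is correct and is the right starting point: since $\tau^{I_1}$ is one-dimensional and, by multiplicit\'e $1$, $I(\sigma,\tau)$ is the unique sous-repr\'esentation de $D_{0,\sigma}(\rho)$ de cosocle $\tau$, the lifting condition is equivalent to the existence of a surjection $K$-\'equivariante $\Ind_I^K\chi_\tau\twoheadrightarrow I(\sigma,\tau)$.

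There are, however, two genuine gaps. First, in the necessity direction your proposed witness --- replace $\mu_i(y_i)\in\{y_i-1,\,p-3-y_i\}$ by $\mu_i'(y_i)\in\{y_i,\,p-2-y_i\}$ while keeping the other coordinates --- need not lie in $\cIy$: if $\mu_i(y_i)=y_i-1$, then condition (ii) defining $\cIy$ forces $\mu_{i-1}(y_{i-1})\in\{p-1-y_{i-1},p-2-y_{i-1},p-3-y_{i-1}\}$, and condition (iii) then forbids $\mu_i'(y_i)=y_i$ (same problem for $p-3-y_i$ versus $p-2-y_i$). So the obstructing constituent must be produced more carefully; moreover the description of the constituents of $I(\sigma,\tau)$ as the $\tau_{\mu'}$ with $\mu'$ ``below'' $\mu$ is itself an input from \cite[\S 14]{BP} that is not established in this paper. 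Second, and more seriously, the sufficiency direction is only asserted: containment of constituants de Jordan--H\"older does not produce a surjection, and multiplicit\'e $1$ of the two representations together with the annulations d'$\Ext^1$ of the proposition \ref{prop-extension-K} does not obviously do so either --- what is needed is an inclusion of kernels inside the projective cover $P_\tau$, which is not multiplicity free. The workable route is the one you relegate to an ``alternative'': exhibit explicitly an $I_1$-invariant vecteur propre de $\cH$ de caract\`ere $\chi_\tau$ in $I(\sigma,\tau)$ with non-zero image in the cosocle, by vector computations of the type (\ref{equation-Ej-define-F}) and \cite[lemme 2.7]{BP}; this explicit construction is essentially how the cited result is proved.
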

\begin{proof}
Voir  \cite[corollaire 14.10]{BP}.
\end{proof}

Soient $\tau$ un poids apparaissant dans $D_{0,\sigma}(\rho)$ tel
que $\tau^{I_1}$  se relève dans $D_{0,\sigma}(\rho)^{I_1}$,
et $\mu\in\cIy$ comme dans le théorème \ref{theorem-BP-D0(rho)}
(i). Si $\rho$ est réductible, on définit
\[\begin{array}{l}\cS^-_{\lambda,\tau}=\cS_{\tau}^-:=\{i\in\cS|\
\mu_{i-1}(\lambda_{i-1}(x_{i-1}))\in\{x_{i-1},x_{i-1}+1,p-1-x_{i-1}\}\}\\
\\
\cS^+_{\lambda,\tau}=\cS_{\tau}^+:=\{i\notin\cS|\
\mu_{i-1}(\lambda_{i-1}(x_{i-1}))\in\{p-3-x_{i-1},p-2-x_{i-1},x_{i-1}\}\}.\end{array}\]
Si $\rho$ est irréductible, on définit $\cS^+_{\lambda,\tau}$ et $\cS^-_{\lambda,\tau}$ comme
ci-dessus sauf que $1\in \cS^-_{\lambda,\tau}$ (resp. $\cS^+_{\lambda,\tau}$) si et seulement
si $1\in\cS_{\lambda}$ et $\mu_{0}(\lambda_0(x_0))\in\{x_0-1,x_0,p-x_0\}$
(resp. $1\notin\cS_{\lambda}$ et
$\mu_0(\lambda_0(x_0))\in\{p-2-x_0,p-1-x_{0},x_0+1\}$).\vv

Si $\cS$ est un sous-ensemble de $\{0,\cdots,f-1\}$, on définit
$\delta_{\red}(\cS)$ (resp. $\delta_{\irr}(\cS)$) comme suit:
$i\in\delta_{\red}(\cS)$ si et seulement si $i+1\in\cS$ (resp. si
$i>0$, $i\in\delta_{\irr}(\cS)$ si et seulement si $i+1\in\cS$ et
$0\in\delta_{\irr}(\cS)$ si et seulement si $1\notin\cS$).

\begin{lemma}\label{lemma-BP-delta}
(i) Il existe un unique poids $\delta(\tau)\in\cD(\rho)$ tel que
$\tau^{[s]}\in D_{0,\delta(\tau)}(\rho)$.

(ii) Le poids $\delta(\tau)$ correspond à
$\delta_{\red}((\cS_{\lambda}\backslash \cS^-_{\lambda,\tau})\cup\cS^+_{\lambda,\tau})$ (resp. à
$\delta_{\irr}((\cS_{\lambda}\backslash \cS^-_{\lambda,\tau})\cup\cS^+_{\lambda,\tau})$) si $\rho$ est
réductible (resp. irréductible).
\end{lemma}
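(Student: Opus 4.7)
The plan is to derive (i) immediately from the multiplicity-one property of $D_0(\rho)$ and then to prove (ii) by an explicit computation of $\tau^{[s]}$ followed by a combinatorial identification of the Diamond component containing it.

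For (i), Theorem \ref{theorem-BP-D0(rho)} (ii) asserts that $D_0(\rho)$ is of multiplicity one, so any irreducible weight appears in at most one summand $D_{0,\sigma'}(\rho)$. Uniqueness of $\delta(\tau)$ is therefore automatic as soon as (ii) exhibits some $\sigma' \in \cD(\rho)$ with $\tau^{[s]} \in D_{0,\sigma'}(\rho)$.

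For (ii), I would first write $\tau = (\mu_0(\lambda_0(r_0)),\ldots,\mu_{f-1}(\lambda_{f-1}(r_{f-1}))) \otimes \det^{e(\mu\circ\lambda)(r_0,\ldots,r_{f-1})}\eta$ and then compute $\tau^{[s]}$ using the explicit formula: for $\sigma = (s_0,\ldots,s_{f-1})\otimes\eta_0$ in the generic range, one has $\sigma^{[s]} = (p-1-s_0,\ldots,p-1-s_{f-1})\otimes{\det}^{\sum_i p^i s_i}\eta_0$. The hypothesis furnished by Lemma \ref{lemma-releve} forces $\mu_i(y_i) \in \{y_i,y_i+1,p-2-y_i,p-1-y_i\}$, which is exactly what guarantees that each flipped value $p-1-\mu_i(\lambda_i(r_i))$ is still in the admissible range of the parametrization of Theorem \ref{theorem-BP-D0(rho)} (ii). Consequently, $\tau^{[s]}$ can be rewritten as $(\mu'_0(\lambda'_0(r_0)),\ldots,\mu'_{f-1}(\lambda'_{f-1}(r_{f-1})))\otimes\det^{e(\mu'\circ\lambda')(r_0,\ldots,r_{f-1})}\eta$ for a suitable pair $(\lambda',\mu')$ with $\lambda' \in \cRDx$ (resp. $\cIDx$) and $\mu'$ compatible with $\mu_{\lambda'}$; the Diamond weight $\sigma'$ associated to $\lambda'$ is the candidate for $\delta(\tau)$.

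The combinatorial core is to show $\cS_{\lambda'} = \delta_{\red}((\cS_{\lambda}\setminus \cS^-_{\lambda,\tau})\cup \cS^+_{\lambda,\tau})$ (resp. with $\delta_{\irr}$). For each $i$, one inspects the pair $(\lambda_{i-1}(x_{i-1}),\mu_{i-1}(\lambda_{i-1}(x_{i-1})))$: the two possible shapes of $\lambda_{i-1}$ (encoded by $i-1 \in \cS_\lambda$ or not) combined with the four possible values of $\mu_{i-1}$ dictate simultaneously the shape of $\lambda'_i$ and thus whether $i \in \cS_{\lambda'}$. A direct check of the eight configurations reveals that membership in $\cS_\lambda$ at position $i$ is preserved when passing to $\cS_{\lambda'}$ except exactly when $i \in \cS^-_{\lambda,\tau}$ (removal) or $i \in \cS^+_{\lambda,\tau}$ (addition); the index shift $i \leftrightarrow i-1$ built into clauses (ii)--(iii) of $\cRDx$, $\cIDx$ matches precisely the shift in the definition of $\delta_{\red}$ and $\delta_{\irr}$.

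The principal obstacle is purely bookkeeping: the case analysis is tedious and must be done separately in the reducible and irreducible settings. The irreducible case requires extra care at $i=0$, where clauses (iv)--(v) of $\cIDx$ replace the generic rule, and at $i=1$, where $\cS^\pm_{\lambda,\tau}$ is defined via a different range of values of $\mu_0$; it is precisely this exceptional behaviour at the Frobenius twist that accounts for the substitution of $\delta_{\red}$ by $\delta_{\irr}$. Once the dictionary between the four $\mu_{i-1}$-options and the inclusion/exclusion of $i$ in $\cS_{\lambda'}$ is tabulated, each individual case is a short symbolic verification, and the formula of (ii) drops out.
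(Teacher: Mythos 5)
The paper does not actually prove this lemma: part (i) is quoted from \cite[proposition 13.4]{BP} and part (ii) from \cite[lemme 15.2]{BP}. Your proposal therefore cannot be ``the same proof''; it is a self-contained reconstruction of the cited combinatorics, and as a strategy it is the right one --- indeed essentially the only one available: write $\tau^{[s]}=(p-1-s_0,\cdots,p-1-s_{f-1})\otimes{\det}^{\sum_ip^is_i}\eta_0$, re-express it in the parametrization of the th\'eor\`eme \ref{theorem-BP-D0(rho)} (ii) as a pair $(\lambda',\mu')$, get uniqueness in (i) from the multiplicity-one statement of that theorem, and match $\cS_{\lambda'}$ against $\delta_{\red}\bigl((\cS_{\lambda}\backslash\cS^-_{\lambda,\tau})\cup\cS^+_{\lambda,\tau}\bigr)$ by a finite case check. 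What your write-up buys over the paper is transparency; what it loses is that the entire substance of (ii), and the existence half of (i), is compressed into ``a direct check of the eight configurations reveals\dots''.

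Two points deserve to be made explicit rather than asserted. First, existence in (i) is not automatic from writing down the flipped tuple: you must verify that the $f$-uplet $\lambda'$ you produce actually satisfies the chaining conditions (ii)--(iii) of $\cRDx$ (resp. (ii)--(v) of $\cIDx$), that $\mu'$ lies in $\cIy$ and is compatible with $\mu_{\lambda'}$ in the sense of the d\'efinition \ref{definition-compatible}, and that no coordinate falls outside $[0,p-1]$ (this is where gen\'ericit\'e enters, not only the rel\`evement hypothesis of the lemme \ref{lemma-releve}). Without this, the ``candidate'' $\sigma'$ is not yet known to be a Diamond weight with $\tau^{[s]}$ in $D_{0,\sigma'}(\rho)$. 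Second, in the irreducible case the definitions of $\cS^{\pm}_{\lambda,\tau}$ at the index $1$ and of $\cS_{\lambda}$ at the index $0$ use different lists of values of $\mu_0$ and $\lambda_0$ than at the other indices; your remark that this is what converts $\delta_{\red}$ into $\delta_{\irr}$ is correct, but these are exactly the positions where a sign or shift error would invalidate the formula, so the tabulation there should be written out rather than waved at. With those two verifications supplied, the argument is complete and recovers the content of \cite[lemme 15.2]{BP}.
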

\begin{proof}
Le (i) est une conséquence de la construction de $D_0(\rho)$, voir \cite[proposition 13.4]{BP}. Pour le (ii), voir \cite[lemme 15.2]{BP}.
\end{proof}

\subsection{Un résultat combinatoire}

On conserve les notations  précédentes. Dans cette section, on considère deux poids $\tau_1$, $\tau_2$
apparaissant dans $D_{0,\sigma}(\rho)$ tels que:\vv
\begin{itemize}
\item[--] $\tau_1^{I_1}$, $\tau_2^{I_1}$ se relèvent dans
$D_{0,\sigma}(\rho)^{I_1}$\vspace{1mm}

\item[--] $(\tau_1,\tau_2)$ est un couple de type $(+1,j)$
\end{itemize}\vspace{1mm}
et on compare le lien entre $\delta(\tau_1)$ et $\delta(\tau_2)$,
ainsi que celui entre les places de $\tau_k$ ($k=1,2$) dans
$D_{0,\delta(\tau_k)}(\rho)$. On note $\mu_k=\mu_{\tau_k}\in\cIy$
le $f$-uplet associé à $\tau_k$ par le théorème \ref{theorem-BP-D0(rho)} (ii), et
$\theta_{k}=\mu_{\tau_k^{[s]}}$ celui associé à $\tau_{k}^{[s]}$.
Notons également $\lambda_k=\lambda_{\delta(\tau_k)}$ (resp.
$\cS_{\lambda_k}$) le $f$-uplet (resp. le sous-ensemble de
$\{0,\cdots,f-1\}$) correspondant à $\delta(\tau_k)\in\cD(\rho)$.\vv

Remarquons qu'il n'y a pas de tel couple $(\tau_1,\tau_2)$ si $f=1$. Supposons donc $f\geq 2$ dans la suite.
On fait la
convention que, si $j-1<0$ (resp. $j+1>f-1$, etc.), on l'identifie
à l'entier $j-1+f$ (resp. $j+1-f$, etc.).\vv

Donnons la liste de toutes les
possibilités pour le couple $(\tau_1,\tau_2)$ considéré.
\begin{lemma}\label{compatible-lemma}
(i) On a $\mu_{1,i}(y_i)=\mu_{2,i}(y_i)$ si $i\notin\{j-1,j\}$ et deux possibilités si $i\in\{j-1,j\}$:\vspace{1mm}
\begin{itemize}
\item[\hspace{3mm}--] ou bien $\mu_{\lambda,j}(y_j)=p-3-y_j$ et
\[(\mu_{1,j-1}(y_{j-1}),\mu_{1,j}(y_j))=(y_{j-1},y_j) \]
\[(\mu_{2,j-1}(y_{j-1}),\mu_{2,j}(y_j))=(p-2-y_{j-1},y_j+1);\]

\item[\hspace{3mm}--] ou bien $\mu_{\lambda,j}(y_j)=p-1-y_j$ et
\[(\mu_{1,j-1}(y_{j-1}),\mu_{1,j}(y_j))=(y_{j-1},p-2-y_j)\]
\[(\mu_{2,j-1}(y_{j-1}),\mu_{2,j}(y_{j}))=(p-2-y_{j-1},p-1-y_j).\]
\end{itemize}\vspace{1mm}

(ii) On a
$\theta_{1,i}(\lambda_{1,i}(r_i))=\theta_{2,i}(\lambda_{2,i}(r_i))$
si $i\notin\{j-1,j\}$, et
\[\theta_{1,j}(\lambda_{1,j}(r_j))=\theta_{2,j}(\lambda_{2,j}(r_j))+1\]
\[\theta_{1,j-1}(\lambda_{1,j-1}(r_{j-1}))+\theta_{2,j-1}(\lambda_{2,j-1}(r_{j-1}))=p.\]
\end{lemma}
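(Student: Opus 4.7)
La strat\'egie est de comparer explicitement les $f$-uplets associ\'es aux poids $\tau_1, \tau_2$ par le th\'eor\`eme \ref{theorem-BP-D0(rho)} (ii), puis de d\'eduire ceux de $\tau_1^{[s]}, \tau_2^{[s]}$ par un calcul direct.

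Pour (i), je commencerai par \'ecrire $\mu_k = \mu_{\tau_k}$ comme un $f$-uplet de $\cIy$ compatible avec $\mu_\lambda$ o\`u $\lambda = \lambda_\sigma$. L'hypoth\`ese de rel\`evement, combin\'ee avec le lemme \ref{lemma-releve}, force $\mu_{k,i}(y_i)\in\{y_i, y_i+1, p-2-y_i, p-1-y_i\}$ pour tout $i$. En intersectant avec les ensembles autoris\'es par la d\'efinition \ref{definition-compatible} (compatibilit\'e avec $\mu_\lambda$), chaque $\mu_{k,i}(y_i)$ se trouve dans $\{y_i, y_i+1, p-2-y_i\}$ ou dans $\{y_i, p-1-y_i, p-2-y_i\}$, selon la valeur de $\mu_{\lambda,i}(y_i)$.

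L'hypoth\`ese que $(\tau_1, \tau_2)$ est un couple de type $(+1,j)$ se traduit, via \cite[cor. 5.6 (i), cas (b)]{BP}, en une relation pr\'ecise entre les coordonn\'ees des poids $\tau_1, \tau_2$: elles co\"incident hors de $\{j-1,j\}$, et aux positions $j-1, j$ elles diff\`erent d'une mani\`ere pr\'ecise (compl\'ementation en $j-1$ avec torsion et incr\'ementation en $j$). En utilisant la param\'etrisation $\tau_k\leftrightarrow\mu_k\circ\lambda$, j'en d\'eduirai d'abord $\mu_{1,i}=\mu_{2,i}$ pour $i\notin\{j-1,j\}$. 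Ensuite, aux deux positions restantes, je proc\`ederai par \'enum\'eration: il y a a priori plusieurs choix dans $\{y_i, y_i+1, p-2-y_i, p-1-y_i\}$; les r\`egles (ii) et (iii) de la d\'efinition de $\cIy$ (qui relient $\mu_{k,j-1}$ \`a $\mu_{k,j}$ d'une part, et $\mu_{k,j}$ \`a $\mu_{k,j+1}$ d'autre part), jointes \`a la contrainte de type $(+1,j)$ et \`a la compatibilit\'e avec $\mu_\lambda$, \'elimineront toutes les possibilit\'es sauf les deux list\'ees dans l'\'enonc\'e. La valeur de $\mu_{\lambda,j}(y_j)$ (soit $p-3-y_j$, soit $p-1-y_j$, les seules compatibles avec la pr\'esence de termes de type $y_j+1$ ou $p-2-y_j$ en position $j$ chez les $\mu_k$) distingue les deux cas.

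Pour (ii), une fois (i) \'etablie, je calculerai $\tau_k^{[s]}$ directement \`a partir de la forme explicite de $\tau_k=(\mu_{k,0}(\lambda_0(r_0)),\dots)\otimes{\det}^{e(\mu_k\circ\lambda)}\eta$, en utilisant que $\chi_{\tau_k^{[s]}}=\chi_{\tau_k}^s$ force un \'echange et une re-torsion des coordonn\'ees. Ceci d\'etermine $\delta(\tau_k)$ via le lemme \ref{lemma-BP-delta} ainsi que le $f$-uplet $\theta_k$ correspondant. Les identit\'es $\theta_{1,i}(\lambda_{1,i}(r_i))=\theta_{2,i}(\lambda_{2,i}(r_i))$ pour $i\notin\{j-1,j\}$, $\theta_{1,j}(\lambda_{1,j}(r_j))=\theta_{2,j}(\lambda_{2,j}(r_j))+1$ et $\theta_{1,j-1}(\lambda_{1,j-1}(r_{j-1}))+\theta_{2,j-1}(\lambda_{2,j-1}(r_{j-1}))=p$ se v\'erifient alors par inspection dans chacun des deux cas de (i).

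Le principal obstacle sera la partie (i): s'assurer que la liste exhaustive des choix combinatoires aux positions $j-1, j$ et aux positions voisines $j-2, j+1$ se r\'eduit exactement aux deux sous-cas annonc\'es. En particulier, il faudra traiter avec soin les situations limites (comme lorsque $r_j$ ou $r_{j-1}$ est petit ou proche de $p-2$) o\`u certaines possibilit\'es combinatoires deviennent d\'eg\'en\'er\'ees. La partie (ii), bien que demandant un calcul soigneux du d\'eterminant et des \'echanges de coordonn\'ees, sera essentiellement m\'ecanique une fois (i) acquise.
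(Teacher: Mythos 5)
Votre plan suit essentiellement la même démarche que la preuve de l'article : pour (i), on combine le lemme \ref{lemma-releve} (qui restreint $\mu_{k,i}(y_i)$ à $\{y_i,y_i+1,p-2-y_i,p-1-y_i\}$), la compatibilité avec $\mu_{\lambda}$, la définition de $\cIy$ et la traduction du type $(+1,j)$ (incrémentation en $j$, complémentation en $j-1$) pour réduire l'énumération aux deux cas annoncés ; pour (ii), l'article utilise simplement l'identité coordonnée par coordonnée $\theta_{k,i}(\lambda_{k,i}(r_i))+\mu_{k,i}(\lambda_i(r_i))=p-1$, ce qui rend le calcul immédiat à partir de (i), exactement comme votre vérification « mécanique ». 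Le plan est correct et ne diffère pas de façon significative de l'argument du texte.
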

\begin{proof}
(i) Le premier énoncé est facile à vérifier à l'aide du théorème \ref{theorem-BP-D0(rho)} (ii). Pour le deuxième, d'après le lemme \ref{lemma-releve}, on sait que pour tout $0\leq i\leq
f-1$,
\[\mu_{1,i}(y_i),\mu_{2,i}(y_i)\in\{p-2-y_i,p-1-y_i,y_i,y_i+1\}.\]
Comme $\mu_{2,j}(y_j)=\mu_{1,j}(y_j)+1$, on en déduit que:
\[\mu_{2,j}(y_j)\in \{p-1-y_j, y_j+1\}.\]

Si $\mu_{\lambda,j}(y_j)=p-3-y_j$, alors la condition de
compabilité  (définition \ref{definition-compatible}) entraîne que $\mu_{2,j}(y_j)=y_j+1$, et donc
$\mu_{1,j}(y_j)=y_j$. Par définition de $\cIy$ et puisque
 $\mu_{1,j-1}(y_{j-1})=p-2-\mu_{2,j-1}(y_{j-1})$, on en déduit que
$\mu_{1,j-1}(y_{j-1})=y_{j-1}$ et
$\mu_{2,j-1}(y_{j-1})=p-2-y_{j-1}$.

De même, si $\mu_{\lambda,j}(y_j)=p-1-y_j$, alors
$\mu_{2,j}(y_j)=p-1-y_j$, et les autres énoncés sont immédiats.

(ii) Par définition, on a les égalités suivantes pour tout $0\leq
i\leq f-1$:
\[\theta_{1,i}(\lambda_{1,i}(r_i))+\mu_{1,i}(\lambda_{i}(r_i))=p-1\]
\[\theta_{2,i}(\lambda_{2,i}(r_i))+\mu_{2,i}(\lambda_{i}(r_i))=p-1.\]
On conclut donc en utilisant (i):
$\mu_{1,i}(y_i)=\mu_{2,i}(y_i)$ si $i\notin\{j-1,j\}$ et que
\[\mu_{1,j}(y_j)=\mu_{2,j}(y_j)-1,\ \ \mu_{1,j-1}(y_{j-1})=p-2-\mu_{2,j-1}(y_{j-1}).\]
\end{proof}


Si $\mu\in\cIy$, on d\'efinit:
\[\cS(\mu):=\{i\in\{0,\cdots,f-1\}|\ \mu_i(x_i)\in\{p-2-x_i-\pm1,x_i\pm1\}\}.\]
(\emph{Attention}: ne pas confondre avec $\cS_{\lambda}$ o\`u $\lambda\in\cIDx$ ou $\cRDx$!)
On constate que, si $\mu,\mu'\in\cIy$ sont tels que $\mu_{i}(y_i)=\mu'_{i}(y_{i})$, alors $i+1\in \cS(\mu)$ si et seulement si $i+1\in\cS(\mu')$.

\begin{prop}\label{prop-combination}
(i) Si $i\notin\{j-1,j\}$, alors $i\in\cS_{\lambda_1}$ si et
seulement si $i\in\cS_{\lambda_2}$; si $i\in\{j-1,j\}$, alors
$i\in\cS_{\lambda_1}$ si et seulement si
$i\notin\cS_{\lambda_2}$.\vspace{1mm}

(ii) $\lambda_{1,i}(x_i)=\lambda_{2,i}(x_i)$ si et seulement si
$i\notin \{j-2,j-1,j\}$;\vspace{1mm}

(iii) Si $i\neq j-1$, alors $i\in \cS(\theta_1)$ si et seulement
si $i\in\cS(\theta_2)$; si $i=j-1$, alors $i\in\cS(\theta_1)$ si
et seulement si $i\notin\cS(\theta_2)$. 
\end{prop}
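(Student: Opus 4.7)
\emph{Plan.} The argument proceeds in three steps, one per item, using Lemma \ref{compatible-lemma} as the main combinatorial input and Lemma \ref{lemma-BP-delta} to pass from $\tau_k$ to $\delta(\tau_k)$.

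For (i), set $T_k := (\cS_{\lambda}\setminus \cS^-_{\lambda,\tau_k})\cup \cS^+_{\lambda,\tau_k}$, so that Lemma \ref{lemma-BP-delta}(ii) gives $\cS_{\lambda_k}=\delta_{\red}(T_k)$ in the reducible case (and $\delta_{\irr}(T_k)$ in the irreducible case). By definition, membership of an index $i$ in $\cS^{\pm}_{\lambda,\tau_k}$ reads off the single entry $\mu_{k,i-1}$, and Lemma \ref{compatible-lemma}(i) guarantees $\mu_{1,i-1}=\mu_{2,i-1}$ as soon as $i-1\notin\{j-1,j\}$. Hence $T_1$ and $T_2$ agree off $\{j,j+1\}$, and since $\delta_{\red}$ (resp. $\delta_{\irr}$) is essentially the shift $i\mapsto i-1$, the sets $\cS_{\lambda_1}$ and $\cS_{\lambda_2}$ coincide outside $\{j-1,j\}$. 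For the two remaining indices, the two cases of Lemma \ref{compatible-lemma}(i) determine explicitly which of $j,j+1$ lies in $\cS^{\pm}_{\lambda,\tau_k}$, and a direct check shows that $\{j-1,j\}\cap \cS_{\lambda_k}$ toggles from $k=1$ to $k=2$.

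For (ii), part (i) yields $\cS_{\lambda_1}\cap(\{0,\ldots,f-1\}\setminus\{j-1,j\})=\cS_{\lambda_2}\cap(\{0,\ldots,f-1\}\setminus\{j-1,j\})$, so Lemma \ref{combination-corollary} applies and gives $\lambda_{1,i}(x_i)=\lambda_{2,i}(x_i)$ for $i\notin\{j-2,j-1,j\}$. The converse, namely that the polynomials actually differ at each of the three positions, is obtained by consulting the combinatorial rules defining $\cRDx$ and $\cIDx$: once $\cS_{\lambda_k}$ is fixed, each entry $\lambda_{k,i}(x_i)$ is determined by the pair $(i\in\cS_{\lambda_k}, i+1\in\cS_{\lambda_k})$, so any toggle of $\cS$ at $j-1$ or $j$ propagates to $\lambda_i$ precisely at $i\in\{j-2,j-1,j\}$.

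For (iii), the $f$-uplet $\theta_k$ is by definition the one associated with $\tau_k^{[s]}$ in $D_{0,\delta(\tau_k)}$. Using (ii), which describes $\lambda_k$ on all but three positions, together with the characterization of $\tau_k^{[s]}$ by $\chi_{\tau_k}^s$ and Theorem \ref{theorem-BP-D0(rho)}(ii), each polynomial $\theta_{k,i}(y_i)\in\Z\pm y_i$ is read off from $\mu_{k,i}$ and $\lambda_{k,i}$. A case-by-case verification, splitting on the two cases of Lemma \ref{compatible-lemma}(i), shows that $\theta_{1,i}(y_i)=\theta_{2,i}(y_i)$ as polynomials for $i\neq j-1$, whence $\cS(\theta_1)$ and $\cS(\theta_2)$ agree at such $i$; at $i=j-1$ the two polynomials land in different classes for the $\cS(\cdot)$-test (exactly one of them in $\{y_{j-1},p-2-y_{j-1}\}$, the other in $\{y_{j-1}\pm 1, p-3-y_{j-1}, p-1-y_{j-1}\}$), yielding the claimed disagreement. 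The principal obstacle is this last explicit computation of $\theta_{k,j-1}$: it requires simultaneously tracking the $\mu\leftrightarrow\mu^{[s]}$ swap of polynomials and the change from $\lambda$ to $\lambda_k$, and it is here that Lemma \ref{compatible-lemma} and the combinatorics of $\cIy$ interact most intricately.
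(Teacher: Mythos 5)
Your plan follows the paper's proof essentially step for step: (i) via Lemma \ref{lemma-BP-delta} (ii) applied to $(\cS_{\lambda}\backslash\cS^-_{\lambda,\tau_k})\cup\cS^+_{\lambda,\tau_k}$ together with the comparison of the $\mu_{k,i}$ furnished by Lemma \ref{compatible-lemma} (i), then a finite case check at $i\in\{j-1,j\}$; (ii) via Lemma \ref{combination-corollary}; (iii) by propagating (i)--(ii) through the relation between $\theta_k$, $\mu_k$ and $\lambda_k$. The only (stylistic) divergence is in (iii), where the paper avoids part of your case-by-case verification by deducing $\theta_{1,j}(y_j)=\theta_{2,j}(y_j)$ from the evaluated identity of Lemma \ref{compatible-lemma} (ii) combined with the constraint $\theta_{k,j}(y_j)\in\{p-2-y_j,p-1-y_j,y_j,y_j+1\}$, and obtains the toggle at $j-1$ from the injectivity of $\theta\mapsto\cS(\theta)$ on such tuples rather than by explicit computation; the approach is otherwise the same.
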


\begin{proof}
On prouve la proposition pour le cas réductible, le cas
irréductible étant analogue. \vv

(i) Par le lemme \ref{compatible-lemma} (i) et la définition de
$\cS_{\lambda,\tau_k}^+$ (resp. $\cS_{\lambda,\tau_k}^-$), on voit
que\vv
\begin{itemize}
\item[]
$\cS_{\lambda,\tau_1}^+\cap(\{0,\cdots,f-1\}\backslash\{j,j+1\})=
\cS_{\lambda,\tau_2}^+\cap(\{0,\cdots,f-1\}\backslash\{j,j+1\})$\vv

\item[]
$\cS_{\lambda,\tau_1}^-\cap(\{0,\cdots,f-1\}\backslash\{j,j+1\})=
\cS_{\lambda,\tau_2}^-\cap(\{0,\cdots,f-1\}\backslash\{j,j+1\})$.\vv
\end{itemize}
Par cons\'equent, si $i\notin\{j-1,j\}$, alors $i\in\cS_{\lambda_1}$ si et
seulement si $i\in\cS_{\lambda_2}$ d'après le lemme
\ref{lemma-BP-delta} (ii).\vv

Considérons le cas $i=j$. On a deux possibilités:
\begin{itemize}
\item[(a)] Si $j+1\in\cS_{\lambda}$, i.e.,
$\lambda_{j+1}(x_{j+1})\in\{p-3-x_{j+1},x_{j+1}+1\}$,  alors par
définition
\[\lambda_j(x_j)\in\{p-2-x_j,p-3-x_j\}\]
et on a donc deux sous-cas:
\begin{itemize}
\item[--] Si $\lambda_j(x_j)=p-2-x_j$, alors
$\mu_{\lambda,j}(y_j)=p-3-y_j$, et donc, d'après le lemme
\ref{compatible-lemma} (i), on a $ \mu_{1,j}(y_j)=y_j$ et
$\mu_{2,j}(y_j)=y_j+1$. On vérifie à partir de la définition que
$j+1\notin\cS_{\lambda,\tau_1}^-$ et
$j+1\in\cS_{\lambda,\tau_2}^-$, ce qui donne, en rappelant que
$\cS_{\lambda_i}=\delta_{\red}((\cS_{\lambda}\backslash\cS_{\lambda,\tau_i}^-)\cup\cS_{\lambda,\tau_i}^+)$,
\[j\in \cS_{\lambda_1},\ \ j\notin\cS_{\lambda_2}.\]

\item[--] Si $\lambda_j(x_j)=p-3-x_j$, alors
$\mu_{\lambda,j}(y_j)=p-1-y_j$, et donc, par le lemme
\ref{compatible-lemma} (i), on a $\mu_{1,j}(y_j)=p-2-y_j$ et
$\mu_{2,j}(y_j)=p-1-y_j$. On vérifie alors à partir de la définition que
\[j\notin\cS_{\lambda_1}, \ \  j\in\cS_{\lambda_2}.\]
\end{itemize}\vspace{1mm}
\item[(b)] Si $j+1\notin\cS_{\lambda}$, alors
$\lambda_j(x_j)\in\{x_j,x_{j+1}\}$. On a \'egalement deux sous-cas à distinguer et le
même argument donne:\vspace{1mm}
\begin{itemize}
\item[--] si $\lambda_{j}(x_j)=x_j$, alors
$\mu_{\lambda,j}(y_j)=p-1-y_j$, et puis $j\in \cS_{\lambda_1}$,
$j\notin\cS_{\lambda_2}$;\vspace{0.8mm}

\item[--] si $\lambda_j(x_j)=x_j+1$, alors
$\mu_{\lambda,j}(y_j)=p-3-y_j$, et puis
$j\notin\cS_{\lambda_1}$, $j\in\cS_{\lambda_2}$.
\end{itemize}
\end{itemize}\vspace{2mm}
Ceci permet de conclure dans le cas où\ $i=j$.

Le même raisonnement (plus facile) donne, lorsque $i=j-1$:
\[j-1\in\cS_{\lambda_1}\Longleftrightarrow j-1\notin\cS_{\lambda_2}.\]

(ii) Par (i), on a:
\[\cS_{\lambda_1}\cap(\{0,\cdots,f-1\}\backslash\{j-1,j\})=\cS_{\lambda_2}
\cap(\{0,\cdots,f-1\}\backslash\{j-1,j\}),\] donc d'après le lemme \ref{combination-corollary}, on a $\lambda_{1,i}(x_i)=\lambda_{2,i}(x_i)$ si
$i\notin\{j-2,j-1,j\}$. De plus, (i) implique
$\lambda_{1,i}(x_i)\neq\lambda_{2,i}(x_i)$ si $i\in\{j-1,j\}$.
Il reste donc à vérifier que
$\lambda_{1,j-2}(x_{j-2})\neq\lambda_{2,j-2}(x_{j-2})$, ce qui est
une conséquence de la définition et de (i) pour $j-1$.\vv

(iii) La conclusion pour $i\notin\{j-2,j-1,j\}$ est triviale
puisqu'alors $\theta_{1,i}(y_i)=\theta_{2,i}(y_i)$.

On suppose que $f\geq 3$, le cas $f=2$ étant conséquence de \cite[\S16]{BP}. On a donc $j+1\notin\{j-1,j\}$, et
d'après (i), $j+1\in\cS_{\lambda_1}$ si et seulement si
$j+1\in\cS_{\lambda_2}$. On en déduit, puisque
$\lambda_{1,j}(x_j)\neq \lambda_{2,j}(x_j)$, que
\[\lambda_{1,j}(x_j)=\lambda_{2,j}(x_j)\pm1.\]
Ensuite, comme
$\theta_{1,j}(y_j),\theta_{2,j}(y_j)\in\{p-2-y_j,p-1-y_j,y_j,y_j+1\}$,
le fait que
$\theta_{1,j}(\lambda_{1,j}(r_{j}))=\theta_{2,j}(\lambda_{2,j}(r_{j}))+1$
(lemme \ref{compatible-lemma} (ii)) force que:
\[\theta_{1,j}(y_j)=\theta_{2,j}(y_j).\]
Ceci permet de conclure dans le cas $i=j$. Le cas  $i=j-2$ s'en
déduit puisque $\theta_{1,j-3}(y_{j-3})=\theta_{2,j-3}(y_{j-3})$ (m\^eme si $f=3$).

Il reste à vérifier que $j-1\in \cS(\theta_1)$ si et seulement si
$j-1\notin\cS(\theta_2)$. Ceci est une conséquence de ce que l'on a prouvé et du fait suivant
(facile à vérifier): \emph{si $\theta_1, \theta_2\in\cIy$ sont
tels que pour tout $0\leq i\leq f-1$:
\[\theta_{1,i}(y_i), \theta_{2,i}(y_i)\in\{p-2-y_i,p-1-y_i,y_i,y_i+1\},\]
alors $\theta_1=\theta_2$ si et seulement si
$\cS(\theta_1)=\cS(\theta_2)$.}
\end{proof}

Soit $\tau$ un poids apparaissant dans
$D_{0,\sigma}(\rho)$. Par réciprocité de Frobenius, on a une
surjection naturelle $\Ind_I^K\chi_{\tau}^{s}\twoheadrightarrow
I(\delta(\tau),\tau^{[s]})$ de telle sorte que $\delta(\tau)$ correspond à
un élément $\xi\in\cPy$. Rappellons que
\[J(\xi)=\{i\in\{0,\cdots,f-1\}|\ \xi_i(y_i)\in\{p-2-y_i,p-1-y_i\}\}.\]
On vérifie facilement que \[i\in J(\xi) \Longleftrightarrow
\theta_i(y_i)\in\{y_i,y_i+1\} \Longleftrightarrow i+1\notin
\cS(\theta).\]

\begin{cor}\label{corollaire-combination-J}
Avec les notations de la proposition \ref{prop-combination} et $\xi_i$ \'etant l'élément correspondant à $\delta(\tau_i)$ comme ci-dessus ($i=1,2$), on a:
\[J(\xi_1)\cap (\{0,\cdots,f-1\}\backslash\{j-2\})=J(\xi_2)\cap (\{0,\cdots,f-1\}\backslash\{j-2\})\]
et que $j-2\in J(\xi_1)$ si et seulement si $j-2\notin J(\xi_2)$.
\end{cor}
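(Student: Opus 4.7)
The plan is to reduce the statement to Proposition \ref{prop-combination} (iii) via the combinatorial translation made just before the corollary, namely the equivalence
\[i\in J(\xi_k) \Longleftrightarrow \theta_{k,i}(y_i)\in\{y_i,y_i+1\} \Longleftrightarrow i+1\notin \cS(\theta_k),\]
which holds for $k=1,2$. Once this is in hand, the whole statement becomes a routine index shift.

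First I would record that, by the above equivalence, the symmetric difference $J(\xi_1)\,\triangle\, J(\xi_2)$ (as subsets of $\{0,\dots,f-1\}$) is exactly the image under the shift $i\mapsto i-1$ (modulo $f$) of the symmetric difference $\cS(\theta_1)\,\triangle\,\cS(\theta_2)$. Indeed, $i\in J(\xi_1)\,\triangle\, J(\xi_2)$ if and only if $i+1\in\cS(\theta_1)\,\triangle\,\cS(\theta_2)$.

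Next, by Proposition \ref{prop-combination} (iii), the symmetric difference $\cS(\theta_1)\,\triangle\,\cS(\theta_2)$ equals exactly $\{j-1\}$. Shifting by $-1$ (with our convention identifying indices modulo $f$), we obtain $J(\xi_1)\,\triangle\, J(\xi_2)=\{j-2\}$, which is precisely the content of the corollary: the two sets coincide outside of $j-2$, and they differ at the single index $j-2$.

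There is no serious obstacle here; the whole point is that the translation $i\in J(\xi_k)\Leftrightarrow i+1\notin\cS(\theta_k)$ is a bijection between the two descriptions (as noted in the paragraph preceding the corollary), so the corollary is a direct consequence of part (iii) of Proposition \ref{prop-combination} together with the index shift. The only minor care needed is the cyclic convention on indices, which is already fixed earlier in this section.
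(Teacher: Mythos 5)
Your proof is correct and is exactly the paper's argument: the paper disposes of this corollary in one line («C'est une simple traduction de la proposition \ref{prop-combination} (iii)»), relying on the equivalence $i\in J(\xi)\Leftrightarrow i+1\notin\cS(\theta)$ stated just before, and you have simply written out that index shift explicitly. Nothing further is needed.
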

\begin{proof}
C'est une simple traduction de la
proposition \ref{prop-combination} (iii).
\end{proof}

\section{Constructions de représentations supersingulières}\label{section-exemple}

 Si $\rho$ est une représentation continue
générique de $\Gal(\bQp/F)$ de dimension 2 sur $\bFp$ telle que $p\in F^{\times}$ agisse trivialement sur $\det(\rho)$, on
lui a associé une famille de diagrammes $D(\rho,r)$ (cf.
n$^{\circ}$\ref{subsection-PoidsDiamond}). Par \cite[théorème 9.8]{BP}, on peut aussi lui associer  un ensemble non vide de
représentations lisses admissibles de $G$. On note $S(\rho,r)$ l'ensembe des repr\'esentations obtenues de cette fa\c{c}on (associ\'ees \`a $D(\rho,r)$). Lorsque
$f=1$, il est connu (\cite[\S20]{BP}) que $S(\rho,r)$ est r\'eduit \`a un singleton, c'est-\`a-dire, \`a isomorphisme pr\`es le diagramme $D(\rho,r)$ d\'etermine une unique repr\'esentation de $G$ par la construction.
L'objet de ce paragraphe est de montrer que ce n'est plus vrai lorsque $f\geq 2$: en faisant un choix pour $r$ ne suffit pas \`a d\'eterminer une unique repr\'esentation lisse admissible de $G$ associ\'ee \`a $\rho$.\vv

\subsection{Préliminaires}\label{subsection-prelimilaire}
Dans cette section, on donne une construction générale de représentations
lisses admissibles de $G$ (\`a partir d'une telle repr\'esentation fix\'ee) .
Elle sera utilisée dans les sections suivantes.\vv

On fixe $\pi$ une représentation lisse admissible de $G$ telle que
$p\in F^{\times}$ agisse trivialement. Soit $\Omega$ une repr\'esentation lisse admissible de $G$ telle que $\pi\hookrightarrow\Omega$ et telle que $\Omega|_K$ soit isomorphe \`a $\rInj_K\rsoc_K(\pi)$, une enveloppe injective de $\rsoc_K(\pi)$ dans la cat\'egorie $\Rep_K$.


Soient $M$ une sous-$I$-repr\'esentation de $\pi$ de dimension finie et $v\in M$ un
vecteur propre de $\cH$ tel que $v$ n'appartienne pas \`a $\rad_I(M)$. Une telle vecteur toujours existe puisque $\rad_I(M)\subsetneq M$. Soit $\chi$ le caract\`ere donnant l'action de $I$ sur $\bFp v$. Notons $\overline{v}$ l'image de $v$ dans $M/\rad_I(M)$ et
\[\beta: M\twoheadrightarrow M/\rad_I(M)\twoheadrightarrow \bFp\overline{v}\]
la projection naturelle avec $\ker\beta$ son noyau. Supposons que $(M,v)$ satisfait aux conditions suivantes:\vspace{1mm}

(S1)  $v\notin\Sigma(M)$,
o\`u $\Sigma(M)$ est la sous-$I$-représentation de $\Omega$ engendrée
par $\ker\beta$, $\Pi(M)$, $\Omega^{I_1}$.\vspace{1mm}

(S2) Il existe $h\in \Omega^{I_1}$ un vecteur non nul propre de $\cH$ de caract\`ere $\chi^s$.\vv

Alors, \`a partir des donn\'ees $(\pi,\Omega, M,v,h)$, on va constuire une famille de repr\'esentations lisses admissibles de $G$ comme suit. \vv

\textbf{Étape 1}. 
Soit $V_{\chi}$ une sous-$I$-repr\'esentation de $\Omega$ qui est isomorphe à
 $\rInj_I\chi$ et telle que:
\[V_{\chi}^{I_1}=\bFp\Pi(h).\]
($V_{\chi}$ existe parce que $\bFp \Pi(h)\hookrightarrow\Omega$ et $\Omega|_I$ est un object injectif dans $\Rep_I$.) Alors l'injection $\bFp v\hookrightarrow \Omega/\Sigma(M)$ induit,
par injectivité de $V_{\chi}$, un morphisme $\Omega/\Sigma(M)\ra
V_{\chi}$
 qui envoie l'image
$\overline{v}$ de $v$ vers $\Pi(h)$. En le composant avec les morphismes
naturels, on obtient un endomorphisme
$I$-équivariant de $\Omega$ que l'on note $\phi$:
\[\phi:\Omega\twoheadrightarrow \Omega/\Sigma(M)\ra
V_{\chi}\hookrightarrow \Omega,\]  On note $\Phi$
l'ensemble des $\phi$ ainsi construits.

\begin{rem}
(i) Il existe en g\'en\'eral beaucoup de tels endomorphismes
$\phi$.

(ii) Puisque $\Omega^{I_1}\subset\ker\phi$, pour tout $v\in\Omega$, il existe $n\gg0$ d\'ependant de $v$ tel que $\phi^n(v)=0$.
Par conséquent, pour tout $a\in\bFp$, $1+a\phi$ est un
automorphisme de $\Omega$ dont l'inverse est
$\sum_{n=0}^{\infty}(-a\phi)^n$.
\end{rem}

\textbf{Étape 2}: On définit, pour tout $a\in\bFp$, une
action de la matrice $\smatr01p0$ sur $\Omega$:
\begin{equation}\label{equation--exem-define-Pi}
\begin{array}{rllc}
\Pi_{\phi,a}:& \Omega&\lra & \Omega\\
&x&\longmapsto &(1+a\phi)^{-1}\cdot\Pi\cdot (1+a\phi)(x).
\end{array}
\end{equation}
On vérifie facilement que $\Pi_{\phi,a}$ est $I$-équivariant et que
$(\Pi_{\phi,a})^2=\id_{\Omega}$.

Comme $\bFp\Pi(v)+\Omega^{I_1}\subset
\Sigma(M)\subset\ker\phi$, on a par définition:
\[\begin{array}{rll}\Pi_{\phi,a}(v)&=&(1+a\phi)^{-1}\Pi(v+a\Pi(h))\\
&=&\sum\limits_{n=0}^{\infty}(-a\phi)^{n}(\Pi(v)+ah)\\
&=&\Pi(v)+ah.
\end{array}\]
Ce calcul montre aussi que, si $x\in \Omega$ vérifie à la fois que $x\in\ker(a\phi)$ et $\Pi(x)\in\ker(a\phi)$,
alors $\Pi_{\phi,a}(x)=\Pi(x)$. En particulier,
$\Pi_{\phi,0}=\Pi$.\vv


\textbf{Étape 3}: Puisque $\Pi_{\phi,a}$ définit une action de la
matrice $\smatr{0}1p0$ sur $\Omega$ compatible avec l'action de $I$, on obtient d'après \cite[\S9]{BP}
une représentation lisse admissible de $G$ que l'on note
$(\Omega, \Pi_{\phi,a})$, ou simplement $\Omega_{\phi,a}$,  telle que
\[\Omega_{\phi,a}|_{KZ}=\Omega|_{KZ}=\rInj_K\rsoc_K(\pi).\] On définit
$\pi_{\phi,a}$ comme la sous-$G$-représentation de $\Omega_{\phi,a}$
engendrée par $M+\pi^{I_1}$.\vv

\'Evidemment, les repr\'esentations $\{\pi_{\phi,a},\ \phi\in\Phi, a\in\bFp\}$ de $G$ associ\'ees \`a $\pi$ comme ci-dessus sont lisses admissibles, admettant un caract\`ere central. Bien s\^ur, ces repr\'esentations $\pi_{\phi,a}$ ne sont pas forc\'ement non isomorphes. Quand m\^eme, on verra dans \S\ref{subsection-cas-f=3} et \S\ref{subsection-cas-f=2} que sous des conditions suppl\'ementaires sur $(\pi,M,v,h)$, on aura $\pi_{\phi,a}\ncong \pi$ avec $a$ bien choisi.\vv

Donnons un crit\`ere pour que $(M,v)$ v\'erifie les conditions (S1) et (S2). Pour $S$ une $I$-repr\'esentation de dimension finie, notons $r^+(S)$ (resp. $r^-(S)$) la longueur de Loewy de $S$ en tant que $U^+$-repr\'esentation (resp. $U^-$-repr\'esentation).

\begin{lemma}\label{lemma=S1S2}
(i) Soit $\langle I\cdot v\rangle\subset M$ la sous-repr\'esentation engendr\'ee par $v$. Si
\begin{equation}\label{equation-r+r-}
r^+(\langle I\cdot v\rangle)>\max\{r^-(M), r^+(\ker(\beta))\},\end{equation} alors $(M,v)$ satisfait \`a (S1).

(ii) Soit $\sigma$ un poids apparaissant dans $\rsoc_K(\pi)$. Alors $\chi^s$ appara\^it dans $(\rInj_K\sigma)^{I_1}$ si et seulement si $\sigma$ appara\^it dans la repr\'esentation $\Ind_I^K\chi^s$. S'il en est ainsi, alors la multiplicit\'e de $\chi^s$ dans $(\rInj_K\sigma)^{I_1}$ est \'egale \`a 1.
\end{lemma}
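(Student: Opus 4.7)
My plan is to handle the two parts separately: (i) is a Loewy-length argument comparing $v$ to $\Sigma(M)$, while (ii) is a Frobenius reciprocity argument using the injectivity of $\rInj_K \sigma$.

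For part (i), I would first observe that $\Sigma(M)$ coincides with the internal sum $\ker\beta + \Pi(M) + \Omega^{I_1}$, without any further $I$-saturation. Indeed each of the three summands is already $I$-stable: $\ker\beta$ because the image $\bFp\overline{v} \subset M/\rad_I(M)$ is an $I$-subrepresentation (a semisimple $I$-module is a sum of $\cH$-characters and $\bFp\overline{v}$ is exactly the line on which $\cH$ acts by the eigencharacter $\chi$ of $v$); $\Pi(M)$ by the remark preceding the lemma (since $\Pi$ normalises $I$); and $\Omega^{I_1}$ because $I_1$ is normal in $I$. Next I would use that for any smooth $U^+$-representation $W$ and $U^+$-stable subspaces $A,B\subset W$ one has $\rad_{U^+}(A+B) = \rad_{U^+}(A) + \rad_{U^+}(B)$ (apply the augmentation ideal of the completed group algebra of $U^+$), which iterates to give $r^+(A+B)=\max\{r^+(A),r^+(B)\}$. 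Combining with $r^+(\Omega^{I_1})=1$ (since $U^+\subset I_1$) and $r^+(\Pi(M))=r^-(M)$ (the conjugation $\Pi^{-1}U^+\Pi = U^-$ turns the $U^+$-action on $\Pi(M)$ into the $U^-$-action on $M$, via $v\mapsto \Pi(v)$), I obtain
\[r^+(\Sigma(M)) = \max\{r^-(M),\, r^+(\ker\beta),\, 1\}.\]

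Then (i) follows by contraposition: if $v\in\Sigma(M)$ then $\langle I\cdot v\rangle\subseteq \Sigma(M)$ by $I$-stability of $\Sigma(M)$, so $r^+(\langle I\cdot v\rangle)\le r^+(\Sigma(M))$. Since the hypothesis $(\ref{equation-r+r-})$ gives $r^+(\langle I\cdot v\rangle) > r^-(M) \geq 1$ (using $M\neq 0$), the bound above is violated, producing a contradiction.

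For part (ii) the strategy is a chain of Frobenius reciprocities. As $\chi^s$ is trivial on $I_1$ and $\cH=I/I_1$ has order prime to $p$, the multiplicity of $\chi^s$ in the $\cH$-module $(\rInj_K\sigma)^{I_1}$ is $\dim_{\bFp}\Hom_\cH(\chi^s, (\rInj_K\sigma)^{I_1}) = \dim_{\bFp}\Hom_I(\chi^s,\rInj_K\sigma)$, which by the Frobenius adjunction ($\Ind_I^K$ being left adjoint to restriction, valid since $[K:I]$ is finite) equals $\dim_{\bFp}\Hom_K(\Ind_I^K\chi^s,\rInj_K\sigma)$. Now I would exploit that $\rInj_K\sigma$ is injective in $\Rep_K$, so $\Hom_K(-,\rInj_K\sigma)$ is exact, and $\Hom_K(\tau,\rInj_K\sigma)=\delta_{\tau,\sigma}\bFp$ for irreducible $\tau$ because $\rsoc_K(\rInj_K\sigma)=\sigma$. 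A d\'evissage along any composition series of $T$ then gives $\dim_{\bFp}\Hom_K(T,\rInj_K\sigma) = $ multiplicity of $\sigma$ as Jordan--H\"older factor of $T$. Taking $T=\Ind_I^K\chi^s$ and invoking the multiplicity-one property from Lemma \ref{lemma-Gamma-PS} yields both the equivalence and the multiplicity-one assertion.

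The main obstacle is the bookkeeping in (i), namely the verification that the three generating subspaces of $\Sigma(M)$ are already $I$-stable (especially for $\ker\beta$, which requires knowing that $\bFp\overline{v}$ is $I$-stable even when $\chi$ appears with multiplicity in $M/\rad_I(M)$) together with the correct identification $r^+(\Pi(M))=r^-(M)$. The argument for (ii) is essentially formal once the adjunction and injectivity are in place.
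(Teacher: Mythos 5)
Your proof is correct and follows essentially the same route as the paper: for (i) the identical Loewy-length bookkeeping ($r^+(\Pi(M))=r^-(M)$, $r^+(S+S')=\max\{r^+(S),r^+(S')\}$, $r^+(\Omega^{I_1})=1$, then contraposition), and for (ii) Frobenius reciprocity against the injective object $\rInj_K\sigma$. The only differences are that you spell out the $I$-stability of the three summands of $\Sigma(M)$, which the paper takes as immediate from the definition, and that you obtain the multiplicity-one assertion by d\'evissage from the multiplicity-one of $\Ind_I^K\chi^s$ (lemme \ref{lemma-Gamma-PS}) where the paper instead cites the multiplicity-one of $(\rInj_K\sigma)^{I_1}$ from \cite[\S4]{BP}; both are valid.
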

\begin{proof}
(i) Par hypoth\`ese on a en particulier $r^+(\langle I\cdot v\rangle)>1$ puisque $r^-(M)\geq 1$. D'autre part, il r\'esulte de la d\'efinition que $r^-(M)=r^+(\Pi(M))$, que $r^+(S+S')=\max\{r^+(S),r^+(S')\}$ pour deux $I$-repr\'esentations $S$ et $S'$ de dimension finie et que $r^+(S)\leq r^+(S')$ si $S\subset S'$. Le r\'esultat s'en d\'eduit en utilisant la condition (\ref{equation-r+r-}) car par d\'efinition $\Sigma(M)=\Pi(M)+\ker(\beta)+\Omega^{I_1}$ et que $r^+(\Omega^{I_1})=1$.

(ii) C'est une cons\'equence de la r\'eciprocit\'e de Frobenius: $\chi^s$ s'injecte dans $\rInj_K\sigma$ si et seulement s'il existe un morphisme $K$-\'equivariant non trivial $\Ind_I^K\chi^s\ra \rInj_K\sigma$. Le dernier \'enonc\'e d\'ecoule du fait que $(\rInj_K\sigma)^{I_1}$ est de multiplicit\'e 1 (cf. \cite[\S4]{BP}).
\end{proof}


\subsection{Les cas $f\geq 3$}\label{subsection-cas-f=3}
Soient $\rho:\Gal(\bQp/F)\ra \GL_2(\bFp)$ une représentation
continue générique (cf. définition \ref{definition-generique}) telle que $p\in F^{\times}$ agit trivialement sur $\det(\rho)$ et $D(\rho,r)$ un diagramme de Diamond associ\'e.  Dans cette section, on montre que l'ensemble $S(\rho,r)$ de repr\'esentations lisses admissibles de $G$ associ\'e \`a $D(\rho,r)$ n'est pas r\'eduit \`a un singleton lorsque $f\geq 3$. \vv

Supposons donc $f\geq 3$.  Supposons de plus que:
\begin{itemize}
\item[(I)] ou bien $f=2m+1$ avec $m\geq 1$ et $\rho$ est
irréductible,

\item[(II)] ou bien $f=2m+2$ avec $m\geq 1$ et $\rho$ est
réductible scindée.
\end{itemize}
On fixe $\pi\in S(\rho,r)$ une repr\'esentation de $G$ associ\'ee \`a $D(\rho,r)$.

\begin{lemma}\label{lemma-exemple-poids}
Sous les hypothèses précédentes, il existe un poids
$\sigma\in\cD(\rho)$ tel que $\sigma^{[s]}\in\cD(\rho)$ et tel que
\[\mu_{\lambda}=(p-3-y_0,\cdots,p-3-y_{f-1}),\]
où\ $\lambda\in\cIDx$ ou $\cRDx$  est le
$f$-uplet correspondant à $\sigma$ et où $\mu_{\lambda}\in\cIy$ est
le $f$-uplet associé à $\lambda$ défini au
\S\ref{subsection-PoidsDiamond}.
\end{lemma}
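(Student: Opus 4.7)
Le plan consiste \`a exhiber explicitement un $f$-uplet $\lambda$ appartenant \`a $\cRDx$ (cas (II)) ou \`a $\cIDx$ (cas (I)) v\'erifiant les propri\'et\'es demand\'ees, en utilisant la bijection entre $\cD(\rho)$ et ces ensembles donn\'ee par le lemme \ref{lemma-galois-serre-irred}. D'apr\`es la d\'efinition de $\mu_\lambda$ au \S\ref{subsection-PoidsDiamond}, la condition $\mu_{\lambda,i}(y_i)=p-3-y_i$ pour tout $i$ se traduit par $\lambda_i(x_i)\in\{p-2-x_i,x_i+1\}$ pour tout $i$ dans le cas r\'eductible, et par $\lambda_0(x_0)\in\{p-1-x_0,x_0\}$, $\lambda_i(x_i)\in\{p-2-x_i,x_i+1\}$ pour $i\geq 1$ dans le cas irr\'eductible. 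En combinant avec les r\`egles de transition d\'efinissant $\cRDx$ ou $\cIDx$, ceci force la contrainte cyclique suivante sur le sous-ensemble $\cS_\lambda\subset\{0,\ldots,f-1\}$: dans le cas (II), $i\in\cS_\lambda$ si et seulement si $i+1\notin\cS_\lambda$ pour tout $i$ modulo $f$; dans le cas (I), la m\^eme alternance pour $1\leq i\leq f-2$, mais $0\in\cS_\lambda$ si et seulement si $1\in\cS_\lambda$ et $f-1\in\cS_\lambda$ si et seulement si $0\notin\cS_\lambda$. Ces conditions sont cohérentes cycliquement \emph{si et seulement si} $f$ est pair dans le cas (II) et $f$ est impair dans le cas (I), ce qui correspond pr\'ecis\'ement aux hypoth\`eses.

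On prend alors explicitement $\cS_\lambda=\{0,2,\ldots,f-2\}$ dans le cas (II) et $\cS_\lambda=\{0,1,3,5,\ldots,f-2\}$ dans le cas (I), et on d\'efinit $\lambda$ coordonn\'ee par coordonn\'ee selon les r\`egles ci-dessus: $\lambda_i(x_i)=x_i+1$ si $i\in\cS_\lambda\setminus\{0\}$ et $\lambda_i(x_i)=p-2-x_i$ si $i\notin\cS_\lambda$, avec en plus $\lambda_0(x_0)=x_0+1$ (resp. $\lambda_0(x_0)=p-1-x_0$) dans le cas r\'eductible (resp. irr\'eductible). La g\'en\'ericit\'e de $\rho$ (d\'efinition \ref{definition-generique}) entra\^ine que les composantes $\lambda_i(r_i)$ restent toutes dans $[0,p-1]$, donc $\lambda$ d\'efinit bien un poids $\sigma\in\cD(\rho)$ par le lemme \ref{lemma-galois-serre-irred} et, par construction, $\mu_\lambda=(p-3-y_0,\ldots,p-3-y_{f-1})$.

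Il reste \`a v\'erifier que $\sigma^{[s]}\in\cD(\rho)$. Pour cela, on remarque que $\sigma^{[s]}$ s'obtient en rempla\c{c}ant chaque composante $\sigma_i=\lambda_i(r_i)$ par $p-1-\sigma_i$ (\`a torsion pr\`es par un caract\`ere du d\'eterminant), ce qui correspond au $f$-uplet $\lambda'$ associ\'e au sous-ensemble compl\'ementaire $\cS_{\lambda'}=\{0,\ldots,f-1\}\setminus\cS_\lambda$ (dans chaque cas, l'autre choix possible compte-tenu des m\^emes contraintes cycliques). On v\'erifie sans peine que $\lambda'$ satisfait encore aux r\`egles de $\cRDx$ (resp. $\cIDx$), d'o\`u $\sigma^{[s]}\in\cD(\rho)$. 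Le principal obstacle est donc purement combinatoire: tracer soigneusement comment les r\`egles (ii)--(v) d\'efinissant $\cRDx$ et $\cIDx$ interagissent avec la condition $\mu_\lambda$ constant, et constater que l'existence d'une solution \'equivaut aux parit\'es de $f$ stipul\'ees par les hypoth\`eses (I) et (II).
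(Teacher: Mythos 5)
Votre preuve est correcte et suit essentiellement la m\^eme d\'emarche que celle de l'article: exhiber explicitement le $f$-uplet $\lambda$ (vos choix de $\cS_\lambda$ redonnent exactement les $\lambda$ de l'article, \`a savoir $(p-1-x_0,x_1+1,p-2-x_2,\cdots)$ dans le cas (I) et $(x_0+1,p-2-x_1,\cdots)$ dans le cas (II)). Vous allez m\^eme un peu plus loin que l'article en d\'emontrant que la contrainte cyclique n'est r\'esoluble que sous les hypoth\`eses de parit\'e (I)/(II) et en justifiant $\sigma^{[s]}\in\cD(\rho)$ via le sous-ensemble compl\'ementaire, point que l'article laisse implicite (seule la co\"incidence des torsions par le d\'eterminant reste \`a v\'erifier dans votre dernier paragraphe, mais c'est une v\'erification de routine).
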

\begin{proof}
Il suffit de prendre $\sigma$ comme le poids de Diamond
correspondant à
\begin{itemize}
\item[--] dans le cas (I),
\[\lambda=(p-1-x_0,x_1+1,p-2-x_2,\cdots,x_{2m-1}+1,p-2-x_{2m});\]

\item[--] dans le cas (II),
\[\lambda=(x_0+1,p-2-x_1,\cdots,x_{2m}+1,p-2-x_{2m+1}).\]
\end{itemize}
L'énoncé concernant $\mu_{\lambda}$ est alors direct par définition (cf. \S\ref{subsection-PoidsDiamond}).
\end{proof}

On fixe un poids $\sigma$ comme dans le lemme \ref{lemma-exemple-poids}. D'après le théorème \ref{theorem-BP-D0(rho)} (ii), pour tout $1\leq j\leq f-1$, il
existe un poids $\tau_j$ apparaissant dans $D_{0,\sigma}(\rho)$
tel que $(\sigma,\tau_j)$ soit un couple de type $(+1,j)$. De manière explicite, $\tau_j$ est le sous-quotient irréductible de $D_{0,\sigma}(\rho)$ correspondant à
\[(\cdots,y_{j-2},p-2-y_{j-1},y_j+1,y_{j+1},\cdots)\in\cIy.\] Pour
simplifier les notations, on fixe $j\in\{0,\cdots,f-1\}$ et on
pose $\tau=\tau_j$.

Rappelons que $\delta(\tau)$ désigne l'unique poids de Diamond tel
que $\tau^{[s]}$ soit un sous-quotient irréductible de
$D_{0,\delta({\tau})}(\rho)$. Puisque $\delta(\tau)$ est un
sous-quotient irréductible de $\Ind_I^K\chi_{\tau}^s$, on peut lui associer un
$f$-uplet $\xi\in \cPx$ (cf. n$^{\circ}$\ref{subsection-principal}) tel que, si l'on écrit
$\tau=(s_0,\cdots,s_{f-1})\otimes\eta$, alors
\[\delta(\tau)=(\xi_0(s_0),\cdots,\xi_{f-1}(s_{f-1}))\otimes{\det}^{e(\xi)(s_0,\cdots,s_{f-1})}\eta.\]

\begin{cor}\label{corollaire--exemple-J(tau)}
Dans le cas (I) ou (II), on a
$J(\xi)=\{0,\cdots,f-1\}\backslash\{j-2\}$ (où\ l'on identifie $j-2$
avec $j-2+f$ si $j-2<0$).
\end{cor}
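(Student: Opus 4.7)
Le plan consiste \`a appliquer le corollaire \ref{corollaire-combination-J} au couple $(\tau_1,\tau_2)=(\sigma,\tau_j)$. Je commencerais par v\'erifier les hypoth\`eses de la proposition \ref{prop-combination}: les deux poids $\sigma$ et $\tau_j$ appartiennent \`a $D_{0,\sigma}(\rho)$ ($\sigma$ en \'etant le socle, $\tau_j$ par construction); le couple $(\sigma,\tau_j)$ est de type $(+1,j)$ par d\'efinition de $\tau_j$; la condition que les $I_1$-invariants se rel\`event dans $D_{0,\sigma}(\rho)^{I_1}$ est imm\'ediate pour $\sigma$, et r\'esulte du lemme \ref{lemma-releve} pour $\tau_j$ en observant que toutes les composantes de $\mu_{\tau_j}=(\cdots,y_{j-2},p-2-y_{j-1},y_j+1,y_{j+1},\cdots)$ appartiennent \`a l'ensemble $\{p-2-y_i,p-1-y_i,y_i,y_i+1\}$.

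L'\'etape cruciale sera l'identification explicite du $f$-uplet $\xi_1\in\cPy$ correspondant \`a $\delta(\sigma)$ en tant que sous-quotient de $\Ind_I^K\chi_{\sigma}^s$. Par le choix de $\sigma$ dans le lemme \ref{lemma-exemple-poids}, on a $\sigma^{[s]}\in\cD(\rho)$, donc $\delta(\sigma)=\sigma^{[s]}$ (c'est le socle de $D_{0,\sigma^{[s]}}(\rho)$). En \'ecrivant $\sigma=(s_0,\cdots,s_{f-1})\otimes\eta$, le poids $\sigma^{[s]}$ se met sous la forme normale $(p-1-s_0,\cdots,p-1-s_{f-1})\otimes{\det}^{\sum_{i}p^is_i}\eta$. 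Par le lemme \ref{lemma-Gamma-PS}, compl\'et\'e par le calcul \'el\'ementaire $e((p-1-y_i)_i)=\sum_ip^iy_i$, on voit que $\xi_1$ est le $f$-uplet maximal $(p-1-y_0,\cdots,p-1-y_{f-1})$; en particulier, $J(\xi_1)=\{0,\cdots,f-1\}$.

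Enfin, le corollaire \ref{corollaire-combination-J} appliqu\'e au couple $(\sigma,\tau_j)$ donne imm\'ediatement que $J(\xi)$ co\"incide avec $J(\xi_1)$ sur $\{0,\cdots,f-1\}\setminus\{j-2\}$, et que $j-2\in J(\xi)$ si et seulement si $j-2\notin J(\xi_1)=\{0,\cdots,f-1\}$, c'est-\`a-dire $j-2\notin J(\xi)$. On conclut donc $J(\xi)=\{0,\cdots,f-1\}\setminus\{j-2\}$. Le point potentiellement d\'elicat est l'identification concr\`ete de $\xi_1$ (notamment la v\'erification que le $f$-uplet $(p-1-y_i)_i$ appartient bien \`a $\cPy$ et donne le bon caract\`ere $\det$), mais une fois ce calcul effectu\'e, la machinerie d\'ej\`a d\'evelopp\'ee dans la proposition \ref{prop-combination} fournit m\'ecaniquement le r\'esultat, uniform\'ement dans les cas (I) et (II).
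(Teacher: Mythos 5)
Votre démonstration est correcte et suit exactement la même stratégie que celle du texte : appliquer le corollaire \ref{corollaire-combination-J} au couple $(\sigma,\tau_j)$ de type $(+1,j)$ après avoir observé que $\delta(\sigma)=\sigma^{[s]}$ et que $J(\xi_{\sigma^{[s]}})=\{0,\cdots,f-1\}$. Les vérifications supplémentaires que vous explicitez (relèvement des $I_1$-invariants, appartenance de $(p-1-y_0,\cdots,p-1-y_{f-1})$ à $\cPy$ et calcul de $e(\lambda)$) sont exactes et ne font que détailler ce que le texte laisse implicite.
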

\begin{proof}
C'est une conséquence du corollaire
\ref{corollaire-combination-J} en remarquant que
$J(\delta(\sigma))=J(\sigma^{[s]})=\{0,\cdots,f-1\}$ et que $(\sigma,\tau)$ est un couple de type $(+1,j)$.
\end{proof}

Rappelons que (\S\ref{subsection-PoidsDiamond}) $I(\sigma,\tau)$ d\'esigne l'unique sous-repr\'esentation de $D_{0,\sigma}(\rho)$ qui admet $\tau$ comme cosocle. Dans notre cas, $I(\sigma,\tau)$ est isomorphe \`a l'unique extension non triviale de $\tau$ par $\sigma$. D'après le lemme \ref{lemma-E(deux-char)}, la $K$-représentation
$I(\sigma,\tau)\subset D_{0,\sigma}(\rho)$ contient une unique
sous-$I$-représentation $M_{\tau}$ isomorphe à
$E_{j-1}(\chi_{\sigma},\chi_{\tau},r_{j-1}'+1)$ o\`u l'on a \'ecrit \[\sigma=(r_0',\cdots,r_{j}',\cdots,r'_{f-1})\] à torsion près. On pose $\chi':=\chi_{\tau}\alpha^{-p^{j}}$ et
\[W:=\Ind_I^K\Pi(M_{\tau})\cong\Ind_I^K\Pi(E_{j-1}(\chi_{\sigma},\chi_{\tau},r'_{j-1}+1)).\]

\begin{cor}
Soit $\omega$ un sous-quotient irréductible de
$\Ind_I^K\Pi(\chi')$, alors la $K$-représentation $W_{\omega}$
(cf. lemme \ref{lemma-W-vecteurs-k}) admet
$\sigma^{[s]}=\delta(\sigma)$ et $\delta(\tau)$ comme sous-représentations si et seulement si
\[\{0,\cdots,f-1\}\backslash\{j-1,j-2\}\subseteq J(\omega).\]
\end{cor}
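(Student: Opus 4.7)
Le plan est de ramener l'\'enonc\'e directement au corollaire \ref{corollary-W-U(tau)}, appliqu\'e successivement aux poids $\sigma^{[s]}$ et $\delta(\tau)$, puis de constater que les deux conditions sur $J(\omega)$ qui en r\'esultent co\"incident gr\^ace au calcul combinatoire fourni par le corollaire \ref{corollaire--exemple-J(tau)}.

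On commencera par identifier les bonnes $K$-repr\'esentations. Par r\'eciprocit\'e de Frobenius, $\sigma^{[s]}$ appara\^it comme cosocle de $\Ind_I^K\Pi(\chi_{\sigma})=\Ind_I^K\chi_{\sigma}^s$; de m\^eme, $\delta(\tau)$ appara\^it dans $\Ind_I^K\Pi(\chi_{\tau})=\Ind_I^K\chi_{\tau}^s$ via la surjection $\Ind_I^K\chi_{\tau}^s\twoheadrightarrow I(\delta(\tau),\tau^{[s]})$ d\'ej\`a utilis\'ee pour d\'efinir $\xi$. Dire que $W_{\omega}$ admet $\sigma^{[s]}$ (resp. $\delta(\tau)$) comme sous-repr\'esentation revient donc, dans la formulation de la section, \`a l'inclusion $U(\sigma^{[s]})\subset W_{\omega}$ (resp. $U(\delta(\tau))\subset W_{\omega}$), o\`u $U(-)$ d\'esigne l'unique sous-$K$-repr\'esentation de l'induite correspondante ayant le poids indiqu\'e comme cosocle (cf. lemme \ref{lemma-W-vecteurs-k}).

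On appliquera ensuite le corollaire \ref{corollary-W-U(tau)}. Le point (i) avec $\sigma'=\sigma^{[s]}$ donne l'\'equivalence
\[U(\sigma^{[s]})\subset W_{\omega}\Longleftrightarrow J(\sigma^{[s]})\subset J(\omega)\cup\{j-2,j-1\},\]
et le point (ii) avec $\tau'=\delta(\tau)$ donne
\[U(\delta(\tau))\subset W_{\omega}\Longleftrightarrow J(\delta(\tau))\subset J(\omega)\cup\{j-1\}.\]
Il suffit alors de substituer les valeurs explicites: la preuve du corollaire \ref{corollaire--exemple-J(tau)} fournit $J(\sigma^{[s]})=J(\delta(\sigma))=\{0,\cdots,f-1\}$, et ce corollaire lui-m\^eme donne $J(\delta(\tau))=\{0,\cdots,f-1\}\backslash\{j-2\}$. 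Les deux conditions se r\'eduisent alors \`a la m\^eme inclusion $\{0,\cdots,f-1\}\backslash\{j-1,j-2\}\subset J(\omega)$, ce qui termine la preuve.

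L'obstacle principal est la v\'erification pr\'ealable que les hypoth\`eses du corollaire \ref{corollary-W-U(tau)} sont bien satisfaites pour le poids $\sigma$ choisi par le lemme \ref{lemma-exemple-poids}, en particulier que sa $j$-i\`eme composante est $\geq 1$; ceci d\'ecoule cas par cas (selon (I) ou (II)) de la g\'en\'ericit\'e de $\rho$. Le reste n'est qu'un appariement combinatoire direct.
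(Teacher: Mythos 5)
Your proof is correct and takes essentially the same route as the paper, whose entire proof is the one-line remark that the statement is a consequence of the corollaries \ref{corollary-W-U(tau)} and \ref{corollaire--exemple-J(tau)}. Your write-up simply makes explicit the substitutions $J(\sigma^{[s]})=\{0,\cdots,f-1\}$ and $J(\delta(\tau))=\{0,\cdots,f-1\}\backslash\{j-2\}$ and the observation that the two resulting inclusions both reduce to $\{0,\cdots,f-1\}\backslash\{j-1,j-2\}\subseteq J(\omega)$.
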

\begin{proof}
C'est une conséquence des corollaires \ref{corollary-W-U(tau)} et
\ref{corollaire--exemple-J(tau)}.
\end{proof}

\begin{lemma}\label{lemma-exemple-f>2-F_0v}
(i) Il existe un poids de Diamond $\sigma'$ tel que $\chi'=\chi_{\sigma'}$. Si l'on note $\lambda'$ le $f$-uplet correspondant à $\sigma'$, alors
$|\ell(\lambda')-\ell(\lambda)|=1$.

(ii) Soit $v_{\tau}\in M_{\tau}$ un vecteur propre non nul de $\cH$ de
caractère $\chi'$. Posons
\[F_{0}:=\summ_{\lambda\in\F_q}\matr{[\lambda]}110\Pi(v_{\tau})\in \pi.\]
Alors ou bien $F_{0}=0$, ou bien $\langle K\cdot
F_{0}\rangle$ est irréductible isomorphe à $\sigma'$.
\end{lemma}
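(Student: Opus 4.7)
On calcule explicitement $\chi_\tau$ \`a partir de la forme explicite de $\tau$ (le sous-quotient irr\'eductible de $D_{0,\sigma}(\rho)$ correspondant \`a $(\ldots,y_{j-2},p-2-y_{j-1},y_j+1,y_{j+1},\ldots)\in\cIy$), puis $\chi'=\chi_\tau\alpha^{-p^j}$. On identifie alors l'unique $f$-uplet $\lambda'$ dans $\cIDx$ (cas (I)) ou $\cRDx$ (cas (II)) dont le poids associ\'e par le lemme \ref{lemma-galois-serre-irred} a $\chi'$ pour caract\`ere ; ce $\lambda'$ s'obtient par modification de $\lambda$ aux indices $j-1$ et $j$, modification directe \`a partir des d\'efinitions de $\cIDx$ et $\cRDx$. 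L'\'egalit\'e $|\ell(\lambda')-\ell(\lambda)|=1$ r\'esulte alors du fait que $\cS_{\lambda'}$ et $\cS_\lambda$ diff\`erent en exactement un indice.

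\textbf{Plan pour (ii).} L'id\'ee cl\'e est d'interpr\'eter $F_0$ comme l'image, via l'application naturelle $\iota:W\to\pi$ (induite par la r\'eciprocit\'e de Frobenius depuis l'inclusion $\Pi(M_\tau)\subset\pi$, o\`u $W=\Ind_I^K\Pi(M_\tau)$), d'un vecteur g\'en\'erateur du sous-$K$-module $W_{\sigma'}\subset W$ associ\'e \`a $\omega=\sigma'$ (au sens du lemme \ref{lemma-W-vecteurs-k}, avec $\psi=\chi'$, qui est bien un caract\`ere apparaissant dans $M_\tau$). En effet, par la formule (\ref{equation-def-vecteur-W}) appliqu\'ee dans le cas $J(\theta)=\emptyset$ (puisque $\sigma'$, ayant $\chi'$ pour caract\`ere, est le socle de $\Ind_I^K\Pi(\chi')$) et $\epsilon(\omega)=0$ (g\'en\'eriquement), ce g\'en\'erateur est exactement $\sum_{\lambda\in\F_q}\smatr{[\lambda]}{1}{1}{0}[1,\Pi(v_\tau)]\in W$, dont l'image par $\iota$ est $F_0$.

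Si $F_0=0$, on est dans la premi\`ere alternative. Sinon, $\iota(W_{\sigma'})$ est un sous-$K$-module non nul de $\pi$, quotient de $W_{\sigma'}$, et donc de cosocle $\sigma'$. Par le corollaire \ref{corollary-W-U(tau)} (appliqu\'e avec $J(\omega)=\emptyset$) combin\'e au choix particulier $\mu_\lambda=(p-3-y_0,\ldots,p-3-y_{f-1})$, on v\'erifie par une analyse combinatoire que les sous-quotients irr\'eductibles de $W_{\sigma'}$ distincts de $\sigma'$ ne sont pas dans $\cD(\rho)$. Comme $\rsoc_K(\pi)=\bigoplus_{\sigma''\in\cD(\rho)}\sigma''$ ne contient que des poids de Diamond, le socle de $\iota(W_{\sigma'})$, comme sous-module du socle de $\pi$, est form\'e uniquement de poids de Diamond, donc n\'ecessairement r\'eduit \`a $\sigma'$ (seul poids de Diamond parmi les constituants). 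Par la multiplicit\'e $1$ de $W_{\sigma'}$, il s'ensuit que $\iota(W_{\sigma'})\cong\sigma'$ et donc que $\langle K\cdot F_0\rangle$ est irr\'eductible isomorphe \`a $\sigma'$.

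L'obstacle principal r\'eside dans la v\'erification combinatoire que, parmi les sous-quotients irr\'eductibles de $W_{\sigma'}$, seul $\sigma'$ appartient \`a $\cD(\rho)$ : cela exploite de mani\`ere essentielle la forme extr\^eme de $\mu_\lambda$ (tous les $p-3-y_i$) et l'analyse fine des $J$-ensembles possibles fournie par le corollaire \ref{corollary-W-U(tau)} et l'exemple \ref{exemple-1}.
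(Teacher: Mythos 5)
Pour le (i), votre route est diff\'erente de celle du texte : vous construisez explicitement le $f$-uplet $\lambda'$ par modification de $\lambda$ aux places $j-1$ et $j$, alors que le texte observe que $(\sigma,\sigma')$ est un couple de type $(-1,j)$, que $\sigma'$ n'appara\^it pas dans $D_{0,\sigma}(\rho)$, et conclut par la maximalit\'e de $D_0(\rho)$. Votre approche est plausible mais rest\'ee \`a l'\'etat de plan (il faudrait v\'erifier que le $\lambda'$ propos\'e satisfait bien aux conditions d\'efinissant $\cIDx$ ou $\cRDx$) ; l'argument de maximalit\'e \'evite pr\'ecis\'ement ce calcul.

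Pour le (ii), il y a une lacune r\'eelle. Vous affirmez que les sous-quotients irr\'eductibles de $W_{\sigma'}$ distincts de $\sigma'$ ne sont pas des poids de Diamond : c'est faux, et c'est contredit par l'outil m\^eme que vous invoquez. Le corollaire \ref{corollary-W-U(tau)} (i), appliqu\'e avec $J(\sigma')=\emptyset$, donne $U(\sigma'')\subset W_{\sigma'}$ d\`es que $J(\sigma'')\subseteq\{j-2,j-1\}$ ; en particulier $\sigma$ lui-m\^eme (le socle de $\Ind_I^K\Pi(\chi_{\sigma})$, avec $J(\sigma)=\emptyset$) appara\^it dans le \emph{socle} de $W_{\sigma'}$, et $\sigma\in\cD(\rho)$. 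Votre conclusion <<le socle de $\iota(W_{\sigma'})$ est r\'eduit \`a $\sigma'$>> ne tient donc pas telle quelle : a priori l'image pourrait contenir $\sigma$ dans son socle. Le point manquant est que le morphisme $W_{\sigma'}\ra\pi$ ne se fait pas sur $W_{\sigma'}$ tout entier mais se factorise \`a travers $W_{\sigma'}/(W_{\sigma'}\cap W_1)$, o\`u $W_1$ est le noyau de $\Ind_I^K(\chi_{\sigma}^s\oplus\chi_{\tau}^s)\ra\pi$ ; c'est seulement dans ce quotient (et non dans $W_{\sigma'}$) que l'on v\'erifie, \`a l'aide de l'exemple \ref{exemple-1}, qu'aucun poids de Diamond autre que $\sigma'$ ne subsiste, et que $\sigma'$ y est de multiplicit\'e 1 (fait que vous utilisez aussi sans le justifier, $W$ n'\'etant pas de multiplicit\'e 1 en g\'en\'eral). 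Sans cette \'etape, l'argument socle/cosocle ne permet pas de conclure que $\langle K\cdot F_0\rangle\cong\sigma'$.
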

\begin{proof}
(i) Remarquons que  si l'on écrit
$\chi'=(s_0,\cdots,s_{f-1})\otimes\eta'$, alors $0\leq s_{j}\leq p-3$. Soit $\sigma'$ l'unique
poids de dimension $\leq q-2$ tel que $\chi_{\sigma'}=\chi'$.
On vérifie que $(\sigma,\sigma')$ est un couple de type
$(-1,j)$ et que $\sigma'$ n'apparaît pas dans $D_{0,\sigma}(\rho)$ (cf. théorème \ref{theorem-BP-D0(rho)}).
Ceci montre que $\sigma'\in \cD(\rho)$ par maximalité de
$D_0(\rho)$ (\cite[proposition 13.1]{BP}). Le deuxième énoncé
est immédiat.\vv

(ii) Supposons $F_{0}\neq 0$. Il suffit de
démontrer que l'image du morphisme naturel
\[W_{\sigma'}\hookrightarrow \Ind_I^K\Pi(M_{\tau})\twoheadrightarrow \langle K\cdot M_{\tau}\rangle\hookrightarrow \pi\]
est isomorphe à
$\sigma'$, où $W_{\sigma'}\subset W$ est défini dans le lemme
\ref{lemma-W-vecteurs-k} (avec $\sigma'$ \'etant vu comme sous-quotient de $\Ind_{I}^K\Pi(\chi')$). Si l'on note $W_1$ le noyau du
morphisme
\[\Ind_I^K(\chi_{\sigma}^s\oplus\chi_{\tau}^s)\ra \pi,\]
alors le morphisme $W_{\sigma'}\ra\pi$ se factorise à travers
$W_{\sigma'}/(W_{\sigma'}\cap W_1)$, et puis on vérifie \`a l'aide de l'exemple \ref{exemple-1} que $W_{\sigma'}/ (W_{\sigma'} \cap W_1)$ n'admet pas
de poids de Diamond autre que $\sigma'$ comme sous-quotient et que $\sigma'$ apparaît dans $W_{\sigma'}$ avec multiplicité 1. Ceci
permet de conclure.
\end{proof}

Dans le lemme \ref{lemma-exemple-f>2-F_0v}, il est clair que le poids $\sigma'$ est uniquement déterminé par $\sigma$ et $j$. Choisissons une repr\'esentation lisse admissible $\Omega$ de $G$ telle que $\pi\hookrightarrow \Omega$ et telle que $\Omega|_K\cong\rInj_K\rsoc_K(\pi)=\oplus_{\sigma\in\cD(\rho)}\rInj_K\sigma$. Le lemme \ref{lemma=S1S2} donne pour $(M_{\tau},v_{\tau})$
\begin{enumerate}
\item[(S1)] $v_{\tau}\notin \Sigma(M_{\tau})$, car $r^-(M_v)=1$ et $M_v=\langle I\cdot v_{\tau}\rangle$ de telle sorte que $r^+(\langle I\cdot v_{\tau}\rangle)> r^+(\ker\beta)$;\vspace{1mm}

\item[(S2)] il existe
$f_{\sigma'}\in (\rInj_K\sigma')^{I_1}\subset\Omega^{I_1}$  un vecteur (unique à scalaire près) non nul  propre de $\cH$ de caractère
$\chi'^s=\chi_{\sigma'}^s$.
\end{enumerate}\vspace{1mm}
On obtient alors par la construction dans \S\ref{subsection-prelimilaire} une famille de repr\'esentations lisse admissibles de $G$: $\{\pi_{\phi,a},\ {\phi\in\Phi, a\in\bFp}\}$.

\begin{lemma}\label{lemma-exemple-f=3-in-S}
Pour tout $\phi\in\Phi$ et $a\in\bFp$, on a $\pi_{\phi,a}\in S(\rho,r)$.
\end{lemma}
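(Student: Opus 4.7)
To prove $\pi_{\phi,a} \in S(\rho,r)$, I would verify in turn the three defining conditions of $S(\rho,r)$: the $K$-socle identity $\rsoc_K \pi_{\phi,a} = \rsoc_K D_0(\rho)$, the embedding of the diagram $D(\rho,r)$ into $(\pi_{\phi,a}^{K_1}, \pi_{\phi,a}^{I_1}, \mathrm{can})$, and generation of $\pi_{\phi,a}$ by $D_1(\rho)$ under the new $G$-action. The common thread is that the perturbation from $\Pi$ to $\Pi_{\phi,a}$ is controlled by the operator $\phi$ which vanishes on all $I_1$-invariants, so that the first two conditions essentially inherit from $\pi$ for free.

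For the socle, note that $\Omega_{\phi,a}$ coincides with $\Omega$ as a $K$-module, and $\Omega|_K$ is an injective envelope of $\rsoc_K(\pi) = \rsoc_K D_0(\rho)$; hence $\rsoc_K \pi_{\phi,a} \subseteq \rsoc_K D_0(\rho)$. Conversely, $\pi^{I_1} \subseteq \pi_{\phi,a}$ contains $D_1(\rho) = D_0(\rho)^{I_1}$, which $K$-generates $D_0(\rho)$ and hence its socle, giving the reverse inclusion.

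For the embedding, the key input is that $\Omega^{I_1} \subseteq \Sigma(M) \subseteq \ker\phi$, so the automorphism $\psi := 1+a\phi$ of $\Omega$ restricts to the identity on $\Omega^{I_1}$. Since $\Pi$ normalizes $I_1$ and therefore preserves $\Omega^{I_1}$, this forces $\Pi_{\phi,a} = \Pi$ on $\Omega^{I_1}$, and in particular on $\pi^{I_1}$. Consequently the inclusion $D_1(\rho) \hookrightarrow \pi^{I_1} \subseteq \pi_{\phi,a}^{I_1}$ is $N$-equivariant for the new $N$-action, and together with $D_0(\rho) = \langle K\cdot D_1(\rho)\rangle \subseteq \pi_{\phi,a}^{K_1}$ under the unchanged $K$-action, yields the desired embedding of diagrams with the same $r$.

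The technical heart is the generation condition. Set $V := \langle G_{\text{new}}\cdot D_1(\rho)\rangle \subseteq \pi_{\phi,a}$. Since $\pi_{\phi,a}$ is by construction generated by $M + \pi^{I_1}$ and $M \subseteq D_0(\rho) = \langle K\cdot D_1(\rho)\rangle \subseteq V$, the reverse inclusion reduces to showing $\pi^{I_1} \subseteq V$. The plan is to exploit the intertwining $\psi\circ \Pi_{\phi,a} = \Pi\circ \psi$ together with $\psi|_{\pi^{I_1}} = \mathrm{id}$: for any $y \in \pi^{I_1} \subseteq \pi = \langle G_{\text{old}}\cdot D_1(\rho)\rangle$, expand $y$ as a finite $\bFp$-linear combination of $G_{\text{old}}$-words applied to $D_1(\rho)$, then transform each word by replacing $\Pi$ with $\Pi_{\phi,a}$, obtaining an element of $V$ whose image under $\psi$ equals $y$, hence equals $y$ itself.

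The main obstacle is precisely in this last step: the operator $\psi$ is only $I$-equivariant and not $K$-equivariant (because $\phi$ factors through the $I$-subrepresentations $\Sigma(M)$ and $V_\chi$, neither of which is $K$-stable). One therefore cannot transport $G$-subrepresentations globally via $\psi$; instead, the comparison of the two actions must be carried out inductively on the length of words in $K$ and $\Pi$, tracking at each application of $K$ between successive $\Pi_{\phi,a}$'s the correction terms which take values in $V_\chi$. The argument closes by observing that these corrections are themselves produced from $D_1(\rho)$ under the new action via the vector $h \in \Omega^{I_1}$, whose existence is granted by condition (S2).
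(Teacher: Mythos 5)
Your skeleton — checking the three defining conditions, observing that $\Pi_{\phi,a}=\Pi$ on $\Omega^{I_1}$ because $\Omega^{I_1}\subseteq\Sigma(M)\subseteq\ker\phi$ and $\Omega^{I_1}$ is $\Pi$-stable, and bounding the socle by $\rsoc_K\Omega$ — is the right one and agrees with the paper's. But it rests on a false identity: $D_0(\rho)$ is \emph{not} equal to $\langle K\cdot D_1(\rho)\rangle$. Already for $f=2$ and $\rho$ irr\'eductible (the setting of \S\ref{subsection-cas-f=2}), $D_{0,\sigma_1}(\rho)$ has four facteurs de Jordan--H\"older but only two of them have $I_1$-invariants that lift to $D_{0,\sigma_1}(\rho)^{I_1}$ (lemme \ref{lemma-releve}), so the $K$-subrepresentation generated by $D_{0,\sigma_1}(\rho)^{I_1}$ is only $I(\sigma_1,\sigma_4^{[s]})$, of length $2$. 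Your argument therefore does establish $\oplus_{\sigma\in\cD(\rho)}\sigma\subseteq\pi_{\phi,a}$ (each $\sigma^{I_1}\subseteq D_1(\rho)$ already generates $\sigma$ over $K$), which settles the socle condition, but it gives neither $D_0(\rho)\hookrightarrow\pi_{\phi,a}^{K_1}$ nor $M\subseteq\langle K\cdot D_1(\rho)\rangle$. Recovering all of $D_0(\rho)$ genuinely requires the $N$-action: one also needs the subrepresentations $\langle K\cdot\Pi(e)\rangle$ for $e\in D_1(\rho)$, which are of the form $I(\delta(\tau),\tau^{[s]})$ and whose sum, together with the $\langle K\cdot e\rangle$, recovers $D_0(\rho)$. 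This is exactly the content of \cite[lemme 19.7]{BP}, which the paper invokes and which applies here precisely because $\Pi_{\phi,a}$ and $\Pi$ coincide on $D_1(\rho)\subseteq\Omega^{I_1}$.

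The second gap is the generation condition. Your word-rewriting scheme for proving $\pi^{I_1}\subseteq\langle G\cdot D_1(\rho)\rangle$ under the new action is not carried out: you yourself point out that $1+a\phi$ is only $I$-equivariant, and the proposed repair (tracking correction terms valued in $V_{\chi}$ along words in $K$ and $\Pi_{\phi,a}$) is a programme, not a proof. Moreover your containment $M\subseteq V$ again goes through the false identity above. The paper's route is much shorter: \cite[lemme 19.7]{BP}, applied to $\pi_{\phi,a}$, yields simultaneously $D_0(\rho)\hookrightarrow\pi_{\phi,a}$ and the fact that $\pi_{\phi,a}$ is generated by $D_1(\rho)$; the generator $M_{\tau}\subseteq I(\sigma,\tau)\subseteq D_0(\rho)$ is then absorbed and one concludes directly.
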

\begin{proof}
Reprenons les notations de \S\ref{subsection-prelimilaire} concernant la construction de $\pi_{\phi,a}$.
Par construction, $\pi_{\phi,a}$ est la sous-$G$-repr\'esentation de $\Omega_{\phi,a}$ engendr\'ee par $M_{\tau}+\pi^{I_1}$. En particulier, on a $D_1(\rho)\hookrightarrow \pi_{\phi,a}$. D'apr\`es \cite[lemme 19.7]{BP} (qui ne d\'epend de la structure de $D_0(\rho)$), on trouve que $D_0(\rho)\hookrightarrow\pi_{\phi,a}$ et que $\pi_{\phi,a}$ est engendr\'ee par $D_1(\rho)$. D'ailleurs, il r\'esulte de la construction que
$\Pi_{\phi,a}(x)=\Pi(x)$ si $x\in D_1(\rho)$, d'o\`u $D(\rho,r)$ s'injecte dans $(\pi_{\phi,a}|_{KZ},\pi_{\phi,a}|_N,\mathrm{can})$ en tant que diagrammes. Cela permet de conclure.
\end{proof}

Le r\'esultat suivant répond négativement à la question (Q2) (de l'introduction).

\begin{theorem}\label{theorem-exem-f>3-irr}
Dans le cas (I), il existe (au moins) deux \'el\'ements de $S(\rho,r)$ qui sont non isomorphes.
\end{theorem}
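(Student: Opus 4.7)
Le plan consiste \`a montrer que, pour un choix convenable de $a \in \bFp^\times$ et $\phi \in \Phi$, la repr\'esentation $\pi_{\phi,a}$ n'est pas isomorphe \`a $\pi$ en tant que $G$-repr\'esentation. Combin\'e au lemme \ref{lemma-exemple-f=3-in-S} (qui fournit $\pi_{\phi,a}\in S(\rho,r)$ pour tout $\phi,a$), cela produira deux \'el\'ements non isomorphes de $S(\rho,r)$.

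La premi\`ere \'etape sera de supposer par l'absurde l'existence d'un isomorphisme $G$-\'equivariant $\psi: \pi \xrightarrow{\sim} \pi_{\phi,a}$. Puisque les $K$-socles de $\pi$ et $\pi_{\phi,a}$ s'identifient canoniquement \`a $\bigoplus_{\sigma \in \cD(\rho)} \sigma$, qui est sans multiplicit\'e, on pourra normaliser $\psi$ pour qu'il induise l'identit\'e sur $\rsoc_K$. Par l'injectivit\'e de $\Omega|_K = \rInj_K \rsoc_K(\pi)$ dans $\Rep_K$, on prolongera le morphisme compos\'e $\pi\overset{\psi}{\ra}\pi_{\phi,a}\hookrightarrow \Omega$ en un $K$-endomorphisme $\tilde\psi$ de $\Omega$, qui sera en fait un $K$-automorphisme car il induit l'identit\'e sur le socle essentiel.

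La deuxi\`eme \'etape consistera \`a traduire l'\'equivariance sous $\Pi$, soit $\tilde\psi \circ \Pi = \Pi_{\phi,a} \circ \tilde\psi$, en l'appliquant au vecteur distingu\'e $v_\tau \in M_\tau$. En exploitant les propri\'et\'es cl\'es de $\phi$ construit dans \S\ref{subsection-prelimilaire} --- \`a savoir qu'il s'annule sur $\Sigma(M_\tau)\supseteq \Pi(M_\tau) + \ker\beta + \Omega^{I_1}$ et envoie $v_\tau$ sur $\Pi(h)$ (\`a scalaire pr\`es) --- ainsi que la cons\'equence $\tilde\psi(v_\tau) - v_\tau \in \ker\beta$ de la normalisation, on obtiendra une relation de la forme
\[\tilde\psi(\Pi(v_\tau)) - \Pi(v_\tau) - a\,h \in \Omega_0,\]
o\`u $\Omega_0 \subset \Omega$ est un sous-espace $\cH$-stable explicite (typiquement $\Pi(\ker\beta) +$ des corrections d'ordre $\geq 2$ en $\phi$) ne contenant pas la composante isotypique de caract\`ere $\chi_{\sigma'}^s$.

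Enfin, la contradiction s'obtiendra en projetant cette relation sur la composante $\cH$-isotypique de caract\`ere $\chi_{\sigma'}^s$ dans $\Omega$: le membre de gauche appartient \`a $\tilde\psi(\pi) = \pi_{\phi,a}$, tandis que $a\,h$ est pr\'ecis\'ement de ce caract\`ere et non nul. La difficult\'e principale sera la v\'erification combinatoire que, dans le cas (I) (avec $f = 2m+1$ impair et $\rho$ irr\'eductible), le caract\`ere $\chi_{\sigma'}^s$ n'appara\^it pas dans la composante pertinente de $\pi^{I_1} = D_0(\rho)^{I_1}$ relative au poids $\sigma$, ce qui forcera $a = 0$ et contredira $a \neq 0$. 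Cette v\'erification combinatoire reposera sur la description explicite de $\cD(\rho)$ dans le lemme \ref{lemma-galois-serre-irred}, le choix sp\'ecifique de $\sigma$ fourni par le lemme \ref{lemma-exemple-poids} (qui n'est possible pr\'ecis\'ement que dans le cas (I) o\`u $f$ est impair), le fait que $\ell(\sigma')=\ell(\sigma)\pm 1$ du lemme \ref{lemma-exemple-f>2-F_0v} (i), et l'analyse combinatoire des couples de poids via la proposition \ref{prop-combination} et le corollaire \ref{corollaire-combination-J}.
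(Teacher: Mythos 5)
Your overall strategy — assume an isomorphism $\psi:\pi\simto\pi_{\phi,a}$, normalize it, and exploit the $\Pi$-equivariance at the vector $v_\tau$ via the identity $\Pi_{\phi,a}(v_\tau)=\Pi(v_\tau)+af_{\sigma'}$ — is the same as the paper's. But your concluding mechanism has a genuine gap. First, projecting the relation $\psi(\Pi(v_\tau))-\Pi(v_\tau)=af_{\sigma'}$ onto the $\chi_{\sigma'}^s$-isotypic component of $\cH$ isolates nothing: $v_\tau$ has character $\chi'$, so $\Pi(v_\tau)$, $\psi(\Pi(v_\tau))$ and $f_{\sigma'}$ are \emph{all} eigenvectors of character $\chi'^s=\chi_{\sigma'}^s$, and the projection is the identity on every term. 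Second, the combinatorial fact you propose to prove is false: by the lemme \ref{lemma-exemple-f>2-F_0v} (i), $\sigma'$ is itself a poids de Diamond, so $\chi_{\sigma'}^s$ \emph{does} occur in $D_1(\rho)=D_0(\rho)^{I_1}$ (it is the character of $(\sigma'^{[s]})^{I_1}$ inside $D_{0,\delta(\sigma')}(\rho)$); moreover the relation you want to project is not a relation between $I_1$-invariant vectors, so comparing with $D_0(\rho)^{I_1}$ is not meaningful. Finally, aiming to force $a=0$ presupposes that \emph{any} nonzero $a$ works, which is not known here.

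What the paper actually does, and what you are missing, is the following. Apply the operator $\summ_{\lambda\in\F_q}\smatr{[\lambda]}110$ to the relation: this turns $\Pi(v_\tau)$ into $F_0$ and $af_{\sigma'}$ into $a\summ_{\lambda\in\F_q}\smatr{[\lambda]}110 f_{\sigma'}\neq0$. The lemme \ref{lemma-exemple-f>2-F_0v} (ii) says that $F_0$ is either $0$ or generates a copy of $\sigma'$; in case (I) one does \emph{not} know which alternative holds (contrary to case (II), cf. remarque \ref{remark-F0=0}), and this is precisely why $a$ must be chosen \emph{according to} that alternative: $a$ is taken so that $F_0+a\summ_{\lambda}\smatr{[\lambda]}110 f_{\sigma'}$ vanishes exactly when $F_0$ does not. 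The distinguishing invariant is then whether $\sigma'$ occurs in $\langle K\cdot\Pi(M_\tau)\rangle/\langle K\cdot\Pi(\ker\beta)\rangle$, a $K$-structural property preserved by $\psi$ (after normalizing $\psi|_{M_\tau}=\id$ via $\dim_{\bFp}\Hom_K(I(\sigma,\tau),\Omega)=1$ — note that you cannot normalize $\psi$ to be the identity on the whole multiplicity-free socle at once, only componentwise). Without the case-dependent choice of $a$ and the use of the lemme \ref{lemma-exemple-f>2-F_0v} (ii) to convert the computation into a $K$-isomorphism invariant, the contradiction does not follow.
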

\begin{proof}
Reprenons les notations du \S\ref{subsection-prelimilaire} concernant la construction de $\pi_{\phi,a}$ et choisissons une constante $a\in\bFp$ telle que
\begin{equation}\label{equation-neq0}F_{0}+a\summ_{\lambda\in\F_q}\matr{[\lambda]}110f_{\sigma'}\left\{ {\begin{array}{ll}
\neq0 & \mathrm{si}\ F_{0}=0  \\
=0& \mathrm{si}\ F_{0}\neq 0.
\end{array}}\right.\end{equation}
($a$ existe car
$\sum_{\lambda\in\F_q}\smatr{[\lambda]}110f_{\sigma'}$ n'est pas
nul dans $\pi$).
Il suffit de montrer que $\pi\cong\pi_{\phi,a}$ en tant que $G$-repr\'esentations (avec $\phi\in\Phi$ quelconque). Par l'absurde, soit $\psi:\pi\simto \pi_{\phi,a}$ un isomorphisme $G$-\'equivariante. Comme $M_{\tau}$ est contenu dans la $K$-repr\'esentation $I(\sigma,\tau)$ et comme \[\dim_{\bFp}\Hom_{\bFp}(I(\sigma,\tau),\Omega)=1,\]
on peut
supposer $\psi_a|_{M_{\tau}}=\id$. D'ailleurs, puisque $\phi(x)=0$ pour tout $x\in\ker(\beta)+\Pi(\ker(\beta))$, on a $\Pi(x)=\Pi_{\phi,a}(x)$ (dans $\Omega$). Or, par construction (\ref{equation-neq0}), les deux $K$-repr\'esentations \[\langle K\cdot\Pi(M_{\tau})\rangle/\langle K\cdot\Pi(\ker(\beta))\rangle\]
et
\[\langle K\cdot\Pi_{\phi,a}(M_{\tau})\rangle/\langle K\cdot\Pi(\ker(\beta))\rangle\]
n'admettent pas \`a la fois $\sigma'$ comme sous-quotient. Cela donne une contradiction parce que
\[\psi(\langle K\cdot \Pi(M_{\tau})\rangle)=\langle K\cdot\Pi_{\phi,a}(M_{\tau})\rangle\]
par la $G$-\'equivariance de $\psi$.
\end{proof}
\vv

Supposons maintenant que l'on est dans le cas (II). D'après
\cite[théorème \ref{theorem-BP-D0(rho)} (iii)]{BP}, on a une décomposition de diagrammes
\[D(\rho,r)=\oplus_{\ell=0}^f(D_{0,\ell}(\rho),D_{1,\ell}(\rho),r_{\ell})\]
où $D_{i,\ell}(\rho):=\oplus_{\ell(\sigma)=\ell}D_{i,\sigma}(\rho)$ pour $i\in\{0,1\}$.
Par \cite[théorème 19.9]{BP}, on peut supposer que $\pi$ est une somme directe de sous-repr\'esentations $\pi_{\ell}$  pour $\ell\in\{0,\cdots,f\}$ v\'erifiant\vv

\begin{itemize}
\item[(a)]
$\rsoc_{K}\pi_{\ell}=\bigoplus_{\substack{\sigma''\in\cD(\rho)\\
\ell(\sigma'')=\ell}}\sigma''$ où $\ell(\sigma'')$ est d\'efini par (\ref{equation-ell});
\vspace{1mm}

\item[(b)]
$(D_{0,\ell}(\rho),D_{1,\ell}(\rho),r_{\ell})\hookrightarrow
(\pi_{\ell}^{K_1},\pi_{\ell}^{I_1},\mathrm{can})$;\vspace{1mm}

\item[(c)] $\pi_{\ell}$ est engendrée par $D_{1,\ell}(\rho)$ en tant que $G$-représentation.\vv
\end{itemize}

\begin{rem}\label{remark-F0=0}
Sous ces conditions pr\'ec\'edentes sur $\pi$, on a $F_0=0$ dans le lemme \ref{lemma-exemple-f>2-F_0v} car $F_0\in \pi_{\ell(\sigma)}$ et $\ell(\sigma')\neq \ell(\sigma)$.
\end{rem}

Le théorème suivant va répondre \emph{négativement} à la question (Q3).
\begin{theorem}\label{theorem-exem-f>3-red}
Il existe une représentation lisse admissible $\pi'$ de $G$
vérifiant les propriétés suivantes:\vspace{2mm}

\begin{itemize}
\item[(a')]
$\rsoc_{K}\pi'=\bigoplus_{\sigma''\in\cD(\rho)}\sigma''$;\vspace{1mm}

\item[(b')] $(D_{0}(\rho),D_{1}(\rho),r)\hookrightarrow
(\pi'^{K_1},\pi'^{I_1},\mathrm{can})$;\vspace{1mm}

\item[(c')] $\pi'$ est engendrée par $D_{1}(\rho)$ en tant que $G$-représentation;\vspace{1mm}

\item[(d')] $\pi'$ n'est pas semi-simple, en particulier,
$\pi'$ n'est pas une somme directe
$\pi'=\oplus_{\ell=0}^f\pi_{\ell}'$ avec $\pi_{\ell}'$ des
sous-représentations vérifiant (a)-(c) ci-dessus.\vspace{1mm}
\end{itemize}
\end{theorem}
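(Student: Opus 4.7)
La strat\'egie est de reprendre la construction du \S\ref{subsection-prelimilaire} avec les m\^emes donn\'ees $(M_{\tau},v_{\tau},f_{\sigma'})$ que dans la preuve du th\'eor\`eme \ref{theorem-exem-f>3-irr}, en partant cette fois d'une repr\'esentation $\pi\in S(\rho,r)$ d\'ej\`a d\'ecompos\'ee sous la forme $\pi=\oplus_{\ell=0}^{f}\pi_{\ell}$ satisfaisant (a)--(c) gr\^ace \`a \cite[th\'eor\`eme 19.9]{BP}. On pose $\pi':=\pi_{\phi,a}$ pour un $\phi\in\Phi$ quelconque et un scalaire $a\in\bFp^{\times}$; le lemme \ref{lemma-exemple-f=3-in-S} assure que $\pi'\in S(\rho,r)$, ce qui fournit d\'ej\`a (a'), (b') et (c').

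Pour \'etablir (d'), on raisonne par l'absurde en supposant l'existence d'une d\'ecomposition $\pi'=\oplus_{\ell=0}^{f}\pi'_{\ell}$ o\`u chaque $\pi'_{\ell}$ v\'erifie (a)--(c). Puisque $D_0(\rho)$ est de multiplicit\'e $1$ dans $\pi'$ (th\'eor\`eme \ref{theorem-BP-D0(rho)} (ii)), l'embo\^itement des socles impose $D_{0,\ell}(\rho)\subset\pi'_{\ell}$ pour chaque $\ell$: en effet, l'inclusion (b') et l'inclusion fournie par (b) pour $\pi'_{\ell}$ donnent deux copies de $D_{0,\ell}(\rho)$ dans $\pi'$ avec le m\^eme socle, donc co\"incident. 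En particulier $v_{\tau}\in D_{0,\sigma}(\rho)\subset\pi'_{\ell(\sigma)}$, et la $G$-stabilit\'e de $\pi'_{\ell(\sigma)}$ donne
\[\Pi_{\phi,a}(v_{\tau})=\Pi(v_{\tau})+af_{\sigma'}\in\pi'_{\ell(\sigma)}.\]

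En appliquant l'op\'erateur $\sum_{\lambda\in\F_q}\matr{[\lambda]}110$ \`a cette \'egalit\'e, et en utilisant la remarque \ref{remark-F0=0} (selon laquelle $F_0=\sum_{\lambda}\matr{[\lambda]}110\Pi(v_{\tau})$ est nul dans $\pi$), on obtient
\[a\summ_{\lambda\in\F_q}\matr{[\lambda]}110f_{\sigma'}\in \pi'_{\ell(\sigma)}.\]
Ce vecteur, non nul pour $a\neq 0$, vit dans $\rInj_K\sigma'\subset\Omega|_K$; sous l'action de $K$ il engendre donc une sous-repr\'esentation non nulle de l'injectif ind\'ecomposable $\rInj_K\sigma'$, laquelle contient n\'ecessairement le socle $\sigma'$ de cet injectif. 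Il en r\'esulte $\sigma'\hookrightarrow\rsoc_K\pi'_{\ell(\sigma)}=\oplus_{\ell(\sigma'')=\ell(\sigma)}\sigma''$, ce qui est absurde puisque $\ell(\sigma')=\ell(\sigma)\pm 1\neq \ell(\sigma)$ d'apr\`es le lemme \ref{lemma-exemple-f>2-F_0v} (i).

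Le point d\'elicat \`a justifier est la compatibilit\'e entre les embo\^itements donn\'es par (b) et (b'), assurant $D_{0,\ell}(\rho)\subset \pi'_{\ell}$ comme sous-espaces de $\Omega$ (et non seulement l'existence d'une injection abstraite): c'est ce qui permet de localiser $v_{\tau}$ dans une seule composante $\pi'_{\ell(\sigma)}$. Une fois ce point r\'egl\'e, la contradiction vient imm\'ediatement du fait que la d\'eformation $\Pi_{\phi,a}$ fait appara\^itre le vecteur $af_{\sigma'}$ dans une composante injective $\rInj_K\sigma'$ distincte de celles permises dans $\pi'_{\ell(\sigma)}$, exactement parce que les param\`etres $\ell(\sigma)$ et $\ell(\sigma')$ diff\`erent.
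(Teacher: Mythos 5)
Your proposal follows essentially the same route as the paper: deform the action of $\Pi$ into $\Pi_{\phi,a}$ using the data $(M_{\tau},v_{\tau},f_{\sigma'})$, then exploit the dichotomy between $F_0=0$ in $\pi$ (remarque \ref{remark-F0=0}) and $F_0'=a\sum_{\lambda\in\F_q}\smatr{[\lambda]}110f_{\sigma'}\neq0$ in $\pi_{\phi,a}$ to force the weight $\sigma'$, with $\ell(\sigma')\neq\ell(\sigma)$, into the $K$-socle of the component that should only carry weights of parameter $\ell(\sigma)$. Two divergences deserve comment. First, the paper does not take an arbitrary $\phi\in\Phi$: it chooses $\phi$ factoring through $\Omega/E$ with $E\supseteq\oplus_{\ell\neq\ell(\sigma)}\Omega_{\ell}+\Sigma(M_{\tau})$ and with image inside $\rInj_K\sigma'\subset\Omega_{\ell(\sigma')}$. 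Your generic choice still yields $\Pi_{\phi,a}(v_{\tau})=\Pi(v_{\tau})+af_{\sigma'}$ with $f_{\sigma'}\in\rInj_K\sigma'$, so nothing is lost for the contradiction. Second, and more substantially, you only refute the existence of a decomposition $\pi'=\oplus_{\ell}\pi'_{\ell}$ with each $\pi'_{\ell}$ satisfying (a)--(c), whereas (d') also asserts that $\pi'$ is not semi-simple; the paper arranges its contradiction against the weaker hypothesis that $\pi'$ admits \emph{some} sub-$G$-representation with $K$-socle $\oplus_{\ell(\sigma'')=\ell(\sigma)}\sigma''$, which is what it invokes to cover both clauses. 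As written, your proof leaves the non-semi-simplicity clause unaddressed.

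On the point you rightly flag as delicate: the argument that two copies of $D_{0,\ell}(\rho)$ in $\pi'$ with the same socle must coincide is not correct as stated (two isomorphic subrepresentations sharing a socle need not be equal). A clean justification is available: $D_{0,\sigma}(\rho)$ admits no Diamond weight other than $\sigma$ among its irreducible subquotients (by construction of $D_0(\rho)$), so for $m\neq\ell(\sigma)$ the projection of $D_{0,\sigma}(\rho)$ onto $\pi'_{m}$, being a quotient of $D_{0,\sigma}(\rho)$ embedded in $\pi'_{m}$, would have to contain in its socle a weight $\sigma''\in\cD(\rho)$ with $\ell(\sigma'')=m$, hence must vanish. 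This gives $D_{0,\sigma}(\rho)\subset\pi'_{\ell(\sigma)}$, so $v_{\tau}\in\pi'_{\ell(\sigma)}$ and the rest of your argument goes through.
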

\begin{proof}
On va construire $\pi'$ \`a partir de $\pi$ en modifiant la construction dans n$^{\circ}$\ref{subsection-prelimilaire}. 
Soit $\Omega|_{K}=\oplus_{\ell=0}^f\Omega_{\ell}$ une d\'ecomposition de $\Omega$ (en tant que $K$-repr\'esentation) telle que $\Omega_{\ell}$ est isomorphe \`a $\rInj_{K}\rsoc_K(\pi_{\ell})$
et l'injection $\pi\hookrightarrow \Omega$ que l'on a fix\'e induit $\pi_{\ell}\hookrightarrow \Omega_{\ell}$ pour tout $\ell\in\{0,\cdots,f\}$.
Comme $M_{\tau}, \Pi(M_{\tau})\subset\pi_{\ell(\sigma)}\subset\Omega_{\ell(\sigma)}$, il existe une
sous-$I$-représentation $E$ de $\Omega$ contenant
\[\bigoplus_{\ell\neq\ell(\sigma)}\Omega_{\ell}
+\Sigma(M_{\tau}),\]
mais pas $v_{\tau}$. Soit $V_{\chi'}$ une
sous-$I$-représentation de $\rInj_K\sigma'\subset\Omega_{\ell(\sigma')}$ isomorphe à
$\rInj_I\chi'$ telle que $V_{\chi'}^{I_1}=\bFp \Pi(f_{\sigma'})$.
En choisissant\vv
\begin{itemize}
\item[--] d'une part, un endomorphisme $I$-\'equivariant $\phi:\Omega\ra\Omega$  qui
se factorise par
\[\Omega\ra \Omega/E\ra V_{\chi'}\ra \Omega,\]

\item[--] d'autre part, une constante $a\in\bFp$ non nulle
\end{itemize}
on obtient donc une action de $\smatr 01p0$ sur $\Omega$, i.e. on
définit $\Pi_{\phi,a}:\Omega\ra\Omega$ comme dans
(\ref{equation--exem-define-Pi}). On en déduit alors une
représentation lisse admissible $\Omega_{\phi,a}$ de $G$. Enfin,
on définit $\pi_{\phi,a}$ comme la sous-$G$-représentation de
$\Omega_{\phi,a}$ engendrée par $D_1(\rho)$.\vspace{1mm}

Montrons que la représentation
$\pi':=\pi_{\phi,a}$ satisfait aux conditions demandées. Par
construction, $\pi'$ satisfait (a'), (b'), (c'). Pour vérifier la condition (d'), il suffit de montrer que $\pi'$ ne poss\`ede pas de sous-$G$-repr\'esentation qui a pour $K$-socle $\oplus_{\ell(\sigma'')=\ell(\sigma)}\sigma''$. Supposons par l'absurde $\pi'_{\ell(\sigma)}$ une telle sous-repr\'esentation.
Comme l'on a vu dans la remarque \ref{remark-F0=0},
\[F_0=\summ_{\lambda\in\F_q}\matr{[\lambda]}110\Pi(v_{\tau})=0\]
et comme et $\Pi_{\phi,a}(v_{\tau})=\Pi(v_{\tau})+af_{\sigma'}$ et $a\neq 0$, on voit que
\[F_0':=\summ_{\lambda\in\F_q}\matr{[\lambda]}110\Pi_{\phi,a}(v_{\tau})\neq 0.\]
et que $F_0'$ engendre $\sigma'$ sous l'action de $K$.
Autrement dit, $\pi_{\ell(\sigma)}'$ admet $\sigma'$ dans son socle, ce qui donne une contradiction et permet de conclure.
\end{proof}

\subsection{Le cas $f=2$ et $\rho$ irréductible}\label{subsection-cas-f=2}
Supposons maintenant que
$f=2$ et fixons $\rho$ une représentation continue (irréductible)
$\Gal(\bQp/F)\ra\GL_2(\bFp)$ telle que
\[\rho|_{I(\bQp/F)}\cong\matr{\omega_4^{r_0+1+p(r_1+1)}}00{\omega_4^{p^2(r_0+1)+p^3(r_1+1)}}\]
avec $1\leq r_0\leq p-2$, $0\leq r_1\leq p-3$.
Alors l'ensemble des poids de Diamond est donné par:
\[\begin{array}{rll}\sigma_1&:=&(r_0,r_1)\vspace{2mm}\\
\sigma_2&:=&(r_0-1,p-2-r_1)\otimes{\det}^{p(r_1+1)}\vspace{2mm}\\
\sigma_3&:=&(p-1-r_0,p-3-r_1)\otimes{\det}^{r_0+p(r_1+1)}\vspace{2mm}\\
\sigma_4&:=&(p-2-r_0,r_1+1)\otimes{\det}^{r_0+p(p-1)},\vspace{2mm}\end{array}\]
et la représentation $D_0(\rho)$ est (à torsion près)
\[\begin{array}{rccccll}D_{0,\sigma_1}(\rho)&:=&\sigma_1&\textbf{---}&S_1&\textbf{---}&(p-3-r_0,p-1-r_1)\\
&&&&\oplus&\\
D_{0,\sigma_2}(\rho)&:=&\sigma_2 &\textbf{---}&S_2&\textbf{---}&(p-r_0,r_1-1)\\
&&&&\oplus&\\
D_{0,\sigma_3}(\rho)&:=&\sigma_3&\textbf{---}&S_3&\textbf{---}&(r_0-2,r_1+2) \\
&&&&\oplus&\\
D_{0,\sigma_4}(\rho)&:=&\sigma_4&\textbf{---}&S_4&\textbf{---}&(r_0+1,p-4-r_1)\end{array}\]
où\
\[\begin{array}{rll}
S_1:=&(p-2-r_0,r_1-1)\oplus(r_0+1,p-2-r_1) \vspace{2mm}\\
S_2:=&(r_0-2,r_1)\oplus(p-1-r_0,p-1-r_1) \vspace{2mm}\\
S_3:=&(r_0-1,p-4-r_1)\oplus(p-r_0,r_1+1) \vspace{2mm}\\
S_4:=&(p-3-r_0,p-3-r_1)\oplus(r_0,r_1+2). \vspace{2mm}
\end{array}\]
On vérifie que $\delta(\sigma_i)=\sigma_{i+1}$ (avec la convention
$\sigma_{5}:=\sigma_1$), i.e. $\sigma_i^{[s]}$ apparaît dans
$D_{0,\sigma_{i+1}}(\rho)$ comme  sous-quotient.

Notons $\chi_i:=\chi_{\sigma_i}$ et
$\chi_i^s:=\chi_{\sigma_i}^s$, et   choisissons une base
$\{e_i,e_i^{[s]}, 1\leq i\leq 4\}$ de
$D_1(\rho):=D_0(\rho)^{I_1}$, où\ $e_i$ (resp. $e_i^{[s]}$) est un
vecteur propre de $\cH$ de caractère
$\chi_i$ (resp. $\chi_i^s$). 

Fixons $D(\rho,r)$ un diagramme associé à $\rho$. Dans cette section, on va démontrer que l'ensemble $S(\rho,r)$ contient un \'el\'ement dont l'espace des $K_1$-invariants est \emph{strictement} plus grand que $D_0(\rho)$.
On fixe $\pi\in S(\rho,r)$ une repr\'esentation (supersinguli\`ere d'apr\`es \cite[th\'eor\`eme 19.10]{BP}) de $G$. 

\subsubsection{La représentation $V_1$}\label{subsubsection-V1}

Puisque $D_{0,\sigma_2}(\rho)$ contient une
sous-$K$-représentation isomorphe à l'extension
\[0\ra \sigma_2\ra *\ra \sigma_1^{[s]}\ra0\]
de type $(+1,1)$, le lemme \ref{lemma-E(deux-char)} montre
qu'il contient une sous-$I$-représentation $M_1$ isomorphe à
$E_0(\chi_2,\chi_1^s,r_0)$. Posons
\[W_1:=\Ind_I^K\Pi(M_1)\cong\Ind_I^K\Pi(E_0(\chi_2,\chi_1^s,r_0)).\] Puisque le caractère
$\chi_1^s\alpha^{-p}$ n'est autre que $\chi_3$ et puisque
$\Ind_I^K\Pi(\chi_3)$ admet un sous-quotient isomorphe à
$\sigma_4$, la sous-représentation $W_{1,\sigma_4}\subset W_1$ est
bien définie (cf. lemme \ref{lemma-W-vecteurs-k}).
On note
$V_1$ l'image de $W_{1,\sigma_4}$ dans $\pi$. Le lemme suivant
décrit la filtration par le $K$-socle de $V_1$.

\begin{lemma}\label{lemma-V_1}
(i) On a $\rsoc_KV_1=\sigma_1\oplus\sigma_3$.

(ii) La filtration par le $K$-socle de $V_1$ est la suivante (où\ l'on
pose $\tau_0:=\sigma_3$ et $\tau_1:=\sigma_2^{[s]}$):
\[\begin{array}{rccccccccccccccccccc}\tau_0&\ligne&\tau_1&\ligne&\cdots&\ligne&\tau_{r_0-1}&\ligne&
\tau_{r_0}&\ligne&{\tau}_{r_0-1}&\ligne&\cdots&\ligne&\tau_0&\ligne&\sigma_4\\
&&&&&&&&&&&&&&&&\mid\\
&&&&&&&&&&&&&&&&\sigma_1.\end{array}\] De manière explicite,
$(V_1/\sigma_1)_i=
\left\{ {\begin{array}{ll}\tau_i& \mathrm{si\ }i\leq r_0\\

\tau_{2r_0-i}& \mathrm{si\ }r_0+1\leq i\leq 2r_0 \\

\sigma_4 & \mathrm{si\ }i=2r_0+1.
\end{array}}\right.$

(iii)  $\tau_i\notin\cD(\rho)$ pout tout $i\neq 0$; le couple
$(\sigma_1,\sigma_4)$ est de type $(+1,1)$ et
$(\tau_i,\tau_{i+1})$ de type $(+1,0)$ pour tout $0\leq i\leq r_0-1$.
\end{lemma}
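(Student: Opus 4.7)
L'id\'ee est de d\'ecrire $V_1=\langle K\cdot\Pi(M_1)\rangle$ en combinant la structure combinatoire de $W_{1,\sigma_4}\subset W_1=\Ind_I^K\Pi(M_1)$, fournie par l'exemple \ref{exemple-1}, avec l'analyse du morphisme naturel $W_{1,\sigma_4}\to\pi$ obtenu par r\'eciprocit\'e de Frobenius \`a partir de $\Pi(M_1)\hookrightarrow\pi|_I$.

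\textbf{\'Etape 1.} J'appliquerais l'exemple \ref{exemple-1} sp\'ecialis\'e \`a $f=2$, $j=1$, avec $\sigma=\sigma_2$, $\tau=\sigma_1^{[s]}$ et $\omega=\sigma_4$, pour d\'eterminer explicitement la filtration par le $K$-socle de $W_{1,\sigma_4}$ ainsi que la liste de tous ses sous-quotients irr\'eductibles. Par le corollaire \ref{corollary-W-U(tau)}, les facteurs se r\'epartissent entre ceux provenant de $\Ind_I^K\Pi(\chi_2)$ (soumis \`a $J(\sigma')\subset \{0,1\}$) et ceux provenant de $\Ind_I^K\Pi(\chi_1^s)$ (soumis \`a $J(\tau')\subset\{0\}$), ces derniers \'etant exactement $\sigma_4$ et $\sigma_1$. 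Le socle de $W_{1,\sigma_4}$ est $\sigma_2$ et son cosocle est $\sigma_4$, ce qui se voit par le lemme \ref{lemma-Prin-filtration} appliqu\'e au socle $\chi_2^s$ de $\Pi(M_1)$.

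\textbf{\'Etape 2.} J'identifierais le socle de $V_1$ en examinant les caract\`eres de $I$ apparaissant dans $\Pi(M_1)\subset\pi$ et leurs r\'ealisations dans $\pi^{I_1}$. Deux observations cruciales, reposant sur la multiplicit\'e 1 de $D_1(\rho)=D_0(\rho)^{I_1}$ sur les caract\`eres de $I$:
\begin{itemize}
\item[--] Le socle de $\Pi(M_1)$, de caract\`ere $\chi_2^s$, est engendr\'e par $\Pi(v_{\sigma_2})$ o\`u $v_{\sigma_2}$ est le vecteur fixe de $\sigma_2\subset M_1$. Par la structure du diagramme $D(\rho,r)\hookrightarrow\pi$ et le fait que $\delta(\sigma_2)=\sigma_3$, on a $\Pi(v_{\sigma_2})=e_2^{[s]}\in D_{0,\sigma_3}(\rho)$; la sous-$K$-repr\'esentation engendr\'ee contient alors $\sigma_3=\rsoc_K D_{0,\sigma_3}(\rho)$.
\item[--] Le caract\`ere $\chi_1$ appara\^it dans $\Pi(M_1)$ au niveau $r_0-1$ (provenant de $\Pi(\chi_1^s)=\chi_1$ dans $M_1$); son vecteur propre dans $\pi^{I_1}$ est un multiple de $e_1\in\sigma_1^{I_1}$, engendrant $\sigma_1$.
\end{itemize}
On en d\'eduit $\sigma_1\oplus\sigma_3\hookrightarrow\rsoc_K V_1$. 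R\'eciproquement, comme ni $\chi_2$ ni $\chi_4$ n'apparaissent parmi les caract\`eres de $\Pi(M_1)$, ni $\sigma_2$ ni $\sigma_4$ ne peuvent figurer dans le socle de $V_1$, ce qui prouve (i).

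\textbf{\'Etape 3.} Pour (ii), je comparerais la structure de $W_{1,\sigma_4}$ de l'\'etape 1 avec le noyau du morphisme vers $V_1$: le diagramme \`a deux rang\'ees de l'exemple \ref{exemple-1} se "d\'eroule" en la cha\^ine unis\'erielle $\tau_0\ligne\tau_1\ligne\cdots\ligne\tau_{r_0}\ligne\tau_{r_0-1}\ligne\cdots\ligne\tau_0\ligne\sigma_4$, avec $\sigma_1$ rattach\'e sous $\sigma_4$ par l'extension non-triviale du couple $(\sigma_1,\sigma_4)$ (de type $(+1,1)$). L'identit\'e des $\tau_i$ se lit sur les formules explicites: en particulier $\tau_0=\sigma_\emptyset^{(0)}=\sigma_3$ et $\tau_1=\sigma_\emptyset^{(1)}=\sigma_2^{[s]}$, les $\tau_i$ suivants s'obtenant en suivant la cha\^ine $\sigma_\emptyset^{(i)}$ puis $\sigma_{\{0,1\}}^{(i)}$ de l'exemple. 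Pour (iii), j'\'ecrirais chaque $\tau_i$ explicitement et v\'erifierais $\tau_i\notin\cD(\rho)=\{\sigma_1,\sigma_2,\sigma_3,\sigma_4\}$ pour $i\geq 1$; les types des couples $(\tau_i,\tau_{i+1})$ et $(\sigma_1,\sigma_4)$ se d\'eterminent alors directement via le crit\`ere de \cite[corollaire 5.6]{BP}.

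\textbf{Obstacle principal.} La principale difficult\'e est l'\'etape 3: expliquer pr\'ecis\'ement le ph\'enom\`ene de "retour" dans la filtration unis\'erielle (de $\tau_{r_0}$ \`a $\tau_0$ dans la partie sup\'erieure) et identifier le noyau du morphisme $W_{1,\sigma_4}\to V_1$ qui aplatit les deux rang\'ees du diagramme de l'exemple \ref{exemple-1} en une seule cha\^ine. Ceci demande une analyse d\'elicate combinant la multiplicit\'e 1 de $D_1(\rho)$, la fa\c{c}on dont $\Pi$ agit sur $D_1(\rho)$ via le diagramme $D(\rho,r)$, et l'identification fine des sous-quotients de $W_{1,\sigma_4}$ qui co\"incident dans $V_1$.
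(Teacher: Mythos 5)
Your Steps 1 and 2 coincide with the paper's argument: the paper likewise reduces everything to the structure of $W_{1,\sigma_4}$ given by the corollaire \ref{corollary-W-U(tau)} (in the spirit of the exemple \ref{exemple-1}), and pins down the two "extreme" pieces by observing that the image of $\Ind_I^K\Pi(\chi_2)$ in $\pi$ is $I(\sigma_3,\sigma_2^{[s]})$ while that of $\Ind_I^K\Pi(\chi_1^s)$ is $\sigma_1$ (your identification of $\Pi(v_{\sigma_2})$ with $e_2^{[s]}$ and of the $\chi_1$-eigenvector with $e_1$ is exactly this). The genuine gap is the step you yourself label as the "obstacle principal", and it is where the whole content of part (ii) lies. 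The paper's resolution is short and does not require computing the kernel of $W_{1,\sigma_4}\to V_1$, nor any further analysis of how $\Pi$ acts on $D_1(\rho)$: since $\rsoc_K\pi=\oplus_{\sigma''\in\cD(\rho)}\sigma''$ with each Diamond weight occurring exactly once, \emph{only Diamond weights can occur as sub-$K$-representations of $\pi$}, hence of $V_1$ and of every sub-$K$-representation of $V_1$. Among the Jordan--H\"older factors of $W_{1,\sigma_4}$ listed by the exemple \ref{exemple-1}, the only Diamond weights are $\sigma_1,\sigma_2,\sigma_3,\sigma_4$; the weight $\sigma_2$ is already dead because the image of the bottom piece $\Ind_I^K\Pi(\chi_2)$ is $I(\sigma_3,\sigma_2^{[s]})$, which does not contain $\sigma_2$ as a subobject. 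It is precisely the verification that none of the intermediate weights $\sigma^{(i)}_{*}$ ($i\geq1$) lies in $\cD(\rho)$ --- a verification you defer to part (iii) as if it were an independent bookkeeping item --- that forbids any of them from starting a new socle constituent and therefore flattens the two rows of the exemple \ref{exemple-1} into the single chain $\tau_0\ \ligne\ \tau_1\ \ligne\ \cdots\ \ligne\ \tau_0\ \ligne\ \sigma_4$ with $\sigma_1$ attached under $\sigma_4$. Without making this connection, your outline establishes (i) and the list of constituents but not the claimed Loewy structure, so part (ii) remains unproved as written.

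A secondary caveat: the exemple \ref{exemple-1} is stated under the hypothesis $J(\omega)=\emptyset$, whereas $\sigma_4$, viewed as a subquotient of $\Ind_I^K\Pi(\chi_3)$, has $J(\sigma_4)\neq\emptyset$ (the socle of $\Ind_I^K\Pi(\chi_3)=\Ind_I^K\chi_3^s$ is $\sigma_3$, which is the constituent with $J=\emptyset$). So the exemple applies only "par analogie", as the paper itself says; one must really go back to the corollaire \ref{corollary-W-U(tau)} with $J(\omega)=J(\sigma_4)$ to list the $U(\sigma')$ and $U(\tau')$ contained in $W_{1,\sigma_4}$. This does not change the strategy, but your literal invocation of the exemple with $\omega=\sigma_4$ is not quite licit.
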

\begin{proof}
C'est une conséquence du corollaire \ref{corollary-W-U(tau)} (analogue \`a l'exemple \ref{exemple-1}). D'abord, on constate que l'image de $\Ind_I^K\Pi(\chi_2)$ dans $\pi$ est $I(\sigma_3,\sigma_2^{[s]})$ et celle de $\Ind_I^K\Pi(\chi_1^s)$ est $\sigma_1$. Puis, en prenant $j=0$ dans l'exemple \ref{exemple-1}, on v\'erifie qu'aucun des poids $\sigma^{(i)}_{\emptyset}$ et $\sigma^{(i)}_{\{0\}}$ pour $1\leq i\leq t$ n'appartient \`a $\cD(\rho)$. Le r\'esultat s'en d\'eduit en rappelant que seul les poids de Diamond peuvent appara\^itre comme sous-$K$-repr\'esentation de $\pi$.
\end{proof}

Par (\ref{equation-def-vecteur-W}) de la preuve du lemme \ref{lemma-W-vecteurs-k}, $V_1$ est engendr\'ee par
le vecteur
\[F_{p(p-2-r_1)}=\summ_{\lambda\in\F_q}\lambda^{p(p-2-r_1)}\matr{[\lambda]}110\Pi(v_1)\]
o\`u $v_1$ est un vecteur non nul propre de $\cH$ de caract\`ere $\chi_3$. On note $S_1\subset V_1$ la sous-$K$-repr\'esentation engendr\'ee par le vecteur
\[F_{0}=\summ{\lambda\in\F_q}\matr{[\lambda]}110\Pi(v_1).\]
Alors, encore par (\ref{equation-def-vecteur-W}), $S_1$ n'est autre que $W_{\sigma_3}$ en regardant $\sigma_3$ comme un sous-quotient de $\Ind_I^K\Pi(\chi_3)$. Par le corollaire \ref{corollary-W-U(tau)} et le lemme \ref{lemma-V_1}, on trouve la filtration par le socle de $S_1$:
\begin{equation}\label{equation-S1} \tau_0\ \ligne\ \tau_1 \ \ligne\ \cdots\ \ligne\
\tau_{r_0}\ \ligne\ \cdots\ \ligne \ \tau_1\ \ligne\
\tau_0.\end{equation}

\subsubsection{La représentation $V_2$}\label{subsubsection-V2}
On va construire un autre sous-espace vectoriel $V_2$ de ${\pi}$
qui est $K$-stable et qui possède la même filtration par le socle que $V_1$.

\begin{lemma}\label{lemma-exemple-f=2-sous-rep}
Il existe une unique sous-$K$-représentation de $\pi$ qui est isomorphe à
\[\begin{array}{ccccc}\sigma_4&\ligne&\sigma_3^{[s]}&\ligne&\omega:=(p-2-r_0,r_1+3)
\otimes{\det}^{r_0+p(p-2)}.\end{array}\]
\end{lemma}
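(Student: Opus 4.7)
The plan is to parallel the construction of $V_1$ from \S\ref{subsubsection-V1}. Since $\sigma_3^{[s]}$ is a direct summand of the middle layer $S_4$ of $D_{0,\sigma_4}(\rho)$, there is a sub-$K$-représentation $I(\sigma_4,\sigma_3^{[s]}) \subset D_{0,\sigma_4}(\rho)$ given by the unique non-split extension of $\sigma_3^{[s]}$ by $\sigma_4$. I would first compare the weights $\sigma_4 = (p-2-r_0,r_1+1)\otimes\det^{r_0+p(p-1)}$ and $\sigma_3^{[s]} = (r_0,r_1+2)\otimes\det^{p^2-p-1}$ to verify that $(\sigma_4,\sigma_3^{[s]})$ is a couple de type $(+1,1)$ (flip in position $0$, shift $+1$ in position $1$), so that by Lemme \ref{lemma-E(deux-char)} this extension contains a unique sous-$I$-représentation $M_2 \cong E_0(\chi_4,\chi_3^s,p-1-r_0)$.

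Next, I would identify $\omega^{[s]}=(r_0+1,p-4-r_1)\otimes\det^{(p-1)+p(r_1+1)}$ with the cosocle of $D_{0,\sigma_4}(\rho)$ by direct computation; in particular $\chi_\omega=\chi_{\omega^{[s]}}^s$. Since $M_2 \subset D_0(\rho) \subset \pi$ and $\pi$ is $G$-stable, $\Pi(M_2)\subset\pi$, and Frobenius reciprocity gives a canonical morphisme $K$-équivariant $\psi:W_2 := \Ind_I^K\Pi(M_2) \to \pi$. Inside $W_2$, I would identify, via Lemme \ref{lemma-W-vecteurs-k}, an appropriate sous-$K$-représentation $W_{2,\omega'}$ whose image under $\psi$ contains the claimed 3-step module, and set $V_2:=\psi(W_{2,\omega'})$.

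The third step would use Corollaire \ref{corollary-W-U(tau)} together with the $f=2$ analog of Exemple \ref{exemple-1} to describe the filtration par le socle of $W_{2,\omega'}$. The sous-quotients other than $\sigma_4$, $\sigma_3^{[s]}$ and $\omega$ are weights lying outside $\cD(\rho)$; by the maximality of $D_0(\rho)$ (\cite[proposition 13.1]{BP}), these must vanish in $V_2\subset\pi$, since they cannot fit into the $K$-socle filtration of $\pi$ beyond what $D_0(\rho)$ already accounts for. What survives is exactly the module unisériel $\sigma_4\ligne\sigma_3^{[s]}\ligne\omega$.

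For uniqueness, any sous-$K$-représentation of $\pi$ of this form has socle $\sigma_4$, which is the unique copy in $\rsoc_K(\pi)$; its extension by $\sigma_3^{[s]}$ is forced to be $I(\sigma_4,\sigma_3^{[s]})\subset D_{0,\sigma_4}(\rho)$; and the further extension by $\omega$ is determined up to scalar by the propriété de multiplicité $1$ of the représentation principale (Lemme \ref{lemma-Gamma-PS}). The hardest step will be the third paragraph: tracking precisely which non-Diamond intermediate sous-quotients of $W_{2,\omega'}$ are killed in $\pi$ while $\sigma_4, \sigma_3^{[s]}, \omega$ survive. This requires combining the combinatorial tools of \S\ref{subsection-principal} with the explicit description of $D_0(\rho)$ from Théorème \ref{theorem-BP-D0(rho)}.
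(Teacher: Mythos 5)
Your first step is sound: $(\sigma_4,\sigma_3^{[s]})$ is indeed a couple of type $(+1,1)$, and Lemme \ref{lemma-E(deux-char)} does give a sub-$I$-représentation $M_2\cong E_0(\chi_4,\chi_3^s,p-1-r_0)$ of $I(\sigma_4,\sigma_3^{[s]})$. But the construction you build on it cannot produce $\omega$, and this is fatal. Since $\Ind_I^K$ is exact, every Jordan--H\"older factor of $W_2=\Ind_I^K\Pi(M_2)$ is a factor of $\Ind_I^K\Pi(\psi)$ for some character $\psi$ occurring in $M_2$, i.e.\ $\psi\in\{\chi_4\alpha^{-k},\,0\le k\le p-1-r_0\}\cup\{\chi_3^s\}$. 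On the other hand, applying the lemme \ref{lemma-Gamma-PS} to the four elements of $\mathcal{P}(x_0,x_1)$ shows that the characters $\psi$ with $\omega\in\JH(\Ind_I^K\psi)$ are exactly $\chi_\omega$, $\chi_\omega^s$, $\chi_3\alpha^{-p}$ and $\chi_3^s\alpha^{p}$ (for instance $\omega$ arises from $(p-1-r_0,p-5-r_1)\otimes\det^{r_0+p(r_1+2)}=\chi_3\alpha^{-p}$ via $\lambda=(x_0-1,p-2-x_1)$). None of these occurs in $M_2$: the second digits of the $\chi_4\alpha^{-k}$ and of $\chi_3^s$ are $r_1$, $r_1+1$ or $r_1+2$, whereas the four characters above have second digit $r_1+3$, $r_1+4$, $p-4-r_1$ or $p-5-r_1$. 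Hence $\omega$ is not a constituent of $W_2$ at all, and no choice of $W_{2,\omega'}$ can have an image in $\pi$ containing the three-step module; your third paragraph has nothing to track.

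The underlying issue is one of direction. To put $\omega$ \emph{on top of} $\sigma_3^{[s]}$ one must start from data inside $\sigma_3$ and transport it by $\Pi$: the paper takes the two-dimensional $E_j(\chi_3)\subset\sigma_3$ (an extension of $\chi_3\alpha^{-p^j}$ by $\chi_3$), forms $\Ind_I^K\Pi(E_j(\chi_3))$, and observes that the subobject $\Ind_I^K\Pi(\chi_3)$ maps onto $I(\sigma_4,\sigma_3^{[s]})$ in $\pi$ (because $\Pi(e_3)$ generates that module, by construction of the diagram), while $\omega$ sits in the quotient $\Ind_I^K\Pi(\chi_3\alpha^{-p^j})$ and every other constituent dies in $\pi$ for lack of Diamond weights. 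Starting instead from $I(\sigma_4,\sigma_3^{[s]})$ and applying $\Pi$ transports you towards $\delta(\sigma_4)=\sigma_1$, i.e.\ towards modules of the type $I(\sigma_1,\sigma_4^{[s]})$ (compare the lemme \ref{lemma-exemple-M_2''}), not towards an extension of $\sigma_3^{[s]}$ by $\omega$. Finally, your uniqueness argument is under-justified at the top layer: the lemme \ref{lemma-Gamma-PS} controls $\Ind_I^K\chi$, not arbitrary extensions inside $\pi$; what is actually needed is the vanishing of $\Ext^1_K$ from the proposition \ref{prop-extension-K} (ii) together with the fact that $\sigma_3^{[s]}$ and $\omega$ are not Diamond weights, so that any copy of the three-step module must have socle the unique $\sigma_4\subset\rsoc_K(\pi)$.
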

\begin{proof}
On vérifie que $\sigma_3$ contient une sous-$I$-représentation
isomorphe à $E_0(\chi_3)$ (grâce à l'hypothèse $r_0\leq p-2$) et
que $\Ind_I^K\Pi(\chi_3\alpha^{-1})$ admet le poids $\omega$ comme
sous-quotient. Soit $W_{\omega}$ la sous-$K$-représentation de
$\Ind_I^K\Pi(E_0(\chi_3))$ définie dans la proposition
\ref{prop-W-vecteurs}. Le lemme \ref{lemma-Breuil} implique que $W_{\omega}$ contient enti\`erement $\Ind_I^K\Pi(\chi_3)$. D'autre part, par construction de $D_0(\rho)$, on sait que l'image de $\Ind_I^K\Pi(\chi_3)$ dans $\pi$ est isomorphe \`a $I(\sigma_4,\sigma_3^{[s]})\subset D_{0,\sigma_4}(\rho)$. Comme le socle de $\Ind_I^K\Pi(\chi_3\alpha^{-1})$ qui appara\^it aussi dans $W_{\omega}$ n'est pas un poids de Diamond, on trouve que  l'image de
$W_{\omega}$ dans ${\pi}$ v\'erifie la condition demand\'ee. L'unicit\'e se d\'eduit de la proposition \ref{prop-extension-K} (ii) et du fait que $\sigma_3^{[s]},\omega\notin\cD(\rho)$.
\end{proof}

On note $\overline{W_{\omega}}$ la $K$-représentation construite dans le lemme
\ref{lemma-exemple-f=2-sous-rep}. Choisissons une base $\{v,w\}$
pour $E_0(\chi_3)$ et définissons des vecteurs $f_k$, $F_k$
($0\leq k\leq q-1$) dans $\Ind_I^K\Pi(E_0(\chi_3))$ comme dans
(\ref{equation-define-F-et-f}). Alors, vu comme sous-quotient
de $\Ind_I^K\Pi(E_0(\chi_3))$, $\overline{W_{\omega}}$ admet une
base induite (\cite[lemme 2.7]{BP})
\[\left\{{\begin{array}{lll}\overline{f}_{d_0+pd_1}, &p-2-r_1\leq d_1\leq p-1, \mathrm{\ ou\ }
d_1= p-3-r_1 \mathrm{\ et\ }  p-1-r_0\leq d_0\leq p-1\\
\overline{F}_{d_0'+pd_1'},& 0\leq d_0'\leq p-2-r_0 \mathrm{\ et\
}p-4-r_1\leq d_1'\leq p-1\end{array}}\right\}\] où $\overline{f}_k$ (resp.
$\overline{F}_k$) désigne l'image de $f_k$ (resp. $F_k$) dans
$\overline{W_{\omega}}\subset\pi$. On constate que $\overline{f}_{p(p-2-r_1)}$ et $\overline{f}_{(p-1-r_0)+p(p-3-r_1)}$ engendrent l'espace des $I_1$-invariants de $\overline{W_{\omega}}$: en fait, on a $\overline{f}_{p(p-2-r_1)}\in\bFp e_4$ et $\overline{f}_{(p-1-r_0)+p(p-3-r_1)}\in\bFp e_3^{[s]}$. Notons que le vecteur $\overline{F}_{p(p-4-r_1)}$ est fix\'e par $\smatr{1+\p}{\cO_F}{\p^2}{1+\p}$ mais pas fix\'e par $I_1$: on a
\begin{equation}\label{equa-exemple-e_omega}
\matr10p1\overline{F}_{p(p-4-r_1)}=\overline{F}_{p(p-4-r_1)}-\overline{f}_{p(p-2-r_1)}
\end{equation} par le lemme \ref{lemma-Witt-dans-W} (iii).\vv

Le vecteur $v_{\omega}:=\overline{F}_{p(p-3-r_1)}\in \overline{W_{\omega}}$ est un vecteur
propre de $\cH$ de caractère $\chi_{\omega}\alpha^{-p}=\chi_4$ (par le lemme \ref{lemma-Witt-dans-W} (i)). Posons $M_2\subset \pi$ la $I$-représentation
engendrée par $v_{\omega}$. Explicitement, $M_2$ admet une base form\'ee par:
\begin{equation}\label{equation-base-M_2}
\left\{{\begin{array}{lll}\overline{f}_{d_0+pd_1},\ \ \ d_1=p-3-r_1\ \mathrm{et}\ p-1-r_0\leq d_0\leq p-1&
\\
\overline{f}_{p(p-2-r_1)},\ \overline{f}_{p(p-1-r_1)},\ \overline{F}_{p(p-4-r_1)},\ \overline{F}_{p(p-3-r_1)}&\end{array}}
\right\}.\end{equation}
Pour définir l'espace $V_2$, nous avons besoin de connaître
la structure de $M_2$.

D\'efinissons d'abord un sous-espace vectoriel $M_2'$ de $M_2$ par $M_2':=\bFp e_4\oplus \bFp
e_{\omega}\subset M_2$, o\`u l'on \'ecrit $e_{\omega}:=\overline{F}_{p(p-4-r_1)}$ pour simplifier (cette notation vient du fait que son image dans $\omega$ engendre $\omega^{I_1}$). Vu l'\'equation (\ref{equa-exemple-e_omega}), $M_2'$ est stable par $I$ et isomorphe \`a $\Pi(E_1(\chi_4^s))$, i.e. \`a
l'extension
\[0\ra \chi_4\ra *\ra \chi_4\alpha^p\ra 0.\]

\begin{lemma}\label{lemma-exemple-M_2''}
La sous-$K$-repr\'esentation $\langle K\cdot \Pi(M_2')\rangle\subset\pi$ est isomorphe \`a $I(\sigma_1,\sigma_4^{[s]})$.
%
\end{lemma}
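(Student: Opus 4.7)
The strategy is to combine Frobenius reciprocity with the structural results from Section 2. Since $M_2'\cong \Pi(E_1(\chi_4^s))$ by construction and $\Pi^2 = p\in Z$ acts trivially on $\pi$, one obtains $\Pi(M_2')\cong E_1(\chi_4^s)$ as abstract $I$-representations, triviable on $K_1$. Frobenius reciprocity then yields a $K$-equivariant morphism $\Ind_I^K E_1(\chi_4^s)\to\pi$ whose image is exactly $\langle K\cdot \Pi(M_2')\rangle$.

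For the containment $I(\sigma_1,\sigma_4^{[s]})\subseteq \langle K\cdot\Pi(M_2')\rangle$, I would observe that $\Pi(e_4)\in D_1(\rho)$ is a nonzero $I_1$-invariant of character $\chi_4^s=\chi_{\sigma_4^{[s]}}$, lying in the line $(\sigma_4^{[s]})^{I_1}\subset D_{0,\sigma_1}(\rho)$ since $\delta(\sigma_4)=\sigma_1$. A direct check using Lemma \ref{lemma-releve} shows that $(\sigma_4^{[s]})^{I_1}$ se rel\`eve dans $D_{0,\sigma_1}(\rho)^{I_1}$; then by multiplicity one in $D_{0,\sigma_1}(\rho)$, the $K$-subrepresentation $\langle K\cdot\Pi(e_4)\rangle$ coincides with the unique sub-$K$-representation of $D_{0,\sigma_1}(\rho)$ of cosocle $\sigma_4^{[s]}$, namely $I(\sigma_1,\sigma_4^{[s]})$.

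For the reverse containment, the short exact sequence
\[
0\to \Ind_I^K\chi_4^s\to \Ind_I^K E_1(\chi_4^s)\to \Ind_I^K\chi_4^s\alpha^{-p}\to 0
\]
reduces the problem to showing $\Pi(e_\omega)\in I(\sigma_1,\sigma_4^{[s]})$, or equivalently that the induced map $\Ind_I^K\chi_4^s\alpha^{-p}\to \pi/I(\sigma_1,\sigma_4^{[s]})$ vanishes. One enumerates the irreducible sub-quotients of $\Ind_I^K\chi_4^s\alpha^{-p}$ via Lemma \ref{lemma-Gamma-PS}, compares them with the explicit list $\{\sigma_1,\sigma_2,\sigma_3,\sigma_4\}$ and the layers of $D_0(\rho)$ recorded at the start of the subsection, and invokes Proposition \ref{prop-extension-K} to rule out non-split $K$-extensions involving non-Diamond weights. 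The main technical obstacle is this combinatorial verification: identifying $\chi_4^s\alpha^{-p}$ explicitly in terms of the parameters $r_0,r_1$ and excluding each potential constituent above $I(\sigma_1,\sigma_4^{[s]})$; the proof of Lemma \ref{lemma-V_1} provides a direct template.
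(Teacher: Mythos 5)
Your setup, the first containment $I(\sigma_1,\sigma_4^{[s]})\subseteq\langle K\cdot \Pi(M_2')\rangle$ (which is precisely the content of the proof of \cite[lemme 19.5]{BP} that the paper simply cites), and the reduction of the reverse containment to the vanishing of the induced map $\Ind_I^K\chi_4^s\alpha^{-p}\to\pi/I(\sigma_1,\sigma_4^{[s]})$ all agree with the paper. The gap is in how you propose to prove that vanishing. Because you work in the quotient $\pi/I(\sigma_1,\sigma_4^{[s]})$ and not in $\pi$ itself, the observation that no constituent of $\Ind_I^K\chi_4^s\alpha^{-p}$ is a Diamond weight does not suffice: a constituent $\tau$ in the socle of the image could sit in a non-split extension $0\to I(\sigma_1,\sigma_4^{[s]})\to E\to\tau\to 0$ inside $\pi$, so you must prove $\Ext^1_K(\tau,\sigma_1)=\Ext^1_K(\tau,\sigma_4^{[s]})=0$ for every constituent $\tau$. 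For $\tau=(r_0,r_1+2)\otimes{\det}^{-p}$ against $\sigma_1$ this is indeed the proposition \ref{prop-extension-K} (ii) (the hypothesis $r_0\leq p-2$ always holds). But for the cosocle of $\Ind_I^K\chi_4^s\alpha^{-p}$, namely $\omega^{[s]}$, which is $(r_0+1,p-4-r_1)\otimes{\det}^{p}$ relative to $\sigma_4^{[s]}=(r_0+1,p-2-r_1)$, you need the proposition \ref{prop-extension-K} (iii) applied to $\sigma=\sigma_4^{[s]}$, whose hypothesis reads $r_0+1\leq p-2$; this fails when $r_0=p-2$, a value permitted by the standing assumption $1\leq r_0\leq p-2$, and this is exactly the borderline case in which that Ext group need not vanish. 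As sketched, your argument therefore does not close in general.

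The idea you are missing is the splitting the paper uses in order to stay inside $\pi$ rather than a quotient of it: one checks that $I(\sigma_1,\sigma_4^{[s]})$ itself contains a sub-$I$-representation isomorphe \`a $E_1(\chi_4^s)\cong\Pi(M_2')$, so Frobenius reciprocity produces a surjection $\Ind_I^K\Pi(M_2')\twoheadrightarrow I(\sigma_1,\sigma_4^{[s]})$ which necessarily kills $\cK_1$, the kernel of $\Ind_I^K\bFp\Pi(e_4)\twoheadrightarrow I(\sigma_1,\sigma_4^{[s]})$. This yields a direct sum decomposition $\Ind_I^K\Pi(M_2')/\cK_1\cong I(\sigma_1,\sigma_4^{[s]})\oplus\Ind_I^K\bFp\Pi(e_\omega)$, so the second summand maps onto an honest $K$-subrepresentation of $\pi$; were that image nonzero, its $K$-socle would contain a Diamond weight, contradicting your (correct) observation that $\Ind_I^K\chi_4^s\alpha^{-p}$ has no Diamond constituent. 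No Ext computation is needed, and in particular the edge case $r_0=p-2$ causes no trouble.
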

\begin{proof}
En voyant $\langle K\cdot\Pi(M_2')\rangle$ comme l'image du morphisme $\Ind_I^K\Pi(M_2')\ra \pi$, il s'agit de d\'eterminer le noyau $\cK$ de ce morphisme. Par la preuve du
\cite[lemme 19.5]{BP}, la sous-$K$-repr\'esentation de $\pi$ engendr\'ee par $\Pi(e_4)$ est isomorphe \`a $I(\sigma_1,\sigma_4^{[s]})$. Donc, si l'on note $\cK_1$ le noyau du
quotient
\[\Ind_I^K\bFp\Pi(e_4)\twoheadrightarrow
I(\sigma_1,\sigma_4^{[s]}),\] alors $\cK_1\subset\cK$ et on obtient une injection $I(\sigma_1,\sigma_4^{[s]})\hookrightarrow\Ind_I^K\Pi(M_2')/\cK_1$. D'autre part, on peut v\'erifier que $I(\sigma_1,\sigma_4^{[s]})$ contient une sous-$I$-repr\'esentation isomorphe \`a $E_1(\chi_4^s)$, donc il existe par r\'eciprocit\'e de Frobenius une surjection $K$-\'equivariante $\Ind_I^K\Pi(M_2')\cong\Ind_I^KE_1(\chi_4^s)\twoheadrightarrow I(\sigma_1,\sigma_4^{[s]})$ qui envoie forc\'ement $\cK_1$ vers $0$ par ce qui pr\'ec\`ede. On en d\'eduit une d\'ecomposition:
\[\Ind_I^K\Pi(M_2')/\cK_1\cong I(\sigma_1,\sigma_4^{[s]})\oplus\Ind_I^K\bFp\Pi(e_{\omega}).\]
Or, en utilisant le lemme \ref{lemma-Gamma-PS}, on vérifie facilement que $\Ind_I^K\bFp\Pi(e_{\omega})$
n'admet aucun poids de Diamond comme sous-quotient. Cela force que $\Ind_I^K\bFp\Pi(e_{\omega})\subset\cK$ puisque $\rsoc_K\pi=\oplus_{\sigma\in\cD(\rho)}\sigma$ et le r\'esultat s'en d\'eduit.
\end{proof}

Ensuite, posons $M_2''$ la sous-$I$-repr\'esentation de $M_2$ engendr\'ee par $\overline{f}_{p-1+p(p-3-r_1)}$. En utilisant la proposition \ref{prop-U+-action} (i), $M_2''$ est le sous-espace vectoriel de $M_2$ de dimension $r_0+1$ engendr\'e par (cf. (\ref{equation-base-M_2})):
\[\Bigl\{\overline{f}_{d_0+pd_1},\ \ \ d_1=p-3-r_1\ \mathrm{et}\ p-1-r_0\leq d_0\leq p-1\Bigr\}.\]
Autrement dit, $M_2''$ est isormorphe \`a $E_0(\chi_3^s,r_0)$.

\begin{lemma}\label{lemma-exemple-M_2'}
La $K$-représentation $\langle K\cdot \Pi(M_2'')\rangle\subset\pi$
contient la repr\'esentation $S_1$.
\end{lemma}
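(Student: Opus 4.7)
Le plan est de proc\'eder de mani\`ere analogue \`a la preuve du lemme \ref{lemma-exemple-M_2'}, en voyant $\langle K\cdot \Pi(M_2'')\rangle$ comme l'image d'un morphisme $K$-\'equivariant $\Phi:\Ind_I^K\Pi(M_2'')\ra\pi$ d\'eduit par r\'eciprocit\'e de Frobenius de l'inclusion $\Pi(M_2'')\hookrightarrow\pi$. Par la proposition \ref{prop-U+-action}~(i) et un calcul direct de caract\`eres montrant que $\chi_3^s\alpha^{-r_0}=\chi_2^s$, l'espace $M_2''$ est isomorphe \`a $E_0(\chi_3^s,r_0)$, unis\'erielle de socle $\chi_3^s$ et de cosocle $\chi_2^s$; partant, $\Pi(M_2'')$ est unis\'erielle de socle $\chi_3$ et de cosocle $\chi_2$.

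L'\'etape cl\'e est d'exhiber dans $\Ind_I^K\Pi(M_2'')$ une sous-$K$-repr\'esentation $W'$ dont la filtration par le socle co\"incide avec celle de $S_1$, \`a savoir $\tau_0\ \ligne\ \tau_1\ \ligne\ \cdots\ \ligne\ \tau_{r_0}\ \ligne\ \cdots\ \ligne\ \tau_1\ \ligne\ \tau_0$. Pour cela, on consid\`ere la sous-$K$-repr\'esentation de $\Ind_I^K\Pi(M_2'')$ engendr\'ee par un vecteur explicite de la forme $\summ_{\lambda\in\F_q}\lambda^{k}\smatr{[\lambda]}110[1,\Pi(z)]$ o\`u $z\in M_2''$ est un vecteur propre de $\cH$ bien choisi et $k$ est tel que ce vecteur soit de caract\`ere $\chi_{\sigma_3}$. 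Par un raisonnement analogue \`a la preuve du lemme \ref{lemma-V_1}, en utilisant la proposition \ref{prop-W-vecteurs}, le corollaire \ref{corollary-W-U(tau)} et l'exemple \ref{exemple-1} adapt\'es aux param\`etres de $\Pi(M_2'')$, on identifie $W'$ comme ayant la structure souhait\'ee.

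Enfin, on conclut que $\Phi(W')\supseteq S_1$. L'image $\Phi(W')$ contient la copie de $\sigma_3$ engendr\'ee par le socle $\Pi(e_3^{[s]})\in\bFp e_3$ de $\Pi(M_2'')$, et comme tous les poids interm\'ediaires $\tau_i$ ($1\leq i\leq r_0$) apparaissent d\'ej\`a dans $S_1\subset V_1\subset \pi$ (voir le lemme \ref{lemma-V_1}), ils ne peuvent dispara\^itre par $\Phi$. Par la correspondance des filtrations par le socle de $W'$ et de $S_1$, il s'ensuit que $S_1\subset \Phi(W')\subset\langle K\cdot\Pi(M_2'')\rangle$.

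Le principal obstacle est la v\'erification combinatoire explicite que le vecteur engendrant $W'$ admet bien la structure de cosocle $\sigma_3$ et la filtration interm\'ediaire identique \`a $S_1$, ce qui rev\^et de contr\^oler quelles composantes de Jordan-H\"older de $\Ind_I^K\Pi(M_2'')$ survivent dans $\pi$; ceci n\'ecessite d'adapter les calculs du \S\ref{subsection-principal} au contexte particulier de $\Pi(E_0(\chi_3^s,r_0))$, en utilisant notamment les formules du lemme \ref{lemma-Witt-dans-W} et de la proposition \ref{prop-U+-action} pour l'action de $U^+$ sur les vecteurs $f_k, F_k$.
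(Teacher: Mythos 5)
Your overall strategy is the same as the paper's: induce $\Pi(M_2'')\cong \Pi(E_0(\chi_3^s,r_0))$ up to $K$, isolate inside $\Ind_I^K\Pi(M_2'')$ the subrepresentation with cosocle $\sigma_3$ generated by an explicit vector $\summ_{\lambda\in\F_q}\lambda^{k}\smatr{[\lambda]}110[1,\Pi(z)]$, show via the corollaire \ref{corollary-W-U(tau)} and l'exemple \ref{exemple-1} that it has the same socle filtration as $S_1$, and then push it into $\pi$. (Your identification of the cosocle of $M_2''$ as $\chi_2^s$ agrees with the paper's $\chi_3^s\alpha^{-r_0}\cong\chi_4\alpha$; these characters coincide.)

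However, the final step contains a genuine gap. To conclude that $\Phi(W')\supseteq S_1$ you argue that the intermediate constituents $\tau_i$ ``ne peuvent dispara\^itre par $\Phi$'' because they already occur in $S_1\subset\pi$. This is a non sequitur: a $K$-equivariant map can perfectly well kill a Jordan--H\"older constituent of its source even when an isomorphic weight occurs elsewhere in the target, so the occurrence of the $\tau_i$ in $V_1$ says nothing about the kernel of $\Phi|_{W'}$. What is actually needed -- and what the paper supplies -- is twofold: first, an explicit (admittedly computational) verification that $W'=W''_{2,\sigma_3}$ is \emph{isomorphic} to $S_1$ as a $K$-representation, the isomorphism being pinned down by sending the generator $F_0$ of $S_1$ to $\summ_{\lambda\in\F_q}\lambda^{p(r_1+2)}\smatr{[\lambda]}110\Pi(w_{\chi_4\alpha})$; second, an argument identifying the image of $W'$ in $\pi$ with the \emph{subspace} $S_1$, which uses that the only Diamond weight among the Jordan--H\"older factors of $S_1$ is $\sigma_3$ and that $\sigma_3$ occurs with multiplicity one in $\rsoc_K(\pi)$ (so the socle of the image is forced to be the copy of $\sigma_3$ that is also the socle of $S_1$, and no proper quotient can survive). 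You acknowledge the combinatorial verification as ``le principal obstacle'' but do not carry it out, and the multiplicity-one argument replacing your ``cannot disappear'' claim is absent; as written the proposal is a plan rather than a proof, and its one explicit justification for the key inclusion is invalid.
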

\begin{proof}
Posons $W_2'':=\Ind_I^K\Pi(M_2'')$. Puisque
\[\rcosoc_I(M_2')=\chi_3^s\alpha^{-r_0}\cong \chi_4\alpha=(p-r_0,r_1+1)\otimes{\det}^{(r_0-1)+p(p-1)}\]
et que $\Ind_I^K\Pi(\chi_4\alpha)$ admet $\sigma_3=\tau_0$ comme
sous-quotient (avec $J(\sigma_3)=\{1\}$), la sous-repr\'esentation
$W_{2,\sigma_3}''\subset W_{2}''$ est d\'efinie. Le corollaire \ref{corollary-W-U(tau)} (combin\'e avec l'exemple \ref{exemple-1}) montre que $W_{2,\sigma_3}''$ a la m\^eme filtration par le socle que $S_1$. De plus, par un calcul compliqu\'e mais direct, on trouve que $W_{2,\sigma_3}''$ est isomorphe \`a $S_1$ en tant que $K$-repr\'esentation. En effet, l'isomorphisme est induit par envoyant $F_{0}$ vers le vecteur
\[\summ_{\lambda\in\F_q}\lambda^{p(r_1+2)}\matr{[\lambda]}110\Pi(w_{\chi_4\alpha})\]
avec $w_{\chi_4\alpha}\in M_2''$ un vecteur non nul propre de $\cH$ de caract\`ere $\chi_4\alpha$.

Apr\`es, on montre que $W_{2,\sigma_3}''$ co\"{i}ncide avec $S_1$ en tant que \emph{sous-espaces} vectoriels de $\pi$. En effet, cela r\'esulte des faits que $S_1$ n'admet pas de poids de Diamond autre que $\sigma_3$ comme sous-quotient et que $\sigma_3$ appara\^it dans $\rsoc_K(\pi)$ avec multiplicit\'e 1.
\end{proof}\vv

D'autre part, notons $\overline{M_2}$ le quotient de $M_2$ par $M_2'$ de sorte que $\langle K\cdot\overline{M_2}\rangle$ s'injecte dans $\pi/I(\sigma_1,\sigma_4^{[s]})$.
Explicitement, $\overline{M_2}$ est une extension de $\Pi(E_j(\chi_4^s\alpha^{p}))$ par $M_2''$ dont la filtration par le socle est:
\begin{equation}\label{equation-M_2-bar}\begin{array}{rccccccccccccccc}\chi_3^s&\ligne&\chi_3^s\alpha^{-1}&\ligne&\cdots&\cdots&\ligne&\chi_3^s\alpha^{-r_0}&\ligne
&\chi_3^s\alpha^{-(r_0+1)}=\chi_4\\
&&&&&&&&&\vdots\\
&&&&&&&&&\chi_4\alpha^{-p}\end{array}\end{equation}
o\`u l'on utilise la notion $\chi_4\alpha^{-p}\cdots \chi_4$ pour pr\'eciser que cette $I$-extension est triviale sur $\smatr{1+\p}{\cO_F}{\p^2}{1+\p}$.

\begin{prop}\label{lemma-exemple-V_1=V_2}
La $K$-représentation $\langle K\cdot \Pi(M_2)\rangle\subset \pi$
 contient une sous-$K$-représentation $V_2$ qui contient $S_1$ et qui a même
filtration par le socle que $V_1$.
\end{prop}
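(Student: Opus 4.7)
La strat\'egie est de construire $V_2$ comme l'image dans $\pi$ d'une sous-$K$-repr\'esentation bien choisie de $\Ind_I^K\Pi(M_2)$, en suivant la m\^eme ligne que la construction de $V_1$ dans le \S\ref{subsubsection-V1}. Plus pr\'ecis\'ement, on cherche \`a exhiber une sous-$K$-repr\'esentation $\widetilde{W}_2\subset \Ind_I^K\Pi(M_2)$ de cosocle $\sigma_4$ dont l'image dans $\pi$ poss\`ede la filtration voulue, et dont le socle s'identifie \`a $\sigma_1\oplus\sigma_3$ gr\^ace aux deux contributions distinctes $\Pi(M_2')$ et $\Pi(M_2'')$.

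La premi\`ere \'etape consiste \`a analyser $\Ind_I^K\Pi(M_2)$ \`a l'aide de la filtration $M_2'\subset M_2$. L'exactitude du foncteur $\Ind_I^K$ donne une suite exacte
\[0\ra \Ind_I^K\Pi(M_2')\ra \Ind_I^K\Pi(M_2)\ra \Ind_I^K\Pi(\overline{M_2})\ra0,\]
o\`u $\overline{M_2}=M_2/M_2'$ est d\'ecrite par \eqref{equation-M_2-bar}. En combinant la proposition \ref{prop-W-vecteurs}, le lemme \ref{lemma-W-vecteurs-k}, le corollaire \ref{corollary-W-U(tau)} et la structure palindromique de l'exemple \ref{exemple-1}, on peut d\'ecrire les sous-$K$-repr\'esentations de $\Ind_I^K\Pi(M_2)$ engendr\'ees par les vecteurs propres de $\cH$ pertinents. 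On identifie alors $\widetilde{W}_2$ comme la sous-$K$-repr\'esentation distingu\'ee de cosocle $\sigma_4$ (ce qui est possible puisque $\chi_4$ appara\^it dans $M_2'\subset M_2$ et $\sigma_4\hookrightarrow \Ind_I^K\Pi(\chi_4)$).

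On pose ensuite $V_2$ comme l'image de $\widetilde{W}_2$ par la surjection naturelle $\Ind_I^K\Pi(M_2)\twoheadrightarrow \langle K\cdot \Pi(M_2)\rangle\subset \pi$. L'inclusion $V_2\supset S_1$ r\'esulte de $\Ind_I^K\Pi(M_2'')\subset \Ind_I^K\Pi(M_2)$ et du lemme \ref{lemma-exemple-M_2'}, pourvu que $\widetilde{W}_2$ soit choisi de sorte \`a contenir la sous-$K$-repr\'esentation correspondant \`a $S_1$. De m\^eme, la contribution de $\Ind_I^K\Pi(M_2')$ fournit la copie de $\sigma_1$ dans le socle de $V_2$, via le lemme \ref{lemma-exemple-M_2''}.

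La difficult\'e principale est de v\'erifier que la filtration par le socle de $V_2$ co\"{\i}ncide exactement avec celle de $V_1$ d\'ecrite dans le lemme \ref{lemma-V_1}(ii): m\^eme socle $\sigma_1\oplus\sigma_3$, m\^eme cha\^{\i}ne palindromique des $\tau_i$, et m\^eme cosocle $\sigma_4$. Ce calcul repose sur la contrainte $\rsoc_K\pi=\bigoplus_i\sigma_i$ (seuls les poids de Diamond figurent dans le socle de $\pi$) qui interdit l'apparition de poids non-Diamond comme sous-repr\'esentations, ainsi que sur la proposition \ref{prop-extension-K} pour annuler les $\Ext^1_K$ ind\'esirables entre les couches cons\'ecutives. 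On conclut en comparant directement la structure interne de $\widetilde{W}_2$ avec celle de $W_{1,\sigma_4}$ \`a travers les formules parall\`eles \eqref{equation-Ej-define-F}--\eqref{equation-def-vecteur-W}, ce qui force l'identification couche par couche des deux filtrations.
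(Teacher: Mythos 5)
Your overall strategy (filter $M_2$ by $M_2'$, extract $S_1$ from the $M_2''$-part, extract $\sigma_1$ from the $M_2'$-part, and take a subrepresentation with cosocle $\sigma_4$) has the right shape, but the two places where you stay vague are exactly where the proof lives, and one of them is mislocated. The character $\chi_4$ occurs \emph{twice} in $M_2$: once as $\bFp e_4\subset M_2'$ and once as the cosocle of $\overline{M_2}$ spanned by the image of $v_{\omega}$. Hence $\sigma_4$ occurs with multiplicity at least $2$ in $\Ind_I^K\Pi(M_2)$ and there is no ``distinguished'' subrepresentation of cosocle $\sigma_4$ until you say which occurrence you use. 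Your parenthetical justification points at the copy inside $M_2'$; but that copy only contributes, via Lemma \ref{lemma-exemple-M_2''}, the subrepresentation $I(\sigma_1,\sigma_4^{[s]})$ of $\pi$, whose cosocle is $\sigma_4^{[s]}$ and which is disconnected from the chain of $\tau_i$'s (a subrepresentation of $\Ind_I^K\Pi(\chi_4)$ with cosocle $\sigma_4$ is just the socle $\sigma_4$ itself). The $\sigma_4$ that must cap the palindromic chain coming from $M_2''$ has to be manufactured from $v_{\omega}$: the paper takes the explicit vector $R_0=\sum_{\lambda\in\F_q}\smatr{[\lambda]}110\Pi(v_{\omega})$, uses Lemma \ref{lemma-Ind(Ej)} (i) to see that its image in $\Ind_I^KE_j(\chi_4^s\alpha^{p})$ is $I_1$-fixed and generates $\sigma_4$, and then applies Corollary \ref{corollary-W-U(tau)} to the extension structure (\ref{equation-M_2-bar}) of $\overline{M_2}$ to show that $\langle K\cdot R_0\rangle$ maps onto an extension of $\sigma_4$ by $S_1$. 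This computation is what makes your proviso ``pourvu que $\widetilde{W}_2$ contienne la sous-repr\'esentation correspondant \`a $S_1$'' a theorem rather than an assumption.

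The second gap is that you never address $\sigma_4^{[s]}$. Since $M_2'$ forces $I(\sigma_1,\sigma_4^{[s]})$ into $\langle K\cdot\Pi(M_2)\rangle$, what one actually obtains is an extension $0\ra I(\sigma_1,\sigma_4^{[s]})\ra S\ra S_2\ra0$, and a priori the subrepresentation $U(\sigma_4)\subset S$ with cosocle $\sigma_4$ could contain $\sigma_4^{[s]}$, which does not occur in the socle filtration of $V_1$. The paper excludes this using $\Ext^1_K(\sigma_4,\sigma_4^{[s]})=0$; but then $U(\sigma_4)$ need not contain $\sigma_1$ either, so one must set $V_2=U(\sigma_4)+\sigma_1$ by hand to recover the socle $\sigma_1\oplus\sigma_3$. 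Taking $V_2$ to be the bare image of $\widetilde{W}_2$, as you do, risks producing a representation whose socle filtration differs from that of $V_1$ either by an unwanted $\sigma_4^{[s]}$ or by a missing $\sigma_1$.
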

\begin{proof}
Rappelons que $v_{\omega}$ est le vecteur engendrant $M_2$. Posons \[R_0=\sum_{\lambda\in\F_q}\matr {[\lambda]}110
\Pi(v_{\omega})\in \langle K\cdot\Pi(M_2)\rangle.\]
Vu la structure de $\overline{M_2}$ (\ref{equation-M_2-bar}), le lemme \ref{lemma-Ind(Ej)} (i) montre que l'image de $R_0$ dans $\Ind_I^KE_j(\chi_4^s\alpha^{p})$ est fix\'ee par $I_1$ et engendre une $K$-repr\'esentation isomorphe \`a $\sigma_4$. Puis, on applique le corollaire \ref{corollary-W-U(tau)} pour d\'eduire que l'image de $S_2:=\langle K\cdot R_0\rangle$ dans $\Ind_I^K\overline{M_2}$ contient une extension de $\sigma_4$ par $S_1$ (cf. le lemme \ref{lemma-exemple-M_2'} et (\ref{equation-M_2-bar})), i.e.
\[ \tau_0\ \ligne\ \tau_1 \ \ligne\ \cdots\ \ligne\
\tau_{r_0}\ \ligne\ \cdots\ \ligne \ \tau_1\ \ligne\
\tau_0\ \ligne\ \sigma_4.\]
Combin\'e avec le lemme \ref{lemma-exemple-M_2''}, on voit que $\langle K\cdot\Pi(M_2)\rangle$  contient une $K$-extension $S$ de la forme:
\[0\ra I(\sigma_1,\sigma_4^{[s]})\ra S\ra S_2\ra0\]
qui provient effectivement d'un certain \'el\'ement dans $\Ext^1_K(\sigma_4,I(\sigma_1,\sigma_4^{[s]}))$.
Par ailleurs, en utilisant le lemme \ref{lemma-V_1}, on v\'erifie que $\sigma_4$ appara\^it dans $S$ avec multiplicit\'e 1. Par cons\'equent, il existe une unique sous-representation $U(\sigma_4)\subset S$ admettant $\sigma_4$ comme cosocle.
Comme $\Ext^1_{K}(\sigma_4,\sigma_4^{[s]})=0$, on voit que $\sigma_4^{[s]}$ n'appara\^it pas dans $U(\sigma_4)$ et pour conclure il suffit de poser $V_2=U(\sigma_4)+\sigma_1$.
\end{proof}

\subsubsection{Conclusion}
Rappelons que l'on a défini deux sous-espaces $M_1$ et $M_2$ de
$\pi$. Choisissons une repr\'esentation lisse admissible $\Omega$ de $G$ telle que $\pi\hookrightarrow \Omega$ et telle que $\Omega|_K\cong\oplus_{\sigma\in\cD(\rho)}\rInj_K\sigma$ et posons $M=M_1+M_2$. Rappelons que $v_1\in M_1$ est un vecteur propre de $\cH$ de caract\`ere $\chi_3$ engendrant $M_1$. On a les faits suivants sur $(M,v_{1})$: 
\begin{enumerate}
\item[(S1)] $v_{1}\notin \Sigma(M)$. En effet, la longueur de Loewy de $M_1$ (resp. $M_2$) en tant que $U^+$-repr\'esentation est \'egale \`a $r_0+1$ par la remarque \ref{remark-E=multi-1} (resp. $r_0+2$) et la longueur de Loewy de $M_2$ en tant que $U^-$-repr\'esentation est \'egale \`a 2. Donc, si l'on a par absurde $v_1\in \Sigma(M)$, alors le caract\`ere $\chi_3$ devrait appara\^itre dans $M_2/\rsoc_I^{r_0}(M_2)$ qui est isomorphe \`a $\chi_4\oplus\chi_3^{s}\alpha^{-r_0}$, d'o\`u une contradiction.

\item[(S2)] Il existe $f_{\sigma_3}\in(\rInj_K\sigma_3)^{I_1}$ un vecteur non nul propre de $\cH$ de caract\`ere $\chi_3^s$: c'est clair par le lemme \ref{lemma=S1S2}.
\end{enumerate}
On obtient alors par la construction de \S\ref{subsection-prelimilaire} une famille de repr\'esentations lisse admissibles de $G$, $\{\pi_{\phi,a},\ {\phi\in\Phi, a\in\bFp}\}$.
\begin{lemma}
Pour tout $\phi\in\Phi$ et $a\in\bFp$, on a $\pi_{\phi,a}\in S(\rho,r)$.
\end{lemma}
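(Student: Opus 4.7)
Mon plan est de suivre exactement la m\^eme strat\'egie que la preuve du lemme \ref{lemma-exemple-f=3-in-S}, en adaptant aux donn\'ees pr\'esentes $(\pi,M=M_1+M_2,v_1)$. D'abord, comme $\pi_{\phi,a}$ est d\'efinie par construction comme la sous-$G$-repr\'esentation de $\Omega_{\phi,a}$ engendr\'ee par $M+\pi^{I_1}$, on a imm\'ediatement $D_1(\rho)=\pi^{I_1}\subset \pi_{\phi,a}$, vu comme sous-espace.

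L'\'etape cl\'e consiste \`a v\'erifier que l'action de $\Pi_{\phi,a}$ co\"incide avec celle de $\Pi$ sur $D_1(\rho)$. Pour cela, on utilise d'une part que $\pi^{I_1}\subset \Omega^{I_1}\subset \Sigma(M)\subset \ker\phi$ par d\'efinition de $\Sigma(M)$, et d'autre part que $\Pi$ normalise $I_1$ (v\'erification directe), donc pr\'eserve les $I_1$-invariants: ainsi $\Pi(\pi^{I_1})\subset \Omega^{I_1}\subset \ker\phi$. Par la formule explicite (\ref{equation--exem-define-Pi}) on en d\'eduit $\Pi_{\phi,a}(x)=\Pi(x)$ pour tout $x\in D_1(\rho)$.

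On applique ensuite \cite[lemme 19.7]{BP} (qui ne d\'epend pas de la structure pr\'ecise de $D_0(\rho)$) pour conclure que $D_0(\rho)\hookrightarrow \pi_{\phi,a}$ et que $\pi_{\phi,a}$ est engendr\'ee par $D_1(\rho)$ en tant que $G$-repr\'esentation. La compatibilit\'e ci-dessus des actions de $\Pi$ et $\Pi_{\phi,a}$ sur $D_1(\rho)$, combin\'ee avec l'\'egalit\'e $\Omega_{\phi,a}|_{KZ}=\Omega|_{KZ}$, entra\^ine alors que $D(\rho,r)$ s'injecte dans $(\pi_{\phi,a}^{K_1},\pi_{\phi,a}^{I_1},\mathrm{can})$ en tant que diagramme, d'o\`u $\pi_{\phi,a}\in S(\rho,r)$.

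La seule subtilit\'e du plan r\'eside dans l'\'etape cl\'e ci-dessus, o\`u il faut s'assurer que l'action modifi\'ee $\Pi_{\phi,a}$ n'alt\`ere pas l'action usuelle de $\Pi$ sur les vecteurs $I_1$-invariants; elle se r\'eduit imm\'ediatement au fait que $\Pi$ normalise $I_1$, donc je m'attends \`a ce que la preuve soit essentiellement formelle \`a partir de la construction de \S\ref{subsection-prelimilaire}.
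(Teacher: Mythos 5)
Votre plan reprend mot pour mot la preuve du lemme \ref{lemma-exemple-f=3-in-S}, mais il passe \`a c\^ot\'e de la seule difficult\'e nouvelle du cas pr\'esent. Dans le cas $f\geq 3$, la donn\'ee $M=M_{\tau}$ est contenue dans $D_0(\rho)$, de sorte que la sous-$G$-repr\'esentation engendr\'ee par $M+\pi^{I_1}$ co\"incide automatiquement avec celle engendr\'ee par $D_1(\rho)$ d\`es que l'on sait, par \cite[lemme 19.7]{BP}, que cette derni\`ere contient $D_0(\rho)$. Ici, $M=M_1+M_2$ avec $M_2\not\subset D_0(\rho)$: la repr\'esentation $M_2$ vit dans $\overline{W_{\omega}}$, dont les sous-quotients $\sigma_3^{[s]}$ et $\omega$ ne sont pas des poids de Diamond, et elle contient des vecteurs (par exemple $\overline{F}_{p(p-4-r_1)}$) qui ne sont pas fix\'es par $K_1$. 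Le lemme 19.7 de \cite{BP} ne donne des informations que sur $\langle G\cdot D_1(\rho)\rangle$ et ne dit rien de $M_2$; l'\'egalit\'e $\pi_{\phi,a}=\langle G\cdot D_1(\rho)\rangle$, c'est-\`a-dire la condition <<$\pi_{\phi,a}$ est engendr\'ee par $D_1(\rho)$>>, n'est donc pas \'etablie par votre argument, et c'est une des conditions d'appartenance \`a $S(\rho,r)$.

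C'est pr\'ecis\'ement ce point que la preuve du texte traite: il faut v\'erifier que $M_2$, ou de mani\`ere \'equivalente la $K$-repr\'esentation $\overline{W_{\omega}}$ du lemme \ref{lemma-exemple-f=2-sous-rep}, est contenue dans la sous-repr\'esentation de $\Omega_{\phi,a}$ engendr\'ee par $D_1(\rho)$. Cela r\'esulte du fait que $\sigma_3$ contient une sous-$I$-repr\'esentation isomorphe \`a $E_0(\chi_3)$ (donc accessible \`a partir de $e_3\in D_1(\rho)$) et que tout quotient de $\Ind_I^K\Pi(E_0(\chi_3))$ dans $\pi_{\phi,a}$ contient $\overline{W_{\omega}}$ comme sous-repr\'esentation (comparer avec \cite[lemme 19.5]{BP}). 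Le reste de votre plan (co\"incidence de $\Pi_{\phi,a}$ et de $\Pi$ sur $D_1(\rho)$, injection du diagramme, application de \cite[lemme 19.7]{BP}) est correct mais identique au cas $f\geq 3$; la v\'erification manquante ci-dessus constitue le contenu r\'eel de ce lemme.
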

\begin{proof}
En examinant la preuve du lemme \ref{lemma-exemple-f=3-in-S}, on trouve qu'il suffit de d\'emontrer que $M_2$ s'injecte dans $\pi_{\phi,a}$, ou encore la $K$-repr\'esentation $\overline{W_{\omega}}$ d\'efinie dans le lemme \ref{lemma-exemple-f=2-sous-rep} est contenue dans $\pi_{\phi,a}$. Or, en reprenant les notations de ce lemme, cela r\'esulte du fait que tout quotient de $\Ind_I^KE_0(\chi_3)$ dans $\pi$ contient $\overline{W_{\omega}}$ comme sous-repr\'esentation (comparer avec \cite[lemme 19.5]{BP}).
\end{proof}

Le théorème suivant va répondre \emph{négativement} à la question (Q2).
\begin{theorem}\label{theorem-exem-f=2}
Il existe un \'el\'ement $\pi'$ de $S(\rho,r)$ telle que $\pi'^{K_1}\supsetneq D_0(\rho)$. 
\end{theorem}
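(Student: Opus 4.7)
Je propose de prendre $\pi' := \pi_{\phi,a}$ pour un choix appropri\'e de $\phi \in \Phi$ et $a \in \bFp^\times$, en appliquant la construction g\'en\'erale de \S\ref{subsection-prelimilaire} au quintuplet $(\pi, \Omega, M_1+M_2, v_1, f_{\sigma_3})$ pr\'epar\'e dans \S\ref{subsection-cas-f=2}. Les conditions (S1) et (S2) ayant \'et\'e v\'erifi\'ees dans le texte pr\'ec\'edent et le lemme imm\'ediatement ant\'erieur garantissant $\pi_{\phi,a} \in S(\rho,r)$, la seule chose \`a prouver sera $\pi'^{K_1} \supsetneq D_0(\rho)$.

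Le vecteur qui fournira un nouveau $K_1$-invariant est b\^ati \`a partir de $e_\omega = \overline{F}_{p(p-4-r_1)} \in \overline{W_\omega} \subset \pi$ introduit dans \S\ref{subsubsection-V2}. Dans $\pi$, l'\'equation (\ref{equa-exemple-e_omega}) donne $\smatr{1}{0}{p}{1} e_\omega = e_\omega - e_4$, donc $e_\omega$ n'est pas $I_1$-invariant; mais il est fix\'e par $U^+\cH Z_1$ et son image dans le quotient $\omega$ de $\overline{W_\omega}$ engendre $\omega^{I_1}$. L'id\'ee est de construire $y \in \pi'$ tel que $\smatr{1}{0}{p}{1} y = y + e_4$: alors $\xi := e_\omega + y$ sera fix\'e par $\smatr{1}{0}{p}{1}$ et, combin\'e avec la d\'ecomposition d'Iwahori (\ref{equation-Iwahori}) et la $U^+\cH Z_1$-invariance de $e_\omega$, appartiendra \`a $\pi'^{I_1} \subset \pi'^{K_1}$.

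Pour produire un tel $y$, on exploite la propri\'et\'e cl\'e de la construction: $\Pi_{\phi,a}(v_1) = \Pi(v_1) + a f_{\sigma_3}$. La proposition \ref{lemma-exemple-V_1=V_2} montre que $V_1$ et $V_2$ partagent la sous-repr\'esentation $S_1$, ce qui sugg\`ere un couplage naturel entre le c\^ot\'e $\overline{W_\omega}$ et le c\^ot\'e $\sigma_3$ de $\pi$: c'est pr\'ecis\'ement ce couplage que l'ajout du terme $a f_{\sigma_3}$ active dans $\pi'$. On choisit $\phi$ de sorte qu'il envoie l'image $\overline{v_1}$ sur $\Pi(f_{\sigma_3})$ et s'annule sur $\Sigma(M_1+M_2)$, et $a \in \bFp^\times$ arbitraire; propageant alors $a f_{\sigma_3}$ par les formules du lemme \ref{lemma-Witt-dans-W} et de la proposition \ref{prop-U+-action} \`a travers les g\'en\'erateurs de $\langle K\cdot \Pi_{\phi,a}(M_2)\rangle$, on obtient explicitement $y$.

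\textbf{Obstacle principal.} Le point d\'elicat est de v\'erifier que le vecteur $\xi = e_\omega + y$ ainsi obtenu se projette non trivialement sur l'unique quotient de $\pi'$ isomorphe \`a $\omega$; puisque $\omega \notin \cD(\rho)$ et donc $D_0(\rho)$ n'admet aucun sous-quotient isomorphe \`a $\omega$, cette non-nullit\'e garantira $\xi \notin D_0(\rho)$, ce qui ach\`eve la preuve. Le suivi pr\'ecis des annulations sous l'action modifi\'ee $\Pi_{\phi,a}$ \`a travers les filtrations de $\overline{M_2}$ (\ref{equation-M_2-bar}), $V_1$ et $V_2$, et la v\'erification que la contribution suppl\'ementaire ne retombe pas enti\`erement dans $D_0(\rho)$, constituent le c\oe{}ur technique de la d\'emonstration.
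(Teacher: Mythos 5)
Your setup is the same as the paper's: you apply the construction of \S\ref{subsection-prelimilaire} to $(\pi,\Omega,M_1+M_2,v_1,f_{\sigma_3})$ and take $\pi'=\pi_{\phi,a}$ with $a\neq0$. But the mechanism you propose for exhibiting a new $K_1$-invariant does not work. First, $M_2$ and $\Pi(M_2)$ are contained in $\Sigma(M_1+M_2)\subset\ker\phi$, so $\Pi_{\phi,a}$ coincides with $\Pi$ on $M_2$ and $\langle K\cdot\Pi_{\phi,a}(M_2)\rangle$ is \emph{unchanged}; there is nothing to ``propagate'' through it. Second, the only genuinely new vectors in $\pi_{\phi,a}$ come from $\Pi_{\phi,a}(v_1)=\Pi(v_1)+af_{\sigma_3}$, and they all lie in the $K$-subrepresentation generated by $f_{\sigma_3}\in(\rInj_K\sigma_3)^{I_1}$, which is fixed by $K_1$ (as $K_1$ is normal and $f_{\sigma_3}$ is $I_1$-fixed); in particular every such vector is fixed by $\smatr{1}{0}{p}{1}$. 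Hence no vector $y$ with $\smatr{1}{0}{p}{1}y=y+e_4$ can be manufactured from them, and a $y$ already in $\pi$ would contradict the hypothesis $\pi^{K_1}=D_0(\rho)$. Your target is also off: the new $K_1$-invariants are not a lift of $\omega$ (the $K$-action on $\Omega$ is untouched, so $e_\omega$ remains non-$K_1$-fixed), but a second copy of $\sigma_4$ glued on $\sigma_3$.

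What is missing is the actual engine of the proof, which you only gesture at when you say that $V_1$ and $V_2$ ``share'' $S_1$. One first reduces to the case $\pi^{K_1}=D_0(\rho)$ and shows that this forces $V_1=V_2$ as subspaces of $\pi$ (otherwise $\pi^{K_1}$ would contain an extension $0\to\sigma_3\oplus\sigma_1\to *\to\sigma_4\to0$ not present in $D_0(\rho)$). The generator $F_{p(p-2-r_1)}$ of this common subspace is therefore reached in two ways: from $M_1$ via $\Pi$, and from $M_2$ via $\Pi$. In $\pi_{\phi,a}$ only the first route is perturbed, producing $F_{p(p-2-r_1)}+af_{p(p-2-r_1)}$, while the second still produces $F_{p(p-2-r_1)}$; their difference $af_{p(p-2-r_1)}$ thus belongs to $\pi_{\phi,a}$ and, for $a\neq0$, generates a $K_1$-fixed non-split extension $0\to\sigma_3\to *\to\sigma_4\to0$ which is not contained in $D_0(\rho)$. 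Without the reduction to $\pi^{K_1}=D_0(\rho)$, the identification $V_1=V_2$, and this ``two routes, one perturbed'' comparison, the argument does not close; as written, your proposal leaves precisely this step (which you yourself flag as the technical core) unproved.
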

\begin{proof}
Supposons que la repr\'esentation $\pi$ que l'on a fix\'e soit telle que $\pi^{K_1}=D_0(\rho)$. On va
construire un autre \'el\'ement de $S(\rho,a)$
vérifiant la condition demandée.

Comme $V_1$ (et de m\^eme pour $V_2$) n'admet pas de poids de Diamond autre que $\sigma_3$ et $\sigma_4$ et comme $\Ext^1_K(\sigma_4,\tau)=0$ pour tout sous-quotient irr\'eductible de $S_1$ autre que $\sigma_3$, la condition $\pi^{K_1}=D_0(\rho)$ force que $V_1=V_2$ en tant que sous-espaces de $\pi$ car sinon $\pi$ contiendrait une extension non triviale de la forme
\[0\ra \sigma_3\oplus\sigma_1\ra *\ra \sigma_4\ra0,\]
ce qui est impossible parce que une telle extension est triviale sur $K_1$ et $D_0(\rho)$ n'en contient pas.

Par d\'efinition de $\Pi_{\phi,a}$, on a  $\Pi_{\phi,a}(v_1)=\Pi(v_1)+af_{\sigma_3}$ et
$\Pi_{\phi,a}(x)=\Pi(x)$ pour $x\in M_2$, donc  $\pi_{\phi,a}$ contient \`a la fois les vecteurs $F_{p(p-2-r_1)}$ (provenant de $V_2$!) et
$F_{p(p-2-r_1)}+af_{p(p-2-r_1)}$ o\`u \[f_{p(p-2-r_1)}=\summ_{\lambda\in\F_q}\lambda^{p(p-2-r_1)}\matr{[\lambda]}110f_{\sigma_3}.\]
Cela montre que, d\`es que $a\neq0$, $\pi_{\phi,a}$ contient l'extension $0\ra \sigma_3\ra *\ra\sigma_4\ra0$ (engendr\'ee par $f_{p(p-2-r_1)}$) et il suffit de poser $\pi'=\pi_{\phi,a}$ avec $a\neq 0$ pour conclure.
\end{proof}

  \hspace{5cm} \hrulefill\hspace{5.5cm}

Université Paris-Sud Mathématiques, Bâtiment 425, 91400, Orsay, France

E-mail: yongquan.hu@math.u-psud.fr

\end{document}